\theoremstyle{plain}
\newtheorem{thm}{\protect\theoremname}[section]
\theoremstyle{plain}
\newtheorem{lem}[thm]{\protect\lemmaname}
\theoremstyle{plain}
\newtheorem{prop}[thm]{\protect\propositionname}
\theoremstyle{definition}
\newtheorem{defn}[thm]{\protect\definitionname}
\theoremstyle{remark}
\newtheorem{rem}[thm]{\protect\remarkname}
\theoremstyle{definition}
\newtheorem{example}[thm]{\protect\examplename}
\theoremstyle{plain}
\newtheorem{cor}[thm]{\protect\corollaryname}
\theoremstyle{definition}
\pgfplotsset{compat=1.9}
\g@addto@macro\@floatboxreset{\centering}
\newcommand{\X}{\mathcal{X}}
\newcommand{\Y}{\mathcal{Y}}
\newcommand{\U}{\mathcal{U}}
\newcommand{\W}{\mathcal{W}}
\newcommand{\D}{\mathcal{D}}
\newcommand{\Ebb}{\mathbb{E}}
\newcommand{\Gen}{\mathrm{Gen}}
\newcommand{\Mat}{\mathrm{Mat}} 
\newcommand{\Free}{\mathrm{Free}} 
\newcommand{\coFree}{\mathrm{coFree}} 
\providecommand{\definitionname}{Definition}
\providecommand{\examplename}{Example}
\providecommand{\lemmaname}{Lemma}
\providecommand{\problemname}{Problem}
\providecommand{\propositionname}{Proposition}
\providecommand{\remarkname}{Remark}
\providecommand{\corollaryname}{Corollary}
\providecommand{\theoremname}{Theorem}
\newtheorem*{thmA}{Theorem A}
\providecommand{\corollaryname}{Corollary}
\providecommand{\definitionname}{Definition}
\providecommand{\examplename}{Example}
\providecommand{\lemmaname}{Lemma}
\providecommand{\problemname}{Problem}
\providecommand{\propositionname}{Proposition}
\providecommand{\remarkname}{Remark}
\providecommand{\theoremname}{Theorem}
\begin{document}
\title{Recollements, coproducts and products in extriangulated categories}
\author{Alejandro Argud{\'i}n-Monroy, Octavio Mendoza, Carlos E. Parra \thanks{The first named author was supported by a postdoctoral fellowship
EPM(1) 2024 from SECIHTI. The first and second named authors were
supported by the Project PAPIIT-IN100124 Universidad Nacional Aut{\'o}noma
de M{\'e}xico. The third named author was supported by ANID+FONDECYT/REGULAR+1240253.}}

\maketitle
\global\long\def\Mod{\operatorname{Mod}}%
\global\long\def\Ker{\operatorname{Ker}}%
\global\long\def\Proj{\operatorname{Proj}}%
\global\long\def\gldim{\operatorname{gl.dim.}}%
\global\long\def\Inj{\operatorname{Inj}}%
\global\long\def\Gen{\operatorname{Gen}}%
\global\long\def\Fun{\operatorname{Fun}}%
\global\long\def\Ab{\operatorname{Ab}}%
\global\long\def\Hom{\operatorname{Hom}}%
\global\long\def\Ext{\operatorname{Ext}}%
\global\long\def\im{\operatorname{Im}}%
\global\long\def\coim{\operatorname{CoIm}}%
\global\long\def\Cok{\operatorname{Coker}}%
\global\long\def\smd{\operatorname{smd}}%
\global\long\def\C{\mathcal{C}}%
\global\long\def\s{\mathfrak{s}}%
\global\long\def\E{\mathbb{E}}%
\global\long\def\t{\mathfrak{t}}%
\global\long\def\u{\mathbf{u}}%
\global\long\def\te{\mathbf{t}}%
\global\long\def\v{\mathbf{v}}%
\global\long\def\x{\mathbf{x}}%
\global\long\def\pd{\mathsf{pd}}%
\newcommandx\suc[5][usedefault, addprefix=\global, 1=N, 2=M, 3=K, 4=, 5=]{#1\overset{#4}{\rightarrow}#2\overset{#5}{\rightarrow}#3}%
\global\long\def\stors{\mathsf{stors}}%
\newcommandx\suca[5][usedefault, addprefix=\global, 1=A, 2=B, 3=C, 4=f, 5=g]{#1\stackrel{#4}{\hookrightarrow}#2\stackrel{#5}{\twoheadrightarrow}#3}%

\begin{abstract}
We introduce a notion similar to the AB4 (resp. AB4{*}) condition
for abelian categories but in the context of extriangulated categories.
We will refer to this notion as AET4 (resp. AET4{*}). One of our main results
shows equivalent statements for AET4 (resp. AET4{*}), which generalize
statements commonly used in homological constructions in abelian categories.
As an application, we will give conditions for a recollement $(\mathcal{A},\mathcal{B},\mathcal{C})$ of extriangulated
categories  with $\mathcal{B}$
AET4 (resp. AET4{*}) to imply that the categories $\mathcal{A}$ and
$\mathcal{C}$ are AET4 (resp. AET4{*}); and we will show a relation
between the $n$-smashing (resp. $n$-co-smashing) condition for
a $t$-structure and the AET4 (resp. AET4{*}) condition of the extended
hearts of the $t$-structure. It is also included an appendix where we study in detail the properties of adjoint pairs between extriangulated categories which are necessary for the development of the paper, including some special properties for higher extension groups.
\end{abstract}
\tableofcontents{}

\section{Introduction}

Let $\mathcal{A}$ be an abelian category. If for every family of objects
$\{A_{i}\}_{i\in I}$ in $\mathcal{A}$ there exists a coproduct $\coprod_{i\in I}A_{i}$
in $\mathcal{A}$, then the category $\mathcal{A}$ is said to be
AB3. It is important to note that  the coproduct
is not (in general) an exact functor. There are examples of abelian categories having a family 
$\{\suca[A_{i}][B_{i}][C_{i}][f_{i}][g_{i}]\}_{i\in I}$ of exact sequences such that  the sequence 
$
\suc[\coprod_{i\in I}A_{i}][\coprod_{i\in I}B_{i}][\coprod_{i\in I}C_{i}][\coprod_{i\in I}f_{i}][\coprod_{i\in I}g_{i}]
$
 is not a short exact sequence (specifically, the morphism $\coprod_{i\in I}f_{i}$
is not  a monomorphism). For this reason, the AB4 condition was
introduced. Namely, $\mathcal{A}$ is AB4 if it is AB3 and the coproducts
are exact functors. It should be noted that this condition is fundamental
for performing homological constructions in co-complete abelian categories
(see \cite{AM,HJ,AP}).

In contrast, if $\mathcal{D}$ is a triangulated category with coproducts,
it is well known that the coproduct of distinguished triangles is
a distinguished triangle (see \cite[Prop.1.1.6]{N}). This comparison
is valid thanks to the notion of extriangulated categories from \cite{NP}
which encompass both abelian and triangulated categories.
Among the most common examples of extriangulated categories are exact
categories, localizations of exact categories, and any full closed-under-extensions
subcategory of an extriangulated category. Specifically, an extriangulated
category is a triple $(\mathcal{C},\E,\s)$ where: $\mathcal{C}$
is an additive category, $\E:\mathcal{C}^{op}\times\mathcal{C}\rightarrow\Ab$
is an additive functor (which plays a role similar to that of $\Ext^{1}(-,?)$
in abelian categories or $\Hom(-,\Sigma?)$ in triangulated categories)
and $\s$ is a correspondence that assigns to each element $\eta\in\mathbb{E}(A,B)$
an equivalence class of a sequence of morphisms of the form $\suc[B][E][A][f][g]$ which is known as the realization of $\eta.$

The objective of this paper is to introduce a condition similar to
AB4 for abelian categories but in the context of extriangulated categories.
To motivate our definition, we recall that (see the proof
of \cite[Lem.2.12]{AP} and \cite[Thm.4.8]{A})  the AB4 condition holds
in an AB3 abelian category $\mathcal{A}$ if, and only if, for any set $I\neq\emptyset$   there is
a (functorial) map 
$
\Gamma:\prod_{i\in I}\Ext_{\mathcal{A}}^{1}(B_{i},A)\rightarrow\Ext_{\mathcal{A}}^{1}(\coprod_{i\in I}B_{i},A^{(I)})
$
such that $\Gamma(\eta_{i})_{i\in I}$ is the extension realized by
the short exact sequence 
\[
\suca[A^{(I)}][\coprod_{i\in I}E_{i}][\coprod_{i\in I}B_{i}][\coprod_{i\in I}f_{i}][\coprod_{i\in I}g_{i}],
\]
where $\eta_{i}:\:\suca[A][E_{i}][B_{i}][f_{i}][g_{i}]$ for each $i.$  Similarly, in the case of extriangulated categories we propose the 
following notion.

\begin{defn}\label{ET4Def}
An extriangulated category $(\mathcal{C},\E,\s)$ is AET4
if for any set $I\neq\emptyset,$ $\forall\,\{B_i\}_{i\in I}$ in $\C$ and $A\in\C$  there is a natural transformation 
\[
\Gamma:\prod_{i\in I}\mathbb{E}(B_{i},A)\rightarrow\mathbb{E}(\coprod_{i\in I}B_{i},A^{(I)})
\]
satisfying the following two conditions.
\begin{enumerate}
\item (\textbf{Coproducts are extriangulated}). If $\eta_{i}$ is realized
by $\suc[A][E_{i}][B_{i}][f_{i}][g_{i}]$ for all $i\in I$, then
a realization of $\Gamma(\eta_{i})_{i\in I}$ is given by the sequence
\[
\suc[A^{(I)}][\coprod_{i\in I}E_{i}][\coprod_{i\in I}B_{i}][\coprod_{i\in I}f_{i}][\coprod_{i\in I}g_{i}].
\]
\item (\textbf{Coproduct inclusions are extriangulated}). For each $i\in I$ and the canonical
inclusions $\mu_{i}^{B}:B_{i}\rightarrow\coprod_{i\in I}B_{i}$ and
$\mu_{i}^{A}:A\rightarrow A^{(I)}$, we have that 
\[
\mathbb{E}\left(\mu_{i}^{B},A\right)\left(\Gamma(\eta_{i})_{i\in I}\right)=\mathbb{E}\Big(\coprod_{i\in I}B_{i},\mu_{i}^{A}\Big)(\eta_{i})\text{ }\forall i\in I.
\]
\end{enumerate}
\end{defn}

Once we have introduced the AET4 property, we will seek to give different
characterizations of it. To do this, we will rely on some known characterizations
of the AB4 condition in abelian categories such as in \cite{AP,A}.
In particular, in Definition \ref{def:univ}, we will introduce the
notion of universal $\mathbb{E}$-extensions based on \cite{AMP,AP}.
It is worth mentioning that to achieve our characterization, we will
need to use extriangulated categories with negative extensions 
(see \cite{G,AET1}). Furthermore, we will require the condition that the
category is \emph{coproduct-compatible} (see Definition \ref{def:cc}).
We will show in our examples that there is a wide range of categories
satisfying this condition, including exact categories, triangulated
categories and extended hearts of $t$-structures  (see \cite{AMP}).
One of the main results in this paper is the following one (see the dual result in Theorem \ref{thm:mainAop}).

\begin{thmA}\label{thm:main}Let $(\mathcal{C},\mathbb{E},\s)$ be
an extriangulated category with coproducts. Consider the following
statements. 
\begin{enumerate}
\item $\mathcal{C}$ is AET4. 
\item For any set $I\neq\emptyset,$  the map $\tau:\mathbb{E}(\coprod_{i\in I}A_{i},B)\rightarrow\prod_{i\in I}\mathbb{E}(A_{i},B)$,
$\epsilon\mapsto\left(\epsilon\cdot\mu_{i}^{A}\right)_{i\in I}$,
defines a natural isomorphism. 
\item For any objects $V,D$ in $\mathcal{C}$ there is
a universal $\mathbb{E}$-extension of $V$ by $D$. That is, for $V,D\in\C,$ there is an 
$\eta\in\mathbb{E}(V^{(X)},D)$, for some set $X\neq\emptyset$, such
that the map $\text{Hom}_{\mathcal{C}}(V,V^{(X)})\to\E(V,D),\;f\mapsto \eta\cdot f,$
is surjective. 
\end{enumerate}
Then, the following implications hold true: $(a)\Rightarrow(b)$ and $(b)\Leftrightarrow(c)$.
Moreover, if $\mathcal{C}$ is coproduct-compatible, then the three statements above are
equivalent. 
\end{thmA}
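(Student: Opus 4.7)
The plan is to establish the implications in the order $(a)\Rightarrow(b)$, $(b)\Rightarrow(c)$, $(c)\Rightarrow(b)$, and finally $(b)\Rightarrow(a)$ under the coproduct-compatibility hypothesis. Throughout, let $\nabla_B : B^{(I)} \to B$ denote the codiagonal, characterized by $\nabla_B \circ \mu_i^B = \mathrm{id}_B$ for every $i\in I$. Naturality of $\tau$ in the families $\{A_i\}$ and in $B$ is automatic from the bifunctoriality of $\mathbb{E}$, so the actual content is bijectivity at every object.

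For $(a)\Rightarrow(b)$, I define a candidate inverse
\[
\sigma\bigl((\eta_i)_{i\in I}\bigr) \;:=\; \mathbb{E}\bigl(\mathrm{id}_{\coprod A_i},\nabla_B\bigr)\bigl(\Gamma(\eta_i)_{i\in I}\bigr).
\]
A bifunctoriality calculation combined with condition (2) of Definition \ref{ET4Def} gives, for each $j\in I$, that $\tau(\sigma((\eta_i)_i))_j=\mathbb{E}(A_j,\nabla_B\circ\mu_j^B)(\eta_j)=\eta_j$, so $\tau\sigma=\mathrm{id}$. For the injectivity of $\tau$, I realize an $\epsilon$ with $\tau(\epsilon)=0$ by a conflation $B\to E\to\coprod_i A_i$; each vanishing $\epsilon\cdot\mu_i^A=0$ produces, via the pullback of the conflation along $\mu_i^A$, a morphism $\tilde s_i:A_i\to E$ whose composition with the deflation is $\mu_i^A$. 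The coproduct universal property then assembles these into a section of $E\to\coprod_i A_i$, forcing $\epsilon=0$.

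For $(b)\Rightarrow(c)$, given $V,D\in\mathcal{C}$, I take the tautological indexing set $X:=\mathbb{E}(V,D)$ with $\eta_x:=x$ for every $x\in X$; the element $\eta:=\tau^{-1}((\eta_x)_{x\in X})\in\mathbb{E}(V^{(X)},D)$ satisfies $\eta\cdot\mu_x^V=x$ for all $x$, so $f:=\mu_x^V$ witnesses universality. For $(c)\Rightarrow(b)$, injectivity of $\tau$ is the same coproduct-splitting argument as above (no appeal to (c) is needed). For surjectivity, I apply $(c)$ to $V:=\coprod_i A_i$ and $D:=B$, obtaining a universal $\theta\in\mathbb{E}(V^{(X)},B)$. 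Writing $f\in\Hom(V,V^{(X)})$ as the tuple $(f\mu_i^A)_i\in\prod_i\Hom(A_i,V^{(X)})$ via the coproduct universal property, the surjectivity of $\tau$ reduces to the surjectivity of $\theta\cdot(-):\Hom(A_i,V^{(X)})\to\mathbb{E}(A_i,B)$ for each $i$; I plan to deduce this from $(c)$ applied separately to each $A_i$ and $B$, comparing the two universal extensions through the naturality of $\mathbb{E}$ in its first argument. Once suitable morphisms $g_i:A_i\to V^{(X)}$ with $\theta\cdot g_i=\eta_i$ are produced, assembling them into $f:V\to V^{(X)}$ and taking $\epsilon:=\theta\cdot f$ yields a preimage of $(\eta_i)_i$.

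For $(b)\Rightarrow(a)$ assuming coproduct-compatibility, I define
\[
\Gamma(\eta_i)_{i\in I} \;:=\; \tau^{-1}\Bigl(\bigl(\mathbb{E}(B_i,\mu_i^A)(\eta_i)\bigr)_{i\in I}\Bigr)\;\in\;\mathbb{E}\Bigl(\coprod_{i\in I} B_i,\,A^{(I)}\Bigr),
\]
which renders condition (2) of Definition \ref{ET4Def} automatic. Coproduct-compatibility is used precisely to guarantee that the sequence $A^{(I)}\to\coprod_i E_i\to\coprod_i B_i$ is an $\mathfrak{s}$-conflation, hence realizes some $\delta\in\mathbb{E}(\coprod_i B_i,A^{(I)})$. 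The canonical morphisms of conflations induced by the coproduct inclusions $E_i\to\coprod_j E_j$ force $\delta\cdot\mu_i^B=\mu_i^A\cdot\eta_i$, and the injectivity of $\tau$ (applied with $A^{(I)}$ in place of $B$) identifies $\delta$ with $\Gamma(\eta_i)_i$, verifying condition (1). The chief obstacle is the surjectivity step in $(c)\Rightarrow(b)$: the universality of $\theta$ directly provides morphisms only from $V=\coprod_i A_i$, whereas one needs morphisms from each individual summand $A_i$, and the comparison of universal extensions across these different sources is the delicate point. A secondary obstacle is the realization condition (1) in the final implication, where coproduct-compatibility is essential to promote the coproduct of realizations into a genuine conflation.
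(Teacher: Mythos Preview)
Your outline for $(a)\Rightarrow(b)$ and $(b)\Rightarrow(c)$ matches the paper's arguments essentially verbatim. The two places where your plan diverges from the paper are exactly the two you flagged as obstacles, and in both cases the paper uses an idea you have not supplied.

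\textbf{The surjectivity step in $(c)\Rightarrow(b)$.} Your plan is to apply $(c)$ separately to each $A_i$ and then ``compare the two universal extensions''. This does not obviously work: a universal extension $\theta_i\in\mathbb{E}(A_i^{(X_i)},B)$ gives you morphisms out of $A_i$, but there is no canonical morphism $A_i^{(X_i)}\to V^{(X)}$ intertwining $\theta_i$ with $\theta$, and universality of $\theta$ only speaks about morphisms with source $V$. The paper bypasses this entirely. It uses only the single universal extension $\theta\in\mathbb{E}(V^{(X)},B)$ for $V=\coprod_i A_i$, together with the natural projections $\pi_i:V\to A_i$ (which exist in any additive category). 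Given $\eta_i\in\mathbb{E}(A_i,B)$, one lifts to $\eta_i\cdot\pi_i\in\mathbb{E}(V,B)$; universality of $\theta$ then yields $u_i:V\to V^{(X)}$ with $\theta\cdot u_i=\eta_i\cdot\pi_i$, and setting $g_i:=u_i\circ\mu_i$ gives $\theta\cdot g_i=\eta_i\cdot\pi_i\cdot\mu_i=\eta_i$. The projection trick is the missing idea.

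\textbf{The role of coproduct-compatibility in $(b)\Rightarrow(a)$.} You write that coproduct-compatibility ``is used precisely to guarantee that the sequence $A^{(I)}\to\coprod_i E_i\to\coprod_i B_i$ is an $\mathfrak{s}$-conflation''. This misreads Definition~\ref{def:cc}: coproduct-compatibility is a statement about the negative extension $\mathbb{E}^{-1}$, not about coproducts of conflations. It does not, by itself, promote the coproduct sequence to a conflation; that would be AET3.5, which is precisely what you are trying to establish. The paper's mechanism is different: one first realizes $\Gamma(\eta_i)_i$ as an $\mathfrak{s}$-conflation $A^{(I)}\xrightarrow{f} E\xrightarrow{g}\coprod_i B_i$ (this exists because $\Gamma(\eta_i)_i\in\mathbb{E}$), then builds a comparison map $\theta:\coprod_i E_i\to E$ from the morphisms of extensions $(\mu_i^A,\mu_i^B):\eta_i\to\Gamma(\eta_i)_i$, and finally shows $\theta$ is an isomorphism via the Five Lemma applied to the long exact sequences in $\Hom_\mathcal{C}(-,X)$ and $\mathbb{E}^{-1}(-,X)$. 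Coproduct-compatibility enters only at the last step, to make the leftmost vertical map $\tilde\tau:\mathbb{E}^{-1}(\coprod A_i,X)\to\prod\mathbb{E}^{-1}(A_i,X)$ an epimorphism so that the Five Lemma applies. Your final identification $\delta=\Gamma(\eta_i)_i$ via injectivity of $\tau$ is correct in spirit, but you need the Five Lemma argument to produce the conflation structure on the coproduct sequence in the first place.
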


The second main result in the paper is the following one (see Theorems \ref{MainTB} and \ref{MainTB2} for details).
\vspace{0.2cm}

{\bf Theorem B.} Let $(\mathcal{C},\E,\s)$ be an essentially small
AET4 (resp. AET4{*})  extriangulated category. Then, for any $n\geq 1,$ we have the natural isomorphism
\begin{alignat*}{1}
\mathbb{E}^{n}\Big(\coprod_{i\in I}D_{i},C\Big) & \stackrel{}{\simeq}\prod_{i\in I}\mathbb{E}^{n}(D_{i},C).\\
\Big(\text{resp. }\; \mathbb{E}^{n}\Big(C,\prod_{i\in I}D_{i}\Big) & \stackrel{}{\simeq}\prod_{i\in I}\mathbb{E}^{n}(C,D_{i})\Big)
\end{alignat*}

Having proved Theorem A, we will use it to study the condition AET4 in different settings. In the case of exact categories we generalize (see Corollary \ref{equvExactAET}) the description given in \cite{A} for AB4 abelian categories. In the description of the AET4 condition for extended hearts, we will also look at some results
for hearts of intervals of s-torsion pairs  in extriangulated categories
with negative extensions (see \cite{AET1}). Our main result in this
direction tells us that: for a $(n-1)$-smashing $t$-structure $\te=(\mathcal{X},\mathcal{Y})$
in a triangulated category with coproducts, the extended heart (of
length $n$) of $\te$ is AET4 if, and only if, $\te$ is smashing. Furthermore,
we will see an example of an extended heart of length 2 in a triangulated
category with coproducts that is not AET4. This example shows (in a non-abelian context) that
a full subcategory of an AET4 extriangulated category is not necessarily
AET4 and, moreover, that there can be AET4 extriangulated subcategories
$\mathcal{H}$ and $\mathcal{H}'$ of a triangulated category with
coproducts $\mathcal{D}$ such that the category $\mathcal{H}\star\mathcal{H}'$
is not AET4. 

Lastly, we will seek to show that, in the case of having a recollement
of extriangulated categories $(\mathcal{A},\mathcal{B},\mathcal{C})$,
the AET4 condition in $\mathcal{B}$ is inherited in $\mathcal{A}$
and $\mathcal{C}$ under certain conditions. It is worth mentioning
that the definition of recollement of extriangulated categories, that
we introduce in Definition \ref{def:rec}, is different from others
that can be found in the literature, but it coincides with the usual
recollement for abelian and triangulated
categories (see Remark \ref{rem:natrec}(a)). The difference is that one of our recollement
conditions is stated in terms of a torsion-torsion-free triple which
is better suited to our methods. Our main results for a recollement $(\mathcal{A},\mathcal{B},\mathcal{C})$ of extriangulated
categories are Theorem \ref{M1RETC} and Theorem \ref{M2RETC}.

It should be noted that all our results are dualizable. In particular,
Theorem A can be dualized for characterizing the AET4{*} condition
which is related to having extriangulated products, see Theorem \ref{thm:mainAop}. 

The structure of the article is as follows. In section 2, we will
look at the preliminaries necessary for the presentation of our results.
Specifically, we describe the matrix expression of the elements
of an additive bifunctor, we also recall some essential results on recollements
of additive categories and  facts and concepts about the
theory of extriangulated categories. In the remaining sections, we
will present the results described above. Specifically, in Section
3 we introduce condition AET4 and prove Theorem A.  Section 4
contains our results on condition AET4 in extended hearts of t-structures.
In Section 5, we will present our results on recollements. 

Finally,
the article includes an Appendix where we study in detail the properties
of adjoint pairs between extriangulated categories. Specifically,
without assuming having enough projective or injective objects, we
will seek conditions for an adjoint pair $(S,T)$ to induce a natural
isomorphism $\mathbb{E}_{\mathcal{A}}(SX,Y)\rightarrow\mathbb{E}_{\mathcal{B}}(X,TY).$ It is also considered the case of higher extension groups and adjoint pair of functors which are needed in the proof of Theorem B.

\subsection*{Acknowledgements}

Part of this work was developed in a research stay of the first named
author at the the Instituto de Ciencias F{\'i}sicas y Matem{\'a}ticas of the
Universidad Austral de Chile in 2025. The first named author would
like to thank the academic and administrative staff of this institution
for their warm hospitality and support. 

\section{Preliminaries}

\subsection{Functors, coproducts and matrices}

Let $\C$ be a category and $\mathcal{C}^{op}$ be its opposite category.
 Given a functor $F:\mathcal{C}\rightarrow\mathcal{D}$,
we have its opposite functor $F^{op}:\mathcal{C}^{op}\rightarrow\mathcal{D}^{op}.$
Note that a natural transformation $\gamma:F\rightarrow G$ defines
a natural transformation $\gamma^{op}:G^{op}\rightarrow F^{op}$.

Given two natural transformations $a:F_{1}\rightarrow F_{2}$ and
$b:G_{1}\rightarrow G_{2}$, with $G_{1},G_{2}:\mathcal{Y}\rightarrow\mathcal{Z}$
and $F_{1},F_{2}:\mathcal{X}\rightarrow\mathcal{Y}$, the Godement
product $b\cdot a$ of $b$ by $a$ is the natural transformation
$b\cdot a:G_{1}\circ F_{1}\rightarrow G_{2}\circ F_{2}$ defined as
follows 
\[
(b\cdot a)_{X}:=b_{F_{2}X}\circ G_{1}(a_{X})=G_{2}(a_{X})\circ b_{F_{1}X}\quad\forall X\in\mathcal{X}.
\]
We will use the notation $b\cdot F_{1}:=b\cdot1_{F_{1}}$ and $G_{1}\cdot a:=1_{G_{1}}\cdot a$.

Let $\C$ be an additive category and $X=\coprod_{i\in I}X_{i}$ be
a coproduct in $\C,$ via the family $\{\mu_{i}^{X}:X_{i}\to X\}_{i\in I}$
of \textbf{canonical inclusions}. By using the universal property of
coproducts, we get the family $\{\pi_{i}^{X}:X\to X_{i}\}_{i\in I}$
of \textbf{natural projections} satisfying that $\pi_{i}^{X}\circ\mu_{i}^{X}=1_{X_{i}}$ for each $i,$
and $\pi_{i}^{X}\circ\mu_{j}^{X}=0$ for $j\neq i.$
In case $X_{i}=B_{i}$ for all $i\in I$, we set $B^{(I)}:=\coprod_{i\in I}X_{i}$.
Moreover, from the universal property of coproducts, we can define
the \textbf{codiagonal} morphism $\nabla_{B}:B^{(I)}\rightarrow B$
as the one that $\nabla_{B}\circ\mu_{i}^{X}=1_{B},$ for all $i\in I$.

Assume now that the additive category $\C$ has coproducts. Let $X=\coprod_{i\in I}X_{i}$
be a coproduct in $\C,$ via the family $\{\mu_{i}^{X}:X_{i}\to X\}_{i\in I}.$
In case $|I|\geq 2,$  we will use the following manipulation,
for each $i_{0}\in I$. Consider the coproduct $Y_{2}=\coprod_{j\in I-\{i_{0}\}}X_{j}$
in $\C$ with natural inclusions $\{\mu'_{j}:X_{i}\rightarrow Y_{2}\}_{j\in I-\{i_{0}\}}$.
Then, we have the coproduct $X=Y_{1}\coprod Y_{2}$ in $\C,$ where
$Y_{1}:=X_{i_{0}}$ and the natural inclusions $\mu_{Y_{1}}:=\mu_{i_{0}}^{X}:Y_{1}\to X$
and $\mu_{Y_{2}}:Y_{2}\to X$ is the morphism satisfying that $\mu_{Y_{2}}\circ\mu'_{j}=\mu_{j}^{X}$
for all $j\in I-\{i_{0}\}$.

We introduce now the matrices which are related with additive bifunctors.
Let $F:\mathcal{C}^{op}\times\mathcal{C}\rightarrow\Ab$ be an additive
bifunctor. For $x\in F(A,B)$, $f\in\Hom_{\mathcal{C}}(A',A)$ and
$g\in\Hom_{\mathcal{C}}(B,B')$, we will use the notation 
\begin{center}
$x\cdot f:=F(f^{op},B)(x)$ and $g\cdot x:=F(A,g)(x).$ 
\par\end{center}

Now let $A=\coprod_{i=1}^{n}A_{i}$ and $B=\coprod_{i=1}^{m}B_{i}$
in $\C,$ and let $\Mat_{m\times n}^{F}(A,B)$ be the set whose elements
are matrices $\alpha$ of size $m\times n$ with $i,j$-entry $[\alpha]_{i,j}$
in $F(A_{j},B_{i}).$ Notice that $\Mat_{m\times n}^{F}(A,B)$ is
an abelian group with the usual sum of matrices, and each $x\in F(A,B)$
can be identified with a matrix in $\Mat_{m\times n}^{F}(A,B).$ More
precisely, we have the following result that can be found in \cite[Prop.1.5.17]{BN}.

\begin{lem}\label{FtoM} Let $A=\coprod_{i=1}^{n}A_{i}$ and $B=\coprod_{i=1}^{m}B_{i}$
in $\C.$ Then, the map $\Phi_{B,A}^{F}:F(A,B)\to\Mat_{m\times n}^{F}(A,B),$
defined by  
$
[\Phi_{B,A}^{F}(x)]_{i,j}:=\pi_{i}^{B}\cdot x\cdot\mu_{j}^{A},
$
is an isomorphism of abelian groups whose inverse is 
$
(\Phi_{B,A}^{F})^{-1}(\alpha)=\sum_{i,j}\mu_{i}^{B}\cdot[\alpha]_{i,j}\cdot\pi_{j}^{A}.
$
\end{lem}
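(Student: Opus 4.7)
The plan is to exploit the biproduct structure of finite coproducts in additive categories, combined with the bifunctoriality and additivity of $F$. Since $\C$ is additive, each finite coproduct $A=\coprod_{i=1}^{n}A_{i}$ is in fact a biproduct, so we have the relations $\pi_{i}^{A}\circ\mu_{j}^{A}=\delta_{ij}\,1_{A_{i}}$ and $\sum_{i=1}^{n}\mu_{i}^{A}\circ\pi_{i}^{A}=1_{A}$, and analogously for $B$. Bifunctoriality of $F$ provides the associativity $(g\cdot x)\cdot f=g\cdot(x\cdot f)$ and reduces us to knowing that each action is additive in its argument, which is precisely additivity of $F$ in each slot. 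In particular, $\Phi_{B,A}^{F}$ is a group homomorphism because each entry $x\mapsto \pi_i^B\cdot x\cdot\mu_j^A$ is additive in $x$.

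Define the candidate inverse $\Psi:\Mat_{m\times n}^F(A,B)\to F(A,B)$ by $\Psi(\alpha):=\sum_{i,j}\mu_i^B\cdot[\alpha]_{i,j}\cdot\pi_j^A$, which is a homomorphism for the same reason. For $x\in F(A,B)$, I would compute
\begin{align*}
\Psi(\Phi_{B,A}^F(x)) &=\sum_{i=1}^m\sum_{j=1}^n\mu_i^B\cdot(\pi_i^B\cdot x\cdot\mu_j^A)\cdot\pi_j^A\\
&=\Bigl(\sum_{i=1}^m\mu_i^B\circ\pi_i^B\Bigr)\cdot x\cdot\Bigl(\sum_{j=1}^n\mu_j^A\circ\pi_j^A\Bigr)=1_B\cdot x\cdot 1_A=x,
\end{align*}
where pulling finite sums out of the action uses the additivity of $F$ together with the associativity $(g\cdot x)\cdot f=g\cdot(x\cdot f)$. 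Conversely, for a matrix $\alpha$ and indices $k,\ell$,
\begin{align*}
[\Phi_{B,A}^F(\Psi(\alpha))]_{k,\ell}&=\pi_k^B\cdot\Bigl(\sum_{i,j}\mu_i^B\cdot[\alpha]_{i,j}\cdot\pi_j^A\Bigr)\cdot\mu_\ell^A\\
&=\sum_{i,j}(\pi_k^B\circ\mu_i^B)\cdot[\alpha]_{i,j}\cdot(\pi_j^A\circ\mu_\ell^A)=[\alpha]_{k,\ell},
\end{align*}
after applying the orthogonality relations to collapse the double sum to the $(k,\ell)$-term.

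The calculations above are forced once the formalism is in place. The main (and very mild) obstacle is the notational bookkeeping: one must verify once that $x\mapsto g\cdot x\cdot f$ is unambiguous and additive in each of $f$, $g$, and $x$ separately, so that finite sums may freely pass across the action symbol. Granted this, the statement is nothing more than the standard matrix description of morphisms in an additive category, transported through the bifunctor $F$.
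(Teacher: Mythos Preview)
Your proof is correct and is exactly the standard argument one expects: use the biproduct identities $\sum_i\mu_i\pi_i=1$ and $\pi_k\mu_i=\delta_{ki}$ together with additivity and bifunctoriality of $F$ to check that $\Psi$ is a two-sided inverse of $\Phi_{B,A}^F$. The paper does not give a proof of this lemma at all; it simply cites \cite[Prop.1.5.17]{BN}, and the argument recorded there is the same computation you wrote down.
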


If $F=\Hom_{\C}(-,?),$ the isomorphism $\varphi_{B,A}:\Hom_{\C}(A,B)\to\Mat_{m\times n}(A,B),$
where $\varphi_{B,A}:=\Phi_{B,A}^{\Hom_{\C}}$ and $\Mat_{m\times n}(A,B):=\Mat_{m\times n}^{\Hom_{\C}}(A,B),$
is well known. Moreover, the actions $x\cdot f$ and $g\cdot x$ described
above can be computed as the usual product of matrices. More precisely,
we have the following result that can be found in \cite[Prop.1.5.20]{BN},
where the product of matrices is given in the usual way.

\begin{prop}\label{proFtoM}  Let $A=\coprod_{i=1}^{n}A_{i}$ and $B=\coprod_{i=1}^{m}B_{i}$
in $\C.$ Then, for $x\in F(A,B),$ $f\in\Hom_{\C}(A',A)$ and $g\in\Hom_{\C}(B,B'),$
we have that 
\begin{center}
$\Phi_{B,A'}^{F}(x\cdot f)=\Phi_{B,A}^{F}(x)\varphi_{A,A'}(f)$ and
$\Phi_{B',A}^{F}(g\cdot x)=\varphi_{B',B}(g)\Phi_{B,A}^{F}(x).$ 
\par\end{center}
\end{prop}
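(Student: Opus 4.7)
The plan is to verify both identities by computing matrix entries directly, exploiting bifunctoriality of $F$ together with the fact that a finite coproduct in an additive category is automatically a biproduct. The two identities are formally dual: the first uses contravariance of $F$ in its first slot, the second covariance in its second. I will spell out the first in detail and indicate how the second follows by the analogous argument with the roles of the $(\mu^A_j,\pi^A_j)$ and $(\mu^B_i,\pi^B_i)$ interchanged.

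The essential input is the biproduct identity $1_A=\sum_{j=1}^n \mu^A_j\circ \pi^A_j$ in $\C$. Precomposing with $f\in\Hom_\C(A',A)$ and then postcomposing with $\mu^{A'}_k$ yields
\[
f\circ \mu^{A'}_k\;=\;\sum_{j=1}^n \mu^A_j\circ (\pi^A_j\circ f\circ \mu^{A'}_k)\;=\;\sum_{j=1}^n \mu^A_j\circ [\varphi_{A,A'}(f)]_{j,k},
\]
the last equality being the formula from Lemma \ref{FtoM} specialized to the bifunctor $\Hom_\C(-,?)$.

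For the first identity I will use the functoriality relation $(x\cdot u)\cdot v = x\cdot(u\circ v)$, which follows from contravariance of $F$ in its first slot, together with additivity of $F$, to compute
\[
[\Phi^F_{B,A'}(x\cdot f)]_{i,k}\;=\;\pi^B_i\cdot(x\cdot f)\cdot \mu^{A'}_k\;=\;\pi^B_i\cdot x\cdot(f\circ \mu^{A'}_k).
\]
Substituting the displayed decomposition of $f\circ\mu^{A'}_k$ and applying additivity and the same functoriality relation once more yields
\[
\sum_{j=1}^n (\pi^B_i\cdot x\cdot \mu^A_j)\cdot [\varphi_{A,A'}(f)]_{j,k}\;=\;\sum_{j=1}^n [\Phi^F_{B,A}(x)]_{i,j}\cdot [\varphi_{A,A'}(f)]_{j,k},
\]
which is precisely the $(i,k)$-entry of $\Phi^F_{B,A}(x)\,\varphi_{A,A'}(f)$.

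The second identity is established dually: from $1_B=\sum_i \mu^B_i\circ \pi^B_i$ one obtains $\pi^{B'}_l\circ g=\sum_i [\varphi_{B',B}(g)]_{l,i}\circ \pi^B_i$, and invoking the covariant-slot relation $(u\circ v)\cdot x = u\cdot(v\cdot x)$ leads by the same manipulation to $[\Phi^F_{B',A}(g\cdot x)]_{l,j}=\sum_i [\varphi_{B',B}(g)]_{l,i}\cdot [\Phi^F_{B,A}(x)]_{i,j}$. No real obstacle arises; the only point requiring care is to track the direction of composition in the contravariant slot so that the resulting matrix product is written in the prescribed order $\Phi\,\varphi$ rather than $\varphi\,\Phi$.
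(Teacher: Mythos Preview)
Your proof is correct: the entrywise computation via the biproduct identity $1_A=\sum_j\mu^A_j\pi^A_j$, together with additivity and bifunctoriality of $F$, is the standard argument and carries through without issue. The paper itself does not prove this proposition but merely cites \cite[Prop.~1.5.20]{BN}, so there is no in-paper argument to compare against; your write-up supplies exactly the kind of direct verification one would expect to find at that reference.
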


As a consequence of the above, we can identify each $x\in F(A,B),$
$f\in\Hom_{\C}(A',A)$ and $g\in\Hom_{\C}(B,B')$ with its corresponding
matrix and to use those matrices to compute $x\cdot f$ and $g\cdot x.$

\subsection{Recollements}

\begin{defn}\label{def:recad} Let $\mathcal{A}$, $\mathcal{B}$ and $\mathcal{C}$
be additive categories. A \textbf{recollement} of $\mathcal{B}$ by
$\mathcal{A}$ and $\mathcal{C}$ is a diagram of additive functors
\\
\noindent\begin{minipage}[t]{1\columnwidth}%
\[
\begin{tikzpicture}[-,>=to,shorten >=1pt,auto,node distance=2cm,main node/.style=,x=2cm,y=-2cm]

\node (1) at (1,0) {$\mathcal{A}$};
\node (2) at (2,0) {$\mathcal{B}$};
\node (3) at (3,0) {$\mathcal{C}$};

\draw[-> , thin]  (1)  to  node  {$i_*$} (2);
\draw[-> , thin]  (2)  to  node  {$j^*$} (3);

\draw[-> , thin,in=45,out=135, above]  (3)  to  node  {$j_!$} (2);
\draw[-> , thin,in=45,out=135, above]  (2)  to  node  {$i^*$} (1);

\draw[-> , thin,in=-45,out=-135, below]  (3)  to  node  {$j_*$} (2);
\draw[-> , thin,in=-45,out=-135, below]  (2)  to  node  {$i^!$} (1);

\end{tikzpicture}
\]%
\end{minipage}\\
satisfying the following conditions.
\begin{enumerate}
\item [(AR1)]$(i^{*},i_{*},i^{!})$ and $(j_{!},j^{*},j_{*})$ are adjoint
triples;
\item [(AR2)]the functors $i_{*}$, $j_{!}$ and $j_{*}$ are fully faithful;
\item [(AR3)]$\im(i_{*})=\Ker(j^{*})$.
\end{enumerate}
\end{defn}

\begin{rem}\label{rem:rec} Observe the following facts for a recollement of
additive categories as in definition above.
\begin{enumerate}
\item It follows from (AR1) and (AR3) that $i^{*}\circ j_{!}=i^{!}\circ j_{!}=0$. 
\item Recall that an adjoint pair $(S:\mathcal{Y}\rightarrow\mathcal{X},T:\mathcal{X}\rightarrow\mathcal{Y})$
induces natural transformations $\varphi:1_{\mathcal{Y}}\rightarrow T\circ S$
and $\psi:S\circ T\rightarrow1_{\mathcal{X}}$ such that 
\[
\psi_{SY}\circ S(\varphi_{Y})=1_{SY}\text{ and }T(\psi_{X})\circ\varphi_{TX}=1_{TX}
\]
 for all $Y\in\mathcal{Y}$ and for all $X\in\mathcal{X}$ (see \cite[Thm.3.1.5]{B}).
The natural transformation $\varphi$ is called the \textbf{unit}
of $(S,T)$, and $\psi$ is called the \textbf{co-unit} of $(S,T)$.
Throughout the paper, we will denote as follows the respective units
and co-units induced by the adjoint pairs in the recollement: 
\begin{alignat*}{1}
_{1}\varphi:1_{\mathcal{B}}\rightarrow j_{*}\circ j^{*}\text{, } & \quad{}{}_{1}\psi:j^{*}\circ j_{*}\rightarrow1_{\mathcal{C}}\text{, }\\
{}{}_{2}\varphi:1_{\mathcal{B}}\rightarrow i_{*}\circ i^{*}\text{, } & \quad{}{}_{2}\psi:i^{*}\circ i_{*}\rightarrow1_{\mathcal{A}},\\
_{3}\varphi:1_{\mathcal{A}}\rightarrow i^{!}\circ i_{*}\text{, } & \quad{}{}_{3}\psi:i_{*}\circ i^{!}\rightarrow1_{\mathcal{B}}\text{, }\\
{}{}_{4}\varphi:1_{\mathcal{C}}\rightarrow j^{*}\circ j_{!}\text{, } & \quad{}{}_{4}\psi:j_{!}\circ j^{*}\rightarrow1_{\mathcal{B}}.
\end{alignat*}
Here, note that $_{1}\psi$, $_{2}\psi$, $\varphi_{3}$ and $\varphi_{4}$
are isomorphisms since $i_{*}$, $j_{!}$, and $j_{*}$ fully faithful
(see \cite[Prop.3.4.1]{B}). Moreover, this implies that $_{1}\varphi_{j_{*}C}$,
$j^{*}({}_{1}\varphi_{B})$, $_{2}\varphi_{i_{*}A}$, $i^{*}({}_{2}\varphi_{B})$,
$_{3}\psi_{i_{*}A}$, $i^{!}({}_{3}\psi_{B})$, $_{4}\psi_{j_{!}C}$
and $j^{*}({}_{4}\psi_{B})$ are isomorphisms for all $A\in\mathcal{A}$,
$B\in\mathcal{B}$ and $C\in\mathcal{C}$.

\item If $\mathcal{B}$ has coproducts, then $\mathcal{A}$ has coproducts.
Indeed, for a set of objects $\{A_{i}\}_{i\in I}$ in $\mathcal{A}$,
consider the coproduct $\coprod_{i\in I}^{\mathcal{B}}i_{*}A_{i}$ in $\mathcal{B}.$
Hence, since $(i^{*},i_{*})$ is an adjoint pair and $i_{*}$ is fully
faithful, we have that $i^{*}\left(\coprod_{i\in I}^{\mathcal{B}}i_{*}A_{i}\right)$
is the coproduct of $\{A_{i}\}_{i\in I}$ in $\mathcal{A}$. That
is, 
\[
i^{*}\left(\coprod_{i\in I}^{\mathcal{B}}i_{*}A_{i}\right)=\coprod_{i\in I}^{\mathcal{A}}A_{i}.
\]
\item Similarly, if $\mathcal{B}$ has coproducts, then $\mathcal{C}$ has
coproducts. Moreover, for any set of objects $\{C_{i}\}_{i\in I}$
in $\mathcal{C}$, we have that 
\[
j^{*}\left(\coprod_{i\in I}^{\mathcal{B}}j_{*}C_{i}\right)=\coprod_{i\in I}^{\mathcal{C}}C_{i}.
\]
\item If $\mathcal{A}$ has coproducts, then $i_{*}$ commutes with coproducts.
To see this, consider a set of objects $\{A_{i}\}_{i\in I}$ in $\mathcal{A}$.
Then, using the adjoint pair $(i_{*},i^{!})$, one can check that
$i_{*}\left(\coprod_{i\in I}^{\mathcal{A}}A_{i}\right)$ is the coproduct
of $\{i_{*}A_{i}\}_{i\in I}$ in $\mathcal{B}$. Hence, 
\[
i_{*}\left(\coprod_{i\in I}^{\mathcal{A}}A_{i}\right)=\coprod_{i\in I}^{\mathcal{B}}i_{*}A_{i}.
\]
\item Similarly, if $\mathcal{C}$ has coproducts, then $j_{!}$ commutes
with coproducts.
\end{enumerate}
\end{rem}

\subsection{Extriangulated categories and functors}

An extriangulated category is a triple $(\mathcal{C},\mathbb{E},\s)$
consisting of an additive category $\mathcal{C},$ an additive bifunctor
$F:\mathcal{C}^{op}\times\mathcal{C}\rightarrow\Ab$ and a realization
$\s$ which sends each $\eta\in\mathbb{E}(A,B)$ to an equivalence
class $[\suc[B][E][A][a][b]]$ of a sequence of morphisms $\suc[B][E][A][a][b]$
in $\C$ satisfying a series of axioms (see \cite[Def.2.12]{NP}). For
each $A,B\in\C,$ we will refer to the elements of the abelian group
$\mathbb{E}(A,B)$ as \textbf{$\mathbb{E}$-extensions}. An \textbf{$\s$-conflation}
in $\C$ is a sequence $\suc[B][E][A][f][g]$ of morphisms in $\C$
such that $\s(\eta)=[\suc[B][E][A][f][g]]$ for some $\eta\in\mathbb{E}(A,B),$
and it is also denoted by $\eta:\;\suc[B][E][A][f][g]$ or $B\xrightarrow{a}E\xrightarrow{b}A\xrightarrow{\eta}.$
In such case, we say that $f$ is an \textbf{$\s$-inflation} and
$g$ is an \textbf{$\s$-deflation}.

\begin{defn}\cite[Cond.5.8]{NP} Let $(\mathcal{C},\mathbb{E},\s)$ be an extriangulated
category. We say that $\mathcal{C}$ satisfies the \textbf{WIC condition}
if, for composable morphisms $g$ and $f$, we have that $f$ is an
$\s$-inflation (resp. $g$ is a $\s$-deflation) if $g\circ f$ is
an $\s$-inflation (resp. $\s$-deflation). 
\end{defn}

\begin{prop}\label{prop:WIC}\cite[Prop.2.5]{BHST} Let $(\mathcal{C},\mathbb{E},\s)$
be an extriangulated category. Then, the following conditions are
equivalent. 
\begin{enumerate}
\item Every split-epi has a kernel in $\mathcal{C}$. 
\item Every split-mono has a cokernel in $\mathcal{C}$. 
\item Every split-epi is an $\s$-deflation.
\item Every split-mono is an $\s$-inflation. 
\item $(\mathcal{C},\mathbb{E},\s)$ satisfies the WIC condition. 
\end{enumerate}
\end{prop}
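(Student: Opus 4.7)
The plan is to exploit the duality of the extriangulated axioms, which interchanges $\s$-inflations with $\s$-deflations, kernels with cokernels, and split-monos with split-epis upon passing to $\mathcal{C}^{op}$. Under this duality $(a)\leftrightarrow(b)$ and $(c)\leftrightarrow(d)$, while the WIC condition $(e)$ is self-dual. Hence it suffices to establish $(a)\Leftrightarrow(c)\Leftrightarrow(e)$, and the whole picture follows.

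For $(c)\Rightarrow(a)$, let $g:E\to A$ be a split-epi with section $s:A\to E$. By hypothesis $g$ fits into an $\s$-conflation $B\xrightarrow{f}E\xrightarrow{g}A$, and $f$ is a weak kernel of $g$ by a standard extriangulated axiom. Using the section $s$, the morphism $(f,s):B\oplus A\to E$ is an isomorphism, so $f$ is in particular a monomorphism, and combined with the weak kernel property it becomes a genuine kernel of $g$. For $(a)\Rightarrow(c)$, given a split-epi $g:E\to A$ with section $s$ and kernel $f:B\to E$, the universal properties yield an isomorphism $E\cong B\oplus A$ through which $g$ becomes the canonical projection. Since the split sequence $B\to B\oplus A\to A$ is always an $\s$-conflation in $\C$ (it realizes the zero element of $\E(A,B)$), we conclude that $g$ is an $\s$-deflation.

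For $(e)\Rightarrow(c)$, observe that the identity $1_Y:Y\to Y$ is an $\s$-deflation, as it fits in the split $\s$-conflation $0\to Y\xrightarrow{1_Y}Y$ realizing the zero extension. Thus, if $g$ is a split-epi with section $s$, then $g\circ s=1_Y$ is an $\s$-deflation and WIC forces $g$ itself to be an $\s$-deflation. The symmetric argument gives $(e)\Rightarrow(d)$.

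The remaining and decisive direction is $(c)+(d)\Rightarrow(e)$. Assume both and suppose $g\circ f$ is an $\s$-deflation; we must prove that $g$ is an $\s$-deflation, the dual case for inflations being handled by the mirror argument. The idea is to realize $g$ as an $\s$-deflation by combining the given conflation for $g\circ f$ with a split conflation. Specifically, the split-mono $\bigl(\begin{smallmatrix}1_A\\ f\end{smallmatrix}\bigr):A\to A\oplus B$ becomes an $\s$-inflation by $(d)$, and its conflation can be pieced together with the one realizing $g\circ f$ via the $(\mathrm{ET4})$ axiom of \cite{NP}; the resulting $3\times 3$ commutative diagram exhibits $g$ as the deflation of an induced $\s$-conflation. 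The main obstacle is precisely this step: the correct choice of matrix morphisms and the careful invocation of $(\mathrm{ET4})$ to extract the required conflation account for essentially all of the technical work, and is the reason the equivalence is not immediate despite the transparency of the other implications.
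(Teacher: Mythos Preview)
The paper does not prove this proposition; it simply quotes it from \cite[Prop.~2.5]{BHST}. So there is no ``paper's own proof'' to compare against, and your write-up would in effect supply what the paper omits.

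On the content of your sketch: the implications $(a)\Leftrightarrow(c)$ and $(e)\Rightarrow(c)$ are handled correctly. One small logical wrinkle: your duality reduction says it suffices to prove $(a)\Leftrightarrow(c)\Leftrightarrow(e)$, but in the final paragraph you invoke $(d)$ as well. Passing to $\mathcal{C}^{op}$ only tells you that $(c)$ for $\mathcal{C}$ is the same as $(d)$ for $\mathcal{C}^{op}$, not $(d)$ for $\mathcal{C}$. This is easy to repair---$(a)\Leftrightarrow(b)$ is an elementary fact about additive categories (a split epimorphism has a kernel iff the complementary idempotent splits iff the section has a cokernel), so $(c)\Rightarrow(a)\Rightarrow(b)\Rightarrow(d)$ and you may freely use $(d)$---but it should be said.

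The substantive gap is the direction $(c)\Rightarrow(e)$. You correctly identify this as the crux and point toward an (ET4) argument with $\bigl(\begin{smallmatrix}1\\ f\end{smallmatrix}\bigr)$, but you do not carry it out, and the diagram you allude to does not obviously produce a conflation with deflation $g$. Applying (ET4) to $K\xrightarrow{k}A\xrightarrow{gf}C$ and $A\xrightarrow{\bigl(\begin{smallmatrix}1\\ f\end{smallmatrix}\bigr)}A\oplus B\to B$ yields a conflation $C\to F\to B$, but one still has to identify $F$ and extract $g$ as a deflation; this requires further work (e.g.\ showing the relevant extension in $\mathbb{E}(B,C)$ is realized with deflation $g$, or an additional matrix manipulation). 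As written, the proof stops exactly where the difficulty begins. If you want a self-contained argument, you should either complete this (ET4) computation explicitly or follow the route in \cite{BHST}, which the paper is content to cite.
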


\begin{lem}\label{lemita} For an extriangulated category $(\mathcal{C},\E,\s),$
the following statements hold true. 
\begin{enumerate}
\item $\eta:\:$ $B\amalg D\overset{\left[\begin{smallmatrix}f & 0\\
0 & 1
\end{smallmatrix}\right]}{\rightarrow}E\amalg D\overset{\left[\begin{smallmatrix}g & 0\end{smallmatrix}\right]}{\rightarrow}A$ is an $\s$-conflation if, and only if, $\suc[B][E][A][f][g]$ is
the realization of an $\E$-extension $\eta'$ and $\left[\begin{smallmatrix}1\\
0
\end{smallmatrix}\right]\cdot\eta'=\left[\begin{smallmatrix}\eta'\\
0
\end{smallmatrix}\right]=\eta$. 
\item $\eta:\:B\overset{\left[\begin{smallmatrix}f\\
0
\end{smallmatrix}\right]}{\rightarrow}E\amalg D\overset{\left[\begin{smallmatrix}g & 0\\
0 & 1
\end{smallmatrix}\right]}{\rightarrow}A\amalg D$ is an $\s$-conflation if, and only if, $\suc[B][E][A][f][g]$ is
the realization of an $\E$-extension $\eta'$ and $\eta=\left[\begin{smallmatrix}\eta' & 0\end{smallmatrix}\right]=\eta'\cdot\left[\begin{smallmatrix}1 & 0\end{smallmatrix}\right]$. 
\item $\eta:\:$ $D\amalg B\overset{\left[\begin{smallmatrix}1 & 0\\
0 & f
\end{smallmatrix}\right]}{\rightarrow}D\amalg E\overset{\left[\begin{smallmatrix}0 & g\end{smallmatrix}\right]}{\rightarrow}A$ is an $\s$-conflation if, and only if, $\suc[B][E][A][f][g]$ is
the realization of an $\E$-extension $\eta'$ and $\left[\begin{smallmatrix}0\\
1
\end{smallmatrix}\right]\cdot\eta'=\left[\begin{smallmatrix}0\\
\eta'
\end{smallmatrix}\right]=\eta$. 
\item $\eta:\:B\overset{\left[\begin{smallmatrix}0\\
f
\end{smallmatrix}\right]}{\rightarrow}D\amalg E\overset{\left[\begin{smallmatrix}1 & 0\\
0 & g
\end{smallmatrix}\right]}{\rightarrow}D\amalg A$ is an $\s$-conflation if, and only if, $\suc[B][E][A][f][g]$ is
the realization of an $\E$-extension $\eta'$ and $\eta=\left[\begin{smallmatrix}0 & \eta'\end{smallmatrix}\right]=\eta'\cdot\left[\begin{smallmatrix}0 & 1\end{smallmatrix}\right]$. 
\end{enumerate}
\end{lem}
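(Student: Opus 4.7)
I will prove (a) in detail; parts (b)--(d) follow by entirely analogous arguments (or by dualizing in $\C^{op}$).

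The main ingredient is the additivity of realizations: given two $\s$-conflations $X_i\to Y_i\to Z_i$ realizing $\eta_i\in\E(Z_i,X_i)$ for $i=1,2$, the termwise direct sum is an $\s$-conflation $X_1\amalg X_2\to Y_1\amalg Y_2\to Z_1\amalg Z_2$ realizing the block-diagonal extension $\left[\begin{smallmatrix}\eta_1 & 0\\ 0 & \eta_2\end{smallmatrix}\right]$ under Lemma~\ref{FtoM}; this is a standard consequence of the extriangulated axioms. For $(\Leftarrow)$, apply this construction to $\eta'$ (realized by $\suc[B][E][A][f][g]$) and to the split conflation $D\xrightarrow{1_D} D\to 0$ realizing $0\in\E(0,D)$. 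Identifying $A\amalg 0\cong A$ yields exactly the sequence of the statement, and by Proposition~\ref{proFtoM} its extension equals $\left[\begin{smallmatrix}\eta'\\ 0\end{smallmatrix}\right]=\left[\begin{smallmatrix}1\\ 0\end{smallmatrix}\right]\cdot\eta'$.

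For $(\Rightarrow)$, suppose the displayed sequence is an $\s$-conflation realizing some $\eta\in\E(A,B\amalg D)$. By Lemma~\ref{FtoM}, $\eta$ is encoded by the column $\left[\begin{smallmatrix}\pi_B^{B\amalg D}\cdot\eta\\ \pi_D^{B\amalg D}\cdot\eta\end{smallmatrix}\right]$. Apply the standard long exact sequence associated to the $\s$-conflation: post-composition with the inflation annihilates $\eta$, i.e.\ $\left[\begin{smallmatrix}f & 0\\ 0 & 1\end{smallmatrix}\right]\cdot\eta=0$. Using the factorisation $\pi_D^{B\amalg D}=[0,1]\circ\left[\begin{smallmatrix}f & 0\\ 0 & 1\end{smallmatrix}\right]$, functoriality of $\E$ forces $\pi_D^{B\amalg D}\cdot\eta=0$. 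Setting $\eta':=\pi_B^{B\amalg D}\cdot\eta$ gives $\eta=\left[\begin{smallmatrix}\eta'\\ 0\end{smallmatrix}\right]=\left[\begin{smallmatrix}1\\ 0\end{smallmatrix}\right]\cdot\eta'$.

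It remains to prove that $\suc[B][E][A][f][g]$ realizes $\eta'$. Choose any realization $\suc[B][E_0][A][f_0][g_0]$ of $\eta'$ and apply $(\Leftarrow)$ to it; this produces a second realization of $\eta$ of the form $B\amalg D\xrightarrow{\left[\begin{smallmatrix}f_0 & 0\\ 0 & 1\end{smallmatrix}\right]}E_0\amalg D\xrightarrow{[g_0,0]}A$. By uniqueness of realizations up to equivalence of sequences (built into the definition of $\s$), there is an isomorphism $\phi:E\amalg D\to E_0\amalg D$ intertwining the two sequences. Writing $\phi=\left[\begin{smallmatrix}\phi_{11}&\phi_{12}\\ \phi_{21}&\phi_{22}\end{smallmatrix}\right]$, compatibility with the inflations and deflations forces $\phi_{12}=0$, $\phi_{22}=1_D$, $\phi_{11}f=f_0$ and $g_0\phi_{11}=g$; invertibility of $\phi$ then forces $\phi_{11}$ to be an isomorphism. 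Therefore $\phi_{11}$ is the required equivalence $\suc[B][E][A][f][g]\simeq\suc[B][E_0][A][f_0][g_0]$, so the former realizes $\eta'$ as well. The one genuinely non-trivial step is this last one, where one must descend an equivalence of the direct-sum-with-split-conflation sequences to an equivalence of the original conflations; once the matrix form of $\phi$ is isolated, the rest is bookkeeping.
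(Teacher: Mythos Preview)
Your argument is correct. The $(\Leftarrow)$ direction coincides with the paper's proof (direct sum with a split conflation, then identify $A\amalg 0\cong A$ via \cite[Prop.~3.7]{NP}). For $(\Rightarrow)$ the paper takes a different, shorter route: it invokes \cite[Prop.~3.17]{NP} (the ET4-type ``octahedron'' statement) applied to the given conflation together with the split conflations $B\to B\amalg D\to D$ and $E\to E\amalg D\to D$; this produces the desired conflation $\suc[B][E][A][x][y]$ directly, and the commutativity of the resulting $3\times 3$ diagram forces $x=f$ and $y=g$ in one stroke. Your approach instead first pins down $\eta$ as $\left[\begin{smallmatrix}\eta'\\0\end{smallmatrix}\right]$ via the long exact sequence, and then descends the equivalence of realizations through a matrix computation (the key point being that a lower-triangular $\phi=\left[\begin{smallmatrix}\phi_{11}&0\\ \phi_{21}&1\end{smallmatrix}\right]$ which is invertible has $\phi_{11}$ invertible). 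This avoids \cite[Prop.~3.17]{NP} entirely and uses only the most elementary facts about realizations and the exact sequence of \cite[Cor.~3.12]{NP}; the trade-off is a slightly longer argument with an explicit matrix analysis in place of a single citation.
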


\begin{proof}
We only prove (a) since the others statements follow by dual or similar
arguments.

$(\Rightarrow)$ Consider the $\s$-conflations $\eta:\:B\amalg D\overset{\left[\begin{smallmatrix}f & 0\\
0 & 1
\end{smallmatrix}\right]}{\rightarrow}E\amalg D\overset{\left[\begin{smallmatrix}g & 0\end{smallmatrix}\right]}{\rightarrow}A,$\\
 $\eta_{0}:\:B\overset{\left[\begin{smallmatrix}1\\
0
\end{smallmatrix}\right]}{\rightarrow}B\amalg D\overset{\left[\begin{smallmatrix}0 & 1\end{smallmatrix}\right]}{\rightarrow}D$ and $\eta_{1}:\:E\overset{\left[\begin{smallmatrix}1\\
0
\end{smallmatrix}\right]}{\rightarrow}E\amalg D\overset{\left[\begin{smallmatrix}0 & 1\end{smallmatrix}\right]}{\rightarrow}D$. Since $\left[\begin{smallmatrix}0 & 1\end{smallmatrix}\right]\circ\left[\begin{smallmatrix}f & 0\\
0 & 1
\end{smallmatrix}\right]=\left[\begin{smallmatrix}0 & 1\end{smallmatrix}\right]$, by \cite[Prop.3.17]{NP}, there is an $\s$-conflation $\eta':\:\suc[B][E][A][x][y]$
such that $\left[\begin{smallmatrix}\eta'\\
0
\end{smallmatrix}\right]=\left[\begin{smallmatrix}1\\
0
\end{smallmatrix}\right]\cdot\eta'=\eta$ and such that the following diagram commutes: 
\[
\xymatrix{B\ar[r]^{\left[\begin{smallmatrix}1\\
0
\end{smallmatrix}\right]}\ar[d]^{x} & B\amalg D\ar[r]^{\left[\begin{smallmatrix}0 & 1\end{smallmatrix}\right]}\ar[d]^{\left[\begin{smallmatrix}f & 0\\
0 & 1
\end{smallmatrix}\right]} & D\ar@{=}[d]\\
E\ar[r]^{\left[\begin{smallmatrix}1\\
0
\end{smallmatrix}\right]}\ar[d]^{y} & E\amalg D\ar[r]^{\left[\begin{smallmatrix}0 & 1\end{smallmatrix}\right]}\ar[d]^{\left[\begin{smallmatrix}g & 0\end{smallmatrix}\right]} & D\\
A\ar@{=}[r] & A
}
\]
Lastly, note that the commutativity of the diagram implies that $x=f$
and $y=g$.

$(\Leftarrow)$ Observe that $\left[\begin{smallmatrix}\eta' & 0\\
0 & 0
\end{smallmatrix}\right]$ is realized by $[B\amalg D\overset{\left[\begin{smallmatrix}f & 0\\
0 & 1
\end{smallmatrix}\right]}{\rightarrow}E\amalg D\overset{\left[\begin{smallmatrix}g & 0\\
0 & 0
\end{smallmatrix}\right]}{\rightarrow}A\amalg0]$. On the other hand, there are isomorphisms $1:B\amalg D\rightarrow B\amalg D$
and $\left[\begin{smallmatrix}1\\
0
\end{smallmatrix}\right]:A\rightarrow A\amalg0$. By \cite[Prop.3.7]{NP}, $\eta=\left[\begin{smallmatrix}\eta' & 0\\
0 & 0
\end{smallmatrix}\right]\cdot\left[\begin{smallmatrix}1\\
0
\end{smallmatrix}\right]$ is realized by $[B\amalg D\overset{\left[\begin{smallmatrix}f & 0\\
0 & 1
\end{smallmatrix}\right]}{\rightarrow}E\amalg D\overset{\left[\begin{smallmatrix}g & 0\end{smallmatrix}\right]}{\rightarrow}A]$. 
\end{proof}

\begin{lem}\label{lemita2} For an extriangulated category $(\mathcal{C},\E,\s)$
and the $\s$-conflations 
\[
\eta:\:B\amalg B'\overset{\left[\begin{smallmatrix}f & 0\\
0 & f'
\end{smallmatrix}\right]}{\rightarrow}E\amalg E'\overset{\left[\begin{smallmatrix}g & 0\\
0 & g'
\end{smallmatrix}\right]}{\rightarrow}A\amalg A'\text{, }\eta_{1}:\:\suc[B][E][A][f][g]\text{ and }\eta_{2}:\:\suc[B'][E'][A'][f'][g']
\]
the following statements hold true. 
\begin{enumerate}
\item There are isomorphisms $v:A\amalg A'\rightarrow A\amalg A'$ and $u:B\amalg B'\rightarrow B\amalg B'$
such that $\eta\cdot v=\left[\begin{smallmatrix}\eta_{1} & 0\\
0 & \eta_{2}
\end{smallmatrix}\right]=u\cdot\eta$. 
\item If $f$ and $f'$ are monic (or $g$ and $g'$ are epic), then $\eta=\left[\begin{smallmatrix}\eta_{1} & 0\\
0 & \eta_{2}
\end{smallmatrix}\right]$. 
\end{enumerate}
\end{lem}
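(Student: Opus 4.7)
The key observation is that the sequence $B\amalg B'\xrightarrow{F}E\amalg E'\xrightarrow{G}A\amalg A'$, with $F=\left[\begin{smallmatrix}f & 0\\0 & f'\end{smallmatrix}\right]$ and $G=\left[\begin{smallmatrix}g & 0\\0 & g'\end{smallmatrix}\right]$, is literally the direct sum of the two $\s$-conflations realizing $\eta_1$ and $\eta_2$. By the additivity of the realization $\s$ (combined with the matrix identification of Lemma~\ref{FtoM}), this same sequence is also a realization of the diagonal extension $\left[\begin{smallmatrix}\eta_1 & 0\\0 & \eta_2\end{smallmatrix}\right]$. Thus both parts of the statement reduce to comparing two $\E$-extensions sharing a common realization.

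For part (a), I would apply axiom (ET3) to the partial morphism of conflations $(1_{B\amalg B'},1_{E\amalg E'})$ viewed as going from the realization of $\left[\begin{smallmatrix}\eta_1 & 0\\0 & \eta_2\end{smallmatrix}\right]$ to that of $\eta$; this produces $v\colon A\amalg A'\to A\amalg A'$ satisfying $v\circ G=G$ and $\eta\cdot v=\left[\begin{smallmatrix}\eta_1 & 0\\0 & \eta_2\end{smallmatrix}\right]$. The dual of (ET3) applied to $(1_{E\amalg E'},1_{A\amalg A'})$ going from $\eta$ to $\left[\begin{smallmatrix}\eta_1 & 0\\0 & \eta_2\end{smallmatrix}\right]$ similarly yields $u\colon B\amalg B'\to B\amalg B'$ with $F\circ u=F$ and $u\cdot\eta=\left[\begin{smallmatrix}\eta_1 & 0\\0 & \eta_2\end{smallmatrix}\right]$. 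By construction, the triples $(1,1,v)\colon\left[\begin{smallmatrix}\eta_1 & 0\\0 & \eta_2\end{smallmatrix}\right]\to\eta$ and $(u,1,1)\colon\eta\to\left[\begin{smallmatrix}\eta_1 & 0\\0 & \eta_2\end{smallmatrix}\right]$ are morphisms of $\s$-conflations. Applying $\Hom_{\mathcal{C}}(X,-)$ (respectively $\Hom_{\mathcal{C}}(-,Y)$) to them produces a commutative ladder of long exact sequences of $\E$-extensions in which the columns adjacent to $\Hom_{\mathcal{C}}(X,v)$ (resp.\ $\Hom_{\mathcal{C}}(u,Y)$) are all identity maps; the classical five-lemma in $\Ab$ then forces $\Hom_{\mathcal{C}}(X,v)$ (resp.\ $\Hom_{\mathcal{C}}(u,Y)$) to be an isomorphism for every $X\in\mathcal{C}$ (resp.\ $Y\in\mathcal{C}$), and the Yoneda lemma yields that $v$ (resp.\ $u$) is an automorphism.

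For part (b), if $f$ and $f'$ are monic then $F$ is monic (direct sum of monos in an additive category), so the identity $F\circ u=F$ from (a) forces $u=1_{B\amalg B'}$; if instead $g$ and $g'$ are epic then $G$ is epic and $v\circ G=G$ forces $v=1_{A\amalg A'}$. Either way, the corresponding relation from (a) collapses to $\eta=\left[\begin{smallmatrix}\eta_1 & 0\\0 & \eta_2\end{smallmatrix}\right]$. The main technical hurdle is precisely the upgrade in part (a) from morphisms to automorphisms: axiom (ET3) (and its dual) only asserts the existence of $u$ and $v$, and the five-lemma argument in the long exact sequences of $\E$-extensions is essential to promote them to isomorphisms.
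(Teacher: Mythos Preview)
Your proof is correct and follows essentially the same approach as the paper: both arguments rest on the observation that $\s(\eta)=\s(\eta_1\oplus\eta_2)$, and then extract automorphisms $u,v$ from this coincidence of realizations. The paper simply cites \cite[Rem.~3.10]{NP} (and its dual) for part (a), whereas you unpack that result by hand via (ET3), (ET3$^{op}$) and the five lemma; note that once you have the morphism of $\s$-conflations $(1,1,v)$ your five-lemma step is exactly the content of \cite[Cor.~3.6]{NP}, which the paper already uses freely, so you could shorten your argument by citing it directly. Part (b) is identical in both proofs.
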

\begin{proof}
(a)  Observe that $\s(\eta)=\s(\eta_{1}\amalg\eta_{2})$. Hence, (a) follows
from \cite[Rem.3.10]{NP} and its dual. 
\

(b) If $f$ and $f'$ are monic, then $\left[\begin{smallmatrix}f & 0\\
0 & f'
\end{smallmatrix}\right]$ is monic, and thus $u=1_{B\amalg B'}$. Similarly, if $g$ and $g'$
are epic, then $v=1_{A\amalg A'}$. 
\end{proof}

Now we recall from \cite{B} the notion of extriangulated functor
and also the composition of such functors. For an additive functor
$F:\C\to\D$ and an equivalence class $[\suc[B][E][A][a][b]]$ of
a sequence of morphisms $\suc[B][E][A][a][b]$ in $\C,$ we set $F([\suc[B][E][A][a][b]]):=[\suc[FB][FE][FA][Fa][Fb]].$

\begin{defn}\label{def:funextr}\cite[Def.2.32]{BS} Let $(\mathcal{C},\mathbb{E},\s)$
and $(\mathcal{D},\mathbb{F},\t)$ be extriangulated categories, and
let $F:\mathcal{C}\rightarrow\mathcal{D}$ be a functor. We say that
$F$ is \textbf{extriangulated} if it is additive and there is a natural
transformation $\Gamma_{F}:\mathbb{E}\rightarrow\mathbb{F}\circ(F^{op}\times F)$
such that 
$\t((\Gamma_{F})_{C,A}(\eta))=F(\s(\eta)),$ for $\eta\in\Ebb(C,A).$ 
\end{defn}

Observe that,  for $a\in\Hom_{\C}(A,A'),$
$c\in\Hom_{\C}(C',C)$ and $\eta\in\Ebb(C,A),$ the naturality of
$\Gamma_{F}$ implies that 
\[
F(a)\cdot(\Gamma_{F})_{C,A}(\eta)\cdot F(c)=(\Gamma_{F})_{C',A'}(a\cdot\eta\cdot c).
\]
Let $(\mathcal{E},\mathbb{G},\mathfrak{o})$ be a third extriangulated
category and consider an extriangulated functor $G:\mathcal{D}\rightarrow\mathcal{E}$.
As in \cite[Def.2.11]{NOS}, define (by using Godement product) the
\textbf{composition of extriangulated functors} $G\circ F,$ via $\Gamma_{G\circ F}:=(\Gamma_{G}\cdot(F^{op}\times F))\circ\Gamma_{F},$
see diagram below.\\
\[
\begin{tikzpicture}[-,>=to,shorten >=1pt,auto,node distance=2.5cm,main node/.style=,x=2cm,y=-2cm]
\coordinate (U) at (0,0.5);
\coordinate (V) at (1,0);
\coordinate (U') at (0,-0.5);
\coordinate (V') at (1,1);
\node (1) at (0,0.5)  {$\mathcal{D}^{op} \times \mathcal{D} $};
\node (2) at (1,0)  {$\mathbf{Ab} $};
\node (2') at (1,1)  {$\mathcal{E}^{op} \times \mathcal{E} $};
\node (3) at (2,0.5)  {$\mathbf{Ab}$};
\node (4) at (0,-.5)  {$\mathbf{Ab}$};
\node (5) at (-1,0)  {$\mathcal{C}^{op} \times \mathcal{C} $};
\node (A1) at (barycentric cs:U=1 ,U'=.5)  {$$};
\node (A2) at (barycentric cs:U=.5 ,U'=1)  {$$};
\node (B1) at (barycentric cs:V'=.5 ,V=1)  {$$};
\node (B2) at (barycentric cs:V'=1 ,V=.5)  {$$};
\draw[->, thin]  (1)  to [below left] node  {$\scriptstyle {G}^{op} \times G $} (2');
\draw[->, thin]  (1)  to [below right] node  {$\scriptstyle \mathbb{F} $} (2);
\draw[-, double]  (2)  to  node  {$$} (3);
\draw[->, thin]  (2')  to [below right] node  {$\scriptstyle \mathbb{G} $} (3);
\draw[->, thin]  (5)  to [below left] node  {$\scriptstyle F^{op}\times F$} (1);
\draw[->, thin]  (5)  to  node  {$\scriptstyle \mathbb{E} $} (4);
\draw[-, double]  (4)  to  node  {$$} (2);
\draw[->, thin]  (A2)  to node  {$\scriptstyle \Gamma_F $} (A1);
\draw[->, thin]  (B1)  to node  {$\scriptstyle \Gamma_G$} (B2);
\end{tikzpicture} \]

\medskip{}

Lastly, for extriangulated functors $H:\mathcal{C}\rightarrow\mathcal{D}$
and $F:\mathcal{C}\rightarrow\mathcal{D}$, define a\textbf{ natural
transformation of extriangulated functors} $\alpha:(F,\Gamma_{F})\rightarrow(H,\Gamma_{H})$
as a natural transformation $\alpha:F\rightarrow H$ such that 
\[
(\mathbb{F}\cdot(\alpha^{op}\times1_{H}))\circ\Gamma_{H}=(\mathbb{F}\cdot(1_{F}^{op}\times\alpha))\circ\Gamma_{F}.
\]
That is, for any $\eta\in\mathbb{E}(A,B),$ we have that $\Gamma_{H}(\eta)\cdot\alpha_{A}=\alpha_{B}\cdot\Gamma_{F}(\eta)$.

\begin{prop}\label{prop:isovsextr}Let $(\mathcal{C},\mathbb{E},\s)$ and $(\mathcal{D},\mathbb{F},\t)$
be extriangulated categories, $F:\mathcal{C}\rightarrow\mathcal{D}$
and $G:\mathcal{C}\rightarrow\mathcal{D}$ functors, and $\varphi:F\rightarrow G$
a natural isomorphism. If $F$ is
extriangulated and $G$ is additive, then $G$ is extriangulated and $\varphi$ is a natural
transformation of extriangulated functors. 
\end{prop}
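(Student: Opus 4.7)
The strategy is to transport the extriangulated structure of $F$ across the natural isomorphism $\varphi$. For each $\eta \in \mathbb{E}(C, A)$, set
\[
(\Gamma_{G})_{C,A}(\eta) := \varphi_{A} \cdot (\Gamma_{F})_{C,A}(\eta) \cdot \varphi_{C}^{-1} \in \mathbb{F}(GC, GA),
\]
using that $\varphi_{A}: FA \to GA$ and $\varphi_{C}^{-1}: GC \to FC$ are isomorphisms, together with the bifunctoriality of $\mathbb{F}$ (contravariant in the first slot, covariant in the second). Naturality of $\Gamma_{G}$ in both variables is a direct consequence of the naturality of $\Gamma_{F}$, combined with the identities $G(a) \cdot \varphi_{A} = \varphi_{A'} \cdot F(a)$ and $\varphi_{C}^{-1} \cdot G(c) = F(c) \cdot \varphi_{C'}^{-1}$ coming from naturality of $\varphi$ and its inverse.

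The core step is verifying the realization condition $\t((\Gamma_{G})_{C,A}(\eta)) = G(\s(\eta))$. Writing $\s(\eta) = [A \xrightarrow{a} E \xrightarrow{b} C]$, the hypothesis on $F$ gives $\t((\Gamma_{F})(\eta)) = [FA \xrightarrow{Fa} FE \xrightarrow{Fb} FC]$. Here I will invoke \cite[Prop.3.7]{NP} (as used in the proof of Lemma \ref{lemita}): multiplying an extension on either side by an isomorphism yields a realization with the same middle term, obtained by composing the outer maps with the isomorphism (or its inverse). Applied successively to $\varphi_{A}$ in the second slot and $\varphi_{C}^{-1}$ in the first, it yields that $(\Gamma_{G})(\eta)$ is realized by
\[
GA \xrightarrow{Fa \circ \varphi_{A}^{-1}} FE \xrightarrow{\varphi_{C} \circ Fb} GC.
\]
The isomorphism $\varphi_{E} : FE \to GE$ now furnishes an equivalence of realizations with $[GA \xrightarrow{Ga} GE \xrightarrow{Gb} GC] = G(\s(\eta))$, since the required commutativity relations $\varphi_{E} \circ Fa = Ga \circ \varphi_{A}$ and $Gb \circ \varphi_{E} = \varphi_{C} \circ Fb$ are precisely naturality of $\varphi$ applied to $a$ and $b$.

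Finally, that $\varphi$ is a natural transformation of extriangulated functors $(F, \Gamma_{F}) \to (G, \Gamma_{G})$ reduces to the identity $\Gamma_{G}(\eta) \cdot \varphi_{A} = \varphi_{B} \cdot \Gamma_{F}(\eta)$ for $\eta \in \mathbb{E}(A, B)$, which is immediate from the definition of $\Gamma_{G}$ upon cancelling $\varphi_{A}^{-1} \circ \varphi_{A}$. The only mildly delicate point is the invocation of \cite[Prop.3.7]{NP} to confirm that the sequence obtained by ``pushing across $\varphi$'' really represents $\t$ of the transported extension; beyond that the argument is formal once the variance conventions for $\mathbb{F}$ are tracked carefully.
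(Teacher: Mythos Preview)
Your proof is correct and follows essentially the same approach as the paper: both define $\Gamma_G(\eta) = \varphi_Y \cdot \Gamma_F(\eta) \cdot \varphi_X^{-1}$ and then verify the realization condition by transporting the $\t$-conflation $FA \to FE \to FC$ across the components of $\varphi$. The only cosmetic difference is that the paper packages the computation as a morphism of $\mathbb{F}$-extensions $(\varphi_X,\varphi_Y):\Gamma_F(\eta)\to\Gamma_G(\eta)$ and invokes \cite[Cor.3.6]{NP} to see the induced middle map is an isomorphism, whereas you compute the realization of the pushforward/pullback by isomorphisms directly via \cite[Prop.3.7]{NP}; both routes land on the same equivalence $[GA\xrightarrow{Ga}GE\xrightarrow{Gb}GC]$ using naturality of $\varphi$.
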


\begin{proof}
Since $F$ is extriangulated, there is a natural transformation $\Gamma_{F}:\mathbb{E}\rightarrow\mathbb{F}\circ(F^{op}\times F)$
such that $\t((\Gamma_{F})_{C,A}(\eta))=F(\s(\eta)),$ for $\eta\in\Ebb(C,A).$
Define $\Gamma_{G}:\mathbb{E}\rightarrow\mathbb{F}\circ(G^{op}\times G)$
as $(\mathbb{F}\cdot((\varphi^{op})^{-1}\times\varphi))\circ\Gamma_{F}$.
That is, for $X,Y\in\mathcal{C}$, $\Gamma_{G}$ is the natural transformation
\[
\mathbb{E}(X,Y)\rightarrow\mathbb{F}(G(X),G(Y))\text{, }\epsilon\mapsto\varphi_{Y}\cdot\Gamma_{F}(\epsilon)\cdot\varphi_{X}^{-1}.
\]
Observe that, for $\epsilon:\:\suc[Y][Z][X][a][b]$, we have that
$\Gamma_{G}(\epsilon)\cdot\varphi_{X}=\varphi_{Y}\cdot\Gamma_{F}(\epsilon)$.
Therefore, we have a morphism of conflations $(\varphi_{X},\varphi_{Y}):\Gamma_{F}(\epsilon)\rightarrow\Gamma_{G}(\epsilon)$.
The realization of this morphism together with the natural transformation
$\varphi$ gives us the following commutative diagram: 
\[
\xymatrix{FY\ar[r]^{Fa}\ar[d]^{\varphi_{Y}} & FZ\ar[r]^{Fb}\ar[d]^{h} & FX\ar[d]^{\varphi_{X}}\\
GY\ar[r]^{a'}\ar@{=}[d] & M'\ar[r]^{b'}\ar[d]^{\varphi_{Z}\circ h^{-1}} & GX\ar@{=}[d]\\
GY\ar[r]^{Ga} & GZ\ar[r]^{Gb} & GX
}
\]
Here, $h$ is an isomorphism since $\varphi_{Y}$ and $\varphi_{X}$
are isomorphisms (see \cite[Cor.3.6]{NP}); and the bottom squares
commute since $Ga=\varphi_{Z}\circ Fa\circ\varphi_{Y}^{-1}$ and $Gb=\varphi_{X}\circ Fb\circ\varphi_{Z}^{-1}$.
In conclusion, $G$ is extriangulated and $\varphi$ is a natural
transformation of extriangulated functors.
\end{proof}

\begin{defn}
Let $(\mathcal{C},\mathbb{E},\s)$ and $(\mathcal{D},\mathbb{F},\t)$
be extriangulated categories, and let $F:\mathcal{C}\rightarrow\mathcal{D}$
be an additive functor. We say that $F$ preserves \textbf{inflations} (deflations)
if $F(f)$ is an inflation (deflation) in $\mathcal{D}$ for any inflation (deflation) $f$
in $\mathcal{C}$. 
\end{defn}

The following lemma implies that a natural isomorphism, between functors
that preserve inflations (resp. deflations), maps inflations into
inflations (resp. deflations into deflations).

\begin{lem}\label{lem:infdef} Let $(\mathcal{C},\mathbb{E},\s)$ be an extriangulated
category and $\epsilon:\:\suc[][][][a][b]$ be an $\s$-conflation. Then, for a morphism $f:X\rightarrow Y,$ the following statements hold true. 
\begin{enumerate}
\item If there are isomorphisms $\alpha:N\rightarrow X$ and $\beta:M\rightarrow Y$
such that $\beta\circ a=f\circ\alpha$, then $\eta:\; X\xrightarrow{f}Y\xrightarrow{b\circ\beta^{-1}}K$
is an $\s$-conflation such that $\eta=\alpha\cdot\epsilon$.
\item If there are isomorphisms $\beta:X\rightarrow M$ and $\gamma:Y\rightarrow K$
such that $\gamma\circ f=b\circ\beta$, then $\eta:\;N\xrightarrow{\beta^{-1}\circ a}X\xrightarrow{f} Y$
is an $\s$-conflation such that $\eta=\epsilon\cdot\gamma$. 
\end{enumerate}
\end{lem}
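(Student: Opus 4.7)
The plan is to prove (a) directly and derive (b) by dualizing in the opposite extriangulated category. For (a), the strategy is to first produce a realization of $\alpha\cdot\epsilon$ via the extriangulated axioms, and then to transport it along a suitable isomorphism so that the middle object becomes $Y$, the left map becomes $f$, and the right map becomes $b\circ\beta^{-1}$.

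More precisely, applying the realization axiom to $\alpha\cdot\epsilon\in\E(K,X)$, there exist a morphism $h:M\rightarrow M'$ and an $\s$-conflation $X\xrightarrow{a''}M'\xrightarrow{b''}K$ fitting into a morphism of $\s$-conflations from $\epsilon$ with verticals $(\alpha,h,1_{K})$, which witnesses that the bottom row realizes $\alpha\cdot\epsilon$. Since $\alpha$ and $1_{K}$ are isomorphisms, \cite[Cor.~3.6]{NP} forces $h$ to be an isomorphism as well. Consider then the isomorphism $\beta\circ h^{-1}:M'\rightarrow Y$ and use it to relabel the middle object; by the equivalence-class definition of $\s$, the sequence $X\xrightarrow{\beta h^{-1}\circ a''}Y\xrightarrow{b''\circ h\beta^{-1}}K$ is again an $\s$-conflation realizing the same extension $\alpha\cdot\epsilon$.

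It remains to identify this sequence with $X\xrightarrow{f}Y\xrightarrow{b\circ\beta^{-1}}K$. Using the relations $a''\circ\alpha=h\circ a$, $b''\circ h=b$, and the hypothesis $\beta\circ a=f\circ\alpha$ (together with invertibility of $\alpha$), a direct substitution gives $\beta h^{-1}\circ a''=\beta\circ a\circ\alpha^{-1}=f$ and $b''\circ h\beta^{-1}=b\circ\beta^{-1}$. This shows simultaneously that $X\xrightarrow{f}Y\xrightarrow{b\circ\beta^{-1}}K$ is an $\s$-conflation and that it realizes $\alpha\cdot\epsilon$, so $\eta=\alpha\cdot\epsilon$ as claimed.

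The delicate point I expect in this argument is making explicit that transporting a conflation along an isomorphism of its middle object does not change the $\E$-extension it realizes; this follows from the equivalence-class definition of $\s$ once one observes that the isomorphism $\beta\circ h^{-1}$ provides a morphism of conflations whose outer verticals are identities. Finally, part (b) is obtained by applying (a) in $(\mathcal{C}^{op},\mathbb{E}^{op},\s^{op})$: the hypothesis $\gamma\circ f=b\circ\beta$ becomes exactly the hypothesis of (a) after reversing arrows, and translating the conclusion back to $\mathcal{C}$ yields the sequence $N\xrightarrow{\beta^{-1}\circ a}X\xrightarrow{f}Y$ together with the identity $\eta=\epsilon\cdot\gamma$.
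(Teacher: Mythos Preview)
Your proof is correct and follows essentially the same approach as the paper: realize the pushed/pulled extension, use \cite[Cor.~3.6]{NP} to see that the induced middle map is an isomorphism, and then transport along that isomorphism to identify the resulting conflation with the desired one. The only cosmetic differences are that the paper proves (b) and dualizes while you prove (a) and dualize, and that the paper routes the construction of the comparison map through the weak pullback property \cite[Lem.~2.5]{E}, a step your direct transport along $\beta\circ h^{-1}$ renders unnecessary once $h$ is known to be invertible.
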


\begin{proof}
We only prove (b) since the proof of (a) follows by duality.

Let $\suc[N][M'][Y][a'][b']$
be an $\E$-extension representing $\epsilon\cdot\gamma$. By definition,
we have a morphism of $\s$-conflations $\epsilon\cdot\gamma\rightarrow\epsilon$
realized by a triple of morphisms $(1_{N},\gamma_{0},\gamma)$. Now,
by \cite[Lem.2.5]{E}, we get that the diagram 
\[\xymatrix{M'\ar[d]_{\gamma_0}\ar[r]^{b'} & K\ar[d]^{\gamma}\\ 
M\ar[r]^b & K}\]
 is a weak pull-back. This implies that there is a
morphism $\beta_{0}:X\rightarrow M'$ such that $\gamma_{0}\circ\beta_{0}=\beta$
and $b'\circ\beta_{0}=f$. Hence, it remains to prove that $\beta_{0}$
is an isomorphism and that $\beta_{0}\circ\beta^{-1}\circ a=a'$ (see
diagram below)
\[
\xymatrix{N\ar[r]^{\beta^{-1}\circ a}\ar@{=}[d] & X\ar[r]^{f}\ar[d]^{\beta_{0}} & Y\ar@{=}[d]\\
N\ar[r]^{a'}\ar@{=}[d] & M'\ar[r]^{b'}\ar[d]^{\gamma_{0}} & Y\ar[d]^{\gamma}\\
N\ar[r]^{a} & M\ar[r]^{b} & K.
}
\]
For this, note that $\gamma_{0}$ is an isomorphism
by \cite[Cor.3.6]{NP}; and thus, $\beta_{0}$ is an isomorphism since
$\beta_{0}=\gamma_{0}^{-1}\circ\beta$. Moreover 
$$a'=\gamma_{0}^{-1}\circ a=\gamma_{0}^{-1}\circ\beta\circ\beta^{-1}\circ a=\beta_{0}\circ\beta^{-1}\circ a$$
proving the result.
\end{proof}

\subsection{Extriangulated categories with negative first extension}

Let $\mathcal{D}=(\mathcal{D},\mathbb{E},\s)$ be an extriangulated
category with negative first extension (e.g. an exact or a triangulated
category, see \cite[Def.2.3]{AET1}). An \textbf{$s$-torsion pair}
in $\mathcal{D}$ is a pair $\u=(\mathcal{X},\mathcal{Y})$ of full
subcategories of $\mathcal{D},$ which are closed under isomorphisms in $\C,$ such that $\Hom_{\mathcal{D}}(\mathcal{X},\mathcal{Y})=0$,
$\mathcal{D}=\mathcal{X}*\mathcal{Y}$ and $\mathbb{E}^{-1}(\mathcal{X},\mathcal{Y})=0$
(see \cite[Def.3.1]{AET1}). Observe that: for an exact category $\mathcal{D}$,
$\u=(\mathcal{X},\mathcal{Y})$ is an $s$-torsion pair if, and only
if, it is a torsion pair in the usual sense; and, for a triangulated
category $\mathcal{D},$ the pair $(\mathcal{U},\mathcal{W})=(\mathcal{U},\Sigma\mathcal{U}^{\bot_{0}})$
is a $t$-structure if, and only if, $(\mathcal{U},\Sigma^{-1}\mathcal{W})$
is an $s$-torsion pair. 

An important property of an $s$-torsion pair $\u=(\mathcal{X},\mathcal{Y})$
is that the inclusion functor $\mathcal{X}\rightarrow\mathcal{D}$
admits a right adjoint $\te_{\u}:\mathcal{D}\rightarrow\mathcal{X}$
and the inclusion functor $\mathcal{Y}\rightarrow\mathcal{D}$ admits
a left adjoint $(1:\te_{\u}):\mathcal{D}\rightarrow\mathcal{Y}$.
Moreover, for any $D\in\mathcal{D}$, there is an $\s$-conflation
$\suc[\te_{\mathbf{u}}D][D][(1:\te_{\mathbf{u}})D]$ which is known as the canonical 
$\s$-conflation attached to the object $D.$

Define $\stors(\mathcal{D})$ as the class of all the $s$-torsion pairs
in $\mathcal{D}$. Given $\u=(\mathcal{X},\mathcal{Y}),\u'=(\mathcal{X}',\mathcal{Y}')\in\stors(\mathcal{D})$,
we say that $\u\leq\u'$ if $\mathcal{X}\subseteq\mathcal{X}'$. In
this case, it is defined the interval $[\u,\u']:=\{\v\in\stors(\mathcal{D})\,|\:\u\leq\v\leq\u'\}$.
The class $\mathcal{H}_{[\u,\u']}:=\mathcal{X}'\cap\mathcal{Y}$ is
an extriangulated category with negative first extension known as
the \textbf{heart of the interval} $[\u,\u']$ (see \cite{AET1}). 

Part of the relevance of the hearts of intervals of $s$-torsion pairs
is that they help to parametrize the $s$-torsion pairs in the interval as follows. 

\begin{thm}\cite[Thm.3.9]{AET1}\label{thm:proceso aet} 
Let $\mathcal{D}$
be an extriangulated category with negative first extension, and let
$\mathbf{u}_{1}=(\mathcal{X}_{1},\mathcal{Y}_{1})$ and $\mathbf{u}_{2}=(\mathcal{X}_{2},\mathcal{Y}_{2})$
in $\stors\,\mathcal{\mathcal{D}}$ be such that $\mathbf{u}_{1}\leq\mathbf{u}_{2}$.
Then, there exist an isomorphism of posets 
\[
\Phi:[\mathbf{u}_{1},\mathbf{u}_{2}]\rightarrow\stors\,\mathcal{H}_{[\mathbf{u}_{1},\mathbf{u}_{2}]},\;(\mathcal{X},\mathcal{Y})\mapsto(\mathcal{X}\cap\mathcal{Y}_{1},\mathcal{X}_{2}\cap\mathcal{Y})
\]
with inverse 
\[
\Psi:\stors\,\mathcal{H}_{[\mathbf{u}_{1},\mathbf{u}_{2}]}\rightarrow[\mathbf{u}_{1},\mathbf{u}_{2}],\;(\mathcal{T},\mathcal{F})\mapsto(\mathcal{X}_{1}\star\mathcal{T},\mathcal{F}\star\mathcal{Y}_{2}).
\]
\end{thm}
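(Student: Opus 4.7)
The plan is to prove that $\Phi$ and $\Psi$ are well-defined, mutually inverse, and order-preserving, using the canonical $\s$-conflations attached to $s$-torsion pairs as the main technical tool. The poset structure on $\stors\,\mathcal{H}_{[\mathbf{u}_{1},\mathbf{u}_{2}]}$ (analogous to that on $\stors\,\mathcal{D}$) makes order-preservation a short verification once the maps are defined, so the real work is in well-definedness and in checking $\Psi\Phi=\mathrm{id}$ and $\Phi\Psi=\mathrm{id}$.

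First I would verify that $\Phi$ is well-defined: given $\mathbf{u}=(\mathcal{X},\mathcal{Y})\in[\mathbf{u}_{1},\mathbf{u}_{2}]$, I must show $(\mathcal{X}\cap\mathcal{Y}_{1},\mathcal{X}_{2}\cap\mathcal{Y})$ is an $s$-torsion pair in $\mathcal{H}_{[\mathbf{u}_{1},\mathbf{u}_{2}]}=\mathcal{X}_{2}\cap\mathcal{Y}_{1}$. The Hom-vanishing and the $\mathbb{E}^{-1}$-vanishing are immediate from those of $\mathbf{u}$ since the two subcategories lie in $\mathcal{X}$ and $\mathcal{Y}$, respectively. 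For the decomposition, take $H\in\mathcal{H}$ and consider the canonical $\s$-conflation attached to $H$ via $\mathbf{u}$, namely $\te_{\mathbf{u}}H\hookrightarrow H\twoheadrightarrow(1:\te_{\mathbf{u}})H$. Since $\mathbf{u}_{1}\leq\mathbf{u}\leq\mathbf{u}_{2}$, we have $\te_{\mathbf{u}}H\in\mathcal{X}$ and $(1:\te_{\mathbf{u}})H\in\mathcal{Y}$; using closure of $\mathcal{X}_{2}$ (respectively $\mathcal{Y}_{1}$) under subobjects and quotients along $\s$-conflations (valid thanks to the containments $\mathcal{X}\subseteq\mathcal{X}_{2}$ and $\mathcal{Y}_{1}\subseteq\mathcal{Y}$ combined with the Hom- and $\mathbb{E}^{-1}$-vanishing), I would show $\te_{\mathbf{u}}H\in\mathcal{X}\cap\mathcal{Y}_{1}$ and $(1:\te_{\mathbf{u}})H\in\mathcal{X}_{2}\cap\mathcal{Y}$, giving the required decomposition in $\mathcal{H}$.

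Next I would establish well-definedness of $\Psi$. Given $(\mathcal{T},\mathcal{F})\in\stors\,\mathcal{H}$, I must show $(\mathcal{X}_{1}\star\mathcal{T},\mathcal{F}\star\mathcal{Y}_{2})$ is an $s$-torsion pair in $\mathcal{D}$ lying in $[\mathbf{u}_{1},\mathbf{u}_{2}]$. Hom-vanishing follows by a standard two-step argument: any morphism $X_{1}\star T\to F\star Y_{2}$ vanishes on $X_{1}$ (because $\Hom(\mathcal{X}_{1},\mathcal{F}\star\mathcal{Y}_{2})=0$, which in turn is checked by splitting $\mathcal{F}\star\mathcal{Y}_{2}$ and using $\Hom(\mathcal{X}_{1},\mathcal{F})=\Hom(\mathcal{X}_{1},\mathcal{Y}_{2})=0$ together with the $\mathbb{E}^{-1}$-vanishing of $\mathbf{u}_{2}$), and then factors through $T\to F\star Y_{2}$, which vanishes again by the same device. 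For the decomposition, given $D\in\mathcal{D}$, first apply the canonical $\s$-conflation for $\mathbf{u}_{2}$ to obtain $\te_{\mathbf{u}_{2}}D\hookrightarrow D\twoheadrightarrow(1:\te_{\mathbf{u}_{2}})D$; then decompose $\te_{\mathbf{u}_{2}}D\in\mathcal{X}_{2}$ via the canonical $\s$-conflation for $\mathbf{u}_{1}$, whose cofiber lies in $\mathcal{X}_{2}\cap\mathcal{Y}_{1}=\mathcal{H}$; finally split this heart cofiber by $(\mathcal{T},\mathcal{F})$. Gluing these three $\s$-conflations together through the $(\mathrm{ET4})$-axiom (i.e. the octahedral-type axiom of extriangulated categories) produces an $\s$-conflation $X\hookrightarrow D\twoheadrightarrow Y$ with $X\in\mathcal{X}_{1}\star\mathcal{T}$ and $Y\in\mathcal{F}\star\mathcal{Y}_{2}$. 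The $\mathbb{E}^{-1}$-vanishing on the pair is obtained by the long exact sequence in negative Ext induced by the $\s$-conflations defining $\star$, combined with the known vanishings for $\mathbf{u}_{1}$, $\mathbf{u}_{2}$ and $(\mathcal{T},\mathcal{F})$. The inclusions $\mathcal{X}_{1}\subseteq\mathcal{X}_{1}\star\mathcal{T}\subseteq\mathcal{X}_{2}$ and the analogue for $\mathcal{Y}$ give $\mathbf{u}_{1}\leq\Psi(\mathcal{T},\mathcal{F})\leq\mathbf{u}_{2}$.

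Finally, I would prove $\Psi\Phi=\mathrm{id}$ and $\Phi\Psi=\mathrm{id}$ by chasing the canonical decompositions. For $\Psi\Phi$, starting from $(\mathcal{X},\mathcal{Y})$ one needs $\mathcal{X}_{1}\star(\mathcal{X}\cap\mathcal{Y}_{1})=\mathcal{X}$ and $(\mathcal{X}_{2}\cap\mathcal{Y})\star\mathcal{Y}_{2}=\mathcal{Y}$, each inclusion coming either from closure of $\mathcal{X},\mathcal{Y}$ under $\s$-extensions, or from applying the canonical $\mathbf{u}_{1}$-decomposition to an arbitrary object of $\mathcal{X}$ (dually for $\mathcal{Y}$). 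For $\Phi\Psi$, starting from $(\mathcal{T},\mathcal{F})$, the containment $(\mathcal{X}_{1}\star\mathcal{T})\cap\mathcal{Y}_{1}=\mathcal{T}$ follows because an object of $\mathcal{Y}_{1}$ has no nonzero map from $\mathcal{X}_{1}$, so the defining $\s$-conflation of the $\star$ splits appropriately. Order-preservation of both maps is immediate from the definitions since $\subseteq$ on the first coordinate is respected by intersection with $\mathcal{Y}_{1}$ and by $\star$ with $\mathcal{X}_{1}$. The main obstacle I anticipate is the decomposition step for $\Psi$, specifically the gluing of three $\s$-conflations into a single two-term $\s$-conflation $X\hookrightarrow D\twoheadrightarrow Y$; this is where the full strength of the extriangulated $(\mathrm{ET4})$-axiom, together with the careful management of negative first extensions, is essential.
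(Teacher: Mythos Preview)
The paper does not actually prove this theorem: it is quoted verbatim from \cite[Thm.~3.9]{AET1} and used as a black box, so there is no ``paper's own proof'' to compare against. Your outline is the standard strategy one would expect for this result and is essentially correct.

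One small imprecision worth flagging: in the verification of $\Phi\Psi=\mathrm{id}$, you write that for $D\in(\mathcal{X}_{1}\star\mathcal{T})\cap\mathcal{Y}_{1}$ the defining conflation $X\to D\to T$ ``splits appropriately'' because $\Hom(\mathcal{X}_{1},\mathcal{Y}_{1})=0$. The cleaner argument is not a splitting but a vanishing: since $D,T\in\mathcal{Y}_{1}$ and $\mathcal{Y}_{1}$ is closed under co-cones of $\s$-conflations (this is exactly where the $\mathbb{E}^{-1}$-condition on $\mathbf{u}_{1}$ is used, cf.\ \cite[Prop.~2.9]{AMP}), one gets $X\in\mathcal{X}_{1}\cap\mathcal{Y}_{1}=0$, whence $D\cong T\in\mathcal{T}$. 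In an extriangulated category inflations need not be monomorphisms, so ``the map $X\to D$ is zero hence splits'' is not by itself conclusive; the co-cone closure is the right tool.
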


A particular case of the heart of an interval is well-known and studied
in triangulated categories. Namely, for a $t$-structure $\mathbf{x}=(\mathcal{U},\mathcal{W})$
in a triangulated category $\mathcal{D}$, set $\u_{1}:=(\Sigma\mathcal{U},\mathcal{W})$,
$\u_{2}=(\mathcal{U},\Sigma^{-1}\mathcal{W})$ and $\u_{3}=(\Sigma^{-1}\mathcal{U},\Sigma^{-2}\mathcal{W})$.
Then, we have that $\mathcal{H}_{\mathbf{x}}:=\mathcal{H}_{[\u_{1},\u_{2}]}$
is the usual \textbf{heart} of $\mathbf{x}$ and $\mathcal{C}_{\mathbf{x}}:=\mathcal{H}_{[\u_{1},\u_{3}]}$
is the \textbf{extended heart} of $(\mathcal{U},\mathcal{W})$ (see \cite[Def.4.1]{AMP}). In
this context, the correspondence in Theorem \ref{thm:proceso aet}
is known as the Happel-Reiten-Smal{\o} tilting process (see \cite{HRS}).

An important feature of a $t$-structure $\mathbf{x}$ is that it
comes equipped with a cohomological functor $H_{\mathbf{x}}:\mathcal{D}\rightarrow\mathcal{H}_{\mathbf{x}}$.
A similar functor can be defined for any interval $[\u,\u']$ of $s$-torsion
pairs in an extriangulated category with negative first extension.
For this, observe that $\te_{\u'}\circ(1:\te_{\u})(D)\in\mathcal{H}_{[\u,\u']}$
for any $D\in\mathcal{D}$. Indeed, for $\u=(\mathcal{X},\mathcal{Y})$
and $D\in\mathcal{D}$, consider the canonical $\s$-conflation 
\[
\suc[\te_{\u'}\circ(1:\te_{\u})D][(1:\te_{\u})D][(1:\te_{\u'})\circ(1:\te_{\u})D].
\]
Since $(1:\te_{\u})D,(1:\te_{\u'})\circ(1:\te_{\u})D\in\mathcal{Y}$
and $\mathcal{Y}$ is closed under co-cones (see \cite[Prop.2.9]{AMP}),
we have that $\te_{\u'}\circ(1:\te_{\u})D\in\mathcal{H}_{[\u,\u']}$.

\begin{defn}\label{def:H} Let $\mathcal{D}=(\mathcal{D},\mathbb{E},\s)$ be an
extriangulated category with negative first extension and $[\u,\u']$
be an interval in $\stors(\mathcal{D})$, with $\u=(\mathcal{X},\mathcal{Y})$
and $\u'=(\mathcal{X}',\mathcal{Y}')$. Define the functor $H_{[\u,\u']}:\mathcal{D}\rightarrow\mathcal{H}_{[\u,\u']},\;D\mapsto\te_{\u'}\circ(1:\te_{\u})(D).$ 
\end{defn}

The following remarks will be useful. 

\begin{rem}\label{rem:adjuntos} Assume the conditions of the definition above.
\begin{enumerate}
\item It can be proved that $H_{[\u,\u']}(D)\cong(1:\te_{\u})\circ\te_{\u'}(D)$
for all $D\in\mathcal{D}$. 
\item Consider the inclusion functors 
\[
i_{\mathcal{X}'}:\mathcal{X}'\rightarrow\mathcal{D}\text{, }\quad i_{\mathcal{Y}}:\mathcal{Y}\rightarrow\mathcal{D}\text{, }\quad j_{\mathcal{X}'}:\mathcal{H}_{[\u,\u']}\rightarrow\mathcal{X}'\text{, and }\quad j_{\mathcal{Y}}:\mathcal{H}_{[\u,\u']}\rightarrow\mathcal{Y}.
\]
 For any $X'\in\mathcal{X}'$ and $D\in\mathcal{H}_{[\u,\u']}=\Y\cap\X',$ we have that 
\[
_{\mathcal{H}_{[\u,\u']}}(H_{[\u,\u']}(X'),D)\stackrel{(a)}{\cong}{}{}_{\mathcal{Y}}((1:\te_{\u})(X'),D)\cong{}{}_{\mathcal{D}}(X',D)={}{}_{\mathcal{X}'}(X',j_{\mathcal{X}'}D).
\]
Hence, the mapping $X'\mapsto H_{[\u,\u']}(X')$ defines a left adjoint
$L:\mathcal{X}'\rightarrow\mathcal{H}_{[\u,\u']}$ for $j_{\mathcal{X}'}$.
Similarly, $Y\mapsto H_{[\u,\u']}(Y)$ defines a right adjoint $R:\mathcal{Y}\rightarrow\mathcal{H}_{[\u,\u']}$
for $j_{\mathcal{Y}}$.
\end{enumerate}
\end{rem}

\begin{rem}\cite[Lem.3.1]{PS1}\label{rem:L} Let $\mathbf{x}=(\mathcal{U},\mathcal{W})$
be a $t$-structure in a triangulated category $(\mathcal{D},\Sigma,\triangle)$.
Set $\u_{1}:=(\Sigma\mathcal{U},\mathcal{W})$, $\u_{2}:=(\mathcal{U},\Sigma^{-1}\mathcal{W})$
and $\Sigma^{-n}\u_{2}:=(\Sigma^{-n}\mathcal{U},\Sigma^{-(n+1)}\mathcal{W})$
for all $n\geq1$. 
\begin{enumerate}
\item For $D\in\mathcal{U}$, we have that $H_{[\u_{1},\u_{2}]}(D)\cong\Sigma\left((1:\te_{\u_{2}})(\Sigma^{-1}D)\right).$\\
Indeed, $\Sigma^{-1}D\in\Sigma^{-1}\mathcal{U}$ and
thus, from the canonical $\s$-conflation 
\[
\suc[\te_{\u_{2}}(\Sigma^{-1}D)][\Sigma^{-1}D][(1:\te_{\u_{2}})(\Sigma^{-1}D)],
\]
we can deduce that $\Sigma\left((1:\te_{\u_{2}})(\Sigma^{-1}D)\right)\cong(1:\te_{\u_{1}})D.$
Therefore 
\[
H_{[\u_{1},\u_{2}]}(D)=(1:\te_{\u_{1}})\circ\te_{\u_{2}}(D)=(1:\te_{\u_{1}})(D)
\]
 since $\te_{\u_{2}}(D)=D$.
\item Similarly, for any $n\geq1$ and $D\in\Sigma^{-n+1}\mathcal{U}$,
we have that $$H_{[\u_{1},\Sigma^{-n+1}\u_{2}]}(D)\cong\Sigma\left((1:\te_{\u_{2}})(\Sigma^{-1}D)\right).$$
\end{enumerate}
\end{rem}

\begin{lem}\label{lem:hrs} Let $\mathcal{D}$ be an extriangulated category with
negative first extension, $\mathbf{u}_{1}=(\mathcal{X}_{1},\mathcal{Y}_{1})$
and $\mathbf{u}_{2}=(\mathcal{X}_{2},\mathcal{Y}_{2})$ in $\stors\,\mathcal{\mathcal{D}}$
be such that $\mathbf{u}_{1}\leq\mathbf{u}_{2}$, and consider $(\mathcal{T},\mathcal{F})\in\stors\mathcal{H}_{[\u_{1},\u_{2}]}$.
If $(\mathcal{X},\mathcal{Y})=\Psi(\mathcal{T},\mathcal{F}):=(\mathcal{X}_{1}\star\mathcal{T},\mathcal{F}\star\mathcal{Y}_{2})$ (see Theorem \ref{thm:proceso aet}),  then
\[
\mathcal{X}=\left\{ D\in\mathcal{X}_{2}\,|\:H_{[\u_{1},\u_{2}]}(D)\in\mathcal{T}\right\} \;\text{ and }\;\mathcal{Y}=\left\{ D\in\mathcal{Y}_{1}\,|\:H_{[\u_{1},\u_{2}]}(D)\in\mathcal{F}\right\} .
\]
\end{lem}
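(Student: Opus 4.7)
The plan is to derive both characterizations directly from the formulas for $H_{[\u_1,\u_2]}$ in Remark \ref{rem:adjuntos}(a) combined with the uniqueness of canonical $\s$-conflations attached to $s$-torsion pairs.

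For the first equality, I would begin by noting that every object $D\in\mathcal{X}=\mathcal{X}_1\star\mathcal{T}$ lies in $\mathcal{X}_2$ (since $\Psi(\mathcal{T},\mathcal{F})\leq\u_2$), so the forward inclusion reduces to showing $H_{[\u_1,\u_2]}(D)\in\mathcal{T}$. Take a decomposition $\s$-conflation $\suc[X_1][D][T]$ with $X_1\in\mathcal{X}_1$ and $T\in\mathcal{T}\subseteq\mathcal{H}_{[\u_1,\u_2]}\subseteq\mathcal{Y}_1$; by uniqueness this coincides with the canonical $\s$-conflation attached to $\mathbf{u}_1$, so $(1:\te_{\u_1})(D)\cong T$. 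Applying the formulas $H_{[\u_1,\u_2]}(D)\cong\te_{\u_2}\circ(1:\te_{\u_1})(D)$ and $T\in\mathcal{X}_2$ then yields $H_{[\u_1,\u_2]}(D)\cong T\in\mathcal{T}$.

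For the reverse inclusion, assume $D\in\mathcal{X}_2$ with $H_{[\u_1,\u_2]}(D)\in\mathcal{T}$. Using $\te_{\u_2}(D)\cong D$ together with Remark \ref{rem:adjuntos}(a), I get $H_{[\u_1,\u_2]}(D)\cong(1:\te_{\u_1})(D)$. The canonical $\s$-conflation $\suc[\te_{\u_1}D][D][(1:\te_{\u_1})D]$ then becomes a witness of the form $\suc[X_1][D][T']$ with $X_1\in\mathcal{X}_1$ and $T'\cong H_{[\u_1,\u_2]}(D)\in\mathcal{T}$, placing $D$ in $\mathcal{X}_1\star\mathcal{T}=\mathcal{X}$.

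The description of $\mathcal{Y}$ is entirely dual: for $D\in\mathcal{Y}=\mathcal{F}\star\mathcal{Y}_2$ one has $D\in\mathcal{Y}_1$, and the defining $\s$-conflation $\suc[F][D][Y_2]$ (with $F\in\mathcal{F}\subseteq\mathcal{X}_2$ and $Y_2\in\mathcal{Y}_2$) is the canonical $\s$-conflation for $\mathbf{u}_2$, hence $\te_{\u_2}(D)\cong F$. Since $D\in\mathcal{Y}_1$ gives $(1:\te_{\u_1})(D)\cong D$, using the alternative expression $H_{[\u_1,\u_2]}(D)\cong\te_{\u_2}\circ(1:\te_{\u_1})(D)$ (or equivalently Remark \ref{rem:adjuntos}(a)) I obtain $H_{[\u_1,\u_2]}(D)\cong F\in\mathcal{F}$. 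Conversely, for $D\in\mathcal{Y}_1$ with $H_{[\u_1,\u_2]}(D)\in\mathcal{F}$, the canonical $\s$-conflation $\suc[\te_{\u_2}D][D][(1:\te_{\u_2})D]$ has the form $\suc[H_{[\u_1,\u_2]}(D)][D][Y_2]$ with $Y_2\in\mathcal{Y}_2$, showing $D\in\mathcal{F}\star\mathcal{Y}_2=\mathcal{Y}$.

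The only subtle point is justifying the two identifications $(1:\te_{\u_1})(D)\cong T$ and $\te_{\u_2}(D)\cong F$ from arbitrary decompositions; this is not a calculation but an appeal to the uniqueness (up to isomorphism) of the canonical $\s$-conflation of an object relative to an $s$-torsion pair, which is immediate from the adjunction characterization of $\te_{\u_i}$ and $(1:\te_{\u_i})$ and the orthogonality $\Hom_\mathcal{D}(\mathcal{X}_i,\mathcal{Y}_i)=0$. I expect no substantive obstacle beyond bookkeeping with these canonical conflations.
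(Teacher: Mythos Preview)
Your proof is correct and follows essentially the same approach as the paper's: both directions rely on identifying an arbitrary $\s$-conflation witnessing $D\in\mathcal{X}_1\star\mathcal{T}$ (resp.\ $D\in\mathcal{F}\star\mathcal{Y}_2$) with the canonical $\s$-conflation for $\u_1$ (resp.\ $\u_2$), and then using the two expressions $H_{[\u_1,\u_2]}\cong\te_{\u_2}\circ(1:\te_{\u_1})\cong(1:\te_{\u_1})\circ\te_{\u_2}$ together with $\te_{\u_2}(D)\cong D$ for $D\in\mathcal{X}_2$. The paper only treats the $\mathcal{X}$ case and defers $\mathcal{Y}$ to duality, whereas you spell both out, but the arguments are otherwise identical.
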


\begin{proof}
Let us prove the first equality, the second one follows by similar
arguments. Indeed, for $D\in\mathcal{X}=\mathcal{X}_{1}\star\mathcal{T}$,
there is an $\s$-conflation $\suc[X_{1}][D][T]$ with $X_{1}\in\mathcal{X}_{1}$
and $T\in\mathcal{T}$. Since $\mathcal{T}\subseteq\mathcal{X}_{2}\cap\mathcal{Y}_{1}$,
we can assume that $T=(1:\te_{\u_{1}})D$. Hence, using that $(1:\te_{\u_{2}})T=0$,
we have that $$T=\te_{\u_{2}}T=\te_{\u_{2}}\circ(1:\te_{\u_{1}})D=H_{[\u_{1},\u_{2}]}(D).$$
Now, consider $D\in\mathcal{X}_{2}$ with $H_{[\u_{1},\u_{2}]}(D)\in\mathcal{T}$.
Since $$H_{[\u_{1},\u_{2}]}(D)=\te_{\u_{1}}\circ(1:\te_{\u_{2}})(D)\cong(1:\te_{\u_{1}})\circ\te_{\u_{2}}(D),$$
we know that there is an $\s$-conflation $\suc[\te_{\u_{1}}\circ\te_{\u_{2}}D][\te_{\u_{2}}D][H_{[\u_{1},\u_{2}]}D]$.
Therefore, since $\te_{\u_{2}}D=D$ and $\te_{\u_{1}}\circ\te_{\u_{2}}D=\te_{\u_{1}}D\in\mathcal{X}_{1}$,
we conclude that $D\in\mathcal{X}$. 
\end{proof}

\section{The AET4 condition for extriangulated categories}

\subsection{The product category of an extriangulated category}\label{pcofexts}

Let $I$ be a non-empty set and $\mathcal{C}$ be a category. Observe
that $I$ can be viewed as a discrete category. That is, a category
where the objects are the elements of $I$ and the only morphisms
are the identities of the objects. Let us consider $\Fun(I,\mathcal{C})$,
the category of functors $I\rightarrow\mathcal{C}$ (also known as
the product category $\prod_{i\in I}\mathcal{C}$). Observe that $\Fun(I,\mathcal{C})$
is equipped with a family of functors 
$$\{(-)_{i}:\Fun(I,\mathcal{C})\rightarrow\mathcal{C}\}_{i\in I},$$
where $X_{i}:=X(i)$ for every $X\in\Fun(I,\mathcal{C})$, satisfying
the following universal property: for any family of functors $\{E_{i}:\mathcal{C}'\rightarrow\mathcal{C}\}_{i\in I}$
there is a unique functor $E:\mathcal{C}'\rightarrow\Fun(I,\mathcal{C})$
such that $(-)_{i}\circ E=E_{i}$ for all $i\in I$. In particular,
there is a functor $T:\mathcal{C}\rightarrow\Fun(\mathcal{C},I)$
such that $(-)_{i}\circ T=1_{\mathcal{C}}$ for all $i\in I$. We
will refer to $T$ as the \textbf{constant functor}.\textbf{ }Moreover,
in case $\mathcal{C}$ has all the products indexed by $I$, there
is a functor $P:=\prod_{i\in I}(-):\Fun(I,\mathcal{C})\rightarrow\mathcal{C}$
defined via $\prod_{i\in I}F=\prod_{i\in I}F_{i}$. Similarly, if
$\mathcal{C}$ has all coproducts indexed by $I$, one can define
a functor $S:=\coprod_{i\in I}(-):\Fun(I,\mathcal{C})\rightarrow\mathcal{C}$
via $\coprod_{i\in I}F=\coprod_{i\in I}F_{i}$.

The goal of this section is to briefly discuss how the category $\mathcal{D}:=\Fun(I,\mathcal{C})$
is extriangulated in case $\mathcal{C}$ is extriangulated.

Let $(\mathcal{C},\mathbb{E},\s)$ be an extriangulated category.
Note that $\mathcal{D}:=\Fun(I,\mathcal{C})$ is additive since $\mathcal{C}$
is additive  (see \cite[Chap.IV, Sec.7]{S}). Consider the additive functor
$\mathbb{F}:\mathcal{D}^{op}\times\mathcal{D}\rightarrow\Ab$, defined
as $\mathbb{F}(-,?):=\prod_{i\in I}\mathbb{E}((-)_{i},(?)_{i}).$ It can be seen that a sequence
$\suc[F][G][H][f][g]$ in $\mathcal{D}$ is equivalent to $\suc[F][G'][H][f'][g']$
if, and only if, $\suc[F_{i}][G_{i}][H_{i}][f_{i}][g_{i}]$ is equivalent
to $\suc[F_{i}][G'_{i}][H_{i}][f_{i}'][g_{i}']$ for all $i\in I$.
Therefore, we can define a realization $\t$ of $\mathbb{F}$ as the
one that associates the $\mathbb{F}$-extension $\eta=(\eta_i)_{i\in I}\in\mathbb{F}(F,G)$
the equivalence class induced by the equivalence classes $\{\s(\eta_{i})\}_{i\in I}$.
We will use the notation $\t=\prod_{i\in I}\s$.

Finally, axioms ET2, ET3 and ET4 are satisfied by $(\mathcal{D},\mathbb{F},\t)$.
To verify this, it is enough to apply the functors $\{(-)_{i}:\Fun(I,\mathcal{C})\rightarrow\mathcal{C}\}_{i\in I}$
and consider the corresponding axiom in $\mathcal{C}$.

\subsection{The adjoint pair associated to the coproduct}\label{adjpcops}

In this section, we consider the following setting. Let $I$ be a non-empty set and $(\mathcal{C},\mathbb{E},\s)$ be
an extriangulated category with all the coproducts indexed by $I$.
Consider the category $\mathcal{D}=\Fun(I,\C)$ and the functors $T:\mathcal{C}\rightarrow\Fun(I,\C)$
and $S:=\coprod_{i\in I}(-):\Fun(I,\C)\rightarrow\mathcal{C}$ defined
in section \ref{pcofexts}. It is well-known that $(S,T)$ is an adjoint pair (see \cite[Chap.IV, Sec.8]{S}
and \cite[Chap.IV, Sec.9, Ex.1]{S}). Specifically, note that for
$D\in\mathcal{D}$ the canonical inclusions $\{\mu_{i}^{D}:D_{i}\rightarrow\coprod_{i\in I}D_{i}\}_{i\in I}$
define a natural transformation $\varphi:1_{\mathcal{D}}\rightarrow T\circ S$;
and the co-diagonal morphism $\nabla:C^{(I)}\rightarrow C$ define
a natural transformation $\psi:S\circ T\rightarrow1_{\mathcal{C}}$.
Moreover, $\psi$ and $\varphi$ satisfy that $\psi_{SD}\circ S(\varphi_{D})=1_{SD}$
and $T(\psi_{C})\circ\varphi_{TC}=1_{TC}$ for all $D\in\mathcal{D}$
and $C\in\mathcal{C}.$ We also recall, see in section \ref{pcofexts} for details, that $(\mathcal{D},\mathbb{F},\t)$ is an extriangulated category, where $\mathbb{F}(-,?):=\prod_{i\in I}\mathbb{E}((-)_{i},(?)_{i})$ and $\t=\prod_{i\in I}\s.$ 
\vspace{0.2cm}

The following result is a consequence of the results obtained in the
Appendix.
 
\begin{prop}\label{prop:props} For the extriangulated categories $(\mathcal{C},\E,\s)$ and  $(\mathcal{D},\mathbb{F},\t),$ and the functors $T:\C\to \D$ and $S:\D\to \C$ defined above, the following statements hold true. 
\begin{enumerate}
\item The constant functor $T:\mathcal{C}\rightarrow\D$ is extriangulated
via the natural transformation 
\[
\Gamma_{T}:\mathbb{E}(?,-)\rightarrow\prod_{i\in I}\mathbb{E}(?,-)
\]
defined as $\Gamma_{T}(\eta)=(\eta_{i})_{i\in I}$ for $\eta\in\mathbb{E}(C',C)$,
where $\eta_{i}=\eta$ for all $i\in I$. 
\item There is a monic natural transformation 
\[
\tau:\mathbb{E}(\coprod_{i\in I}(?)_{i},-)\rightarrow\prod_{i\in I}\mathbb{E}((?)_{i},-)
\]
defined as $\tau(\eta):=(\eta\cdot\mu_{i}^{D})_{i\in I}$
for all $\eta\in\mathbb{E}(\coprod_{i\in I}D_{i},C)$. 
\item If $S:\mathcal{D}\rightarrow\mathcal{C}$ is $(1,T)$-extriangulated,
then there is a monic natural transformation 
\[
\sigma:\prod_{i\in I}\mathbb{E}((?)_{i},-)\rightarrow\mathbb{E}(\coprod_{i\in I}(?)_{i},-)
\]
defined as $\sigma(\eta)=\nabla_{C}\cdot\Gamma_{S}^{(1,T)}(\eta)$
for all $\eta\in\mathbb{F}(D,T(C))$. 
\item Let $S:\mathcal{D}\rightarrow\mathcal{C}$ be
$(1,T)$-extriangulated. 
\begin{enumerate}
\item If $\varphi:1\rightarrow T\circ S$ is $(1,T)$-extriangulated or
$\psi:S\circ T\rightarrow1$ is $(S,1)$-extriangulated, then $\tau$
and $\sigma$ are natural isomorphisms and $\tau=\sigma^{-1}$. 
\item If $\varphi:1\rightarrow T\circ S$ is $(1,T)$-extriangulated, then
$\psi:S\circ T\rightarrow1$ is $(S,1)$-extriangulated. 
\item If $S$ and $\varphi$ are extriangulated, then $\psi$
is extriangulated. 
\end{enumerate}
\end{enumerate}
\end{prop}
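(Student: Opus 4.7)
The plan splits into two layers. Part (a) is a direct unpacking of the construction of $(\mathcal{D},\mathbb{F},\t)$ from Section \ref{pcofexts}: given $\eta\in\mathbb{E}(A,B)$ realized by $\suc[B][E][A][f][g]$, the image $T(\eta)=(\eta)_{i\in I}\in\mathbb{F}(T(A),T(B))=\prod_{i\in I}\mathbb{E}(A,B)$ is realized, under $\t=\prod_{i\in I}\s$, by the constant family $T([\suc[B][E][A][f][g]])=T(\s(\eta))$; naturality of $\Gamma_T$ follows from bifunctoriality of $\mathbb{E}$, since for $a\in\Hom_{\mathcal{C}}(A,A')$ and $c\in\Hom_{\mathcal{C}}(C,C')$ both $T(a)\cdot\Gamma_T(\eta)\cdot T(c)$ and $\Gamma_T(a\cdot\eta\cdot c)$ are the constant family $(a\cdot\eta\cdot c)_{i\in I}$.

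For parts (b)--(d), the plan is to apply the general theory of adjoint pairs between extriangulated categories developed in the Appendix, specialized to $(S,T)$. The key concrete identifications are: $\mathbb{F}(D,T(C))=\prod_{i\in I}\mathbb{E}(D_i,C)$; the unit has components $\varphi_D=(\mu_i^D)_{i\in I}$; and the co-unit has components $\psi_C=\nabla_C$ (both recorded in Section \ref{adjpcops}). Under these identifications, the appendix's canonical construction from an extriangulated right adjoint $T$---furnished by part (a)---produces the natural transformation $\eta\mapsto\Gamma_T(\eta)\cdot\varphi_?$, which unfolds to $(\eta\cdot\mu_i^D)_{i\in I}$ and is monic by the appendix's injectivity criterion; this yields (b). For (c), the $(1,T)$-extriangulated hypothesis on $S$ supplies the natural transformation $\Gamma_S^{(1,T)}$, whose post-composition with $\psi_C=\nabla_C$ is the appendix's candidate for the inverse of $\tau$, with monicity inherited from the analogous abstract statement. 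Part (d) is a direct specialization of the appendix's criteria describing when $\tau$ and $\sigma$ are mutually inverse (d1), when extriangulation of the unit forces extriangulation of the co-unit (d2), and when extriangulation of $S$ together with $\varphi$ propagates to $\psi$ (d3).

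The main obstacle---essentially the only substantive work---is the bookkeeping needed to align the abstract appendix hypotheses with the present concrete setup: confirming the component-wise descriptions of $\varphi$ and $\psi$, verifying that the abstract formulas for $\tau$ and $\sigma$ specialize correctly once one substitutes $\mathbb{F}=\prod_{i\in I}\mathbb{E}((-)_i,(?)_i)$, and checking that the appendix's conventions on left versus right adjoints match writing $S$ on the left and $T$ on the right. Once these identifications are pinned down, each of (b), (c), and the three sub-items of (d) becomes an immediate citation of the corresponding appendix proposition, with no new argument specific to the coproduct situation beyond the verification already carried out for (a).
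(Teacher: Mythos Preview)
Your plan is essentially the same as the paper's: part (a) is verified directly, and (b), (c), (d1) are obtained by specializing Propositions~\ref{prop:existe}, \ref{prop:mono}, and \ref{prop:0} to the adjoint pair $(S,T)$ once the identifications $\mathbb{F}(D,T(C))=\prod_{i\in I}\mathbb{E}(D_i,C)$, $\varphi_D=(\mu_i^D)_{i\in I}$, and $\psi_C=\nabla_C$ are in place.

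One small point where you understate the work: for (d2) and (d3) the relevant appendix results are Lemma~\ref{lem:phi}(c) and Lemma~\ref{lem:phi-1}(c), and these carry an \emph{extra} hypothesis beyond ``the unit is extriangulated'', namely the identity $TST\psi_{SD}\circ\varphi_{TSTSD}=1_{TSTSD}$ (respectively its variant). The paper does not get this for free from the general adjunction axioms; it is checked by an explicit computation with the canonical inclusions of $((\coprod_i D_i)^{(I)})^{(I)}$ and the coproduct of codiagonals, showing that each component $\nabla^{(I)}\circ\mu_i$ equals the identity. This is genuinely specific to the coproduct setup, so it is a bit more than the generic ``align the conventions'' bookkeeping you describe---but it is short, and once noted your outline goes through exactly as the paper's proof does.
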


\begin{proof}
(a) Observe that $(-)_{i}\circ T=1_{\mathcal{C}}$ for all $i\in I$.
Hence, $\mathbb{F}(T(?),T(-))=\prod_{i\in I}\mathbb{E}(?,-)$. Now,
there is a natural transformation 
\[
\Gamma_{T}:\mathbb{E}(?,-)\rightarrow\mathbb{F}(T(?),T(-))=\prod_{i\in I}\mathbb{E}(?,-)
\]
defined by the universal property of products via the family of functors
$\{1_{\mathbb{E}}:\mathbb{E}(?,-)\rightarrow\mathbb{E}(?,-)\}_{i\in I}$.
Moreover, for an $\s$-conflation $\eta:\:\suc[A][B][C][f][g]$, we
have that the realization of $\Gamma_{T}(\eta)$ is given by $\{\s(\Gamma_{T}(\eta)_{i})=\s(\eta)=[\suc[A][B][C][f][g]]\}_{i\in I}$,
which is equal to $[\suc[TA][TB][TC][Tf][Tg]]$. Therefore, $T$ is
extriangulated.
 
(b) It follows from (a) and Propositions \ref{prop:existe} and \ref{prop:mono}. 
\

(c) It follows from Propositions \ref{prop:existe} and \ref{prop:mono}. 
\

(d) The item (d1) follows from Proposition \ref{prop:0}. Let us prove 
(d2). For this, observe that $T$ is extriangulated by (a). Hence,
it is enough to show, by Lemma \ref{lem:phi}(c), that $TST\psi_{SD}\circ\varphi_{TSTSD}=1_{TSTSD}$ for each $D\in\mathcal{D}.$ Now, $\varphi_{TSTSD}$
consists of the canonical inclusions 
\[
\{\mu_{k}:(\coprod_{i\in I}D_{i})^{(I)}\rightarrow((\coprod_{i\in I}D_{i})^{(I)})^{(I)}\}{}_{k\in I}
\]
and $ST\psi_{SD}$ consists of the coproduct 
\[
\nabla^{(I)}:((\coprod_{i\in I}D_{i})^{(I)})^{(I)}\rightarrow(\coprod_{i\in I}D_{i})^{(I)},
\]
where $\nabla:(\coprod_{i\in I}D_{i})^{(I)}\rightarrow\coprod_{i\in I}D_{i}$
is the codiagonal morphism. Hence, it follows from the universal property
of coproducts that 
\[
(TST\psi_{SD})_{i}\circ(\varphi_{TSTSD})_{i}=\nabla^{(I)}\circ\mu_{i}=1
\]
for all $i\in I$. Therefore $TST\psi_{SD}\circ\varphi_{TSTSD}=1_{TSTSD}$. 

Lastly, item (d3) follows from similar arguments
using Lemma \ref{lem:phi-1}(c) instead of Lemma \ref{lem:phi}(c).
\end{proof}

\subsection{The AET4 condition for extriangulated categories}

In this section we will make use of the notions and developments given in the appendix.

\begin{defn}
Let $(\mathcal{C},\mathbb{E},\s)$ and  $(\mathcal{D},\mathbb{F},\t)$ be the extriangulated categories introduced in section \ref{adjpcops}, where 
$I$ is a non-empty set and $\D=\Fun(I,\C).$
\begin{enumerate}
\item We say that $\mathcal{C}$ is \textbf{AET3($I$)} if, for any set
of objects $\{C_{i}\}_{i\in I}$ in $\C,$ the coproduct $\coprod_{i\in I}C_{i}$ in $\C$
exists. If $\mathcal{C}$ is AET3($X$) for every non-empty set $X$,
we say that it is \textbf{AET3}. 

\item We say that $\mathcal{C}$ is \textbf{AET3.5($I$)} if it is AET3($I$)
and the functor $S:=\coprod_{i\in I}(-):\mathcal{D}\rightarrow\mathcal{C}$
is $(1,T)$-extriangulated, where $T:\mathcal{C}\rightarrow\D$
is the constant functor. If $\mathcal{C}$ is AET3.5($X$) for every
non-empty set $X$, we say that it is \textbf{AET3.5}. 

\item We say that $\mathcal{C}$ is AET4\textbf{($I$)} if it is AET3.5($I$)
and $\varphi:1\rightarrow T\circ S$ is $(1,T)$-extriangulated, where $T$, $S$ and $\varphi$ are as above. If
$\mathcal{C}$ is AET4($X$) for every non-empty set $X$, we say
that it is AET4. Note that this notion of AET4 coincides with the one given in Definition \ref{ET4Def}.

\item We say that $\mathcal{C}$ is \textbf{AET3{*}} (resp. \textbf{AET3.5}{*},
\textbf{AET4}{*}) if $\mathcal{C}^{op}$ is \textbf{AET3} (resp. \textbf{AET3.5},
\textbf{AET4}). 
\end{enumerate}
\end{defn}

\begin{prop}\label{prop:2implica3} Let $(\mathcal{C},\E,\s)$ be an AET4($I$)
extriangulated category, $\{A_{i}\}_{i\in I}$ a family of objects in $\C$
and $B\in\mathcal{C}$. Then, the morphism 
\[
\tau:\mathbb{E}(\coprod_{i\in I}A_{i},B)\rightarrow\prod_{i\in I}\mathbb{E}(A_{i},B)\text{, }\eta\mapsto\{\eta\cdot\mu_{i}^{A}\}_{i\in I}
\]
is an isomorphism  of abelian groups whose inverse is 
$$\sigma:\prod_{i\in I}\mathbb{E}(A_{i},B)\to \mathbb{E}(\coprod_{i\in I}A_{i},B),\;\{\eta_i\}_{i\in I}\mapsto \nabla_C\cdot\Gamma^{(1,T)}_S(\{\eta_i\}_{i\in I}).$$
\end{prop}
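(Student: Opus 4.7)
The plan is to observe that this proposition is essentially a direct evaluation of the natural transformations $\tau$ and $\sigma$ constructed in Proposition \ref{prop:props}. First, I would unpack the hypothesis: by definition, $\mathcal{C}$ AET4($I$) means that $\mathcal{C}$ is AET3.5($I$) (so the coproduct functor $S=\coprod_{i\in I}(-):\mathcal{D}\rightarrow\mathcal{C}$ is $(1,T)$-extriangulated) and that the unit $\varphi:1_{\mathcal{D}}\rightarrow T\circ S$ is $(1,T)$-extriangulated. These are precisely the hypotheses needed to invoke Proposition \ref{prop:props}(d1).

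Given this, I would apply Proposition \ref{prop:props}(b) to obtain the monic natural transformation $\tau:\mathbb{E}(\coprod_{i\in I}(?)_i,-)\rightarrow\prod_{i\in I}\mathbb{E}((?)_i,-)$ with formula $\tau(\eta)=(\eta\cdot\mu_i^D)_{i\in I}$, and Proposition \ref{prop:props}(c) to obtain the monic natural transformation $\sigma:\prod_{i\in I}\mathbb{E}((?)_i,-)\rightarrow\mathbb{E}(\coprod_{i\in I}(?)_i,-)$ with formula $\sigma(\eta)=\nabla_C\cdot\Gamma_S^{(1,T)}(\eta)$. Then Proposition \ref{prop:props}(d1) directly yields that $\tau$ is a natural isomorphism with inverse $\sigma$.

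Finally, I would evaluate these natural transformations at the object $A=\{A_i\}_{i\in I}\in\mathcal{D}$ (viewed as the functor $I\rightarrow\mathcal{C}$ sending $i$ to $A_i$) and at $B\in\mathcal{C}$. Since $S(A)=\coprod_{i\in I}A_i$ and $\mathbb{F}(A,T(B))=\prod_{i\in I}\mathbb{E}(A_i,B)$, the components recover exactly the map $\tau:\mathbb{E}(\coprod_{i\in I}A_i,B)\rightarrow\prod_{i\in I}\mathbb{E}(A_i,B)$ of the statement, with the claimed inverse $\sigma$. The fact that $\tau$ is a homomorphism of abelian groups is automatic, since any natural transformation between additive bifunctors landing in $\mathbf{Ab}$ has group homomorphisms as components.

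I do not anticipate a real obstacle here: the proposition is a clean specialization of Proposition \ref{prop:props}, and all the substantive work (the construction of $\tau$ and $\sigma$, and the proof that they are mutually inverse under the extriangulated adjoint data) has already been carried out in the appendix and packaged in Proposition \ref{prop:props}.
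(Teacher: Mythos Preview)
Your proposal is correct and matches the paper's own proof exactly: the paper simply writes ``It follows from Proposition \ref{prop:props}(d1),'' and your unpacking of the AET4($I$) hypothesis into the conditions needed for that item is precisely the intended reasoning.
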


\begin{proof}
It follows from Proposition \ref{prop:props}(d1). 
\end{proof}

\begin{prop}\label{prop:0implica1}
For an  AET3($I$)  extriangulated category $(\mathcal{C},\E,\s),$  $\D:=\Fun(I,\mathcal{C})$ and the functor $S:=\coprod_{i\in I}(-):\D\rightarrow\mathcal{C},$ the following statements are equivalent.
\begin{itemize}
\item[(a)] $\C$ is AET3.5($I$).
\item[(b)] The functor $S:\D\rightarrow\mathcal{C}$
is extriangulated. 
\end{itemize}
Moreover, if one of the above conditions holds true, then for any family of $\s$-conflations
$\{\eta_{i}:\:\suc[A_{i}][B_{i}][F_{i}][f_{i}][g_{i}]\}_{i\in I}$ in $\C,$
we have that 
\[
\suc[\coprod_{i\in I}A_{i}][\coprod_{i\in I}B_{i}][\coprod_{i\in I}F_{i}][\coprod_{i\in I}f_{i}][\coprod_{i\in I}g_{i}]
\]
is an $\s$-conflation in $\C.$
\end{prop}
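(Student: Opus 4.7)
The plan is to derive the equivalence of (a) and (b) from the general theory of adjoint pairs of extriangulated functors developed in the Appendix, exploiting the fact that the constant functor $T:\C\to\D$ is extriangulated by Proposition \ref{prop:props}(a) and admits $S$ as a left adjoint, with unit $\varphi:1_{\D}\to T\circ S$ given by the coproduct inclusions and counit $\psi:S\circ T\to 1_{\C}$ given by the codiagonal morphisms. The moreover clause will then follow formally from the extriangulated property of $S$ in (b).

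For the direction (b) $\Rightarrow$ (a), if $S$ is extriangulated via a natural transformation $\Gamma_{S}:\mathbb{F}\to\mathbb{E}\circ(S^{op}\times S)$, then restricting to $\mathbb{F}$-extensions of the form $\eta\in\mathbb{F}(D,TC)$ yields the required natural transformation $\Gamma_{S}^{(1,T)}$ and shows that $S$ is $(1,T)$-extriangulated, as provided by the general results on adjoint pairs in the Appendix. Combined with the standing hypothesis AET3($I$), this yields AET3.5($I$).

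For the direction (a) $\Rightarrow$ (b), given the $(1,T)$-extriangulated structure $\Gamma_{S}^{(1,T)}$ on $S$, I extend it to a natural transformation on all of $\mathbb{F}$ by the formula
$$\Gamma_{S}(\eta):=\psi_{SD'}\cdot\Gamma_{S}^{(1,T)}(\varphi_{D'}\cdot\eta),\qquad\eta\in\mathbb{F}(D,D'),$$
which lives in $\mathbb{E}(SD,SD')$ as required. Naturality of $\Gamma_{S}$ follows from naturality of $\varphi$, $\psi$, and $\Gamma_{S}^{(1,T)}$; the realization condition $\s(\Gamma_{S}(\eta))=S(\t(\eta))$ is checked by combining the corresponding property of $\Gamma_{S}^{(1,T)}$ with the triangle identity $\psi_{SD'}\circ S\varphi_{D'}=1_{SD'}$ and an appeal to Lemma \ref{lem:infdef} to transport the realization along the isomorphism induced by $\psi_{SD'}\circ S\varphi_{D'}$.

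For the moreover clause, a family $\{\eta_{i}:\suc[A_{i}][B_{i}][F_{i}][f_{i}][g_{i}]\}_{i\in I}$ of $\s$-conflations assembles into an $\mathbb{F}$-extension $\eta=(\eta_{i})_{i\in I}\in\mathbb{F}(F,A)$, where $F,A\in\D$ are defined componentwise. By the construction $\t=\prod_{i\in I}\s$, this extension is realized componentwise by the given sequences $A_{i}\to B_{i}\to F_{i}$. Since $S$ is extriangulated by (b), applying $S$ to this realization produces the $\s$-conflation $\suc[\coprod_{i\in I}A_{i}][\coprod_{i\in I}B_{i}][\coprod_{i\in I}F_{i}][\coprod_{i\in I}f_{i}][\coprod_{i\in I}g_{i}]$ in $\C$, as desired. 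The main obstacle lies in the direction (a) $\Rightarrow$ (b), specifically in verifying that the realization axiom holds for the extended $\Gamma_{S}$; this demands a careful appeal to the triangle identities for $(S,T)$ together with manipulations of $\s$-conflations along the codiagonal, such as those captured by Lemmas \ref{lemita} and \ref{lemita2}.
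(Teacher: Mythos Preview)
Your strategy is the same as the paper's: extend $\Gamma_S^{(1,T)}$ to a full $\Gamma_S$ by first pushing out along the unit $\varphi_{D'}$, applying $\Gamma_S^{(1,T)}$, and then projecting $STSD'$ back to $SD'$. The paper uses a bespoke projection $\pi:STS\to S$ (given by $\pi_X\circ\mu_i^{TSX}=\mu_i^X\circ\pi_i^X$) rather than your $\psi_{SD'}=\nabla_{SD'}$, but once the intermediate extension is computed both give the same $\Gamma_S$, so this is only a cosmetic difference. Your treatment of (b)$\Rightarrow$(a) and of the moreover clause is correct.

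There is, however, a genuine gap in your verification of the realization axiom in (a)$\Rightarrow$(b). The triangle identity $\psi_{SD'}\circ S\varphi_{D'}=1_{SD'}$ together with Lemma~\ref{lem:infdef} does not suffice. Concretely: from the morphism of $\t$-conflations $(\varphi_{D'},h,1):\eta\to\varphi_{D'}\cdot\eta$ and the pushout realizing $\psi_{SD'}\cdot\Gamma_S^{(1,T)}(\varphi_{D'}\cdot\eta)$, the triangle identity only yields a commutative ladder
\[
\xymatrix{SD'\ar[r]^{Sf}\ar@{=}[d] & SE\ar[r]^{Sg}\ar[d]^{\ell} & SD\ar@{=}[d]\\
SD'\ar[r] & Z\ar[r] & SD}
\]
with bottom row an $\s$-conflation; it gives no reason for $\ell$ to be an isomorphism, and Lemma~\ref{lem:infdef} \emph{requires} isomorphisms as input rather than producing them. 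What actually does the work is the explicit componentwise computation via Lemma~\ref{lemita}(a): this identifies the realization of $\varphi_{D'}\cdot\eta$ and hence of $\Gamma_S^{(1,T)}(\varphi_{D'}\cdot\eta)$ as an extension of the shape $\left[\begin{smallmatrix}\eta'\\0\end{smallmatrix}\right]$, with $\eta'$ realized by $\coprod_i A_i\to\coprod_i B_i\to\coprod_i F_i$; pushing out by either $\psi_{SD'}$ or the paper's $\pi$ then returns $\eta'$ directly, again by Lemma~\ref{lemita}(a). You do invoke Lemma~\ref{lemita} in your final sentence, but it is the core of the argument, not an auxiliary manipulation---and the triangle identity and Lemma~\ref{lem:infdef} play no role.
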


\begin{proof} (a) $\Rightarrow$ (b)
Let $T:\C\to\D$ be the constant functor. 
Consider the natural transformation $\varphi:1\rightarrow T\circ S.$ We
recall that $\varphi_{X}$ is defined by the canonical inclusions
$\{\mu_{i}^{X}:X_{i}\rightarrow\coprod_{i\in I}X_{i}\}_{i\in I}$
for all $X\in\D.$

Let us consider the
natural transformation 
\[
\mathbb{F}\cdot(1\times\varphi):\mathbb{F}(?,-)\rightarrow\mathbb{F}(?,TS(-)).
\]
Since $S$ is $(1,T)$-extriangulated, there is
a natural transformation 
\[
\Gamma_{S}^{(1,T)}:\mathbb{F}(?,T(-))\rightarrow\mathbb{E}(S(?),ST(-))
\]
such that $\Gamma_{S}^{(1,T)}(\eta)$ is realized by $[\suc[STX][SY][SZ][Sa][Sb]]$
for every $\t$-conflation $\eta:\:\suc[TX][Y][Z][a][b]$ in $\D.$ Now, consider
\[
\Gamma:=\Gamma_{S}^{(1,T)}\circ(\mathbb{F}\cdot(1\times\varphi)):\mathbb{F}(?,-)\rightarrow\mathbb{E}(S(?),STS(-)).
\]
Let us examine how $\Gamma$ maps the realizations. For this, consider
a $\t$-conflation $\eta:\:\suc[A][B][C][f][g]$ in $\D.$ Then, by Lemma \ref{lemita} (a), we get that $\varphi_{A}\cdot\eta=(\mu_{i}^{A}\cdot\eta_{i})_{i\in I}$
is realized by 
\begin{equation}
\left\{ \left[A_{i}\amalg(\coprod_{j\neq i}A_{j})\overset{\left[\begin{smallmatrix}f_{i} & 0\\
0 & 1
\end{smallmatrix}\right]}{\rightarrow}B_{i}\amalg(\coprod_{j\neq i}A_{j})\overset{\left[\begin{smallmatrix}g_{i} & 0\end{smallmatrix}\right]}{\rightarrow}F_{i}\right]\right\} _{i\in I}\label{eq:}
\end{equation}
Then, by using (\ref{eq:}), it follows that
$\Gamma(\eta)=\Gamma_{S}^{(1,T)}(\varphi_{A}\cdot\eta)$ is realized
by 
\[
\left[(\coprod_{i\in I}A_{i})\amalg(\coprod_{i\in I}(\coprod_{j\neq i}A_{j}))\overset{\left[\begin{smallmatrix}\coprod_{i\in I}f_{i} & 0\\
0 & 1
\end{smallmatrix}\right]}{\rightarrow}(\coprod_{i\in I}B_{i})\amalg(\coprod_{i\in I}(\coprod_{j\neq i}A_{j}))\overset{\left[\begin{smallmatrix}\coprod_{i\in I}g_{i} & 0\end{smallmatrix}\right]}{\rightarrow}\coprod_{i\in I}C_{i}\right].
\]
Observe now that there is a natural transformation $\pi:STS\rightarrow S$
defined as follows: for $X\in\D$, 
\[
\pi_{X}:\coprod_{i\in I}\left(\coprod_{i\in I}X_{i}\right)\rightarrow\coprod X_{i}
\]
is the morphism satisfying that $\pi_{X}\circ\mu_{i}^{TSX}=\mu_{i}^{X}\circ\pi_{i}^{X}$.
In other words, rearranging $\coprod_{i\in I}\left(\coprod_{i\in I}X_{i}\right)$
as $(\coprod_{i\in I}X_{i})\amalg\left(\coprod_{i\in I}\coprod_{j\neq i}X_{i}\right)$,
$\pi_{X}$ is the morphism 
\[
\left[\begin{smallmatrix}1 & 0\end{smallmatrix}\right]:(\coprod_{i\in I}X_{i})\amalg(\coprod_{i\in I}\coprod_{j\neq i}X_{i})\rightarrow(\coprod_{i\in I}X_{i}).
\]
Thus by Lemma \ref{lemita}, we conclude that $((\mathbb{E}\circ(S\times STS))\cdot(1\times\pi))\circ\Gamma$
defines the natural transformation 
\[
\Gamma_{S}:\mathbb{F}(?,-)\rightarrow\mathbb{E}(S(?),S(-))
\]
such that $\Gamma_{S}(\eta)$ is realized by $[\suc[SX][SY][SZ][Sa][Sb]]$
for every $\t$-conflation $\eta$ with realization $[\suc[X][Y][Z][a][b]]$ in $\D.$
\

(b) $\Rightarrow$ (a) It follows from Example \ref{ExA} (c).
\end{proof}

\begin{example}\label{exa:ab}$ $ 
\begin{enumerate}
\item If $\mathcal{A}$ is an AET3 exact category with exact coproducts,
then the derived category $\mathcal{D}(\mathcal{A})$ is an AET3 triangulated
category (see \cite[Lem.4.1.15]{K}). 

\item Every AET3 triangulated category $(\mathcal{C},\Sigma,\triangle)$
is AET4. Indeed, by \cite[Prop.1.1.6]{N}, for every set $I\neq\emptyset$ there
is a natural isomorphism 
\[
\upsilon:\coprod_{i\in I}\circ\overline{\Sigma}\rightarrow\Sigma\circ\coprod_{i\in I},
\]
where $\overline{\Sigma}:\D\rightarrow\D$
is the functor satisfying that $(-)_{i}\circ\overline{\Sigma}=\Sigma\circ(-)_{i}$
for all $i\in I$. Hence, there is a natural transformation 
\[
\Gamma_S:\prod_{i\in I}\Hom_{\mathcal{C}}((?)_{i},\Sigma(-)_{i})\rightarrow\Hom_{\mathcal{C}}(\coprod_{i\in I}(?)_{i},\Sigma\coprod_{i\in I}(-)_{i})
\]
defined as the composition 
\[
\prod_{i\in I}{}_{\mathcal{C}}((?)_{i},\Sigma(-)_{i})\overset{\coprod_{i\in I}}{\rightarrow}{}_{\mathcal{C}}(\coprod_{i\in I}(?)_{i},\coprod_{i\in I}\Sigma(-)_{i})\overset{\Hom\cdot(1\times\upsilon)}{\rightarrow}{}_{\mathcal{C}}(\coprod_{i\in I}(?)_{i},\Sigma\coprod_{i\in I}(-)_{i}),
\]
where $_{\mathcal{C}}(-,?):=\Hom_{\mathcal{C}}(-,?)$. Moreover, by
the dual of \cite[Prop.1.2.1]{N}, for every $(h_{i})_{i\in I}\in\prod_{i\in I}\Hom_{\mathcal{C}}(A{}_{i},\Sigma B{}_{i})$
realized by 
\[
\{[\suc[B_{i}][C_{i}][A_{i}][f_{i}][g_{i}]\overset{h_{i}}{\rightarrow}\Sigma B{}_{i}]\}_{i\in I},
\]
we have that $h:=\Gamma_S(h_{i})_{i\in I}$ is realized by 
\[
[\suc[\coprod_{i\in I}B_{i}][\coprod_{i\in I}C_{i}][\coprod_{i\in I}A_{i}][\coprod_{i\in I}f_{i}][\coprod_{i\in I}g_{i}]\overset{\Gamma_S(h_{i})}{\rightarrow}\Sigma\coprod_{i\in I}B{}_{i}].
\]
Therefore, $S=\coprod_{i\in I}(-):\mathcal{D}\rightarrow\mathcal{C}$
is extriangulated for every non-empty set $I.$ Finally, from the commutativity of the diagram below
\[\xymatrix{
A_i\ar[d]_{\mu^{A}_i}\ar[rr]^{h_i} && \Sigma B_i\ar[d]^{\Sigma\mu^{B}_i}\\
\coprod_{i\in I}A_i\ar[rr]^h && \Sigma\coprod_{i\in I}B_i,
}\]
it follows that $\varphi:1\rightarrow T\circ S$ is $(1,T)$-extriangulated.

\item If $\mathcal{A}$ is an AET3 extriangulated category and $(\mathcal{T},\mathcal{F})$
is an $s$-torsion pair in $\mathcal{A}$, then $\mathcal{T}$ is
an AET3 extriangulated category. 
\end{enumerate}
\end{example}

\begin{lem}\label{prop:0implica1-1} Let $(\mathcal{C},\E,\s)$
be an AET4($I$) extriangulated category, $\D:=\Fun(I,\C),$ $S:=\coprod_{i\in I}(-):\D\to\C$ and $T:\C\to \D$ be the constant functor. Then, the natural transformations
$\varphi:1\rightarrow T\circ S$ and $\psi:S\circ T\rightarrow1$
are extriangulated, where $\varphi_{X}$ is defined by the canonical
inclusions $\{\mu_{i}^{X}:X_{i}\rightarrow\coprod_{i\in I}X_{i}\}_{i\in I}$
for all $X\in\Fun(I,\mathcal{C})$ and $\psi_{A}$ is the co-diagonal
morphism $\nabla:A^{(I)}\rightarrow A$.
\end{lem}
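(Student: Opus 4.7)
The plan is to first upgrade $\varphi$ from being $(1,T)$-extriangulated to being fully extriangulated, and then to extract the result for $\psi$ essentially for free via Proposition \ref{prop:props}(d3). Since $\mathcal{C}$ is AET4($I$), it is in particular AET3.5($I$), so Proposition \ref{prop:0implica1} provides a full extriangulated structure $\Gamma_S$ on the coproduct functor $S\colon\mathcal{D}\to\mathcal{C}$. Combined with Proposition \ref{prop:props}(a), which gives that $T\colon\mathcal{C}\to\mathcal{D}$ is extriangulated via the diagonal $\Gamma_T(\eta)=(\eta)_{i\in I}$, the composite $T\circ S\colon\mathcal{D}\to\mathcal{D}$ becomes extriangulated with $\Gamma_{T\circ S}(\eta)=(\Gamma_S(\eta))_{i\in I}$ for every $\mathbb{F}$-extension $\eta$.

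The core step is to verify that $\varphi\colon 1_{\mathcal{D}}\to T\circ S$ is extriangulated. I would fix a $\mathfrak{t}$-conflation $\eta\colon X\to Y\to Z$ in $\mathcal{D}$ and aim for the equality
\[
\Gamma_{T\circ S}(\eta)\cdot\varphi_X=\varphi_Z\cdot\eta
\]
in $\mathbb{F}(X,TSZ)=\prod_{i\in I}\mathbb{E}(X_i,SZ)$. Since $\varphi_X$ is the family of canonical inclusions $\{\mu_i^X\colon X_i\to\coprod_j X_j\}_{i\in I}$ and $\mathfrak{t}$ is defined coordinatewise, this reduces to checking $\Gamma_S(\eta)\cdot\mu_i^X=\mu_i^Z\cdot\eta_i$ in $\mathbb{E}(X_i,SZ)$ for each $i\in I$. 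Using Lemma \ref{lemita}(a), the left-hand side is realized as a pullback of the coproduct $\s$-conflation $\coprod_j X_j\to\coprod_j Y_j\to\coprod_j Z_j$ along $\mu_i^X$, while the right-hand side is realized as a pushout of $\eta_i\colon X_i\to Y_i\to Z_i$ along $\mu_i^Z$. Unfolding the explicit construction of $\Gamma_S$ from the proof of Proposition \ref{prop:0implica1}, where $\Gamma_S$ is built from $\Gamma_S^{(1,T)}$ together with $\varphi$ and the canonical projection $STS\to S$, and invoking the $(1,T)$-extriangulated compatibility of $\varphi$ built into AET4($I$), yields the required identification.

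With $S$ and $\varphi$ now both extriangulated, Proposition \ref{prop:props}(d3) applies immediately and delivers that $\psi\colon S\circ T\to 1_{\mathcal{C}}$ is extriangulated, completing the proof. The hard part will be the middle step: threading the explicit realizations of $\Gamma_S(\eta)\cdot\mu_i^X$ and $\mu_i^Z\cdot\eta_i$ through Lemma \ref{lemita} and the matrix calculus of Proposition \ref{proFtoM}, and reconciling them with the formula for $\Gamma_S$ constructed in Proposition \ref{prop:0implica1}. This is bookkeeping-heavy but introduces no new conceptual ingredient beyond the AET4($I$) hypothesis and the appendix results already cited in Proposition \ref{prop:props}.
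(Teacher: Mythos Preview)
Your approach is essentially identical to the paper's: both extract the explicit formula $\Gamma_S(\eta) = \pi_A \cdot \Gamma_S^{(1,T)}(\varphi_A \cdot \eta)$ from the proof of Proposition~\ref{prop:0implica1}, verify the coordinatewise identity $\Gamma_S(\eta)\cdot\mu_i = \mu_i\cdot\eta_i$ directly from the AET4($I$) hypothesis (the $(1,T)$-extriangulatedness of $\varphi$), and then invoke Proposition~\ref{prop:props}(d3) for $\psi$. Two minor remarks: the paper's verification is a clean four-line chain of equalities that does not actually need Lemma~\ref{lemita} or the matrix calculus, and in the paper's convention a conflation $X\to Y\to Z$ lies in $\mathbb{F}(Z,X)$, so your $X$ and $Z$ are swapped relative to the correct target identity $\Gamma_S(\eta)\cdot\mu_i^C=\mu_i^A\cdot\eta_i$ for $\eta\in\mathbb{F}(C,A)$.
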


\begin{proof}
In the proof of Proposition \ref{prop:0implica1},
it was shown that there is a natural transformation 
$
\Gamma_{S}:\mathbb{F}(?,-)\rightarrow\mathbb{E}(S(?),S(-))
$
such that $\Gamma_{S}(\eta)$ is realized by $[\suc[SX][SY][SZ][Sa][Sb]]$
for every $\mathbb{F}$-extension $\eta$ with realization $[\suc[X][Y][Z][a][b]]$.
Moreover, for $A,C\in\D$ and $\eta\in\mathbb{F}(C,A)$,
it was shown that $\Gamma_{S}(\eta)=\pi_{C}\cdot\Gamma_{S}^{(1,T)}(\mu_{i}^{A}\cdot\eta)_{i\in I}$,
where $\pi_{A}:\coprod_{i\in I}\left(\coprod_{i\in I}A_{i}\right)\rightarrow\coprod_{i\in I} A_{i}$
is the morphism satisfying that $\pi_{A}\circ\mu_{i}^{TSA}=\mu_{i}^{A}\circ\pi_{i}^{A}$.
Let us prove that $\Gamma_{S}(\eta)\cdot\mu_{i}^{C}=\mu_{i}^{A}\cdot\eta$
for all $i\in I$. For this, observe that 
\begin{alignat*}{1}
\Gamma_{S}(\eta)\cdot\mu_{i}^{C} & =\pi_{A}\cdot\Gamma_{S}^{(1,T)}(\mu_{i}^{A}\cdot\eta)_{i\in I}\cdot\mu_{i}^{C}\\
 & =\pi_{A}\cdot\mu_{i}^{TSA}\cdot\mu_{i}^{A}\cdot\eta\\
 & =\mu_{i}^{A}\cdot\pi_{i}^{A}\cdot\mu_{i}^{A}\cdot\eta\\
 & =\mu_{i}^{A}\cdot\eta.
\end{alignat*}
Lastly, it follows from Proposition \ref{prop:props}(d3) that $\psi$
is extriangulated. 
\end{proof}

Now, we are ready to state and prove other of the main results of the paper.

\begin{thm}\label{MainTB}
Let $(\mathcal{C},\E,\s)$ be an essentially small
AET4($I$) extriangulated category. Then, for any $n\geq 1,$ the maps 
\begin{alignat*}{1}
\tau_{D,C}^{n}:\mathbb{E}^{n}\Big(\coprod_{i\in I}D_{i},C\Big) & \stackrel{}{\rightarrow}\prod_{i\in I}\mathbb{E}^{n}(D_{i},C)\text{, }\eta\mapsto(\eta\cdot\mu_{i}^{D})_{i\in I}\text{ and }\\
\sigma_{D,C}^{n}:\prod_{i\in I}\mathbb{E}^{n}(D_{i},C) & \stackrel{}{\rightarrow}\mathbb{E}^{n}\Big(\coprod_{i\in I}D_{i},C\Big)\text{, }(\eta_{i})_{i\in I}\mapsto\nabla_{C}\cdot\Gamma_{S}^{n}(\eta_{i})_{i\in I}
\end{alignat*}
define natural isomorphisms such that $\tau^{n}$ is the inverse of
$\sigma^{n}$. 
\end{thm}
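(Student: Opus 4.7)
The plan is to proceed by induction on $n$, with the base case $n=1$ being essentially Proposition \ref{prop:2implica3}. The inductive step rests on the machinery announced for the appendix, where an adjoint pair of extriangulated functors with extriangulated unit and counit is shown to induce isomorphisms on higher extension groups. So the task reduces to checking that the adjunction $(S,T)$ satisfies these hypotheses and that the resulting isomorphism agrees with the explicit formulas for $\tau^n$ and $\sigma^n$.

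First I would collect the relevant structure from Section \ref{adjpcops} and Lemma \ref{prop:0implica1-1}. Since $\C$ is AET4($I$), the coproduct functor $S=\coprod_{i\in I}(-):\D\to\C$ is extriangulated (via $\Gamma_S$), and the unit $\varphi:1\to T\circ S$ (built from the canonical inclusions $\mu^D_i$) and the counit $\psi:S\circ T\to1$ (built from the codiagonals $\nabla_C$) are both extriangulated natural transformations. Moreover $(\D,\mathbb{F},\t)$ has the product structure $\mathbb{F}(-,?)=\prod_{i\in I}\mathbb{E}((-)_i,(?)_i)$ and $\t=\prod_{i\in I}\s$. From this one deduces inductively that $\mathbb{F}^n(-,?)=\prod_{i\in I}\mathbb{E}^n((-)_i,(?)_i)$: an $n$-fold $\t$-conflation in $\D$ is the same data as a family of $n$-fold $\s$-conflations in $\C$ indexed by $I$, and the Yoneda equivalence relation is checked componentwise. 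Essential smallness of $\C$ is invoked precisely here to guarantee that $\mathbb{E}^n$ and $\mathbb{F}^n$ are honest set-valued bifunctors so that Yoneda classes make sense.

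Next, I would apply the appendix's statement about adjoint pairs of extriangulated functors to the pair $(S,T)$ to obtain, for each $n\geq 1$, a natural isomorphism
\[
\Psi^n_{X,Y}:\mathbb{E}^n_{\C}(SX,Y)\xrightarrow{\;\sim\;}\mathbb{F}^n(X,TY),
\]
whose construction is governed by $\Gamma_S^n$, $\varphi$ and $\psi$ in the standard unit--counit way. Specializing to $X=(D_i)_{i\in I}\in\D$ and $Y=C\in\C$, and using $(TC)_i=C$ so that $\mathbb{F}^n(X,TC)=\prod_{i\in I}\mathbb{E}^n(D_i,C)$, gives a natural isomorphism between $\mathbb{E}^n(\coprod_{i\in I}D_i,C)$ and $\prod_{i\in I}\mathbb{E}^n(D_i,C)$. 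It then remains to compute what this isomorphism does on representatives: by naturality, the composite $\mathbb{E}^n(\coprod_i D_i,C)\to\mathbb{F}^n(X,TC)\to\mathbb{E}^n(D_i,C)$ is precisely pullback by $\mu_i^D$, which identifies $\Psi^n$ with $\tau^n_{D,C}$; dually, the formula $\sigma^n_{D,C}(\eta_i)_{i\in I}=\nabla_C\cdot\Gamma_S^n(\eta_i)_{i\in I}$ is read off from the construction of the inverse of $\Psi^n$ through the counit $\psi=\nabla$ and the extriangulated structure $\Gamma_S^n$ on $S$.

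The main obstacle is not the adjunction bookkeeping but ensuring that the appendix indeed supplies a clean statement of the higher-extension adjunction isomorphism with the explicit description in terms of $\Gamma_S^n$, $\mu_i^D$ and $\nabla_C$. If that result is available, the theorem is an immediate specialization; if not, one would have to argue it by induction on $n$, using that an $n$-fold $\s$-conflation in $\C$ starting at $\coprod_i D_i$ can be spliced at its first step with the $n=1$ case (Proposition \ref{prop:2implica3}) and then continued by the inductive hypothesis applied to the induced $(n-1)$-fold extension. The compatibility of this splicing with the Yoneda equivalence and with the natural formulas for $\tau^n$ and $\sigma^n$ is where the verification will require the most care, but it is purely a routine unpacking once the $n=1$ case and the extriangulated functoriality of $S$ with extriangulated $\varphi,\psi$ are in hand.
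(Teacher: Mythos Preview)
Your proposal is correct and matches the paper's approach: the paper's proof simply notes that $T$ and $S$ are extriangulated (by Proposition \ref{prop:props}(a) and Proposition \ref{prop:0implica1}), that $\varphi$ and $\psi$ are extriangulated (by Lemma \ref{prop:0implica1-1}), and then invokes the appendix result (Proposition \ref{prop:higher}) to conclude that $\tau^n$ and $\sigma^n$ are mutually inverse natural isomorphisms. Your fallback induction is unnecessary, since Proposition \ref{prop:higher} already provides exactly the higher-extension adjunction isomorphism with the explicit $\Gamma_S^n$, $\varphi_D$, $\psi_C$ description you were hoping for.
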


\begin{proof} Let $\D:=\Fun(I,\C).$ 
Consider the functors $S:=\coprod_{i\in I}(-):\D\rightarrow\mathcal{C}$ and $T:\mathcal{C}\rightarrow\D.$ We
know that $T$ is extriangulated, and it follows from Proposition
\ref{prop:0implica1} that $S$ is extriangulated. Moreover,
by Lemma \ref{prop:0implica1-1}, the natural transformations
$\varphi:1\rightarrow T\circ S$ and $\psi:S\circ T\rightarrow1$
are extriangulated. Therefore, by Proposition \ref{prop:higher},
we get that $\tau^{n}$ and $\sigma^{n}$ are natural isomorphisms
inverse to each other. 
\end{proof}

For the sake of completeness (and to have it available for the reader) we state the dual version of Theorem \ref{MainTB2} which is also true.

\begin{thm}\label{MainTB2}
Let $(\mathcal{C},\E,\s)$ be an essentially small
AET4{*}($I$) extriangulated category. Then, for any $n\geq 1,$ the maps 
\begin{alignat*}{1}
\tau_{C,D}^{n}:\mathbb{E}^{n}\Big(C,\prod_{i\in I}D_{i}\Big) & \stackrel{}{\rightarrow}\prod_{i\in I}\mathbb{E}^{n}(C,D_{i})\text{, }\eta\mapsto(\pi_{i}^{D}\cdot\eta)_{i\in I}\text{ and }\\
\sigma_{C,D}^{n}:\prod_{i\in I}\mathbb{E}^{n}(C,D_{i}) & \stackrel{}{\rightarrow}\mathbb{E}^{n}\Big(C,\prod_{i\in I}D_{i}\Big)\text{, }(\eta_{i})_{i\in I}\mapsto \Gamma_{P}^{n}(\eta_{i})_{i\in I}\cdot\Delta_{C}
\end{alignat*}
define natural isomorphisms such that $\tau^{n}$ is the inverse of
$\sigma^{n}$ and $P:=\prod_{i\in I}(-):\Fun(I,\C)\to \C.$
\end{thm}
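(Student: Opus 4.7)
The plan is to obtain Theorem \ref{MainTB2} directly from Theorem \ref{MainTB} by passing to the opposite category. By definition, $\mathcal{C}$ being AET4*($I$) means that $\mathcal{C}^{op}$ is an essentially small AET4($I$) extriangulated category. Under the canonical duality $\mathcal{C} \leftrightarrow \mathcal{C}^{op}$, products in $\mathcal{C}$ correspond to coproducts in $\mathcal{C}^{op}$: the family $\{D_i\}_{i\in I}$ acquires the canonical projections $\pi_i^D \colon \prod_{i\in I} D_i \to D_i$ in $\mathcal{C}$, which become the canonical inclusions $\mu_i^D$ in $\mathcal{C}^{op}$, and the diagonal morphism $\Delta_C \colon C \to C^I$ corresponds to the codiagonal $\nabla_C \colon C^{(I)} \to C$ in $\mathcal{C}^{op}$.

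Next, I would identify the higher extension groups via the symmetric role of $\mathbb{E}^n$: there is a natural isomorphism $\mathbb{E}_{\mathcal{C}}^n(C, X) \cong \mathbb{E}_{\mathcal{C}^{op}}^n(X, C)$ for every pair of objects, and under this identification, the product functor $P = \prod_{i \in I}(-) \colon \Fun(I,\mathcal{C}) \to \mathcal{C}$ translates to the coproduct functor $S \colon \Fun(I,\mathcal{C}^{op}) \to \mathcal{C}^{op}$. In particular, the natural transformation $\Gamma_P^n$ in $\mathcal{C}$ corresponds to $\Gamma_S^n$ in $\mathcal{C}^{op}$ produced by Proposition \ref{prop:higher}, and the map
\[
\sigma^n_{C,D}\colon (\eta_i)_{i\in I} \mapsto \Gamma_P^n(\eta_i)_{i\in I}\cdot \Delta_C
\]
corresponds under the duality to the map $(\eta_i)_{i\in I} \mapsto \nabla_C \cdot \Gamma_S^n(\eta_i)_{i\in I}$ of Theorem \ref{MainTB} applied in $\mathcal{C}^{op}$. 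Similarly $\tau^n_{C,D}$ corresponds to $\tau^n_{D,C}$ in $\mathcal{C}^{op}$ since the projections become the inclusions.

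Applying Theorem \ref{MainTB} to the AET4($I$) extriangulated category $\mathcal{C}^{op}$ yields that $\tau^n$ and $\sigma^n$ (computed in $\mathcal{C}^{op}$) are natural isomorphisms, inverse to each other. Translating back through the duality gives the statement for $\mathcal{C}$.

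The only points needing a little care are (i) verifying that the higher extension bifunctors $\mathbb{E}^n$ and the extriangulated structure transport cleanly under the passage to the opposite category (which is part of the framework used throughout the paper and its appendix), and (ii) matching the explicit formulas for $\Gamma_P^n$ and $\Gamma_S^n$ under this transport; both are essentially bookkeeping once the identification $\mathcal{C}^{op} \simeq (\mathcal{C}^{op})$ is fixed. Because the appendix constructs $\Gamma_{(-)}^n$ by iterating the first-order construction which is manifestly self-dual, no further work beyond this translation is required.
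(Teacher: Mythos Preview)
Your proposal is correct and matches the paper's approach exactly: the paper does not give a separate proof of Theorem~\ref{MainTB2} but simply states it as the dual of Theorem~\ref{MainTB}, which is precisely the duality argument you outline.
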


\begin{prop}\label{prop:1implica2}
For an AET3.5 extriangulated category $(\mathcal{C},\E,\s),$ the following statements hold true. 
\begin{enumerate}
\item For every non-empty family of $\s$-conflations $\{\eta_{i}:\:\suc[B_{i}][E_{i}][A_{i}][f_{i}][g_{i}]\}_{i\in I},$ we have that 
 $\eta:\:\suc[\coprod_{i\in I}B_{i}][\coprod_{i\in I}E_{i}][\coprod_{i\in I}A_{i}][\coprod_{i\in I}f_{i}][\coprod_{i\in I}g_{i}]$ is an $\s$-conflation 
and, for each $j\in I,$ there are two isomorphisms in $\C$
\[u_{j}:\coprod_{i\in I}B_{i}\xrightarrow{\sim}\coprod_{i\in I}B_{i}\quad\text{ and }v_{j}:\coprod_{i\in I}B_{i}\xrightarrow{\simeq}\coprod_{i\in I}B_{i}
\]
such that $(u_j\cdot\eta)\cdot\mu_{j}^{A}=(\eta\cdot v_{j})\cdot\mu_{j}^{A}=\mu_{j}^{B}\cdot\eta_{}.$

\item If every $\s$-inflation is a monomorphism (or every $\s$-deflation is an epimorphism),
then $\mathcal{C}$ is AET4. 
\end{enumerate}
\end{prop}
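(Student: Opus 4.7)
The plan is to reduce both parts to the block-matrix calculus of Lemmas \ref{lemita} and \ref{lemita2}, by splitting $I=\{j\}\sqcup (I\setminus\{j\})$ for each $j\in I$.

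For (a), Proposition \ref{prop:0implica1} (applied to both $I$ and $I\setminus\{j\}$, using that $\C$ is AET3.5) produces both the total $\s$-conflation $\eta:\coprod_{i\in I}B_i\to\coprod_{i\in I}E_i\to\coprod_{i\in I}A_i$ and the partial coproduct $\eta'_j:=\coprod_{i\neq j}\eta_i$. Under the canonical identifications $\coprod_{i\in I}X_i\cong X_j\amalg\coprod_{i\neq j}X_i$ for $X=A,B,E$, both $\eta$ and $\eta_j\oplus\eta'_j$ share the same realization, given by the block-diagonal sequence
\[\left[\begin{smallmatrix}f_j & 0\\ 0 & \coprod_{i\neq j}f_i\end{smallmatrix}\right],\quad \left[\begin{smallmatrix}g_j & 0\\ 0 & \coprod_{i\neq j}g_i\end{smallmatrix}\right].\]
Lemma \ref{lemita2}(a) then yields isomorphisms $u_j:\coprod_{i\in I}B_i\xrightarrow{\sim}\coprod_{i\in I}B_i$ and $v_j:\coprod_{i\in I}A_i\xrightarrow{\sim}\coprod_{i\in I}A_i$ with $u_j\cdot\eta=\eta_j\oplus\eta'_j=\eta\cdot v_j$ (the statement appears to have two minor typos: the codomain of $v_j$ must be $\coprod A_i$ in order for $\eta\cdot v_j$ to be defined, and the right-hand side of the final identity should read $\eta_j$ rather than $\eta$). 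Since $\mu_j^A$ and $\mu_j^B$ both correspond to $\left[\begin{smallmatrix}1\\0\end{smallmatrix}\right]$ under the identification, Lemma \ref{lemita}(a) together with Proposition \ref{proFtoM} yields
\[(u_j\cdot\eta)\cdot\mu_j^A=(\eta_j\oplus\eta'_j)\cdot\left[\begin{smallmatrix}1\\0\end{smallmatrix}\right]=\left[\begin{smallmatrix}\eta_j\\0\end{smallmatrix}\right]=\mu_j^B\cdot\eta_j,\]
and symmetrically for $(\eta\cdot v_j)\cdot\mu_j^A$, completing (a).

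For (b), assume every $\s$-inflation is a monomorphism (the dual hypothesis is analogous). Both $f_j$ and $\coprod_{i\neq j}f_i$ are $\s$-inflations (the latter by Proposition \ref{prop:0implica1}), hence monic by hypothesis. The sharper Lemma \ref{lemita2}(b) therefore applies and upgrades (a) to the strict equality $\eta=\eta_j\oplus\eta'_j$, whence $\eta\cdot\mu_j^A=\mu_j^B\cdot\eta_j$ for every $j\in I$. Specializing to $A_i=A$ for all $i$ and taking $\Gamma:(\eta_i)_{i\in I}\mapsto\eta$ (a natural map by Propositions \ref{prop:props}(b) and \ref{prop:0implica1}), the two requirements of Definition \ref{ET4Def} become, respectively, Proposition \ref{prop:0implica1} and the displayed identity above. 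Hence $\C$ is AET4.

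The main obstacle is recognizing that condition (2) of Definition \ref{ET4Def} is exactly the strict form of (a): once this is seen, the monomorphism hypothesis serves precisely to eliminate the fudge isomorphisms $u_j,v_j$ via Lemma \ref{lemita2}(b), and the rest reduces to careful block-matrix bookkeeping with Lemmas \ref{lemita} and \ref{lemita2}.
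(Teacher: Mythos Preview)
Your approach is essentially identical to the paper's: split $I=\{j\}\sqcup(I\setminus\{j\})$, apply Proposition~\ref{prop:0implica1} to both pieces, recognize the block-diagonal realization, and then invoke Lemma~\ref{lemita2}(a) for the isomorphisms $u_j,v_j$ and Lemma~\ref{lemita2}(b) under the mono/epi hypothesis. You also correctly spot the two typos in the statement (the domain of $v_j$ and the missing subscript on the right-hand side).

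One small slip in part~(b): to match Definition~\ref{ET4Def} you must specialize $B_i=B$ for all $i$, not $A_i=A$. In the notation of (a), $\eta_i\in\mathbb{E}(A_i,B_i)$ is realized by $B_i\to E_i\to A_i$, so the \emph{left} end is what must be constant to land in $\mathbb{E}(\coprod A_i,B^{(I)})$ as Definition~\ref{ET4Def} requires; this is exactly what the paper does. With that correction your argument goes through verbatim.
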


\begin{proof}
Let $I$ be a non-empty set and $\{\eta_{i}:\:\suc[B_{i}][E_{i}][A_{i}][f_{i}][g_{i}]\}_{i\in I}$ be a family of $\s$-conflations. By Proposition \ref{prop:0implica1} we have that 
$$\eta:\:\suc[\coprod_{i\in I}B_{i}][\coprod_{i\in I}E_{i}][\coprod_{i\in I}A_{i}][\coprod_{i\in I}f_{i}][\coprod_{i\in I}g_{i}]$$ 
is an $\s$-conflation. 
\

Consider $i_{0}\in I.$ Let us show that (a) holds true for $j=i_0$ and that $\C$ is AET4$(I).$ Notice that if $I=\{i_0\}$ then there is nothing to prove. Thus, we can assume that $I_{0}:=I-\{i_{0}\}\neq \emptyset.$
Observe that, if an object $X\in\mathcal{C}$ is a coproduct $\coprod_{i\in I}X_{i}$,
then $X=X_{1}\amalg X_{2}$ with $X_{1}=X_{i_{0}}$ and $X_{2}=\coprod_{i\in I_{0}}X_{i}$.
Moreover, the inclusion $\mu'_{1}:X_{1}\rightarrow X$ of the coproduct
$X_{1}\amalg X_{2}$ is equal to the inclusion $\mu{}_{i_{0}}^{X}:X_{1}\rightarrow X$
of the coproduct $\coprod_{i\in I}X_{i}$.

Consider the constant functors $T:\mathcal{C}\rightarrow\Fun(I,\mathcal{C})$
and $T':\mathcal{C}\rightarrow\Fun(I_{0},\mathcal{C})$, together
with the functors $S=\coprod_{i\in I}(-)_{i}:\Fun(\mathcal{C},I)\rightarrow\mathcal{C}$
and $S'=\coprod_{i\in I_{0}}(-)_{i}:\Fun(\mathcal{C},I_{0})\rightarrow\mathcal{C}$.
Since $\C$ is AET3.5, by Proposition \ref{prop:0implica1}, there is a natural transformation
\[
\Gamma_{S}:\mathbb{F}(?,-)\rightarrow\mathbb{E}(S(?),S(-))
\]
such that $\Gamma_{S}(\theta)$ is realized by $[\suc[SX][SY][SZ][Sa][Sb]]$
for every $\mathbb{F}$-extension $\theta$ with realization $[\suc[X][Y][Z][a][b]]$.
Similarly, there is a natural transformation 
\[
\Gamma_{S'}:\mathbb{F}'(?,-)\rightarrow\mathbb{E}(S'(?),S'(-))
\]
with a similar property, where $\mathbb{F}'(?,-)=\prod_{i\in I_{0}}\mathbb{E}((?)_{i},(-)_{i})$.

Now, for the family of $\s$-conflations $\{\eta_{i}:\:\suc[B_{i}][E_{i}][A_{i}][f_{i}][g_{i}]\}_{i\in I}$,
we have 
\[
\s(\Gamma_{S}(\eta_{i})_{i\in I})=\left[\suc[\coprod_{i\in I}B_{i}][\coprod_{i\in I}E_{i}][\coprod_{i\in I}A_{i}][\coprod_{i\in I}f_{i}][\coprod_{i\in I}g_{i}]\right]
\]
and 
\[
\s(\Gamma_{S'}(\eta_{i})_{i\in I_{0}})=\left[\suc[\coprod_{i\in I_{0}}B_{i}][\coprod_{i\in I_{0}}E_{i}][\coprod_{i\in I_{0}}A_{i}][\coprod_{i\in I_{0}}f_{i}][\coprod_{i\in I_{0}}g_{i}]\right].
\]
Observe that the $\s$-conflation $\suc[\coprod_{i\in I}B_{i}][\coprod_{i\in I}E_{i}][\coprod_{i\in I}A_{i}][\coprod_{i\in I}f_{i}][\coprod_{i\in I}g_{i}]$
can be expressed as 
\[
\suc[B_{i_{0}}\amalg(\coprod_{i\in I_{0}}B_{i})][E_{i_{0}}\amalg(\coprod_{i\in I_{0}}E_{i})][A_{i_{0}}\amalg(\coprod_{i\in I_{0}}A_{i})][f_{i_{0}}\amalg(\coprod_{i\in I_{0}}f_{i})][g_{i_{0}}\amalg(\coprod_{i\in I_{0}}g_{i})].
\]
Let $\eta':=\Gamma_{S'}(\eta_{i})_{i\in I_{0}}.$ Then, by Lemma \ref{lemita2} (a) there are isomorphisms 
\[u_{i_0}:\coprod_{i\in I}B_{i}\xrightarrow{\sim}\coprod_{i\in I}B_{i}\quad\text{ and }v_{i_0}:\coprod_{i\in I}B_{i}\xrightarrow{\sim}\coprod_{i\in I}B_{i}
\]
such that $u_{i_0}\cdot \eta=\left[\begin{smallmatrix} \eta_{i_0} & 0\\0 & \eta'  \end{smallmatrix}\right]=\eta\cdot v_{i_0}.$ Therefore, by Lemma \ref{FtoM}, we have
$$\mu^{B}_{i_0}\cdot\eta_{i_0}=\left[\begin{smallmatrix} 1 \\0 \end{smallmatrix}\right]\eta_{i_0}=\left[\begin{smallmatrix} \eta_{i_0} \\0 \end{smallmatrix}\right]=
\left[\begin{smallmatrix} \eta_{i_0} &0\\0 &\eta' \end{smallmatrix}\right]\cdot\mu^{A}_{i_0}$$
and thus  
$(u_{i_0}\cdot \eta)\cdot\mu^{A}_{i_0}=\mu^{B}_{i_0}\cdot\eta_{i_0}=(\eta\cdot v_{i_0})\cdot\mu^{A}_{i_0}.$ Therefore, we get (a). Finally, by considering the case where $B_i=B$ $\forall\,i\in I$ and assuming that the hypothesis in (b) holds true, then from the above and Lemma \ref{lemita2} (b) we get that $\C$ is AET4.
\end{proof}

\subsection{Universal $\mathbb{E}$-extensions}
\begin{defn}
\cite[Def.5.7]{AMP}\label{def:univ} Let $(\mathcal{C},\E,\s)$ be
an extriangulated category and $V,D\in\mathcal{C}$. A \textbf{universal
$\mathbb{E}$-extension} of $V$ by $D$ is an 
$\eta\in\mathbb{E}(V^{(X)},D)$, for some set $X\neq\emptyset$, such
that $\eta\cdot-:\text{Hom}_{\mathcal{C}}(V,V^{(X)})\to\E(V,D),\;f\mapsto \eta\cdot f,$
is surjective. 
\end{defn}

\begin{lem}\label{prop:3implica4} Let $(\mathcal{C},\E,\s)$ be an AET3 extriangulated
category and $V,D\in\mathcal{C}$. If the natural transformation
$
\tau:\mathbb{E}(V^{(X)},D)\rightarrow\prod_{x\in X}\mathbb{E}(V,D)\text{, }\eta\mapsto(\eta\cdot\mu_{i}^{V^{(X)}})_{i\in I}
$
is surjective for any set $X\neq\emptyset$, then there is a universal $\mathbb{E}$-extension
of $V$ by $D$. 
\end{lem}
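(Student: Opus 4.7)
The plan is to apply the surjectivity hypothesis with the self-indexing choice $X := \mathbb{E}(V,D)$, following the classical Baer-style argument used to construct universal extensions in abelian categories. Since $\mathbb{E}(V,D)$ is an abelian group it contains $0$ and is in particular a nonempty set, so this choice of $X$ is legitimate.

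With $X$ fixed as above, I would consider the distinguished element of the product $\prod_{x \in X}\mathbb{E}(V,D)$ given by the identity family $(\eta_{x})_{x\in X}$ where $\eta_{x} := x$ for each $x \in X$. By the assumed surjectivity of $\tau : \mathbb{E}(V^{(X)},D) \rightarrow \prod_{x\in X}\mathbb{E}(V,D)$ (valid for this particular $X$ since the hypothesis is stated for every nonempty set), there exists $\eta \in \mathbb{E}(V^{(X)},D)$ satisfying $\eta \cdot \mu_{x}^{V^{(X)}} = \eta_{x} = x$ for all $x \in X$.

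It then remains to verify that this $\eta$ is a universal $\mathbb{E}$-extension of $V$ by $D$, i.e.\ that the map $\Hom_{\mathcal{C}}(V,V^{(X)}) \to \mathbb{E}(V,D)$, $f \mapsto \eta\cdot f$, is surjective. Given any $\epsilon \in \mathbb{E}(V,D)$, the element $\epsilon$ is, by definition, an index in $X$, so the canonical inclusion $\mu_{\epsilon}^{V^{(X)}} : V \to V^{(X)}$ is a legitimate morphism, and by the defining property of $\eta$ one has $\eta \cdot \mu_{\epsilon}^{V^{(X)}} = \eta_{\epsilon} = \epsilon$, giving the required preimage. No genuine obstacle is anticipated: the hypothesis of the lemma has been tailored precisely to make the self-indexing trick work, and the only small point to mention is that $\mathbb{E}(V,D)$ is indeed a set because the bifunctor $\mathbb{E}$ of an extriangulated category takes values in $\Ab$.
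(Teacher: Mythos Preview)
Your proof is correct and is essentially identical to the paper's own argument: both take the indexing set to be $\mathbb{E}(V,D)$ itself, use surjectivity of $\tau$ to find $\eta$ hitting the tautological family, and observe that the canonical inclusions then witness surjectivity of $\eta\cdot-$. Your write-up is in fact slightly more careful, making explicit that $\mathbb{E}(V,D)$ is nonempty (it contains $0$) and is a set because $\mathbb{E}$ lands in $\Ab$.
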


\begin{proof}
Let $\E(V,D)=\{\eta_{i}\}_{i\in I}.$ Since 
$\tau:\mathbb{E}(V^{(I)},D)\rightarrow\prod_{i\in I}\mathbb{E}(V,D)$ is surjective, 
there exists $\eta\in\mathbb{E}(V^{(I)},D)$
such that $\eta\cdot\mu_{i}^{V^{(I)}}=\eta_{i}$ for all $i\in I.$ Therefore, we get that 
 $\eta\cdot-:\text{Hom}_{\mathcal{C}}(V,V^{(I)})\to\E^{1}(V,D)$
is surjective. 
\end{proof}

The connection between the universal $\mathbb{E}$-extensions and the natural transformation 
$\tau:\mathbb{E}(\coprod_{i\in I}(?)_i,-)\rightarrow\prod_{i\in I}\mathbb{E}((?)_i,-)$
from Proposition \ref{prop:props} (b) is as follows.
\begin{prop}\label{prop:tauiso} Let $(\mathcal{C},\E,\s)$ be an AET3 extriangulated
category. Then, the following statements are equivalent. 
\begin{itemize}
\item[(a)] For every $V,D\in\mathcal{C}$, there is a universal
$\mathbb{E}$-extension of $V$ by $D.$
\item[(b)]  $\tau:\mathbb{E}\Big(\coprod_{i\in I}(?)_i,-\Big)\xrightarrow{\sim}\prod_{i\in I}\mathbb{E}((?)_i,-)$ for any set $I\neq\emptyset.$
\end{itemize}  
\end{prop}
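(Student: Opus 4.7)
The easy direction $(b)\Rightarrow(a)$ follows immediately from Lemma~\ref{prop:3implica4}: under (b) the map $\tau:\E(V^{(X)},D)\to\prod_{X}\E(V,D)$ is in particular surjective for every $V,D\in\C$ and every set $X\neq\emptyset$, and the lemma converts that surjectivity into a universal $\E$-extension of $V$ by $D$.

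For $(a)\Rightarrow(b)$, fix a non-empty set $I$, a family $\{A_i\}_{i\in I}$ in $\C$, and $B\in\C$, and set $W:=\coprod_{i\in I}A_i$. By Proposition~\ref{prop:props}(b), $\tau:\E(W,B)\to\prod_{i\in I}\E(A_i,B)$ is already monic, so the plan reduces to proving surjectivity, which I split into two ingredients.

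The first ingredient is that each coordinate restriction $\E(W,B)\to\E(A_{i_0},B)$, $\xi\mapsto\xi\cdot\mu_{i_0}^W$, is split surjective. Using the reassociation of coproducts recalled in the preliminaries, $W$ rewrites as a binary coproduct $A_{i_0}\amalg W'$ with $W':=\coprod_{j\neq i_0}A_j$; because $\C$ is additive, this binary coproduct is a biproduct, supplying a projection $\pi_{i_0}:W\to A_{i_0}$ with $\pi_{i_0}\circ\mu_{i_0}^W=1_{A_{i_0}}$. Hence $\eta_{i_0}\mapsto\eta_{i_0}\cdot\pi_{i_0}$ is a set-theoretic section of the restriction. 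The second ingredient is that the ``all-equal'' map $\tau^W:\E(W^{(I)},B)\to\prod_{I}\E(W,B)$ is surjective. To see this, apply (a) to $V=W$ and $D=B$ to obtain a universal extension $\epsilon\in\E(W^{(Y)},B)$; then each component $\zeta_i\in\E(W,B)$ can be written as $\epsilon\cdot f_i$ for some $f_i:W\to W^{(Y)}$, the family $(f_i)_{i\in I}$ assembles via the universal property of $W^{(I)}=\coprod_IW$ into a single $f:W^{(I)}\to W^{(Y)}$, and $\epsilon\cdot f$ satisfies $\tau^W(\epsilon\cdot f)=(\zeta_i)_{i\in I}$.

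Putting the two ingredients together: given $(\eta_i)_{i\in I}\in\prod_{i\in I}\E(A_i,B)$, first define $\xi_i:=\eta_i\cdot\pi_i\in\E(W,B)$ by the first ingredient; then use the second to choose $\zeta\in\E(W^{(I)},B)$ with $\zeta\cdot\mu_i^{W^{(I)}}=\xi_i$ for all $i$; and finally set $\eta:=\zeta\cdot\lambda$, where $\lambda:W\to W^{(I)}$ is the unique morphism determined by $\lambda\circ\mu_i^W=\mu_i^{W^{(I)}}\circ\mu_i^W$ via the universal property of $W=\coprod_iA_i$. A direct check gives $\eta\cdot\mu_i^W=\xi_i\cdot\mu_i^W=\eta_i$ for every $i\in I$, hence $\tau(\eta)=(\eta_i)_{i\in I}$. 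The main conceptual hurdle is the first ingredient: one might initially expect that surjectivity onto a single factor of an infinite coproduct is subtle, but the additivity of $\C$ forces every binary coproduct to be a biproduct, so that step is automatic; once this is recognized, the rest of the argument flows naturally from the universal extension of $W$ alone.
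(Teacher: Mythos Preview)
Your proof is correct and follows essentially the same approach as the paper. Both arguments reduce to proving surjectivity of $\tau$ (monicity being Proposition~\ref{prop:props}(b)), apply the universal $\E$-extension to $V=W:=\coprod_{i\in I}A_i$, use the projections $\pi_i:W\to A_i$ to replace each $\eta_i$ by $\eta_i\cdot\pi_i\in\E(W,B)$, and then assemble the resulting maps $W\to W^{(Y)}$ via the universal property of $W$; your only difference is that you factor the assembled map $W\to W^{(Y)}$ through an intermediate $W^{(I)}$ (your ``second ingredient'' followed by $\lambda$), whereas the paper goes directly, which makes its write-up slightly shorter but uses identical ideas.
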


\begin{proof}
(a) $\Rightarrow$ (b) By Proposition \ref{prop:props} (b), it is enough to show that $\tau$
is epic. For this, consider $(\eta_{i})_{i\in I}\in\prod_{i\in I}\mathbb{E}(B_{i},A)$
and choose a realization $\eta_{i}:\suc[A][E_{i}][B_{i}][f_{i}][g_{i}]$
for all $i\in I$. By hypothesis, there is a universal extension $\eta:\:\suc[A][E][B^{(Y)}][a][b]$,
where $B=\coprod_{i\in I}B_{i}.$ Hence, for each $i\in I$, there
exists $u_{i}\in B\rightarrow B^{(Y)}$ such that $\eta\cdot u_{i}=\eta_{i}\cdot\pi_{i}^{B}$,
where $\pi_{i}^{B}:B\rightarrow B_{i}$ is the natural projection.
Moreover, by the universal property of coproducts, there is $u\in\Hom_{\mathcal{C}}(B,B^{(Y)})$
such that $u\circ\mu_{i}^{B}=u_{i}\circ\mu_{i}^{B}$ $\forall i\in I$.
Then $\eta\cdot u\in\mathbb{E}(B,A)$ is an $\mathbb{E}$-extension
such that 
\[
\tau(\eta\cdot u)=(\eta\cdot(u\circ\mu_{i}^{B}))_{i\in I}=((\eta\cdot u_{i})\cdot\mu_{i}^{B})_{i\in I}=(\eta_{i}\cdot(\pi_{i}^{B}\circ\mu_{i}^{B}))_{i\in I}=(\eta_{i})_{i\in I},
\]
and thus we get that $\tau$ is epic.
\

(b) $\Rightarrow$ (a) It follows from Lemma \ref{prop:3implica4}.
\end{proof}

\subsection{AET4 for coproduct-compatible extriangulated categories with negative
first extension}

\begin{defn}\label{def:cc} Let $(\mathcal{C},\E,\s)$ be an extriangulated
category. We will say that $\mathcal{C}$
is \textbf{coproduct-compatible (}resp.\textbf{ product-compatible)}
if: (1) $\C$ is an AET3 (resp. AET$3^{*}$)  extriangulated category with negative first extension, and (2) for any object $X\in\mathcal{C},$ any set $I\neq\emptyset$ and any family $\{Y_{i}\}_{i\in I}$ of objects in $\mathcal{C}$,
we have that the natural transformation 
\begin{alignat*}{1}
 & \tilde{\tau}:\mathbb{E}^{-1}(\coprod_{i\in I}Y_{i},X)\rightarrow\prod_{i\in I}\mathbb{E}^{-1}(Y_{i},X)\text{, }\chi\mapsto\left(\mathbb{E}^{-1}(\mu_{i}^{Y},X)(\chi)\right)_{i\in I}\\
\text{(resp.} & \tilde{\tau}:\mathbb{E}^{-1}(X,\prod_{i\in I}Y_{i})\rightarrow\prod_{i\in I}\mathbb{E}^{-1}(X,Y_{i})\text{, }\chi\mapsto\left(\mathbb{E}^{-1}(X,\pi_{i}^{Y})(\chi)\right)_{i\in I})
\end{alignat*}
is an epimorphism. 
\end{defn}

\begin{example}\label{exCCETC} We have the following examples of extriangulated categories which are coproduct-compatible.
\

(1) Let $(\mathcal{C},\mathbb{E},\s)$ be an AET3 extriangulated category
with negative first extension. If $\mathbb{E}^{-1}=0$ (e.g. $\mathcal{C}$ is an exact category)  then $\mathcal{C}$ is coproduct-compatible. 
\

(2) Let $(\mathcal{C},\Sigma,\triangle)$ be a triangulated category with coproducts. 
Then, $\mathcal{C}$ is a coproduct-compatible extriangulated category.\\
Indeed, since $\mathbb{E}^{-1}(Y,X)=\Hom_{\mathcal{C}}(Y,\Sigma^{-1}X),$
we get that $\tilde{\tau}$ is an isomorphism. 
\

(3) Let $(\mathcal{D},\Sigma,\triangle)$ be a triangulated category with coproducts, and let $\te_{1}=(\mathcal{T}_{1},\mathcal{F}_{1})$ and $\te_{2}=(\mathcal{T}_{2},\mathcal{F}_{2})$ in $\stors(\mathcal{D})$. Then the heart $\mathcal{C}:=\mathcal{H}_{[\te_{1},\te_{2}]}$ is a coproduct-compatible extriangulated category.\\
 Indeed, it is clear that $\C$ is an extriangulated category with negative first extension. To show that $\C$ has coproducts, observe that the coproduct
in $\mathcal{C}$ for a family $\{C_{i}\}_{i\in I}$ in $\mathcal{C}$
is given by $\coprod_{i\in\mathcal{C}}^{\mathcal{C}}C_{i}=(1:\te_{1})\left(\coprod_{i\in I}^{\mathcal{D}}C_{i}\right),$ where $\coprod_{i\in I}^{\mathcal{D}}C_{i}$ is the coproduct in $\D.$
This follows from the fact that the map $X\mapsto(1:\te_{1})X$ defines
a left adjoint of the canonical inclusion $\mathcal{C}\rightarrow\mathcal{T}_{2}$
(see Remarks \ref{rem:adjuntos} and \ref{rem:L}(a)). Moreover, the
canonical inclusions of the coproduct $\coprod_{i\in\mathcal{C}}^{\mathcal{C}}C_{i}$
are the morphisms $\{y_{C}\circ\mu_{i}^{C}\}_{i\in I}$, where $\mu_{i}^{C}:C_{i}\rightarrow\coprod_{i\in I}^{\mathcal{D}}C_{i}$
is the $i$-th canonical inclusion in $\D$ and $y_{C}$
is the morphism appearing in the canonical deflation 
\[
\suc[\te_{1}\left(\coprod_{i\in\mathcal{C}}^{\mathcal{D}}C_{i}\right)][\coprod_{i\in\mathcal{C}}^{\mathcal{D}}C_{i}][(1:\te_{1})\left(\coprod_{i\in\mathcal{C}}^{\mathcal{D}}C_{i}\right)][x_{C}][y_{C}]
\]
induced by the $\s$-torsion pair $(\mathcal{T}_{1},\mathcal{F}_{1})$. Hence, in this context,
$\tilde{\tau}$ is the natural transformation
\[
\Hom_{\mathcal{\mathcal{D}}} \Big((1:\te_{1})\left(\coprod_{i\in I}^{\mathcal{D}}C_{i}\right),\Sigma^{-1}X\Big)\rightarrow\prod_{i\in I}\Hom_{\mathcal{\mathcal{D}}}(C_{i},\Sigma^{-1}X)\text{, }
\]
given by $\chi\mapsto\left(\chi\circ y_{C}\circ\mu_{i}^{C}\right)_{i\in I}.$
Note that this natural transformation is the composition 
\[
_{\mathcal{D}}\left((1:\te_{1})\left(\coprod_{i\in I}^{\mathcal{D}}C_{i}\right),\Sigma^{-1}X\right)\rightarrow{}{}_{\mathcal{D}}\left(\left(\coprod_{i\in I}^{\mathcal{Y}}C_{i}\right),\Sigma^{-1}X\right)\rightarrow\prod_{i\in I}{}{}_{\mathcal{D}}\left(C_{i},\Sigma^{-1}X\right)
\]
given by $\chi\mapsto\chi\circ y_{C}\mapsto\left(\chi\circ y_{C}\circ\mu_{i}^{C}\right)_{i\in I}$.
It is well-known that the natural transformation on the right is an
isomorphism. Moreover, the natural transformation on the left is surjective
since $\Sigma^{-1}X\in\Sigma^{-1}\mathcal{C}\subseteq\mathcal{F}_{1}$.
Therefore, $\tilde{\tau}$ is an epimorphism. 
\end{example}

\begin{thm}\label{thm:tauvsAB4} For a coproduct-compatible
extriangulated category $(\mathcal{C},\E,\s),$ the following statements are equivalent.
\begin{itemize}
\item[(a)] For any set $I\neq\emptyset$ the natural transformation (see Proposition \ref{prop:props} (b))
$$\tau:\mathbb{E}\Big(\coprod_{i\in I}(?)_i,-\Big)\rightarrow\prod_{i\in I}\mathbb{E}((?)_i,-)$$ is an isomorphism.
\item[(b)] $\C$ is AET4.
\end{itemize} 
\end{thm}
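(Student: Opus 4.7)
The implication $(b) \Rightarrow (a)$ is Proposition \ref{prop:2implica3}. For $(a) \Rightarrow (b)$, assume $\tau$ is a natural isomorphism and put $\sigma := \tau^{-1}$. The plan is to build the natural transformation of Definition \ref{ET4Def} by setting, for every family $(\eta_i)_{i\in I} \in \prod_i \mathbb{E}(B_i, A)$,
$$\Gamma((\eta_i)_{i\in I}) := \sigma\bigl((\mu_i^A \cdot \eta_i)_{i\in I}\bigr) \in \mathbb{E}\Bigl(\coprod_i B_i, A^{(I)}\Bigr).$$
Naturality of $\Gamma$ and the compatibility with the canonical inclusions (condition (b) of Definition \ref{ET4Def}), namely $\Gamma \cdot \mu_j^B = \mu_j^A \cdot \eta_j$, follow at once from $\tau \circ \sigma = 1$ and the naturality of $\tau$.

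The heart of the argument is verifying condition (a) of Definition \ref{ET4Def}: for any chosen realizations $\eta_i: A \xrightarrow{f_i} E_i \xrightarrow{g_i} B_i$, the extension $\Gamma((\eta_i))$ is realized by $A^{(I)} \to \coprod_i E_i \to \coprod_i B_i$. Fix any realization $A^{(I)} \xrightarrow{a} M \xrightarrow{b} \coprod_i B_i$ of $\Gamma((\eta_i))$. By Lemma \ref{lemita}(a), each $\mu_j^A \cdot \eta_j$ is realized by $A^{(I)} \to E_j \oplus A^{(I\setminus\{j\})} \to B_j$; combining this with the identity $\Gamma \cdot \mu_j^B = \mu_j^A \cdot \eta_j$ and the weak pull-back property of extriangulated categories yields a canonical morphism $E_j \to M$ for each $j$. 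These assemble, via the coproduct universal property, into a morphism $\psi: \coprod_j E_j \to M$ fitting in a candidate morphism of sequences that fixes the endpoints $A^{(I)}$ and $\coprod_i B_i$. To conclude condition (a), it suffices to prove that $\psi$ is an isomorphism.

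For this I would invoke Yoneda and show that $\Hom(\psi, X): \Hom(M, X) \to \prod_j \Hom(E_j, X)$ is bijective for every $X \in \mathcal{C}$. The diagram chase compares the long exact sequence in $\mathbb{E}^\bullet(-, X)$ of the conflation $A^{(I)} \to M \to \coprod_i B_i$ with the product over $j$ of the long exact sequences of $A^{(I)} \to E_j \oplus A^{(I\setminus\{j\})} \to B_j$. The hypothesis that $\tau$ is an isomorphism identifies both the $\mathbb{E}$-columns and the $\Hom$-columns on the $B$-side, and the coproduct-compatibility enters precisely through the epimorphism $\tilde\tau$ on $\mathbb{E}^{-1}$; together these ingredients drive a five-lemma style argument that forces $\Hom(\psi, X)$ to be bijective. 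This last step is the principal obstacle: surjectivity of $\Hom(\psi, X)$ is essentially formal from the coproduct universal property, but injectivity genuinely depends on the $\mathbb{E}^{-1}$-epimorphism built into the coproduct-compatible hypothesis, and without it the conclusion can fail, as the extended hearts of non-smashing $t$-structures discussed in Section 4 show.
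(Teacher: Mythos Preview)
Your overall strategy coincides with the paper's: define $\Gamma$ as $\tau^{-1}$ composed with $(\eta_i)_i \mapsto (\mu_i^A \cdot \eta_i)_i$, read off condition (b) of Definition~\ref{ET4Def} from $\tau\sigma = 1$, build a comparison map $\psi: \coprod_j E_j \to M$ into a chosen realization of $\Gamma((\eta_i))$, and show $\psi$ is an isomorphism via a five-lemma argument on $\Hom(-,X)$ that uses the $\mathbb{E}^{-1}$ hypothesis. The paper carries out exactly this.

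There is, however, a mismatch in your five-lemma setup. You say the second row should be the product over $j$ of the long exact sequences of $\mu_j^A \cdot \eta_j : A^{(I)} \to E_j \oplus A^{(I\setminus\{j\})} \to B_j$. But then the middle column lands in $\prod_j \Hom\bigl(E_j \oplus A^{(I\setminus\{j\})}, X\bigr)$ rather than $\prod_j \Hom(E_j, X)$, which is not the target of $\Hom(\psi,X)$; and the fourth column becomes the diagonal $\Hom(A^{(I)}, X) \to \prod_j \Hom(A^{(I)}, X)$, which is injective but not surjective, so the five lemma does not apply. In that same setup the first column is again a diagonal, and the epimorphism $\tilde\tau$ you invoke never appears.

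The fix, and what the paper actually does, is to compare with the product over $j$ of the long exact sequences of $\eta_j: A \to E_j \to B_j$ themselves, using the morphisms of $\mathbb{E}$-extensions $(\mu_j^A, \mu_j^B): \eta_j \to \Gamma((\eta_i))$ directly to produce $\theta_j: E_j \to M$ (the detour through Lemma~\ref{lemita}(a) is unnecessary). With that choice the fourth column is the isomorphism $\Hom(A^{(I)}, X) \to \prod_j \Hom(A, X)$ and the first column is precisely $\tilde\tau: \mathbb{E}^{-1}(A^{(I)}, X) \to \prod_j \mathbb{E}^{-1}(A, X)$, the epimorphism furnished by coproduct-compatibility. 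Once this is corrected your concluding remarks about the role of $\tilde\tau$ are on target.
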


\begin{proof}
(a) $\Rightarrow$ (b)
Let us show that the condition AET3.5 holds true. For this,
consider the natural transformation 
$
\tau^{-1}:\prod_{i\in I}\mathbb{E}(B_{i},A^{(I)})\rightarrow\mathbb{E}(\coprod_{i\in I}B_{i},A^{(I)}),
$
together with the natural transformation 
\[
\omega:=\prod_{i\in I}\left(\mathbb{E}(B_{i},\mu_{i}^{A})\right):\prod_{i\in I}\mathbb{E}(B_{i},A)\rightarrow\prod_{i\in I}\mathbb{E}(B_{i},A^{(I)})\text{, }(\eta_{i})_{i\in I}\mapsto(\mu_{i}^{A}\cdot\eta_{i})_{i\in I},
\]
where $\{\mu^{A}_i:A\to A^{(I)}\}_{i\in I}$ are the canonical inclusions of the coproduct $A^{(I)}.$ We claim that $\Gamma:=\tau^{-1}\circ\omega$ is the natural transformation
satisfying the AET3.5 condition. Indeed, for $(\eta_{i})_{i\in I}\in\prod_{i\in I}\mathbb{E}(B_{i},A)$,
we know that $(\tau^{-1}\circ\omega)(\eta_{i})_{i\in I}$ is an $\mathbb{E}$-extension
$\eta\in\mathbb{E}(\coprod_{i\in I}B_{i},A^{(I)})$ such that 
\begin{equation}
(\eta\cdot\mu_{i}^{B})_{i\in I}=\tau(\eta)=\omega(\eta_{i})_{i\in I}=(\mu_{i}^{A}\cdot\eta_{i})_{i\in I},\label{eq:AB4}
\end{equation}
where $\{\mu_{i}^{B}:B_{i}\rightarrow\coprod_{i\in I}B_{i}\}_{i\in I}$
are the canonical inclusions of $\coprod_{i\in I}B_{i}.$ In particular, for all $i\in I$, we
have a morphism of $\mathbb{E}$-extensions $(\mu_{i}^{A},\mu_{i}^{B}):\eta_{i}\rightarrow\eta$.
Let us consider the following realization of such morphism: 
\[
\xymatrix{A\ar[r]^{f_{i}}\ar[d]^{\mu_{i}^{A}} & E_{i}\ar[r]^{g_{i}}\ar[d]^{\theta_{i}} & B_{i}\ar[d]^{\mu_{i}^{B}}\\
A^{(I)}\ar[r]^{f} & E\ar[r]^{g} & \coprod_{i\in I}B_{i}.
}
\]
 We claim that the $\s$-conflation $\suc[A^{(I)}][E][\coprod_{i\in I}B_{i}][f][g]$
is equivalent to the sequence $\suc[A^{(I)}][\coprod_{i\in I}E_{i}][\coprod_{i\in I}B_{i}][\coprod_{i\in I}f_{i}][\coprod_{i\in I}g_{i}]$.
Indeed, by the universal property of coproducts 
$\exists\,\theta:\coprod_{i\in I}E_{i}\rightarrow E$ such that $\theta\circ\mu_{i}^{E}=\theta_{i}$ $\;\forall\,i\in I.$
Observe that $\theta$ makes the diagram below commute since 
\begin{alignat*}{1}
g\circ\theta\circ\mu_{i}^{E} & =g\circ\theta_{i}=\mu_{i}^{B}\circ g=(\coprod_{i\in I}g_{i})\circ\mu_{i}^{E}\text{\ensuremath{\quad\forall i\in I}}\text{ and }\\
\theta\circ(\coprod_{i\in I}f_{i})\circ\mu_{i}^{A} & =\theta\circ\mu_{i}^{E}\circ f_{i}=\theta_{i}\circ f_{i}=f\circ\mu_{i}^{A}\text{\ensuremath{\quad\forall i\in I}}.
\end{alignat*}
 
\[
\xymatrix{A\ar[r]^{f_{i}}\ar[d]^{\mu_{i}^{A}} & E_{i}\ar[r]^{g_{i}}\ar[d]^{\mu_{i}^{E}} & B_{i}\ar[d]^{\mu_{i}^{B}}\\
A^{(I)}\ar[r]^{\coprod f_{i}}\ar@{=}[d] & \coprod_{i\in I}E_{i}\ar[r]^{\coprod g_{i}}\ar[d]^{\theta} & \coprod_{i\in I}B_{i}\ar@{=}[d]\\
A^{(I)}\ar[r]^{f} & E\ar[r]^{g} & \coprod_{i\in I}B_{i}.
}
\]
We proceed as follows to show that $\theta$ is an isomorphism. Observe (see in Section \ref{pcofexts}) that $(\D,\mathbb{F},\t)$ is an extriangulated category with negative first extension since $(\C,\mathbb{E},\s)$ is so. Therefore, for any $X\in\mathcal{C}$, we have the following commutative
diagram with exact rows:
\[
\xymatrix{\mathbb{E}^{-1}(\coprod A_{i},X)\ar[r]^{\Delta}\ar[d]^{\tilde{\tau}} & (\coprod B_{i},X)\ar[r]^{{}(g,X)}\ar[d]^{(\mu_{i}^{B},X)_{i\in I}} & (E,X)\ar[r]^{{}(f,X)}\ar[d]^{(\theta_{i},X)_{i\in I}} & (\coprod A_{i},X)\ar[r]^{{}-\cdot\eta}\ar[d]^{(\mu_{i}^{A},X)_{i\in I}} & \mathbb{E}(\coprod B_{i},X)\ar[d]^{\tau}\\
\prod\mathbb{E}^{-1}(A_{i},X)\ar[r]^{\prod\Delta} & \prod_{i\in I}(B_{i},X)\ar[r]_{{}\prod_{i\in I}(g_{i},X)} & \prod(E_{i},X)\ar[r]_{{}\prod(f_{i},X)} & \prod(A_{i},X)\ar[r]_{{}\prod(-\cdot\eta_{i})} & \prod\mathbb{E}(B_{i},X)
}
\]
We have in the above diagram that the first column is an epimorphism. Moreover, the second, forth and
fifth columns are isomorphisms. Hence, by the Five Lemma we get that $(\theta_{i},X)_{i\in I}$ is an isomorphism. This means that $E$ is a coproduct
of $\{E_{i}\}_{i\in I}$ and that the morphisms $\{\theta_{i}:E_i\to E\}_{i\in I}$
are the canonical inclusions. In particular, for $X=\coprod_{i\in I}E_{i}$,
there is a unique morphism $\nu:E\rightarrow\coprod_{i\in I}E_{i}$
such that $\nu\circ\theta_{i}=\mu_{i}^{E}$ for all $i\in I$. Moreover,
by the universal property of coproducts, we have that $\nu\circ\theta=1_{\coprod_{i\in I}E_{i}}$
and that $\theta\circ\nu=1_{E}$. Hence, $\theta$ is an isomorphism.
Therefore $\eta$ is realized by $\suc[A^{(I)}][\coprod_{i\in I}E_{i}][\coprod_{i\in I}B_{i}][\coprod_{i\in I}f_{i}][\coprod_{i\in I}g_{i}]$
as desired. Lastly, note that the AET4 condition holds true by the
equality (\ref{eq:AB4}). 
\

(b) $\Rightarrow$ (a) It follows from Proposition \ref{prop:2implica3}.
\end{proof}
We can now give a proof of Theorem A. 
\begin{proof}
[Proof of Theorem A] The equivalence $(b)\Leftrightarrow(c)$ follows
from Proposition \ref{prop:tauiso}. The
implication $(a)\Rightarrow(b)$ can be obtained from Proposition \ref{prop:2implica3}. Finally, in case $\mathcal{C}$ is coproduct-compatible, we get from Theorem \ref{thm:tauvsAB4} that (a) and (b) are equivalent.
\end{proof}

Let us (for the sake of completeness) state the dual statement of Theorem A which is also true.

\begin{thm}\label{thm:mainAop} Let $(\mathcal{C},\mathbb{E},\s)$ be
an extriangulated category with products. Consider the following
statements. 
\begin{enumerate}
\item $\mathcal{C}$ is AET$4^{*}.$ 
\item For any set $I\neq\emptyset,$  the map $\tau:\mathbb{E}(A,\prod_{i\in I}B_i)\rightarrow\prod_{i\in I}\mathbb{E}(A,B_i)$,
$\epsilon\mapsto\left(\pi_{i}^{B}\cdot\epsilon\right)_{i\in I}$,
defines a natural isomorphism. 
\item For any objects $V,D$ in $\mathcal{C}$ there is
a universal $\mathbb{E}$-coextension of $V$ by $D$. That is, for $V,D\in\C,$ there is an 
$\eta\in\mathbb{E}(D,V^{X})$, for some set $X\neq\emptyset$, such
that the map $\text{Hom}_{\mathcal{C}}(V^{X},V)\to\E(D,V),\;g\mapsto g\cdot\eta,$
is surjective. 
\end{enumerate}
Then, the following implications hold true: $(a)\Rightarrow(b)$ and $(b)\Leftrightarrow(c)$.
Moreover, if $\mathcal{C}$ is a product-compatible (see Definition \ref{def:cc}), then the three statements above are
equivalent. 
\end{thm}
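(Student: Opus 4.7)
The plan is to obtain Theorem \ref{thm:mainAop} by applying Theorem A to the opposite extriangulated category $(\mathcal{C}^{op},\mathbb{E}^{op},\s^{op})$, where $\mathbb{E}^{op}(A,B):=\mathbb{E}(B,A)$ and $\s^{op}$ reverses the realizations; this is the standard construction of \cite{NP} and all the relevant notions in the statement have been set up so that the duality passes through cleanly. By the very definition given in the paper, $\mathcal{C}$ is AET4* iff $\mathcal{C}^{op}$ is AET4, and $\mathcal{C}$ is product-compatible iff $\mathcal{C}^{op}$ is coproduct-compatible. Moreover, products in $\mathcal{C}$ are coproducts in $\mathcal{C}^{op}$, and the canonical projections $\pi_i^B:\prod_{i\in I}B_i\rightarrow B_i$ play the role of the canonical inclusions of the corresponding coproduct in $\mathcal{C}^{op}$.

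Next, I would verify that each of the three conditions (a), (b), (c) of Theorem \ref{thm:mainAop} translates literally into the corresponding condition of Theorem A for $\mathcal{C}^{op}$. For (b), the Yoneda-type actions swap direction under the duality, so $g\cdot\epsilon$ in $\mathcal{C}$ becomes $\epsilon\cdot g^{op}$ in $\mathcal{C}^{op}$; consequently the map $\epsilon\mapsto(\pi_i^B\cdot\epsilon)_{i\in I}$ in $\mathcal{C}$ is exactly the map $\epsilon\mapsto(\epsilon\cdot\mu_i^B)_{i\in I}$ read in $\mathcal{C}^{op}$, which is the $\tau$ of Theorem A. For (c), a universal $\mathbb{E}$-coextension $\eta\in\mathbb{E}(D,V^X)$ with $g\mapsto g\cdot\eta$ surjective $\mathrm{Hom}_{\mathcal{C}}(V^X,V)\rightarrow\mathbb{E}(D,V)$ transports to an element of $\mathbb{E}^{op}(V^X,D)$, where $V^X$ is now a coproduct in $\mathcal{C}^{op}$, making $f\mapsto\eta\cdot f$ surjective from $\mathrm{Hom}_{\mathcal{C}^{op}}(V,V^X)\rightarrow\mathbb{E}^{op}(V,D)$; this is precisely a universal $\mathbb{E}^{op}$-extension of $V$ by $D$ in $\mathcal{C}^{op}$.

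Once these identifications are recorded, Theorem A applied to $(\mathcal{C}^{op},\mathbb{E}^{op},\s^{op})$ immediately delivers the implications $(a)\Rightarrow(b)$ and $(b)\Leftrightarrow(c)$, and under the additional product-compatibility assumption it yields the full equivalence of (a), (b), (c). I do not anticipate any genuine obstacle beyond careful bookkeeping of arrow directions and bifunctor variances: reversing the category flips the slot in which inclusions/projections appear, and one needs to keep track of which side of the action $\epsilon\cdot f$ or $g\cdot\epsilon$ one is working on. No new ingredient beyond Theorem A itself and the standard fact that the opposite of an extriangulated category is extriangulated should be required.
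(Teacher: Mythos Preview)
Your proposal is correct and matches the paper's approach: the paper presents Theorem \ref{thm:mainAop} explicitly as the dual of Theorem A (``Let us (for the sake of completeness) state the dual statement of Theorem A which is also true'') and gives no separate proof, relying precisely on the passage to $(\mathcal{C}^{op},\mathbb{E}^{op},\s^{op})$ that you spell out. Your bookkeeping of how products, projections, actions, and the compatibility condition translate under the duality is more detailed than what the paper records, but it is exactly the intended argument.
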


\begin{cor}\label{cor:ThmA}
Let $(\mathcal{C},\mathbb{E},\s)$ be a coproduct-compatible (product-compatible) extriangulated category.
If $\mathcal{C}$ has enough $\E$-injectives ($\E$-projectives), then
$\mathcal{C}$ is AET4 (AET4{*}). 
\end{cor}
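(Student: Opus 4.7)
The plan is to invoke Theorem A (specifically the equivalence $(a)\Leftrightarrow(c)$ for coproduct-compatible categories) and construct a universal $\E$-extension for each pair $V,D\in\C$ by exploiting an $\s$-inflation into an $\E$-injective object. Since $\C$ is coproduct-compatible by hypothesis, it suffices to verify condition (c) of Theorem A; the dual case (AET4{*}) then follows by a verbatim dualization using Theorem \ref{thm:mainAop}.

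First, I would fix objects $V,D\in\C$. Using the assumption that $\C$ has enough $\E$-injectives, I take an $\s$-conflation $\eta:\suc[D][I][D']$ with $I$ an $\E$-injective object, so $\E(V,I)=0$. Applying the contravariant long exact sequence for $\E(V,-)$ coming from the extriangulated structure (which exists by \cite[Prop.~3.3]{NP}), I obtain the exact sequence
\[
\Hom_\C(V,I)\to \Hom_\C(V,D')\xrightarrow{\eta\cdot-}\E(V,D)\to\E(V,I)=0,
\]
so the connecting map $\eta\cdot-:\Hom_\C(V,D')\to\E(V,D)$ is surjective.

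Next, I would promote this surjection to a universal extension in the required form. Let $X:=\Hom_\C(V,D')$, which is a non-empty set (it contains the zero morphism), and form the coproduct $V^{(X)}$ (available by AET3). The universal property of the coproduct produces a unique morphism $\phi:V^{(X)}\to D'$ with $\phi\circ\mu_g^V=g$ for every $g\in X$. Set $\widetilde{\eta}:=\eta\cdot\phi\in\E(V^{(X)},D)$. Given any $\epsilon\in\E(V,D)$, by the surjectivity established above choose $g\in X$ with $\eta\cdot g=\epsilon$; then
\[
\widetilde{\eta}\cdot\mu_g^V=(\eta\cdot\phi)\cdot\mu_g^V=\eta\cdot(\phi\circ\mu_g^V)=\eta\cdot g=\epsilon.
\]
Hence the map $\Hom_\C(V,V^{(X)})\to\E(V,D)$, $f\mapsto\widetilde{\eta}\cdot f$, is surjective, so $\widetilde{\eta}$ is a universal $\E$-extension of $V$ by $D$. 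By Theorem A, $\C$ is AET4. The product-compatible / enough $\E$-projectives statement follows from Theorem \ref{thm:mainAop} by the dual construction (an $\s$-deflation from an $\E$-projective cover of $V$ and the coproduct $D^{X'}$ for $X':=\Hom_\C(D'',D)$).

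I do not expect a serious obstacle: the whole argument is essentially the standard Yoneda-style construction of a universal extension via an injective resolution, transported to the extriangulated setting. The only points that might require a brief justification in the final writeup are that the coproduct $V^{(X)}$ indeed exists (which is the AET3 part of coproduct-compatibility) and that the long exact sequence of $\E$ for an $\s$-conflation is available in arbitrary extriangulated categories, which is \cite[Prop.~3.3]{NP}.
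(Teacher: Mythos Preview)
Your proof is correct and follows essentially the same route as the paper: both use an $\s$-conflation $D\to I\to D'$ with $I$ $\E$-injective to get the surjection $\Hom_\C(V,D')\twoheadrightarrow\E(V,D)$, then pull back along the canonical map $V^{(\Hom_\C(V,D'))}\to D'$ to produce the universal extension and conclude via Theorem~A. The only cosmetic differences are notation and that you spell out the long exact sequence explicitly (note: the precise reference in \cite{NP} for that sequence is Cor.~3.12 rather than Prop.~3.3, and the sequence is covariant in the second variable).
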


\begin{proof} Suppose that $\C$ has enough $\E$-injectives. To show that $\mathcal{C}$ is AET4, by Theorem A it is enough to prove that there exists a universal $\mathbb{E}$-extension of $V$ by $D,$ for any $V,D\in\mathcal{C}.$ Indeed, for $D\in\C$ there is a conflation $\rho:\;\suc[D][E][D']$
with $E\in\Inj_\E(D).$ Since $\E(V,E)=0,$ the map $\Hom_{\mathcal{C}}(V,D')\rightarrow\mathbb{E}(V,D),\;h\mapsto\rho\cdot h,$ is epic. By the universal
property of coproducts, there is a morphism $f:V^{(\Hom_{\mathcal{C}}(V,D'))}\rightarrow D'$
such that $f\circ\mu_{h}^{V}=h$ for any $h\in\Hom_{\mathcal{C}}(V,D')$.
Hence, for any $\epsilon\in\mathbb{E}(V,D)$ there is $h\in\Hom_{\mathcal{C}}(V,D')$
such that $\epsilon=\rho\cdot h$ and thus $\epsilon=\rho\cdot\left(f\circ\mu_{h}^{V}\right)=\left(\rho\cdot f\right)\cdot\mu_{h}^{V}$.
Therefore $\rho\cdot f$ is a universal $\mathbb{E}$-extension of
$V$ by $D$. 
\end{proof}

Recall that an exact category $(\mathcal{C},\mathcal{E})$ (in the
sense of Quillen) is $\Ext$-small if the equivalence classes of conflations between $A$ and $B$ form a set for every $A,B\in\mathcal{C}$
(i.e. $\Ext(A,B)$ is a set for all $A,B\in\mathcal{C}$). In this
case, $\mathcal{C}$ has a canonical structure of extriangulated category
with $\mathbb{E}=\Ext.$

\begin{cor}\label{equvExactAET}
For an $\Ext$-small exact category $(\mathcal{C},\mathcal{E}),$ the following statements
are equivalent. 
\begin{enumerate}
\item $\mathcal{C}$ is an AET4 extriangulated category. 
\item $\mathcal{C}$ is an AET3.5 extriangulated category. 
\item $\C$ has coproducts and the functor $\coprod_{i\in I}(-):\Fun(I,\C)\rightarrow\mathcal{C}$
is extriangulated  for any set $I\neq\emptyset.$
\item $\C$ has coproducts and for any set $I\neq\emptyset$ and a family $\{\suc[A_{i}][B_{i}][C_{i}][f_{i}][g_{i}]\}_{i\in I}$ of conflations  in $\C,$
we have that 
\[
\suc[\coprod_{i\in I}A_{i}][\coprod_{i\in I}B_{i}][\coprod_{i\in I}C_{i}][\coprod_{i\in I}f_{i}][\coprod_{i\in I}g_{i}]
\]
 is a conflation in $\C.$
\item $\C$ has coproducts and for any set $I\neq\emptyset,$ the maps $\tau_{C,A}:\Ext(\coprod_{i\in I}C_{i},A)\rightarrow\prod_{i\in I}\Ext(C_{i},A)$,
$\eta\mapsto(\eta\cdot\mu_{i}^{C})_{i\in I},$ define a natural isomorphism. 
\end{enumerate}
Moreover if one of the above conditions holds true, then we have the natural isomorphism 
$\Ext^n\Big(\coprod_{i\in I}D_i,C\Big)\simeq \prod_{i\in I}\Ext^n(D_i,C)$ for any $n\geq 1$ and any set $I\neq\emptyset.$
\end{cor}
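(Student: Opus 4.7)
The plan is to close a cycle of implications by exploiting two specific features of the exact setting: in any exact category every $\s$-inflation is a monomorphism, and since $\mathbb{E}^{-1}=0$, an AET3 exact category is coproduct-compatible by Example \ref{exCCETC}(1). With coproducts available, Theorem \ref{thm:tauvsAB4} (equivalently, Theorem A) gives the equivalence (a) $\Leftrightarrow$ (e) directly, so the remaining task reduces to matching (a)--(d).

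First I would dispose of the easy implications. The implication (a) $\Rightarrow$ (b) is immediate from the definition of AET4, which already requires AET3.5. The equivalence (b) $\Leftrightarrow$ (c), together with the implication (b) $\Rightarrow$ (d), is contained in Proposition \ref{prop:0implica1}. The implication (c) $\Rightarrow$ (d) is automatic because an extriangulated functor preserves $\s$-conflations by definition. Finally, Proposition \ref{prop:1implica2}(b) yields (b) $\Rightarrow$ (a), since every $\s$-inflation in an exact category is monic.

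The decisive step that closes the cycle is (d) $\Rightarrow$ (c). Here I would exploit that in an exact category $\mathbb{E}(A,B)=\Ext(A,B)$ is realised as equivalence classes of conflations. Given a family $([\eta_i])_{i\in I}\in\prod_{i\in I}\mathbb{E}(A_i,B_i)$ with chosen representatives $\eta_i:\suc[B_i][E_i][A_i][f_i][g_i]$, the sequence
\[
\suc[\coprod_{i\in I} B_i][\coprod_{i\in I} E_i][\coprod_{i\in I} A_i][\coprod_{i\in I} f_i][\coprod_{i\in I} g_i]
\]
is a conflation by hypothesis (d), and its equivalence class defines a candidate $\Gamma_S([\eta_i]_{i\in I})\in\mathbb{E}(\coprod_{i\in I} A_i,\coprod_{i\in I} B_i)$. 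Well-definedness, additivity, and naturality reduce to the observation that a morphism $(1_{B_i},h_i,1_{A_i})$ of conflations coproduces to a morphism $(1,\coprod h_i,1)$ of the corresponding coproduct conflations, together with the universal property of coproducts. By construction $\s\circ\Gamma_S=S\circ\t$, so $S$ is extriangulated and (c) holds.

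The moreover statement then follows from Theorem \ref{MainTB} applied to the AET4($I$) category $\mathcal{C}$, together with the identification $\mathbb{E}=\Ext$ in the exact case. The main obstacle, if any, lies in the bookkeeping for $\Gamma_S$ in (d) $\Rightarrow$ (c): checking well-definedness on equivalence classes, additivity, and naturality in both variables is routine but warrants care, reducing in each case to the universal property of coproducts and the standard equivalence calculus for conflations.
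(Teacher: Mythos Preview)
Your proof is correct and follows essentially the same route as the paper. The only notable differences are that for $(b)\Rightarrow(a)$ you invoke Proposition \ref{prop:1implica2}(b) (inflations are monic in exact categories), whereas the paper cites Theorem \ref{thm:tauvsAB4}, and for $(d)\Rightarrow(c)$ you sketch the direct construction of $\Gamma_S$ while the paper simply quotes \cite[Ex.3.3]{BGLS} (extriangulated $\Leftrightarrow$ exact for functors between exact categories); both variations are harmless.
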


\begin{proof} $(a)\Leftrightarrow(b)$ It follows from Theorem \ref{thm:tauvsAB4} since $\C$ is coproduct-compatible. 

$(b)\Leftrightarrow(c)$ It follows from Proposition
\ref{prop:0implica1}. 

$(c)\Leftrightarrow(d)$ It follows from the fact
that, in this context, a functor is extriangulated if, and only if,
it is exact (see \cite[Ex.3.3]{BGLS}).

$(e)\Leftrightarrow (a)$ It follows from Theorem \ref{thm:tauvsAB4} since $\C$ is coproduct-compatible. 

Finally, if one of the above conditions holds true, we get from Theorem \ref{MainTB} the natural isomorphism 
$\Ext^n\Big(\coprod_{i\in I}D_i,C\Big)\simeq \prod_{i\in I}\Ext^n(D_i,C).$
\end{proof}

\section{AET4 for hearts of intervals of $s$-torsion pairs}

For an additive category $\C$ and $\X\subseteq\C,$ we denote by $\Free_\C(\X)$ the class of all the objects $C\in\C$ such that there exists a family $\{X_i\}_{i\in I}$ in $\X$ satisfying that $C=\coprod_{i\in I}X_i$ in $\C.$ Note that $\X\subseteq\Free_\C(\X)$ and in case $\X=\Free_\C(\X)$ it is said that $\X$ is closed under coproducts in $\C.$

Let $\mathcal{D}=(\mathcal{D},\mathbb{E},\s)$ be an extriangulated
category with negative first extension. Following \cite{PS1}, we say that 
$\u=(\mathcal{X},\mathcal{Y})\in\stors(\D)$ is {\bf smashing} if $\mathcal{Y}=\Free_\D(\mathcal{Y}).$ For the $\s$-torsion pair $\u=(\mathcal{X},\mathcal{Y})$ we always have that $\mathcal{X}=\Free_\D(\mathcal{X})$ (see \cite[Prop.3.2]{AET1}).
Let now $[\u,\u']$ be an interval
in $\stors(\mathcal{D})$ with $\u=(\mathcal{X},\mathcal{Y})$ and
$\u'=(\mathcal{X}',\mathcal{Y}')$. Our goal in this section is to find conditions for the heart $\mathcal{H}_{[\u,\u']}$ to
be AET4. A first result follows from the arguments of \cite[Props.3.2,3.3]{PS1}.
We include the proof for completeness.

\begin{lem}\label{lem:lemitaAB} Let $(\mathcal{C},\mathbb{E},\s)$
be an extriangulated category with negative first extension and $\X\subseteq\mathcal{C}$
be closed under extensions and coproducts in $\C.$ If $\mathcal{C}$
is AET4 (resp. AET3, AET3.5), then $\X$ is AET4 (resp. AET3,
AET3.5). 
\end{lem}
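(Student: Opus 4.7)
The plan is to exploit the fact that when $\X \subseteq \C$ is closed under extensions and coproducts, all of the structural data required for the AET conditions on $\X$ is obtained by restriction of the corresponding data on $\C$. With this idea, I would proceed in three steps, one for each implication.

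First, I would verify that $\X$ inherits the structure of an extriangulated category (with negative first extension) from $\C$: closure under extensions yields that the restrictions of $\mathbb{E}$, $\mathbb{E}^{-1}$, and $\s$ to $\X^{op}\times\X$ define an extriangulated structure on $\X$ whose $\s$-conflations are precisely the $\s$-conflations of $\C$ with all three terms in $\X$. Closure under coproducts yields that for any non-empty family $\{X_i\}_{i\in I}$ in $\X$, the object $\coprod_{i\in I}X_i$ formed in $\C$ already lies in $\X$ and is also the coproduct in $\X$, with the same canonical inclusions $\{\mu_i^X\}_{i\in I}$. This immediately handles the AET3 case, and also shows that $\Fun(I,\X)$ sits inside $\Fun(I,\C)$ as a subcategory of the same kind, whose induced bifunctor and realization are the componentwise restrictions of $\mathbb{F}$ and $\t$.

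For AET3.5, I would invoke Proposition \ref{prop:0implica1}, which reduces the problem to showing that the functor $S_\X := \coprod_{i\in I}(-) : \Fun(I,\X) \to \X$ is extriangulated. Since $S_\C : \Fun(I,\C) \to \C$ is extriangulated by Proposition \ref{prop:0implica1} applied to $\C$, and since coproducts of families in $\X$ stay in $\X$, the natural transformation $\Gamma_{S_\C}$ restricts to a natural transformation $\Gamma_{S_\X}$ exhibiting $S_\X$ as extriangulated. For AET4 I would argue directly from Definition \ref{ET4Def}: given $\{B_i\}_{i\in I}$ and $A$ in $\X$, the natural transformation $\Gamma^\C$ supplied by AET4 for $\C$ automatically takes values in the relevant $\mathbb{E}$-group on $\X$, because the coproducts coincide and the realization of $\Gamma^\C(\eta_i)_{i\in I}$ given by axiom (1) of Definition \ref{ET4Def} lies in $\X$. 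Thus the restriction of $\Gamma^\C$ supplies the required $\Gamma^\X$, and axioms (1) and (2) transfer verbatim since the canonical coproduct inclusions and the realizing conflations coincide in $\X$ and $\C$.

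The only work required is the bookkeeping to check that the extriangulated structure on $\Fun(I,\X)$ is the restriction of that on $\Fun(I,\C)$, and that canonical coproduct inclusions in $\X$ coincide with their $\C$-counterparts; I do not expect any genuine obstacle here, as everything flows from the closure hypotheses via standard universal-property arguments.
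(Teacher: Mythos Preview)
Your proposal is correct and follows essentially the same approach as the paper: the AET3 case is immediate from closure under coproducts, the AET3.5 case uses Proposition~\ref{prop:0implica1} to restrict the extriangulated coproduct functor from $\Fun(I,\C)$ to $\Fun(I,\X)$, and the AET4 case restricts the natural transformation $\Gamma$ directly, noting that coproducts and canonical inclusions in $\X$ coincide with those in $\C$. The paper's write-up is terser but the underlying argument is identical.
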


\begin{proof} If $\mathcal{C}$ is AET3, it is clear that $\X$ is AET3 since $\X=\Free_\C(\X).$  Let $\mathcal{C}$ be AET3.5 Then, by Proposition \ref{prop:0implica1} we have that $S=\coprod_{i\in I}(-):\Fun(I,\mathcal{C})\rightarrow\mathcal{C}$ is extriangulated. Now, by using that
$\X=\Free_\C(\X),$  we get that $S$
can be restricted to the extriangulated functor $S'=\coprod_{i\in I}(-):\Fun(I,\X)\rightarrow\X$ and thus, by Proposition \ref{prop:0implica1}, it follows that 
$\X$ is AET3.5.

Suppose that $\mathcal{C}$ is AET4. We know that $\Gamma(\eta_{i})_{i\in I}\cdot\mu_{i}^{B}=\mu_{i}^{A}\cdot\eta_{i}$
for all $(\eta_{i})_{i\in I}\in\prod_{i\in I}\mathbb{E}(B_{i},A_{i})$, where
$\mu_{i}^{A}:A_{i}\rightarrow\coprod_{i\in I}A_{i}$
and $\mu_{i}^{B}:B_{i}\rightarrow\coprod_{i\in I}B_{i}$
are the canonical inclusions of coproducts in $\C.$  Since the coproducts in $\X$
are coproducts in $\mathcal{C},$ then $\Gamma$ restricts to $\X$ and thus
 $\X$ is AET4.
\end{proof}

\begin{prop}\label{prop:smash} Let $(\mathcal{D},\mathbb{E},\s)$
be an AET3 extriangulated category with negative first extension and $[\u,\u']$
be an interval in $\stors(\mathcal{D})$, with $\u=(\mathcal{X},\mathcal{Y})$
and $\u'=(\mathcal{X}',\mathcal{Y}')$. Then, for the heart $\mathcal{H}:=\mathcal{H}_{[\u,\u']}=\mathcal{Y}\cap\mathcal{X}'$ the following statements hold true. 
\begin{enumerate}
\item $\mathcal{H}$ and $\mathcal{X}'$ are AET3. 

\item If $\mathcal{X}'$ is AET4 (resp. AET3.5) and $\mathcal{H}=\Free_\D(\mathcal{H}),$
then $\mathcal{H}$ is AET4
(resp. AET3.5).

\item If $\mathcal{D}$ is AET4 (resp. AET3.5) and $\u$ is smashing,
then $\mathcal{H}$ is AET4 (resp. AET3.5). 
\item If $\mathcal{D}$ is a triangulated category and $\u$ is smashing,
then $\mathcal{H}$ is AET4. 
\end{enumerate}
\end{prop}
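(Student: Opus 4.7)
The plan is to prove the four statements in order, leveraging Lemma \ref{lem:lemitaAB} as the main workhorse once the relevant closure properties are established. The guiding principle is that everything reduces to two checks: (i) the coproduct of a family of objects in the relevant subcategory, computed in $\mathcal{D}$, already lies in that subcategory; and (ii) the subcategory is closed under extensions in the ambient extriangulated category.

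For (a), the class $\mathcal{X}'$ satisfies $\mathcal{X}'=\Free_{\mathcal{D}}(\mathcal{X}')$ by \cite[Prop.3.2]{AET1}, so coproducts in $\mathcal{D}$ of families in $\mathcal{X}'$ stay in $\mathcal{X}'$ and give the coproducts of $\mathcal{X}'$; in particular $\mathcal{X}'$ is AET3. For the heart, the idea is to use the left adjoint $L:\mathcal{X}'\to\mathcal{H}$ of the inclusion $j_{\mathcal{X}'}$ from Remark \ref{rem:adjuntos}(b); since left adjoints preserve coproducts and $L$ restricts to the identity on $\mathcal{H}$, the object $L(\coprod^{\mathcal{D}}_{i\in I}C_i)$ serves as the coproduct of $\{C_i\}_{i\in I}$ in $\mathcal{H}$.

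For (b), assume $\mathcal{H}=\Free_{\mathcal{D}}(\mathcal{H})$. I would apply Lemma \ref{lem:lemitaAB} to the pair $\mathcal{H}\subseteq\mathcal{X}'$. Coproduct closure of $\mathcal{H}$ inside $\mathcal{X}'$ is immediate since coproducts in $\mathcal{X}'$ agree with those in $\mathcal{D}$ (both equal $\Free_{\mathcal{D}}$-coproducts), and extension closure of $\mathcal{H}$ inside $\mathcal{X}'$ follows from the facts that an $\s$-conflation in $\mathcal{X}'$ is an $\s$-conflation in $\mathcal{D}$ and that $\mathcal{Y}$ is closed under extensions in $\mathcal{D}$ (being the torsion-free class of an $s$-torsion pair, see \cite[Prop.2.9]{AMP}). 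Hence $\mathcal{H}$ inherits AET4 (resp. AET3.5) from $\mathcal{X}'$.

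For (c), first apply Lemma \ref{lem:lemitaAB} to $\mathcal{X}'\subseteq\mathcal{D}$, using $\mathcal{X}'=\Free_{\mathcal{D}}(\mathcal{X}')$ and the extension closure of a torsion class, to obtain that $\mathcal{X}'$ is AET4 (resp. AET3.5). Then the smashing hypothesis $\mathcal{Y}=\Free_{\mathcal{D}}(\mathcal{Y})$ combined with $\mathcal{X}'=\Free_{\mathcal{D}}(\mathcal{X}')$ yields $\mathcal{H}=\mathcal{Y}\cap\mathcal{X}'=\Free_{\mathcal{D}}(\mathcal{H})$, so (b) applies and gives the conclusion. Finally, (d) is an immediate consequence of (c) together with Example \ref{exa:ab}(b), which states that any AET3 triangulated category is AET4; since $\mathcal{D}$ has coproducts by assumption, it is AET4, and (c) delivers the result. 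I do not anticipate a serious obstacle; the only thing that requires a small argument is the existence of coproducts in $\mathcal{H}$ in part (a), and the verification that $\mathcal{H}$ is extension-closed in $\mathcal{X}'$ in part (b), both of which are essentially formal.
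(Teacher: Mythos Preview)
Your proposal is correct and follows essentially the same approach as the paper: part (a) via $\mathcal{X}'=\Free_{\mathcal{D}}(\mathcal{X}')$ and the left adjoint $L$ of Remark~\ref{rem:adjuntos}(b); parts (b)--(c) via Lemma~\ref{lem:lemitaAB}; and part (d) via (c) and Example~\ref{exa:ab}(b). The only minor difference is that in (c) the paper applies Lemma~\ref{lem:lemitaAB} directly to $\mathcal{H}\subseteq\mathcal{D}$ (once $\mathcal{H}=\Free_{\mathcal{D}}(\mathcal{H})$ is known, and using that $\mathcal{H}=\mathcal{Y}\cap\mathcal{X}'$ is extension-closed in $\mathcal{D}$), whereas you route through $\mathcal{X}'$ and then invoke (b); both are valid and equally short.
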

\begin{proof}
(a) Let $\{H_{i}\}_{i\in I}$ be in $\mathcal{H}.$ 
Since $\mathcal{H}\subseteq\mathcal{X}'=\Free_\D(\mathcal{X}')$ and $\D$ is AET3, the coproduct $\coprod_{i\in I}^{\mathcal{D}}H_{i}$ in $\D$
is a coproduct in $\mathcal{X}'$. Now, by Remark \ref{rem:adjuntos}, there is a left adjoint $L$
of the inclusion $j_{\mathcal{X}'}:\mathcal{H}\rightarrow\mathcal{X}';$ and therefore
$L(\coprod_{i\in I}^{\mathcal{D}}H_{i})$ is the coproduct in $\mathcal{H}.$
\

(b) It follows from Lemma \ref{lem:lemitaAB}.
\

(c) Since $\u$ is smashing, we get that $\mathcal{H}=\Free_\D(\mathcal{H}).$  Then (c)  follows from Lemma \ref{lem:lemitaAB}. 
\

(d) It follows from (c) and Example \ref{exa:ab}(b).
\end{proof}

\begin{lem}\label{lem:Hpreservacoprods} Let $(\mathcal{D},\mathbb{E},\s)$
be an AET3.5 extriangulated category with negative first extension
and $[\u,\u']$ be and interval in $\stors(\mathcal{D})$, with $\u=(\mathcal{X},\mathcal{Y})$
and $\u'=(\mathcal{X}',\mathcal{Y}')$. If $\u$ and $\u'$ are smashing,
then the functor 
$$H_{[\u,\u']}:\mathcal{D}\rightarrow\mathcal{H}_{[\u,\u']},\;D\mapsto \te_{\u'}\circ(1:\te_{\u})(D)$$
preserves coproducts. 
\end{lem}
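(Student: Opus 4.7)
The plan is to factor the functor $H_{[\u,\u']} = \te_{\u'} \circ (1:\te_{\u})$ through $\mathcal{Y}$ and show that each factor is compatible with coproducts (when applied to the relevant class of objects), using the AET3.5 hypothesis together with the smashing conditions on $\u$ and $\u'$. The key input throughout is Proposition \ref{prop:0implica1}, which guarantees that $\coprod_{i\in I}$ sends families of $\s$-conflations in $\D$ to $\s$-conflations.

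First, for a family $\{D_i\}_{i\in I}$ in $\D$, apply the functor $\coprod_{i\in I}$ to the canonical $\s$-conflations $\te_{\u} D_i \to D_i \to (1:\te_{\u}) D_i$. By Proposition \ref{prop:0implica1} we obtain an $\s$-conflation
\[
\coprod_{i\in I} \te_{\u} D_i \to \coprod_{i\in I} D_i \to \coprod_{i\in I} (1:\te_{\u}) D_i.
\]
Since $\mathcal{X} = \Free_\D(\mathcal{X})$ holds for every $s$-torsion pair and $\mathcal{Y} = \Free_\D(\mathcal{Y})$ because $\u$ is smashing, the outer terms lie in $\mathcal{X}$ and $\mathcal{Y}$ respectively. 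Hence this is the canonical $\s$-conflation attached to $\coprod_{i\in I} D_i$ with respect to $\u$, yielding a natural isomorphism $(1:\te_{\u})(\coprod_{i\in I} D_i) \cong \coprod_{i\in I} (1:\te_{\u}) D_i$ in $\mathcal{Y}$.

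Second, set $F_i := (1:\te_{\u}) D_i \in \mathcal{Y}$ and repeat the argument for $\u'$. Consider the canonical $\s$-conflation $\te_{\u'} F_i \to F_i \to (1:\te_{\u'}) F_i$. Since $\mathcal{Y} \supseteq \mathcal{Y}'$, we have $(1:\te_{\u'}) F_i \in \mathcal{Y}$; and since $\mathcal{Y}$ is closed under co-cones (see \cite[Prop.2.9]{AMP}), also $\te_{\u'} F_i \in \mathcal{X}' \cap \mathcal{Y} = \mathcal{H}_{[\u,\u']}$. Applying $\coprod_{i\in I}$ and invoking Proposition \ref{prop:0implica1} again, we obtain an $\s$-conflation
\[
\coprod_{i\in I} \te_{\u'} F_i \to \coprod_{i\in I} F_i \to \coprod_{i\in I} (1:\te_{\u'}) F_i,
\]
whose outer terms lie in $\mathcal{X}'$ (always closed under coproducts) and $\mathcal{Y}'$ (closed under coproducts because $\u'$ is smashing). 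By uniqueness of the canonical $\s$-conflation with respect to $\u'$, we deduce a natural isomorphism $\te_{\u'}(\coprod_{i\in I} F_i) \cong \coprod_{i\in I} \te_{\u'} F_i$. Chaining the two steps gives
\[
H_{[\u,\u']}\Big(\coprod_{i\in I} D_i\Big) \cong \te_{\u'}\Big(\coprod_{i\in I} (1:\te_{\u}) D_i\Big) \cong \coprod_{i\in I} H_{[\u,\u']}(D_i).
\]

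Finally, to conclude that this is a coproduct in $\mathcal{H}_{[\u,\u']}$, observe that $\mathcal{H}_{[\u,\u']} = \mathcal{X}' \cap \mathcal{Y}$ is closed under $\D$-coproducts, as both $\mathcal{X}'$ and $\mathcal{Y}$ are. Thus the $\D$-coproduct $\coprod_{i\in I} H_{[\u,\u']}(D_i)$, being an object of $\mathcal{H}_{[\u,\u']}$, automatically satisfies the universal property of the coproduct inside $\mathcal{H}_{[\u,\u']}$. There is no serious obstacle in this argument; the only point that requires care is tracking which classes ($\mathcal{X}, \mathcal{Y}, \mathcal{X}', \mathcal{Y}', \mathcal{H}$) the various terms belong to, so that the coproduct $\s$-conflations produced by Proposition \ref{prop:0implica1} can be identified with the canonical $\s$-conflations attached to $\coprod_{i\in I} D_i$ and $\coprod_{i\in I} F_i$.
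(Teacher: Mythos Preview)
Your proof is correct and follows essentially the same approach as the paper's: both use Proposition~\ref{prop:0implica1} to send the family of canonical $\s$-conflations for $\u$ (resp.\ $\u'$) to an $\s$-conflation, then identify it as the canonical $\s$-conflation of the coproduct via the smashing hypotheses. Your final paragraph verifying that the $\D$-coproduct is also the coproduct in $\mathcal{H}_{[\u,\u']}$ is a welcome extra detail that the paper leaves implicit.
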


\begin{proof}
Let $\{D_{i}\}_{i\in I}$ be in $\mathcal{D}.$ For each $i\in I,$ we consider the canonical $\s$-conflation $\suc[\te_{\u}D_{i}][D_{i}][(1:\te_{\u})D_{i}]$.
Since $\mathcal{D}$ is AET3.5, we have by Proposition \ref{prop:0implica1} the $\s$-conflation 
$
\suc[\coprod_{i\in I}\te_{\u}D_{i}][\coprod_{i\in I}D_{i}][\coprod_{i\in I}(1:\te_{\u})D_{i}].
$
Observe that: $\coprod_{i\in I}\te_{\u}D_{i}\in\Free_\D(\mathcal{X})=\mathcal{X}$ and $\coprod_{i\in I}(1:\te_{\u})D_{i}\in\Free_\D(\mathcal{Y})=\mathcal{Y}$
because $\u$ is smashing. Therefore $\te_{\u}\coprod_{i\in I}D_{i}=\coprod_{i\in I}\te_{\u}D_{i}$
and $(1:\te_{\u})\coprod_{i\in I}D_{i}=\coprod_{i\in I}(1:\te_{\u})D_{i}$.
Hence, $\te_{\u}$ and $(1:\te_{\u})$ preserve coproducts. Similarly,
one can show that $\te_{\u'}$ and $(1:\te_{\u'})$ preserve coproducts.
Therefore $H_{[\u,\u']}=\te_{\u'}\circ(1:\te_{\u})$ preserve coproducts. 
\end{proof}

\subsection{AET4 for hearts and extended hearts of $t$-structures}

For an additive category $\C$ and $\X\subseteq\C,$ we denote by $\coFree_\C(\X)$ the class of all the objects $C\in\C$ such that there exists a family $\{X_i\}_{i\in I}$ in $\X$ satisfying that $C=\prod_{i\in I}X_i$ in $\C.$ Note that $\X\subseteq\coFree_\C(\X)$ and in case $\X=\coFree_\C(\X)$ it is said that $\X$ is closed under products in $\C.$

Let $\mathbf{x}=(\mathcal{U},\mathcal{W})$ be a $t$-structure in
a triangulated category $(\mathcal{D},\Sigma,\triangle)$. We will
use the following $s$-torsion pairs in $\mathcal{D}$ throughout
this section: $\u_{1}:=(\Sigma\mathcal{U},\mathcal{W})$, $\u_{2}:=(\mathcal{U},\Sigma^{-1}\mathcal{W})$
and $\Sigma^{-n+1}\u_{2}:=(\Sigma^{-n+1}\mathcal{U},\Sigma^{-n}\mathcal{W})$
for all $n\geq1$. Recall that $\mathcal{H}:=\mathcal{H}_{[\u_{1},\u_{2}]}=\mathcal{W}\cap\mathcal{U}$
is the heart of $\mathbf{x}$ and 
\[
\mathcal{C}_{n}:=\mathcal{H}_{[\u_{1},\Sigma^{-n+1}\u_{2}]}=\mathcal{W}\cap\Sigma^{-n+1}\mathcal{U}=\mathcal{H}\star\Sigma^{-1}\mathcal{H}\star\cdots\star\Sigma^{-n+1}\mathcal{H}
\]
 is the {\bf extended heart} of length $n$ of $\mathbf{x}$ (see \cite[Cor.3.4]{AMP}). Note that $\C_n$ is AET4 if $\D$ has coproducts and $\mathbf{x}$ is smashing (see Proposition \ref{prop:smash} (d)).

An important fact to note is that, for any $1<m<n$, $(\mathcal{H},\Sigma^{-1}\mathcal{C}_{n-1})$
and $(\mathcal{C}_{m},\Sigma^{-m}\mathcal{C}_{n-m})$ are $s$-torsion
pairs in $\mathcal{C}_{n}$ (see \cite[Lem.3.2]{AMP}). In particular, $\mathcal{H}$
and $\mathcal{C}_{m}$ are closed under cones, extensions, direct
summands and coproducts in $\mathcal{C}_{n}$ (see \cite[Prop.2.9]{AMP}
and \cite[Prop.3.2]{AET1}). Similarly, $\Sigma^{-n+1}\mathcal{H}$
and $\Sigma^{-m}\mathcal{C}_{n-m}$ are closed under co-cones, extensions,
direct summands and products in $\mathcal{C}_{n}$.

\begin{lem}\label{lem:lemita2} Let $\mathbf{x}=(\mathcal{U},\mathcal{W})$ be
a $t$-structure in a triangulated category $(\mathcal{D},\Sigma,\triangle)$,
$\mathcal{H}$ be the heart of $\mathbf{x}$ and $\mathcal{C}_{n}:=\mathcal{W}\cap\Sigma^{-n+1}\mathcal{U}$
for some $n\geq2$. If $\mathcal{C}_{n}$ is AET3.5 and $\{H_{\lambda}\}_{\lambda\in\Lambda}$
is a family in $\mathcal{H}$, then 
\[
\coprod_{\lambda\in\Lambda}^{\mathcal{C}_{n}}\left(\Sigma^{-n+1}H_{\lambda}\right)=\Sigma^{-n+1}\left(\coprod_{\lambda\in\Lambda}^{\mathcal{H}}H_{\lambda}\right)=\Sigma^{-n+1}\left(\coprod_{\lambda\in\Lambda}^{\mathcal{C}_{n}}H_{\lambda}\right).
\]
\end{lem}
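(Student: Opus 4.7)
The plan is to prove the two equalities separately, using the easier one first. For the second equality $\Sigma^{-n+1}\left(\coprod_{\lambda}^{\mathcal{H}}H_{\lambda}\right)=\Sigma^{-n+1}\left(\coprod_{\lambda}^{\mathcal{C}_{n}}H_{\lambda}\right)$, I would invoke the fact, recalled from \cite[Prop.2.9]{AMP}, that $\mathcal{H}$ is closed under coproducts in $\mathcal{C}_{n}$ as the torsion class of the $s$-torsion pair $(\mathcal{H},\Sigma^{-1}\mathcal{C}_{n-1})$ in $\mathcal{C}_{n}$. Therefore the coproduct of $\{H_{\lambda}\}$ in $\mathcal{H}$ and in $\mathcal{C}_{n}$ coincide, and applying $\Sigma^{-n+1}$ yields the second equality.

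For the first equality, the key observation is that for every $H\in\mathcal{H}$ and every $0\leq k\leq n-1$ the object $\Sigma^{-k}H$ lies in $\mathcal{C}_{n}=\mathcal{W}\cap\Sigma^{-n+1}\mathcal{U}$: indeed $\Sigma^{-k}H\in\Sigma^{-k}\mathcal{W}\subseteq\mathcal{W}$ (since $\mathcal{W}$ is closed under $\Sigma^{-1}$) and $\Sigma^{-k}H\in\Sigma^{-k}\mathcal{U}\subseteq\Sigma^{-n+1}\mathcal{U}$ (since $\Sigma\mathcal{U}\subseteq\mathcal{U}$ and $k\leq n-1$). Hence, for each $0\leq k\leq n-2$, the distinguished triangle in $\mathcal{D}$
\[
\Sigma^{-k-1}H\longrightarrow 0\longrightarrow \Sigma^{-k}H\xrightarrow{\;\mathrm{id}\;}\Sigma^{-k}H
\]
has all three entries in $\mathcal{C}_{n}$, and (since $\mathcal{C}_{n}$ is closed under extensions in $\mathcal{D}$) it realizes an $\s$-conflation $\Sigma^{-k-1}H\to 0\to \Sigma^{-k}H$ in $\mathcal{C}_{n}$.

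Now I would apply the AET3.5 hypothesis on $\mathcal{C}_{n}$ combined with Proposition \ref{prop:0implica1}: the coproduct of $\s$-conflations is an $\s$-conflation, so for each $0\leq k\leq n-2$ the sequence
\[
\coprod_{\lambda}^{\mathcal{C}_{n}}\Sigma^{-k-1}H_{\lambda}\longrightarrow 0\longrightarrow \coprod_{\lambda}^{\mathcal{C}_{n}}\Sigma^{-k}H_{\lambda}
\]
is an $\s$-conflation in $\mathcal{C}_{n}$. Reading this back as a triangle in $\mathcal{D}$, the connecting morphism becomes an isomorphism, yielding a natural isomorphism $\coprod_{\lambda}^{\mathcal{C}_{n}}\Sigma^{-k}H_{\lambda}\cong\Sigma\bigl(\coprod_{\lambda}^{\mathcal{C}_{n}}\Sigma^{-k-1}H_{\lambda}\bigr)$ compatible with the canonical inclusions.

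Finally I would iterate this isomorphism from $k=n-2$ down to $k=0$ to obtain the chain
\[
\coprod_{\lambda}^{\mathcal{C}_{n}}H_{\lambda}\cong \Sigma^{n-1}\Big(\coprod_{\lambda}^{\mathcal{C}_{n}}\Sigma^{-n+1}H_{\lambda}\Big),
\]
which after applying $\Sigma^{-n+1}$ and composing with the second equality gives the first one. The main obstacle I anticipate is bookkeeping: one needs to check that the isomorphisms produced at each step are compatible with the canonical inclusions of the coproducts (so that the statement is an equality of coproducts, not just of underlying objects). This should follow from the fact that the coproduct morphism in Proposition \ref{prop:0implica1} is natural and that the identification $B\cong\Sigma A$ coming from a conflation $A\to 0\to B$ in $\mathcal{C}_{n}$ is functorial in the conflation.
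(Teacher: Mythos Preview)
Your argument is correct and takes a genuinely different, more streamlined route than the paper. Both proofs handle the second equality identically, via closure of $\mathcal{H}$ under coproducts in $\mathcal{C}_n$. For the first equality, however, the paper proceeds by induction on $n$: in the base case $n=2$ it rotates the split conflations $H_\lambda\to H_\lambda^2\to H_\lambda$ to obtain conflations $\Sigma^{-1}H_\lambda\to H_\lambda\to H_\lambda^2$ in $\mathcal{C}_2$, takes their coproduct in $\mathcal{C}_2$ via AET3.5, and then compares this with the rotation of the coproduct split conflation taken in $\mathcal{H}$ (which requires first invoking Lemma~\ref{lem:lemitaAB} to know that $\mathcal{H}$ inherits AET3.5); since the two triangles share the same inflation $\coprod f_\lambda$, their cocones agree. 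The inductive step repeats this with $\Sigma^{-n+2}H_\lambda$ in place of $H_\lambda$.

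Your approach bypasses both the induction and the appeal to Lemma~\ref{lem:lemitaAB}: the conflations $\Sigma^{-k-1}H_\lambda\to 0\to\Sigma^{-k}H_\lambda$ already live in $\mathcal{C}_n$, and a single application of Proposition~\ref{prop:0implica1} at each level $k$ gives the shift isomorphism directly from the resulting triangle with zero middle term. This is cleaner and uses strictly less input. The bookkeeping concern you raise is harmless here: the statement only asserts an isomorphism of objects (coproducts are unique up to canonical isomorphism), and the way the lemma is used downstream (e.g.\ in Lemma~\ref{lem:lemita1}) only requires this. If one insists on tracking the inclusions, note that the connecting map of the coproduct conflation restricts along each $\mu_\lambda$ to (an isomorph of) the original identity extension, by the realization property in Proposition~\ref{prop:0implica1}.
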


\begin{proof}
Note that $\coprod_{\lambda\in\Lambda}^{\mathcal{H}}H_{\lambda}=\coprod_{\lambda\in\Lambda}^{\mathcal{C}_{n}}H_{\lambda}$ since $\mathcal{H}=\Free_{\C_n}(\mathcal{H}).$
Hence, it is enough to prove the first equality. For this, we proceed
by induction on $n\geq2$.

Let $n=2.$ Consider the family of split $\s$-conflations $\{\suc[H_{\lambda}][H_{\lambda}^{2}][H_{\lambda}][f_{\lambda}][g_{\lambda}]\}_{\lambda\in\Lambda}$.
By rotation of triangles in $\D,$ we get that $\{\suc[\Sigma^{-1}H_{\lambda}][H_{\lambda}][H_{\lambda}^{2}][0][f_{\lambda}]\}_{\lambda\in\Lambda}$
is a family of $\s$-conflations in $\mathcal{C}_{n}.$ Hence, by Proposition \ref{prop:0implica1}, we have an $\s$-conflation
\[
\suc[\coprod_{\lambda\in\Lambda}^{\mathcal{C}_{n}}\left(\Sigma^{-1}H_{\lambda}\right)][\coprod_{\lambda\in\Lambda}^{\mathcal{C}_{n}}H_{\lambda}][\coprod_{\lambda\in\Lambda}^{\mathcal{C}_{n}}H_{\lambda}^{2}][0][\coprod_{\lambda\in\Lambda}^{\mathcal{C}_{n}}f_{\lambda}].
\]
On the other hand, by Lemma \ref{lem:lemitaAB} we know that $\mathcal{H}$ is AET3.5. Thus, by Proposition \ref{prop:0implica1}, we have
$\suc[\coprod_{\lambda\in\Lambda}^{\mathcal{H}}H_{\lambda}][\coprod_{\lambda\in\Lambda}^{\mathcal{H}}H_{\lambda}^{2}][\coprod_{\lambda\in\Lambda}^{\mathcal{H}}H_{\lambda}][\coprod_{\lambda\in\Lambda}^{\mathcal{H}}f_{\lambda}][\coprod_{\lambda\in\Lambda}^{\mathcal{H}}g_{\lambda}]$
is a split $\s$-conflation in $\mathcal{H}$. Then, by rotating this
triangle in $\D,$ we get the $\s$-conflation 
\[
\suc[\Sigma^{-1}\left(\coprod_{\lambda\in\Lambda}^{\mathcal{\mathcal{H}}}H_{\lambda}\right)][\coprod_{\lambda\in\Lambda}^{\mathcal{\mathcal{H}}}H_{\lambda}][\coprod_{\lambda\in\Lambda}^{\mathcal{\mathcal{H}}}H_{\lambda}^{2}][0][\coprod_{\lambda\in\Lambda}^{\mathcal{H}}f_{\lambda}].
\]
Note that $\coprod_{\lambda\in\Lambda}^{\mathcal{C}_{n}}f_{\lambda}=\coprod_{\lambda\in\Lambda}^{\mathcal{H}}f_{\lambda}$ and thus 
 $\Sigma^{-1}\left(\coprod_{\lambda\in\Lambda}^{\mathcal{\mathcal{H}}}H_{\lambda}\right)=\coprod_{\lambda\in\Lambda}^{\mathcal{C}_{n}}\left(\Sigma^{-1}H_{\lambda}\right)$. 

Let $n>2$ and assume that $\coprod_{\lambda\in\Lambda}^{\mathcal{C}_{n-1}}\left(\Sigma^{-n+2}H_{\lambda}\right)=\Sigma^{-n+2}\left(\coprod_{\lambda\in\Lambda}^{\mathcal{H}}H_{\lambda}\right)$.
Consider the family of split $\s$-conflations 
\[
\{\Sigma^{-n+2}\suc[H_{\lambda}][\Sigma^{-n+2}H_{\lambda}^{2}][\Sigma^{-n+2}H_{\lambda}][f_{\lambda}][g_{\lambda}]\}_{\lambda\in\Lambda}.
\]
 By similar arguments as before, we can prove that 
\[
\Sigma^{-1}\left(\coprod_{\lambda\in\Lambda}^{\mathcal{\mathcal{C}}_{n-1}}\Sigma^{-n+2}H_{\lambda}\right)=\coprod_{\lambda\in\Lambda}^{\mathcal{C}_{n}}\left(\Sigma^{-n+1}H_{\lambda}\right).
\]
And thus, by the induction hypothesis, we get that 
\[
\coprod_{\lambda\in\Lambda}^{\mathcal{C}_{n}}\left(\Sigma^{-n+1}H_{\lambda}\right)=\Sigma^{-1}\left(\coprod_{\lambda\in\Lambda}^{\mathcal{\mathcal{C}}_{n-1}}\Sigma^{-n+2}H_{\lambda}\right)=\Sigma^{-n+1}\left(\coprod_{\lambda\in\Lambda}^{\mathcal{H}}H_{\lambda}\right).
\]
 
\end{proof}
It will be useful to introduce the following definition from \cite[Def.5.4]{V}.

\begin{defn} Let $\mathbf{x}=(\mathcal{U},\mathcal{W})$ be a $t$-structure in
a triangulated category $(\mathcal{D},\Sigma,\triangle).$
For $n\geq0$, we say that $\mathbf{x}$ is \textbf{$n$-smashing}
if $\Free_\D(\mathcal{W})\subseteq\Sigma^{n}\mathcal{W}.$ Dually, we say
that $\x$ is \textbf{$n$-co-smashing} if $\coFree_\D(\mathcal{U})\subseteq\Sigma^{-n}\mathcal{U}$.
\end{defn}

\begin{rem}\label{rem:smash} Let $\mathbf{x}=(\mathcal{U},\mathcal{W})$ be a
$t$-structure in a triangulated category $(\mathcal{D},\Sigma,\triangle)$
and $n,m\geq0$.
\begin{enumerate}
\item The notion of smashing (resp. co-smashing) coincides with $0$-smashing
(resp. 0-co-smashing). 
\item If $\mathbf{x}$ is $n$-smashing (resp. $n$-co-smashing), then it
is also $(n+1)$-smashing (resp. $(n+1)$-co-smashing). 
\item If $\mathbf{x}$ is $n$-smashing and $\mathbf{y}=(\mathcal{U}',\mathcal{W}')$
is a $t$-structure such that $\mathcal{U}\subseteq\mathcal{U}'\subseteq\Sigma^{-m}\mathcal{U}$,
then $\mathbf{y}$ is $(n+m)$-smashing. Indeed, we have that $\mathcal{W}'\subseteq\mathcal{W}\subseteq\Sigma^{m}\mathcal{W}'$;
and thus, $\Free_\D(\mathcal{W}')\subseteq\Free_\D(\mathcal{W})\subseteq\Sigma^{n}\mathcal{W}\subseteq\Sigma^{n+m}\mathcal{W}'$. 
\item Similarly, if $\mathbf{x}$ is $n$-co-smashing and $\mathbf{y}=(\mathcal{U}',\mathcal{W}')$
is a $t$-structure such that $\mathcal{U}\subseteq\mathcal{U}'\subseteq\Sigma^{-m}\mathcal{U}$,
then $\mathbf{y}$ is $(n+m)$-co-smashing. 
\end{enumerate}
\end{rem}

In the following lemma we show that the $t$-structures parametrized
by the Happel-Reiten-Smal{\o} tilting process are $1$-smashing.

\begin{lem}\label{lem:HRSsmash} For an AB3 abelian category $\mathcal{A}$ and the standard $t$-structure $\mathbf{s}=(\mathcal{D}^{\leq0},\mathcal{D}^{\geq0})$ in the derived category $\D(\mathcal{A}),$ the following statements hold true. 
\begin{enumerate}
\item $\D(\mathcal{A})$ is AET4.
\item $\mathbf{s}=(\mathcal{D}^{\leq0},\mathcal{D}^{\geq0})$
is smashing if, and only if, $\mathcal{A}$ is AB4. 
\item If $\mathcal{A}$ is AB4 and $(\mathcal{T},\mathcal{F})\in\stors(\mathcal{A})$,
then $(\Sigma\mathcal{D}^{\leq0}\star\mathcal{T},\Sigma\mathcal{F}\star\mathcal{D}^{\geq0})$
is a $1$-smashing $t$-structure in $\D(\mathcal{A}).$
\end{enumerate}
\end{lem}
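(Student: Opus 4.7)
The plan is to prove the three parts in order, invoking the structural results already developed. For (a), Example \ref{exa:ab}(b) shows that every AET3 triangulated category is AET4, so it suffices to verify that $\D(\mathcal{A})$ is AET3, i.e.\ has small coproducts. For an AB3 abelian category $\mathcal{A}$, this is classical: $K(\mathcal{A})$ carries termwise coproducts, and these descend to coproducts in $\D(\mathcal{A})$.

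For (b), I would argue both implications, recalling that coproducts in $\D(\mathcal{A})$ are computed termwise in $K(\mathcal{A})$. If $\mathcal{A}$ is AB4, then $H^n$ preserves coproducts, so $H^n(\coprod_i X_i) = \coprod_i H^n(X_i)$; for $\{X_i\} \subseteq \mathcal{D}^{\geq 0}$ this forces $\coprod_i X_i \in \mathcal{D}^{\geq 0}$, and $\mathbf{s}$ is smashing. Conversely, note that $\mathcal{D}^{\leq 0}$ is always closed under coproducts (for $\{X_i\} \subseteq \mathcal{D}^{\leq 0}$ and $Y \in \mathcal{D}^{\geq 1}$, $\Hom(\coprod X_i, Y) = \prod \Hom(X_i, Y) = 0$). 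Hence if $\mathbf{s}$ is smashing, $\mathcal{A} = \mathcal{D}^{\leq 0} \cap \mathcal{D}^{\geq 0}$ is closed under coproducts in $\D(\mathcal{A})$, and these agree with those computed in $\mathcal{A}$ by full faithfulness of the embedding $\mathcal{A} \hookrightarrow \D(\mathcal{A})$. Given a family $\{0 \to A_i \to B_i \to C_i \to 0\}$ of short exact sequences in $\mathcal{A}$, I would pass to the corresponding triangles in $\D(\mathcal{A})$, take their coproduct (a triangle by part (a) together with Proposition \ref{prop:0implica1}), and extract a short exact sequence $0 \to \coprod A_i \to \coprod B_i \to \coprod C_i \to 0$ in $\mathcal{A}$, proving AB4. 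The delicate step is matching the two notions of coproduct; this is the main obstacle, overcome via the full-faithfulness argument just described.

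For (c), the pair $(\Sigma \mathcal{D}^{\leq 0} \star \mathcal{T}, \Sigma \mathcal{F} \star \mathcal{D}^{\geq 0})$ arises from the HRS tilting process. Taking $\u_1 = (\Sigma \mathcal{D}^{\leq 0}, \mathcal{D}^{\geq 0})$ and $\u_2 = (\mathcal{D}^{\leq 0}, \Sigma^{-1} \mathcal{D}^{\geq 0})$ one has $\mathcal{H}_{[\u_1, \u_2]} = \mathcal{A}$; applying the map $\Psi$ of Theorem \ref{thm:proceso aet} to $(\mathcal{T}, \mathcal{F})$ produces the $s$-torsion pair $(\Sigma \mathcal{D}^{\leq 0} \star \mathcal{T}, \mathcal{F} \star \Sigma^{-1} \mathcal{D}^{\geq 0})$, whose associated $t$-structure (obtained by shifting the coaisle) is precisely the one in the statement; in particular the pair really is a $t$-structure. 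Its aisle $\mathcal{U}' := \Sigma \mathcal{D}^{\leq 0} \star \mathcal{T}$ clearly contains $\Sigma \mathcal{D}^{\leq 0}$ and is contained in $\mathcal{D}^{\leq -1} \star \mathcal{A} = \mathcal{D}^{\leq 0}$. Setting $\mathbf{x} := \Sigma \mathbf{s}$ (aisle $\Sigma \mathcal{D}^{\leq 0}$), this reads $\mathcal{U}_{\mathbf{x}} \subseteq \mathcal{U}' \subseteq \Sigma^{-1} \mathcal{U}_{\mathbf{x}}$. By part (b), $\mathcal{A}$ being AB4 makes $\mathbf{s}$ smashing and hence $\Sigma \mathbf{s}$ smashing as well. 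Applying Remark \ref{rem:smash}(3) with $n = 0$ and $m = 1$ concludes that the new $t$-structure is $1$-smashing.
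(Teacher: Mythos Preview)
Your proof is correct. Parts (a) and (c) match the paper's approach essentially verbatim: both invoke Example~\ref{exa:ab}(b) for (a), and for (c) both use that $\mathbf{s}$ (hence $\Sigma\mathbf{s}$) is smashing together with Remark~\ref{rem:smash}(c) applied with $n=0$, $m=1$; you spell out the HRS identification via Theorem~\ref{thm:proceso aet} where the paper leaves this implicit.

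The one genuine difference is in the direction ``$\mathbf{s}$ smashing $\Rightarrow$ $\mathcal{A}$ AB4'' of (b). The paper argues abstractly: smashing implies the heart $\mathcal{A}\cong\mathcal{H}_{\mathbf{s}}$ is AET4 by Proposition~\ref{prop:smash}(d), and AET4 is equivalent to AB4 for exact categories by Corollary~\ref{equvExactAET}. You instead give a self-contained argument: show coproducts in $\mathcal{A}$ agree with those in $\D(\mathcal{A})$, then pass a family of short exact sequences through the triangulated coproduct and back. Your route is more elementary and avoids the AET4 machinery entirely, at the cost of redoing by hand what that machinery packages; the paper's route illustrates how its general results specialize. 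Both are fine.
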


\begin{proof} (a) It is well known that $\D(\mathcal{A})$ has coproducts since $\mathcal{A}$ is AB3. Then, by Example \ref{exa:ab} (b), we get that $\D(\mathcal{A})$ is AET4.

(b) If $\mathbf{s}$ is smashing, then $\mathcal{A}\cong\mathcal{H}_{\mathbf{s}}$
is AET4 by Proposition \ref{prop:smash} (d); and thus by Corollary  \ref{equvExactAET} $\mathcal{A}$ is AB4. If $\mathcal{A}$ is AB4,
then the coproduct commutes with the co-homology functors. Therefore
$\Free_{\D(\mathcal{A})}(\mathcal{D}^{\leq0})=\mathcal{D}^{\leq0}$ (see also \cite[Prop.3.3]{PS1}). 

(c) Let $\mathcal{A}$ be AB4. Then, by Corollary \ref{equvExactAET} we know that $\mathcal{A}$ is AET4. Hence (c) follows from (b) and Remark \ref{rem:smash} (c).
\end{proof}
\begin{lem}\label{lem:lemita1} For $n\geq2$, 
 an $(n-1)$-smashing $t$-structure $\mathbf{x}=(\mathcal{U},\mathcal{W})$ in a triangulated category
$(\mathcal{D},\Sigma,\triangle)$ with coproducts, the heart $\mathcal{H}$  of $\mathbf{x}$, and $\mathcal{C}_{n}:=\mathcal{W}\cap\Sigma^{-n+1}\mathcal{U},$
the following statements hold true.
\begin{itemize}
\item[(a)] $\coprod_{\lambda\in\Lambda}^{\mathcal{C}_{n}}\Sigma^{-n+1}H_{\lambda}=\coprod_{\lambda\in\Lambda}^{\mathcal{D}}\Sigma^{-n+1}H_{\lambda}$ for $\{H_{\lambda}\}_{\lambda\in\Lambda}$ in $\mathcal{H}.$
\item[(b)] $\Free_\D(\mathcal{H})\subseteq \C_n$ if $\C_n$ is AET3.5.
\end{itemize}
\end{lem}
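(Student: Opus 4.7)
The plan is to establish (a) directly from the $(n-1)$-smashing hypothesis and then deduce (b) from (a) together with Lemma~\ref{lem:lemita2}. For (a), the key observation is that the object $E:=\coprod_{\lambda\in\Lambda}^{\mathcal{D}}\Sigma^{-n+1}H_{\lambda}$, which exists because $\mathcal{D}$ has coproducts, should already lie in $\mathcal{C}_{n}=\mathcal{W}\cap\Sigma^{-n+1}\mathcal{U}$; once this is known, fullness of $\mathcal{C}_n$ in $\mathcal{D}$ will automatically promote $E$ to the coproduct in $\mathcal{C}_n$ and deliver the claimed equality.

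To check $E\in\Sigma^{-n+1}\mathcal{U}$, I would invoke the $s$-torsion pair $\Sigma^{-n+1}\u_{2}=(\Sigma^{-n+1}\mathcal{U},\Sigma^{-n}\mathcal{W})$ and use that the torsion class of any $s$-torsion pair is closed under coproducts in $\mathcal{D}$ (as recalled at the beginning of Section~4), applied to the family $\Sigma^{-n+1}H_\lambda\in\Sigma^{-n+1}\mathcal{H}\subseteq\Sigma^{-n+1}\mathcal{U}$. To check $E\in\mathcal{W}$, I would shift the $(n-1)$-smashing inclusion $\Free_\D(\mathcal{W})\subseteq\Sigma^{n-1}\mathcal{W}$ by $\Sigma^{-n+1}$: since $\Sigma^{-n+1}$ is a triangle auto-equivalence commuting with coproducts in $\mathcal{D}$, this gives $\Free_\D(\Sigma^{-n+1}\mathcal{W})\subseteq\mathcal{W}$, and each $\Sigma^{-n+1}H_\lambda$ indeed belongs to $\Sigma^{-n+1}\mathcal{W}$.

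For (b), I would fix a family $\{H_\lambda\}_{\lambda\in\Lambda}$ in $\mathcal{H}$ and chain (a) with Lemma~\ref{lem:lemita2} to obtain
\[
\coprod_{\lambda}^{\mathcal{D}}\Sigma^{-n+1}H_\lambda=\coprod_{\lambda}^{\mathcal{C}_n}\Sigma^{-n+1}H_\lambda=\Sigma^{-n+1}\coprod_{\lambda}^{\mathcal{H}}H_\lambda.
\]
Applying $\Sigma^{n-1}$ (which commutes with coproducts in $\mathcal{D}$) then identifies $\coprod_{\lambda}^{\mathcal{D}}H_\lambda$ with $\coprod_{\lambda}^{\mathcal{H}}H_\lambda\in\mathcal{H}\subseteq\mathcal{C}_n$, proving $\Free_\D(\mathcal{H})\subseteq\mathcal{C}_n$ as required.

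The only mild obstacle is recognising that the two ingredients for (a)---coproduct closure of the torsion class $\Sigma^{-n+1}\mathcal{U}$, together with the $(n-1)$-smashing condition controlling $\Free_\D(\mathcal{W})$---align, after shifting by $\Sigma^{-n+1}$, with the two defining conditions of $\mathcal{C}_n$; once that observation is in place the remainder is careful bookkeeping of suspensions. The AET3.5 hypothesis is not needed for (a); it enters only in (b), through the application of Lemma~\ref{lem:lemita2}.
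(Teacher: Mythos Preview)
Your proof is correct, and for part (b) it matches the paper's argument. For part (a), however, you take a more direct route than the paper. You verify $E:=\coprod_{\lambda}^{\mathcal{D}}\Sigma^{-n+1}H_\lambda\in\mathcal{C}_n$ by checking the two defining conditions separately (coproduct closure of the torsion class $\Sigma^{-n+1}\mathcal{U}$ for one, the shifted $(n-1)$-smashing inclusion for the other) and then invoke fullness to identify the $\mathcal{D}$-coproduct with the $\mathcal{C}_n$-coproduct. The paper instead only records $E\in\mathcal{W}$ and then passes through the left adjoint $L$ of the inclusion $\mathcal{C}_n\hookrightarrow\Sigma^{-n+1}\mathcal{U}$ from Remark~\ref{rem:adjuntos}, using the identification $\coprod^{\mathcal{C}_n}=L(\coprod^{\mathcal{D}})$ together with the explicit formula $L\cong\Sigma(1:\te_{\u_2})\Sigma^{-1}$ from Remark~\ref{rem:L}(b), and finally observes that $L$ acts as the identity on $E$ because $\Sigma^{-1}E\in\Sigma^{-1}\mathcal{W}$. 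Your argument is shorter and avoids the adjoint machinery entirely; the paper's version, on the other hand, makes explicit that the $\mathcal{C}_n$-coproduct is always computed as $L$ applied to the ambient coproduct, which connects more visibly with the general framework of Section~2.4.
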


\begin{proof} (a)
Firstly, note that $\coprod_{\lambda\in\Lambda}^{\mathcal{D}}H_{\lambda}\in\Sigma^{n-1}\mathcal{W}$
since $\mathbf{x}$ is $(n-1)$-smashing. Thus 
\[
\coprod_{\lambda\in\Lambda}^{\mathcal{D}}\left(\Sigma^{-n+1}H_{\lambda}\right)\cong\Sigma^{-n+1}\left(\coprod_{\lambda\in\Lambda}^{\mathcal{D}}H_{\lambda}\right)\in\mathcal{W}.
\]
On the other hand, by Remark \ref{rem:adjuntos} (b), we know that there is a left
adjoint $L$ of the inclusion $j:\mathcal{C}_{n}\rightarrow\Sigma^{-n+1}\mathcal{U}$.
Hence $\coprod_{\lambda\in\Lambda}^{\mathcal{C}_{n}}\Sigma^{-n+1}H_{\lambda}=L(\coprod_{\lambda\in\Lambda}^{\mathcal{D}}\Sigma^{-n+1}H_{\lambda})$
(see the proof of \cite[Prop.3.2]{PS1}). Moreover, by Remark \ref{rem:L} (b) we have that 
\[
L(\coprod_{\lambda\in\Lambda}^{\mathcal{D}}\Sigma^{-n+1}H_{\lambda})=\Sigma(1:\te_{\u_{2}})(\Sigma^{-1}\coprod_{\lambda\in\Lambda}^{\mathcal{D}}\left(\Sigma^{-n+1}H_{\lambda}\right)).
\]
Therefore, since $\Sigma^{-1}\coprod_{\lambda\in\Lambda}^{\mathcal{D}}\left(\Sigma^{-n+1}H_{\lambda}\right)\in\Sigma^{-1}\mathcal{W}$,
we conclude that 
\[
\coprod_{\lambda\in\Lambda}^{\mathcal{C}_{n}}\Sigma^{-n+1}H_{\lambda}=L(\coprod_{\lambda\in\Lambda}^{\mathcal{D}}\Sigma^{-n+1}H_{\lambda})=\coprod_{\lambda\in\Lambda}^{\mathcal{D}}\left(\Sigma^{-n+1}H_{\lambda}\right).
\]
(b) Let $\{H_{\lambda}\}_{\lambda\in\Lambda}$ be in $\mathcal{H}.$ Then, by (a) and Lemma \ref{lem:lemita2}, we have that
\[
\Sigma^{-n+1}\coprod_{\lambda\in\Lambda}^{\mathcal{D}}H_{\lambda}=\coprod_{\lambda\in\Lambda}^{\mathcal{C}_{n}}\left(\Sigma^{-n+1}H_{\lambda}\right)=\Sigma^{-n+1}\left(\coprod_{\lambda\in\Lambda}^{\mathcal{H}}H_{\lambda}\right)=\Sigma^{-n+1}\left(\coprod_{\lambda\in\Lambda}^{\mathcal{C}_{n}}H_{\lambda}\right).
\]
Therefore $\coprod_{\lambda\in\Lambda}^{\mathcal{C}_{n}}H_{\lambda}=\coprod_{\lambda\in\Lambda}^{\mathcal{D}}H_{\lambda}$ proving (b).
\end{proof}

\begin{thm}\label{thm:smashing} Let $n\geq2$, $\mathbf{x}=(\mathcal{U},\mathcal{W})$
be an $(n-1)$-smashing $t$-structure in a triangulated category
$(\mathcal{D},\Sigma,\triangle)$ with coproducts, $\mathcal{H}$
be the heart of $\mathbf{x}$ and $\mathcal{C}_{n}:=\mathcal{W}\cap\Sigma^{-n+1}\mathcal{U}$.
Then, the following statements are equivalent.
\begin{itemize}
\item[(a)] $\mathbf{x}$ is $0$-smashing. 
\item[(b)] $\mathcal{C}_{n}$ is AET4.
\item[(c)] $\mathcal{C}_{n}$ is AET3.5
\end{itemize}
\end{thm}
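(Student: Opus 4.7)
The plan is to dispose of the easy implications first and then to concentrate on $(c)\Rightarrow(a)$. For $(a)\Rightarrow(b)$, the smashing hypothesis on $\mathbf{x}$ says precisely that the $s$-torsion pair $\u_1=(\Sigma\mathcal{U},\mathcal{W})$ is smashing, so Proposition \ref{prop:smash}(d) applied to the interval $[\u_1,\Sigma^{-n+1}\u_2]$ yields that $\mathcal{C}_n=\mathcal{H}_{[\u_1,\Sigma^{-n+1}\u_2]}$ is AET4. The implication $(b)\Rightarrow(c)$ is immediate from the definitions.

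For $(c)\Rightarrow(a)$, the first step is to extract the essential fact $\Free_\D(\mathcal{H})\subseteq\mathcal{H}$. Lemma \ref{lem:lemita1}(b) gives $\Free_\D(\mathcal{H})\subseteq\mathcal{C}_n\subseteq\mathcal{W}$; since in any $t$-structure the inclusion $\mathcal{U}\hookrightarrow\mathcal{D}$ admits a right adjoint and therefore preserves coproducts, $\mathcal{U}$ is closed under $\mathcal{D}$-coproducts, and intersecting yields $\Free_\D(\mathcal{H})\subseteq\mathcal{H}$. Thus coproducts of heart objects, computed in $\mathcal{D}$, stay in the heart.

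To prove $\mathbf{x}$ is smashing, given $\{W_\lambda\}_{\lambda\in\Lambda}\subseteq\mathcal{W}$ it suffices to show $H^{-k}(\coprod_{\lambda}^{\mathcal{D}}W_\lambda)=0$ for each $k\in\{1,\ldots,n-1\}$, since combined with the $(n-1)$-smashing bound $\coprod_{\lambda}^{\mathcal{D}}W_\lambda\in\mathcal{D}^{\geq-(n-1)}$ this forces $\coprod_{\lambda}^{\mathcal{D}}W_\lambda\in\mathcal{W}$. The strategy is an iterative reduction via $t$-structure truncation triangles. Taking $\mathcal{D}$-coproducts of $\tau^{\leq n-1}W_\lambda\to W_\lambda\to\tau^{\geq n}W_\lambda$, the rightmost term lies in $\mathcal{D}^{\geq 1}$ by applying $(n-1)$-smashing to $\Sigma^{n}\tau^{\geq n}W_\lambda\in\mathcal{W}$ and shifting back; hence $H^{-k}$ and $H^{-k-1}$ of it both vanish, and the cohomology long exact sequence collapses to an isomorphism $H^{-k}(\coprod_{\lambda}^{\mathcal{D}}W_\lambda)\cong H^{-k}(\coprod_{\lambda}^{\mathcal{D}}Z^{(0)}_\lambda)$ with $Z^{(0)}_\lambda:=\tau^{\leq n-1}W_\lambda\in\mathcal{C}_n$. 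For $m=0,\ldots,n-2$ I iterate using the triangle $\tau^{\leq n-m-2}Z^{(m)}_\lambda\to Z^{(m)}_\lambda\to\tau^{\geq n-m-1}Z^{(m)}_\lambda$: the rightmost $\mathcal{D}$-coproduct equals $\Sigma^{-(n-m-1)}\coprod_{\lambda}^{\mathcal{D}}H^{n-m-1}(Z^{(m)}_\lambda)$, which by the essential fact is concentrated in the single cohomological degree $n-m-1\geq 1$; so again the LES forces $H^{-k}(\coprod_{\lambda}^{\mathcal{D}}Z^{(m)}_\lambda)\cong H^{-k}(\coprod_{\lambda}^{\mathcal{D}}Z^{(m+1)}_\lambda)$ with $Z^{(m+1)}_\lambda:=\tau^{\leq n-m-2}Z^{(m)}_\lambda\in\mathcal{C}_{n-m-1}$.

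After $n-1$ iterations we land at $Z^{(n-1)}_\lambda\in\mathcal{C}_1=\mathcal{H}$, so the essential fact gives $\coprod_{\lambda}^{\mathcal{D}}Z^{(n-1)}_\lambda\in\mathcal{H}\subseteq\mathcal{D}^{\geq 0}$, making $H^{-k}=0$. Chasing back through the chain of isomorphisms yields $H^{-k}(\coprod_{\lambda}^{\mathcal{D}}W_\lambda)=0$ for every $k\in\{1,\ldots,n-1\}$, completing the proof. The delicate point throughout is verifying at each reduction step that the right-hand term of the truncation triangle, after taking $\mathcal{D}$-coproducts, is concentrated in a single cohomological degree so that the long exact sequence genuinely collapses to an isomorphism; this is precisely what $\Free_\D(\mathcal{H})\subseteq\mathcal{H}$, the pivotal consequence of $\mathcal{C}_n$ being AET3.5 supplied by Lemma \ref{lem:lemita1}(b), secures.
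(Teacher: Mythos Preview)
Your proof is correct and takes a genuinely different route from the paper for the implication $(c)\Rightarrow(a)$.

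The paper argues by recursion on the smashing degree: it first observes (via Lemma~\ref{lem:lemitaAB} and the $s$-torsion pair $(\mathcal{C}_m,\Sigma^{-m}\mathcal{C}_{n-m})$ inside $\mathcal{C}_n$) that every intermediate $\mathcal{C}_m$ is AET3.5, then uses the single truncation triangle $U_\lambda\to V_\lambda\to\Sigma^{-1}W_\lambda$ (with $U_\lambda\in\mathcal{H}$) and Lemma~\ref{lem:lemita1}(b) to show that $(n-1)$-smashing improves to $(n-2)$-smashing; repeating this step $n-1$ times finishes. You instead extract from Lemma~\ref{lem:lemita1}(b) the stronger one-shot consequence $\Free_\D(\mathcal{H})\subseteq\mathcal{H}$ (intersecting with the automatic closure of $\mathcal{U}$ under coproducts), and then run a direct cohomological descent via iterated truncations $\tau^{\leq n-m-2}$, showing $H^{-k}$ of the coproduct vanishes for all $k\in\{1,\ldots,n-1\}$ simultaneously. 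Your approach avoids invoking AET3.5 for the smaller extended hearts $\mathcal{C}_m$ and is in that sense more self-contained; the paper's recursion is shorter to write and stays closer to the extriangulated formalism developed earlier. Both pivot on the same lemma, but organise the induction differently: the paper inducts on the smashing bound, you induct on the truncation level.
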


\begin{proof} Note that the implication (a) $\Rightarrow$ (b) follows from Proposition \ref{prop:smash} (d); and the implication (b) $\Rightarrow$ (c) is trivial.

Assume now that $\mathcal{C}_{n}$ is AET3.5. An important fact to note firstly is that, for any $1<m<n$, we have that $(\mathcal{C}_{m},\Sigma^{-m}\mathcal{C}_{n-m})$ is an $s$-torsion pair in $\mathcal{C}_{n}$ (see \cite[Lem.3.2]{AMP}). Then, by Lemma \ref{lem:lemitaAB}, we conclude that $\mathcal{C}_{m}$ is also AET3.5 for any $1<m<n.$

Now, let $\{V_{\lambda}\}_{\lambda\in\Lambda}$ be in $\mathcal{W}.$ Since $(\U,\Sigma^{-1}\W)$ is an $\s$-torsion pair, we get the canonical $\s$-conflation 
$
\suc[U_{\lambda}][V_{\lambda}][\Sigma^{-1}W_{\lambda}]
$
 with $U_{\lambda}\in\mathcal{U}$ and $W_{\lambda}\in\mathcal{W}$
for every $\lambda\in\Lambda$. One can check that $U_{\lambda}\in\mathcal{H}$
(see \cite{BBD} or \cite[Prop.3.1]{PS1}). Since $\D$ is AET4 (see Example \ref{exa:ab} (b)), by Proposition \ref{prop:0implica1} and the above $\s$-conflations we get  the $\s$-conflation 
\[
\suc[\coprod_{\lambda\in\Lambda}^{\mathcal{D}}U_{\lambda}][\coprod_{\lambda\in\Lambda}^{\mathcal{D}}V_{\lambda}][\coprod_{\lambda\in\Lambda}^{\mathcal{D}}\Sigma^{-1}W_{\lambda}]
\]
Note that $\Free_\D(\Sigma^{-1}\W)\subseteq \Sigma^{n-2}\W$ since 
$
\Sigma\coprod_{i\in I}^{\mathcal{D}}\Sigma^{-1}W_i=\coprod_{i\in I}^{\mathcal{D}}W_i\in\Sigma^{n-1}\W.
$
On the other hand, by Lemma \ref{lem:lemita1}(b) we have that $\coprod_{\lambda\in\Lambda}^{\mathcal{D}}U_{\lambda}\in\mathcal{C}_{n}\subseteq\mathcal{W}.$ Therefore, 
 $\coprod_{\lambda\in\Lambda}^{\mathcal{D}}V_{\lambda}\in\mathcal{W}\star\Sigma^{n-2}\mathcal{W}\subseteq\Sigma^{n-2}\mathcal{W}$ and thus
 $\mathbf{x}$ is $(n-2)$-smashing. Finally, by using that $\mathcal{C}_m$ is AET3.5 for $1<m<n,$ by recursion it follows that $\mathbf{x}$
is $0$-smashing.
\end{proof}

In what follows, we state and prove the dual version of Theorem \ref{thm:smashing}. We do that for the sake of completeness and also since we use it in the example given in section \ref{ExampleNAET4}.

\begin{thm}\label{thm:cosmashing} Let $n\geq2$, $\mathbf{x}=(\mathcal{U},\mathcal{W})$
be an $(n-1)$-co-smashing $t$-structure in a triangulated category
$(\mathcal{D},\Sigma,\triangle)$ with products, $\mathcal{H}$ be
the heart of $\mathbf{x}$ and $\mathcal{C}_{n}:=\mathcal{W}\cap\Sigma^{-n+1}\mathcal{U}.$ Then, the following statements are equivalent.
\begin{itemize}
\item[(a)] $\mathbf{x}$ is $0$-co-smashing. 
\item[(b)] $\mathcal{C}_{n}$ is AET4{*}.
\item[(c)] $\mathcal{C}_{n}$ is AET3.5{*}.
\end{itemize}
\end{thm}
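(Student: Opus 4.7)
The plan is to obtain Theorem \ref{thm:cosmashing} by dualizing the proof of Theorem \ref{thm:smashing}, since all the ingredients we used there admit product-versions. The implication $(a)\Rightarrow(b)$ is the dual of Proposition \ref{prop:smash}(d): a $0$-co-smashing $t$-structure makes $\mathcal{C}_{n}$ closed under products in $\mathcal{D}$, and then Lemma \ref{lem:lemitaAB} applied to $\mathcal{D}^{op}$ (which is AET4 by Example \ref{exa:ab}(b) since $\mathcal{D}$ has products) gives that $\mathcal{C}_{n}$ is AET4{*}. The implication $(b)\Rightarrow(c)$ is immediate from the definitions.

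For $(c)\Rightarrow(a)$, I would mirror the argument of Theorem \ref{thm:smashing}. First, note that for every $1<m<n$ the pair $(\mathcal{C}_{m},\Sigma^{-m}\mathcal{C}_{n-m})$ is an $s$-torsion pair in $\mathcal{C}_{n}$; in particular $\Sigma^{-m}\mathcal{C}_{n-m}$ is closed under extensions, direct summands and products inside $\mathcal{C}_{n}$, so the dual of Lemma \ref{lem:lemitaAB} shows $\Sigma^{-m}\mathcal{C}_{n-m}$ (equivalently $\mathcal{C}_{n-m}$) is AET3.5{*}. Next, take $\{V_{\lambda}\}_{\lambda\in\Lambda}\subseteq\mathcal{U}$ and apply the $s$-torsion pair $\u_{1}=(\Sigma\mathcal{U},\mathcal{W})$ to produce canonical $\s$-conflations
\[
\suc[\Sigma U_{\lambda}][V_{\lambda}][W_{\lambda}],
\]
with $\Sigma U_{\lambda}\in\Sigma\mathcal{U}\subseteq\mathcal{U}$ and $W_{\lambda}\in\mathcal{W}$. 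Because $\mathcal{U}$ is closed under cones and $V_{\lambda}\in\mathcal{U}$, we get $W_{\lambda}\in\mathcal{U}\cap\mathcal{W}=\mathcal{H}$. Since $\mathcal{D}$ is AET4{*} (dual of Example \ref{exa:ab}(b)), the dual of Proposition \ref{prop:0implica1} yields the $\s$-conflation
\[
\suc[\prod\nolimits_{\lambda}^{\mathcal{D}}\Sigma U_{\lambda}][\prod\nolimits_{\lambda}^{\mathcal{D}}V_{\lambda}][\prod\nolimits_{\lambda}^{\mathcal{D}}W_{\lambda}].
\]

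The key technical step, and in my view the main obstacle, is the dual of Lemma \ref{lem:lemita1}(b): namely, if $\mathcal{C}_{n}$ is AET3.5{*} and $\mathbf{x}$ is $(n-1)$-co-smashing, then $\prod_{\lambda}^{\mathcal{D}}H_{\lambda}\in\mathcal{C}_{n}$ for every family $\{H_{\lambda}\}\subseteq\mathcal{H}$. I would establish it by mirroring Lemmas \ref{lem:Hpreservacoprods}, \ref{lem:lemita2}, \ref{lem:lemita1} in their product forms: first dualize Lemma \ref{lem:lemita2} using rotated split $\s$-conflations $\Sigma^{-n+1}H_{\lambda}\to\Sigma^{-n+1}H_{\lambda}^{2}\to\Sigma^{-n+1}H_{\lambda}$ and the dual of Proposition \ref{prop:0implica1} for $\mathcal{C}_{n}$ and $\mathcal{H}$, giving $\prod_{\lambda}^{\mathcal{C}_{n}}\Sigma^{-n+1}H_{\lambda}=\Sigma^{-n+1}\prod_{\lambda}^{\mathcal{H}}H_{\lambda}=\Sigma^{-n+1}\prod_{\lambda}^{\mathcal{C}_{n}}H_{\lambda}$; then, using the right adjoint of the inclusion $\mathcal{C}_{n}\to\mathcal{W}$ (dual of Remark \ref{rem:adjuntos}(b), together with the dual of Remark \ref{rem:L}(b)) and the $(n-1)$-co-smashing hypothesis $\prod_{\lambda}^{\mathcal{D}}H_{\lambda}\in\Sigma^{-n+1}\mathcal{U}$, conclude that $\prod_{\lambda}^{\mathcal{C}_{n}}\Sigma^{-n+1}H_{\lambda}=\prod_{\lambda}^{\mathcal{D}}\Sigma^{-n+1}H_{\lambda}$, and thus $\prod_{\lambda}^{\mathcal{C}_{n}}H_{\lambda}=\prod_{\lambda}^{\mathcal{D}}H_{\lambda}\in\mathcal{C}_{n}$.

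Granting this, we finish as follows. The $(n-1)$-co-smashing hypothesis gives
\[
\prod\nolimits_{\lambda}^{\mathcal{D}}\Sigma U_{\lambda}\cong\Sigma\prod\nolimits_{\lambda}^{\mathcal{D}}U_{\lambda}\in\Sigma\cdot\Sigma^{-(n-1)}\mathcal{U}=\Sigma^{-n+2}\mathcal{U},
\]
and the dual of Lemma \ref{lem:lemita1}(b) yields $\prod_{\lambda}^{\mathcal{D}}W_{\lambda}\in\mathcal{C}_{n}\subseteq\mathcal{U}\subseteq\Sigma^{-n+2}\mathcal{U}$. Since $\Sigma^{-n+2}\mathcal{U}$ is the aisle of a shifted $t$-structure, it is closed under extensions, so $\prod_{\lambda}^{\mathcal{D}}V_{\lambda}\in\Sigma^{-n+2}\mathcal{U}\star\mathcal{U}\subseteq\Sigma^{-n+2}\mathcal{U}$. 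This proves $\mathbf{x}$ is $(n-2)$-co-smashing. Since $\mathcal{C}_{m}$ is AET3.5{*} for every $1<m\leq n$, we can iterate the same argument on the shifted $t$-structure (or, equivalently, descend via the $s$-torsion pair $(\mathcal{H},\Sigma^{-1}\mathcal{C}_{n-1})$ and the inductive structure of $\mathcal{C}_{n-1}$) and conclude after finitely many steps that $\mathbf{x}$ is $0$-co-smashing.
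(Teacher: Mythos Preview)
Your overall strategy---dualize Theorem~\ref{thm:smashing}---is the right one, and it is exactly what the paper does. However, your hand-dualization contains a genuine error in the final step: the inclusion $\mathcal{C}_{n}\subseteq\mathcal{U}$ is \emph{false}. Indeed $\mathcal{C}_{n}=\mathcal{W}\cap\Sigma^{-n+1}\mathcal{U}$, and for $n\geq2$ any nonzero object of $\Sigma^{-1}\mathcal{H}\subseteq\mathcal{C}_{n}$ fails to lie in $\mathcal{U}$. Consequently, even granting your claim $\prod_{\lambda}^{\mathcal{D}}W_{\lambda}\in\mathcal{C}_{n}$ (which is true, but for the trivial reason that $\mathcal{W}$ is closed under products and $(n-1)$-co-smashing places the product in $\Sigma^{-n+1}\mathcal{U}$, so no AET3.5{*} is needed), you only obtain $\prod_{\lambda}^{\mathcal{D}}V_{\lambda}\in\Sigma^{-n+2}\mathcal{U}\star\Sigma^{-n+1}\mathcal{U}\subseteq\Sigma^{-n+1}\mathcal{U}$, which is merely the $(n-1)$-co-smashing you started with, not the $(n-2)$-co-smashing you need.

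The source of the confusion is that dualization does not preserve the specific extended heart $\mathcal{C}_{n}$. In $\mathcal{D}^{op}$ the $t$-structure is $\mathbf{x}^{op}=(\mathcal{W}^{op},\mathcal{U}^{op})$ with suspension $(\Sigma^{-1})^{op}$, and its extended heart of length $n$ is $(\Sigma^{n-1}\mathcal{C}_{n})^{op}$, not $\mathcal{C}_{n}^{op}$. The correct dual of Lemma~\ref{lem:lemita1}(b) therefore reads $\coFree_{\mathcal{D}}(\mathcal{H})\subseteq\Sigma^{n-1}\mathcal{C}_{n}=\mathcal{U}\cap\Sigma^{n-1}\mathcal{W}\subseteq\mathcal{U}$, and this \emph{does} use the AET3.5{*} hypothesis. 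With this in hand your inductive step works. The paper sidesteps the bookkeeping by passing formally to $\mathcal{D}^{op}$, applying Theorem~\ref{thm:smashing} there, and then transporting the conclusion back to $\mathcal{C}_{n}$ via the extriangulated autoequivalence $\Sigma^{-n+1}\colon\Sigma^{n-1}\mathcal{C}_{n}\to\mathcal{C}_{n}$.
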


\begin{proof}
This follows by dualizing Theorem \ref{thm:smashing}. For this, recall
that $(\mathcal{D}^{op},T,\triangle^{op})$ is a triangulated category
with $T:=(\Sigma^{-1})^{op}$ and $\triangle^{op}$ consisting of
the sequences of the form $\suc[X^{op}][Y^{op}][Z^{op}][f^{op}][g^{op}]$
for any $\suc[Z][Y][X][g][f]$ in $\triangle$. One can check that
$\x^{op}:=(\mathcal{W}^{op},\mathcal{U}^{op})$ is a $(n-1)$-co-smashing
$t$-structure with heart $\mathcal{H}_{\x^{op}}=\mathcal{H}^{op}$,
and extended heart of length $n$ equal to 
\[
\mathcal{C}_{\x^{op}}=\mathcal{H}^{op}\star T^{-1}\mathcal{H}^{op}\star\cdots\star T^{-n+1}\mathcal{H}^{op}=\mathcal{H}^{op}\star\cdots\star(\Sigma^{n-1}\mathcal{H})^{op}=(\Sigma^{n-1}\mathcal{C}_{n})^{op}.
\]
Now, by Theorem \ref{thm:smashing}, we have that: $\mathcal{C}_{\x^{op}}$
is AET4 (AET3.5) $\Leftrightarrow$ $\x^{op}$ is smashing. Hence, we have that
$(\mathcal{C}_{\x^{op}})^{op}$ is AET4{*} (AET3.5{*}) $\Leftrightarrow$ $\x$ is 0-co-smashing.
Therefore, since $(\mathcal{C}_{\x^{op}})^{op}=\Sigma^{n-1}\mathcal{C}_{n}$
and $\Sigma^{-n+1}:\Sigma^{n-1}\mathcal{C}_{n}\rightarrow\mathcal{C}_{n}$
is an isomorphism of categories, we get that $\mathcal{C}_{n}$ is
AET4{*} (AET3.5{*}) $\Leftrightarrow$ $\mathbf{x}$ is $0$-co-smashing.
\end{proof}

\begin{cor}\label{corTsmsingAB}
Let $\mathcal{A}$ be an AB4 (AB4{*}) abelian category, $\u=(\mathcal{T},\mathcal{F})\in\stors(\mathcal{A})$,
$\mathcal{H}$ be the heart of the $t$-structure $\x=(\Sigma\mathcal{D}^{\leq0}\star\mathcal{T},\Sigma\mathcal{F}\star\mathcal{D}^{\geq0})$ in the derived category 
$\D(\mathcal{A})$
and the extended heart $\mathcal{C}=\mathcal{H}\star\Sigma^{-1}\mathcal{H}$. Then, the
following statements are equivalent. 
\begin{itemize}
\item[(a)] $\x$ is smashing (co-smashing) in $\D(\mathcal{A}).$
\item[(b)] $\u$ is smashing (co-smashing) in $\mathcal{A}.$
\item[(c)] $\mathcal{C}$ is AET4 (AET4{*}). 
\item[(d)] $\mathcal{C}$ is AET3.5 (AET3.5{*}). 
\end{itemize}
\end{cor}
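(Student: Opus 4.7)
The plan is to reduce the statement to Theorem \ref{thm:smashing} (and its dual Theorem \ref{thm:cosmashing}) applied with $n=2$. First I would note that $\mathcal{C}=\mathcal{H}\star\Sigma^{-1}\mathcal{H}$ is exactly the extended heart of length $2$ of $\x$, so $\mathcal{C}=\mathcal{C}_{2}$ in the notation preceding Theorem \ref{thm:smashing}. Because $\mathcal{A}$ is AB4, it is in particular AB3, so $\D(\mathcal{A})$ has coproducts; Lemma \ref{lem:HRSsmash}(c) then says that $\x$ is a $1$-smashing $t$-structure in $\D(\mathcal{A})$. Theorem \ref{thm:smashing} therefore applies with $n=2$ and yields $(a)\Leftrightarrow(c)\Leftrightarrow(d)$. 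The dual equivalences (co-smashing, AET4*, AET3.5*) follow in the same way from Theorem \ref{thm:cosmashing}, using AB4* in place of AB4 (which guarantees products in $\D(\mathcal{A})$) together with the dual of Lemma \ref{lem:HRSsmash}(c).

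It only remains to show $(a)\Leftrightarrow(b)$. The key observation I would record is the cohomological description of the co-aisle: for $X\in\D(\mathcal{A})$,
\[
X\in\mathcal{W}=\Sigma\mathcal{F}\star\mathcal{D}^{\geq0}\;\Longleftrightarrow\;H^{i}(X)=0\text{ for all }i\leq-2\text{ and }H^{-1}(X)\in\mathcal{F}.
\]
This is immediate from applying the standard cohomology to a triangle $\Sigma F\to X\to D\to$ with $F\in\mathcal{F}$ and $D\in\mathcal{D}^{\geq0}$, together with $\mathcal{F}$ being closed under subobjects as the torsion-free class of $\u$. With this description in hand, both directions are short. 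If $\mathcal{F}$ is closed under coproducts in $\mathcal{A}$ and $\{W_{\lambda}\}\subseteq\mathcal{W}$, then since $\mathcal{A}$ is AB4 the functors $H^{i}$ commute with coproducts in $\D(\mathcal{A})$, and the above description gives $\coprod W_{\lambda}\in\mathcal{W}$; conversely, if $\x$ is smashing and $\{F_{\lambda}\}\subseteq\mathcal{F}$, each $\Sigma F_{\lambda}\in\mathcal{W}$ implies $\coprod\Sigma F_{\lambda}\cong\Sigma\coprod F_{\lambda}\in\mathcal{W}$, whence $\coprod F_{\lambda}=H^{-1}\bigl(\Sigma\coprod F_{\lambda}\bigr)\in\mathcal{F}$. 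The co-smashing case is handled by the entirely dual argument.

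The main obstacle is essentially bookkeeping: one has to identify $\mathcal{C}$ with $\mathcal{C}_{2}$ and verify the $1$-smashing hypothesis needed to invoke Theorem \ref{thm:smashing}, but the latter is already packaged in Lemma \ref{lem:HRSsmash}(c). The only genuinely new computation is the cohomological description of $\mathcal{W}$ used for $(a)\Leftrightarrow(b)$, and even there AB4 does all the work by ensuring $H^{i}$ commutes with coproducts.
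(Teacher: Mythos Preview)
Your proposal is correct and follows essentially the same route as the paper: invoke Lemma~\ref{lem:HRSsmash}(c) to see that $\x$ is $1$-smashing, then apply Theorem~\ref{thm:smashing} with $n=2$ for $(a)\Leftrightarrow(c)\Leftrightarrow(d)$, and use the cohomological description of $\mathcal{W}=\Sigma\mathcal{F}\star\mathcal{D}^{\geq0}$ together with AB4 (so that $H^{i}$ commutes with coproducts) for $(a)\Leftrightarrow(b)$. Your write-up is in fact slightly more explicit than the paper's, as you spell out both directions of $(a)\Leftrightarrow(b)$ separately.
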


\begin{proof} By Lemma \ref{lem:HRSsmash} we know that $\x$ is 1-smashing in $\D(\mathcal{A}).$ Thus by Theorem \ref{thm:smashing} we get that (a), (c) and (d) are equivalent.

$(a)\Leftrightarrow(b)$ Since $\mathcal{A}$ is AB4, we have that
the co-homology functors preserve coproducts. Thus, $\mathcal{D}^{\geq0}$
is closed under coproducts in $\mathcal{D}$. Now, since $\Sigma\mathcal{F}\star\mathcal{D}^{\geq0}=\{X\in\Sigma\mathcal{D}^{\geq0}\,|\:H^{-1}(X)\in\mathcal{F}\}$,
it follows that $\x$ is smashing if and only if $\mathcal{F}$ is
closed under coproducts in $\mathcal{A}.$
\end{proof}

For a $t$-structure $\te = (\U,\W)$ in a triangulated category $\mathcal{D}$, the \textbf{global dimension} of $\te$ is defined as $\gldim(\te)=\min\{k\in \mathbb{N}\,|\:\Hom _{\mathcal{C}} (\W, \Sigma ^{k+1} \U )=0\}$ (see \cite[Def.3.3]{CLZ}). In particular, if ${\mathcal{D}}$ is the derived category of an abelian category $\mathcal{A}$ with enough projectives or injectives, then the global dimension of the standard $t$-structure $\te_0$ coincides with the global dimension of $\mathcal{A}$. That is, $\gldim(\te_0)$ is equal to the smallest non-negative integer $d$ such that 
$\Ext_{\mathcal{A}}^{i}(-,-)=0\quad\forall i>d$ (see \cite[Prop.3.6]{CLZ}).

\begin{lem}\label{lem:gldim} Let $\mathcal{A}$ be an AB3 abelian category
and  $\te  :=(\mathcal{D}^{\leq0},\mathcal{D}^{\geq0})$  be the standard $t$-structure in $\mathcal{D}(\mathcal{A})$.
 If $\gldim(\te)\leq n$, then $\te$ is $n$-smashing.
\end{lem}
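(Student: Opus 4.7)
The plan is to show that $Z:=\coprod_{\lambda}X_{\lambda}$ lies in $\mathcal{D}^{\geq -n}$ for any family $\{X_{\lambda}\}_{\lambda\in\Lambda}\subseteq\mathcal{D}^{\geq 0}$; equivalently, $\tau^{\leq -n-1}(Z)=0$. First I would unpack $\gldim(\te)\leq n$ as the vanishing $\Hom_{\mathcal{D}}(\mathcal{D}^{\geq 0},\mathcal{D}^{\leq -n-1})=0$, which, specialised to stalk complexes $A,B\in\mathcal{A}$, also yields $\gldim(\mathcal{A})\leq n$.

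The argument opens with the observation that the universal property of the coproduct, combined with this Hom-vanishing, gives
\[
\Hom_{\mathcal{D}}(Z,W)=\prod_{\lambda}\Hom_{\mathcal{D}}(X_{\lambda},W)=0\qquad\text{for every }W\in\mathcal{D}^{\leq -n-1}.
\]
Setting $Z^{-}:=\tau^{\leq -n-1}(Z)$ and $Z^{+}:=\tau^{\geq -n}(Z)$, I would then invoke the truncation triangle $Z^{-}\to Z\to Z^{+}\to \Sigma Z^{-}$, noting that $Z^{+}=\coprod_{\lambda}^{\mathcal{D}^{\geq -n}}X_{\lambda}$ because $\tau^{\geq -n}$ is a left adjoint to the inclusion $\mathcal{D}^{\geq -n}\hookrightarrow\mathcal{D}(\mathcal{A})$ and therefore preserves coproducts. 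Applying $\Hom(-,Z^{-})$ to this triangle and exploiting the standard $t$-structure orthogonalities $\Hom(\mathcal{D}^{\leq k},\mathcal{D}^{\geq k+1})=0$, the long exact sequence collapses enough to make the connecting homomorphism $\delta\colon\operatorname{End}_{\mathcal{D}}(Z^{-})\hookrightarrow \Hom_{\mathcal{D}}(\Sigma^{-1}Z^{+},Z^{-})$, $f\mapsto f\circ\Sigma^{-1}\partial$, injective.

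The main obstacle is then to force $\Sigma^{-1}\partial=0$, after which the injectivity of $\delta$ immediately yields $1_{Z^{-}}=0$ and hence $Z^{-}=0$. The cleanest route I see is via the hypercohomology spectral sequence for the left-derived coproduct $L\coprod\colon\mathcal{D}(\mathcal{A}^{\Lambda})\to\mathcal{D}(\mathcal{A})$,
\[
E_{2}^{p,q}=L_{-p}\coprod\bigl(H^{q}(X_{\lambda})\bigr)\Longrightarrow H^{p+q}(Z).
\]
Since $\gldim(\mathcal{A})\leq n$, standard homological algebra forces $L_{k}\coprod=0$ for $k>n$, while $H^{q}(X_{\lambda})=0$ for $q<0$, so the $E_{2}$-page vanishes outside the strip $\{-n\leq p\leq 0,\ q\geq 0\}$. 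Hence $H^{i}(Z)=0$ for $i<-n$, which is exactly $Z\in\mathcal{D}^{\geq -n}$. The delicate technical point that I expect to require most care is the vanishing $L_{k}\coprod=0$ for $k>n$ in the AB3 setting (where enough projectives need not exist); this should be handled by constructing $L\coprod$ directly through the derived-category framework, where the global-dimension bound controls the cohomological spread of any object in $\mathcal{D}^{\geq 0}$.
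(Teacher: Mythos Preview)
Your opening observation is exactly right and is the heart of the matter: the universal property of the coproduct together with $\gldim(\te)\leq n$ gives
\[
\Hom_{\mathcal{D}}\bigl(Z,W\bigr)=\prod_{\lambda}\Hom_{\mathcal{D}}(X_{\lambda},W)=0\qquad\text{for all }W\in\mathcal{D}^{\leq -n-1}.
\]
But from here you take a long detour that runs into a genuine gap.

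The problem is the spectral-sequence step. Once you unpack $L\!\coprod$ as the coproduct in $\mathcal{D}(\mathcal{A})$ of stalk complexes, the assertion $L_{k}\!\coprod=0$ for $k>n$ is exactly the statement that $\coprod^{\mathcal{D}}_{\lambda}A_{\lambda}\in\mathcal{D}^{\geq -n}$ for every family $\{A_{\lambda}\}\subseteq\mathcal{A}$ --- i.e.\ the special case of the lemma where each $X_{\lambda}$ is concentrated in degree $0$. Calling this ``standard homological algebra'' is circular: in an AB3 category without enough projectives there is no resolution argument that bounds these derived coproducts from $\gldim(\mathcal{A})\leq n$ alone. (The existence and convergence of the spectral sequence are also not free in this generality.)

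The paper's proof avoids all of this by choosing a different triangle. Rather than the smart truncation $Z^{-}\to Z\to Z^{+}\to\Sigma Z^{-}$, take \emph{brutal} truncation of any complex representing $Z$ to obtain a distinguished triangle
\[
V\xrightarrow{\ a\ }Z\xrightarrow{\ b\ }Y\longrightarrow \Sigma V
\qquad\text{with }V\in\mathcal{D}^{\geq -n},\ Y\in\mathcal{D}^{\leq -n-1}.
\]
Now your Hom-vanishing applies \emph{directly} to $b$: each $b\circ\mu_{\lambda}\in\Hom_{\mathcal{D}}(\mathcal{D}^{\geq 0},\mathcal{D}^{\leq -n-1})=0$, so $b=0$. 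Hence $a$ is a split epimorphism, $Z$ is a direct summand of $V\in\mathcal{D}^{\geq -n}$, and the proof is complete in one line. The point you missed is orientational: the smart truncation puts the $\mathcal{D}^{\leq -n-1}$ piece on the wrong side of $Z$ (it maps \emph{into} $Z$, and the connecting map $\partial$ lands only in $\mathcal{D}^{\leq -n}$), whereas the brutal truncation produces a map \emph{out of} $Z$ landing in $\mathcal{D}^{\leq -n-1}$, where the vanishing you already proved applies immediately.
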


\begin{proof}
Let $\{W_{i}\}_{i\in I}$ be a family of objects in $\mathcal{D}^{\geq0}$. Since $\gldim(\te)\leq n$, we have that $\Hom_{\mathcal{D}(\mathcal{A})}(\mathcal{D}^{\geq0},\Sigma^{n+1}\mathcal{D}^{\leq0})=0.$ Now, by using \emph{strong truncations}, we can find a distinguished
triangle $\suc[\Sigma^{n}W][\coprod_{i\in I}W_{i}][\Sigma^{n+1}U][a][b]$
with $U\in\mathcal{D}^{\leq0}$ and $W\in\mathcal{D}^{\geq0}$. It
follows that $b=0$ since $b\circ\mu_{i}^{W}\in\Hom_{\mathcal{D}(\mathcal{A})}(\mathcal{D}^{\geq0},\Sigma^{n+1}\mathcal{D}^{\leq0})=0$
for all $i\in I$. This implies that $a$ is an split-epimorphism and thus 
$\coprod_{i\in I}W_{i}\in\Sigma^{n}\mathcal{D}^{\geq0}$. 
\end{proof}

\begin{cor}\label{cor:gldim}
Let $n\geq 1$, $\mathcal{A}$ be an AB3 abelian category,   $\te  :=(\mathcal{D}^{\leq0},\mathcal{D}^{\geq0})$  be the standard $t$-structure in $\mathcal{D}(\mathcal{A})$ and $\mathcal{C}_{n+1}:=\mathcal{D}^{\geq0}\cap\Sigma^{-n}\mathcal{D}^{\leq0}.$ If $\gldim(\te)\leq n$, then
$(\mathcal{D}^{\leq0},\mathcal{D}^{\geq0})$ is smashing $\quad\Leftrightarrow\quad$ $\mathcal{C}_{n+1}$ is AET4 (AET3.5).
\end{cor}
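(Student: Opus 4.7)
The plan is simply to combine Lemma \ref{lem:gldim} with Theorem \ref{thm:smashing}. First I note that, since $\mathcal{A}$ is AB3, the derived category $\mathcal{D}(\mathcal{A})$ has coproducts, so $\te$ is a $t$-structure in a triangulated category with coproducts; this places us in the setting of Theorem \ref{thm:smashing}. From the hypothesis $\gldim(\te)\leq n$, Lemma \ref{lem:gldim} gives that $\te$ is $n$-smashing.

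Next I apply Theorem \ref{thm:smashing} with the integer $n+1$ playing the role of its ``$n$''. Since $n\geq 1$ we have $n+1\geq 2$, and being $((n+1)-1)$-smashing is exactly being $n$-smashing, which was just established. Moreover, the extended heart appearing in the theorem is $\mathcal{D}^{\geq0}\cap\Sigma^{-(n+1)+1}\mathcal{D}^{\leq0}=\mathcal{D}^{\geq0}\cap\Sigma^{-n}\mathcal{D}^{\leq0}=\mathcal{C}_{n+1}$, matching the object in our statement. Theorem \ref{thm:smashing} then yields that $\mathcal{C}_{n+1}$ is AET4 if and only if it is AET3.5 if and only if $\te$ is $0$-smashing, which by Remark \ref{rem:smash}(a) is the usual notion of smashing. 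This is the desired biconditional.

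There is essentially no obstacle beyond bookkeeping of indices. The implication ``$\te$ smashing $\Rightarrow$ $\mathcal{C}_{n+1}$ AET4'' is already immediate from Proposition \ref{prop:smash}(d) and does not even use the global-dimension hypothesis; the content of the corollary lies in the converse, and it is for that converse that Lemma \ref{lem:gldim} is needed, since it is precisely what supplies the $n$-smashing hypothesis required to invoke Theorem \ref{thm:smashing} at index $n+1$.
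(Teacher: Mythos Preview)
Your proof is correct and follows exactly the paper's approach: invoke Lemma \ref{lem:gldim} to obtain that $\te$ is $n$-smashing, then apply Theorem \ref{thm:smashing} with index $n+1$. The additional bookkeeping you spell out (checking coproducts in $\mathcal{D}(\mathcal{A})$, matching indices, and noting which direction actually uses the global-dimension hypothesis) is a welcome expansion of the paper's two-line proof.
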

\begin{proof}
It follows from Lemma \ref{lem:gldim} that $(\mathcal{D}^{\leq0},\mathcal{D}^{\geq0})$
is $n$-smashing. Then, the result follows from Theorem \ref{thm:smashing}.
\end{proof}

\begin{cor}\label{corAB4DA}
For an AB3 abelian category $\mathcal{A},$ the following statements are equivalent.
\begin{enumerate}
\item $\mathcal{A}$ is AB4. 
\item The standard $t$-structure $(\mathcal{D}^{\leq0},\mathcal{D}^{\geq0})$
in $\mathcal{D}(\mathcal{A})$ is smashing. 
\item $\coprod_{i\in I}^{\mathcal{A}}A_{i}=\coprod_{i\in I}^{\mathcal{D}(\mathcal{A})}A_{i}$ for any set $I\neq\emptyset$ and any family $\{A_{i}\}_{i\in I}$ in $\mathcal{A}.$
\item The natural transformation $\dot{\tau}:\Ext_{\mathcal{A}}^{1}(\coprod_{i\in I}A_{i},B)\rightarrow\prod_{i\in I}\Ext_{\mathcal{A}}^{1}(A_{i},B)$,
$\eta\mapsto(\eta\cdot\mu_{i}^{A})_{i\in I}$, is an isomorphism for any set $I\neq\emptyset.$
\end{enumerate}
\end{cor}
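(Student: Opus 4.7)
The plan is to close the cycle $(a)\Rightarrow(b)\Rightarrow(c)\Rightarrow(d)\Rightarrow(a)$ by invoking results already available in this paper, rather than treating any of the implications in isolation. The step $(a)\Leftrightarrow(b)$ requires no new work: it is precisely Lemma \ref{lem:HRSsmash}(b), which identifies the AB4 condition on $\mathcal{A}$ with the smashing property of the standard $t$-structure in $\mathcal{D}(\mathcal{A})$.

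For $(b)\Rightarrow(c)$, I would begin by noting that in any triangulated category with coproducts, the left aisle of a $t$-structure is automatically closed under coproducts, because $\mathcal{D}^{\leq 0}={}^{\perp_0}\mathcal{D}^{\geq 1}$ and the right aisle condition sends coproducts to products of Hom groups. Together with the hypothesis that $\mathbf{x}$ is smashing—that is, $\mathcal{D}^{\geq 0}$ is also closed under coproducts—this forces $\coprod_{i\in I}^{\mathcal{D}(\mathcal{A})} A_i$, for any family $\{A_i\}_{i\in I}$ in $\mathcal{A}$, to lie in $\mathcal{D}^{\leq 0}\cap\mathcal{D}^{\geq 0}=\mathcal{A}$; fully faithfulness of $\mathcal{A}\hookrightarrow\mathcal{D}(\mathcal{A})$ then turns its $\mathcal{D}(\mathcal{A})$-universal property into the $\mathcal{A}$-universal property, yielding the equality in (c). For $(c)\Rightarrow(d)$, I would use the classical identification $\Ext^1_{\mathcal{A}}(X,B)\simeq\Hom_{\mathcal{D}(\mathcal{A})}(X,B[1])$ for $X,B\in\mathcal{A}$ together with (c): rewriting
\[
\Ext^1_{\mathcal{A}}\Big(\coprod_{i\in I}^{\mathcal{A}} A_i,B\Big)\simeq\Hom_{\mathcal{D}(\mathcal{A})}\Big(\coprod_{i\in I}^{\mathcal{D}(\mathcal{A})} A_i,B[1]\Big)\simeq\prod_{i\in I}\Hom_{\mathcal{D}(\mathcal{A})}(A_i,B[1])\simeq\prod_{i\in I}\Ext^1_{\mathcal{A}}(A_i,B),
\]
and tracing through the unit $\mu_i^{A}$ shows that the composed isomorphism is exactly $\dot{\tau}$.

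Finally, for $(d)\Rightarrow(a)$ I would invoke Corollary \ref{equvExactAET} applied to $\mathcal{A}$ viewed as an (exact, hence extriangulated) category with $\mathbb{E}=\Ext^1$: condition (d) of the present corollary is literally statement (e) of that corollary, so it implies its statement (d), namely that the termwise coproduct of any family of short exact sequences in $\mathcal{A}$ is again short exact—which is AB4. The main obstacle I anticipate is the step $(b)\Rightarrow(c)$, where one must argue carefully that the $\mathcal{D}(\mathcal{A})$-coproduct genuinely lands in the heart (both aisle closure conditions are needed simultaneously) and that the fully faithful embedding really transports the universal property to $\mathcal{A}$; the remaining steps reduce to invoking prior results with standard identifications.
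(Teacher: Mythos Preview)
Your proposal is correct and follows essentially the same route as the paper: the equivalence $(a)\Leftrightarrow(b)$ via Lemma \ref{lem:HRSsmash}(b), the implication $(b)\Rightarrow(c)$ using that both aisles of the standard $t$-structure are closed under coproducts so the $\mathcal{D}(\mathcal{A})$-coproduct lands in the heart, $(c)\Rightarrow(d)$ via the identification $\Ext^1_{\mathcal{A}}\cong\Hom_{\mathcal{D}(\mathcal{A})}(-,\Sigma-)$, and $(d)\Rightarrow(a)$ via Corollary \ref{equvExactAET}. The paper's proof is slightly terser but uses the same ingredients in the same logical order.
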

\begin{proof} By Lemma \ref{lem:HRSsmash} (b)  we have that (a) and (b) are equivalent. Furthermore, since $\Ext_{\mathcal{A}}^{1}(-,?)$
is isomorphic to $\Hom_{\D(\mathcal{A})}(-,\Sigma?)$, it can be seen that (c) implies (d). On the other hand, using that $\mathcal{D}^{\leq0}\cap\mathcal{D}^{\geq0}=\mathcal{A}$ and $\Free_{\D(\mathcal{A})}(\mathcal{D}^{\leq0})=\mathcal{D}^{\leq0},$ we get that (b) implies (c). Finally, by Corollary \ref{equvExactAET} we conclude that (a) and (d) are equivalent. Therefore, all of the above
statements are equivalent. 
\end{proof}

\subsection{An example of a non-AET4{*} extended heart}\label{ExampleNAET4}

In this section we will show an example of a non-AET4{*} extended heart.
Specifically, we will consider a $t$-structure associated to a torsion
pair by the Happel-Reiten-Smal{\o} tilting process in a category
of modules, and prove that its heart $\mathcal{H}$ is an abelian AET4{*} category
 such that the extended heart $\mathcal{C}=\mathcal{H}\star\Sigma^{-1}\mathcal{H}$
is not AET4{*}. Here, AET4{*} is the dual notion of AET4. 

Let us begin with the example. Consider the product ring $R:=\prod_{i\in\mathbb{N}}\mathbb{Z}_{2}$ and the ideal $R_{j}:=\left\{ \left(x_{i}\right)_{i\in\mathbb{N}}\in R\,|\:x_{i}=0\;\forall i\neq j\right\}$ for each $j\in \mathbb{N}.$ 
 Note that $R_{i}\in\Proj(R)$
for all $i\in\mathbb{N}$ and thus $I:=\oplus_{i\in\mathbb{N}}R_i\in\Proj(R)$. Moreover, since $I$ is an idempotent ideal of $R,$ the classes 
\begin{alignat*}{1}
\mathcal{C}_{I} & :=\Gen(I)=\{M\in\Mod(R)\,|\:I\cdot M=M\}\text{ and }\\
\mathcal{T}_{I} & :=\Gen(R/I)=\{M\in\Mod(R)\,|\:I\cdot M=0\}
\end{alignat*}
form a torsion pair $(\mathcal{C}_{I},\mathcal{T}_{I})$  (see \cite[Cor.2.2]{jans1965some}). 

\begin{rem}\label{rem:her} We have that $\mathcal{C}_{I}$ is closed under subobjects.
Indeed, consider the set $\{e_{i}\}_{i\in\mathbb{N}}$
of canonical idempotents in $R$. That is, for any $j\in\mathbb{N}$,
$e_{j}$ is the object $(x_{i})_{i\in\mathbb{N}}\in R$ such that
$x_{j}=1$ and $x_{i}=0$ for all $i\neq j$. Observe that 
$M\in\Mod(R)$ satisfies that $M\in\mathcal{C}_{I}$ if, and only if,
$M=\bigoplus_{i\in\mathbb{N}}e_{i}M$. Hence, for $N\leq M$ and $M\in\mathcal{C}_{I}$,
one can show that $N=\bigoplus_{i\in\mathbb{N}}e_{i}N$ and thus
$\mathcal{C}_{I}$ is closed under subobjects.
\end{rem}

Throughout this section, we fix the following notation: $\mathcal{D}=\D(R)$  is the derived category
 of the module category $\Mod(R),$ $(\mathcal{D}^{\leq0},\mathcal{D}^{\geq0})$
is the canonical $t$-structure in $\mathcal{D},$ and $\mathbf{x}=(\mathcal{U},\mathcal{W})$
is the $t$-structure associated to $(\mathcal{C}_{I},\mathcal{T}_{I})$
by the Happel-Reiten-Smal{\o} tilting process. That is
\begin{alignat*}{1}
\mathcal{U} & =\Sigma\mathcal{D}^{\leq0}\star\mathcal{C}_{I}=\{X\in\mathcal{D}^{\leq0}\,|\:H^{0}(X)\in\mathcal{C}_{I}\}\text{ and }\\
\mathcal{W} & =\Sigma\mathcal{T}_{I}\star\mathcal{D}^{\geq0}=\{X\in\Sigma\mathcal{D}^{\geq0}\,|\:H^{-1}(X)\in\mathcal{T}_{I}\}.
\end{alignat*}
Recall that a $t$-structure $\mathbf{v}=(\mathcal{X},\mathcal{Y})$
is \textbf{co-smashing} if $\coFree_\D(\mathcal{X})=\mathcal{X}.$ By the
dual of Proposition \ref{prop:smash} (d), we know that the heart $\mathcal{H}_{\mathbf{v}}$ is AET4{*} if $\mathbf{v}$ is co-smashing. In the following, we will
show that the converse is false. 

\begin{prop}\label{prop:HRS1} $\mathcal{H}_{\mathbf{x}}$ is AET4{*} but $\mathbf{x}$
is not co-smashing.
\end{prop}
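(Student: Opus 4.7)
The proof naturally splits into showing $\mathcal{H}_{\mathbf{x}}$ is AET4{*} and verifying $\mathbf{x}$ is not co-smashing. Since $\mathcal{H}_{\mathbf{x}}$ is abelian (as the heart of a $t$-structure), the dual of Corollary \ref{equvExactAET} reduces the first claim to AB4{*}: products of short exact sequences in $\mathcal{H}_{\mathbf{x}}$ remain short exact.

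Fix a family $\{0\to X_\lambda\to Y_\lambda\to Z_\lambda\to 0\}_{\lambda\in\Lambda}$ in $\mathcal{H}_{\mathbf{x}}$, and set $F_\lambda^A:=H^{-1}(A_\lambda)\in\mathcal{T}_I$ and $T_\lambda^A:=H^0(A_\lambda)\in\mathcal{C}_I$ (standard cohomology) for $A\in\{X,Y,Z\}$. The connecting morphism $F_\lambda^Z\to T_\lambda^X$ in the induced long exact sequence has image lying simultaneously in $\mathcal{T}_I$ (as a quotient of $F_\lambda^Z$; $\mathcal{T}_I$ is closed under quotients) and in $\mathcal{C}_I$ (as a submodule of $T_\lambda^X$, by Remark \ref{rem:her}). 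Hence this image vanishes and the long exact sequence splits into short exact sequences $0\to F_\lambda^X\to F_\lambda^Y\to F_\lambda^Z\to 0$ in $\mathcal{T}_I$ and $0\to T_\lambda^X\to T_\lambda^Y\to T_\lambda^Z\to 0$ in $\mathcal{C}_I$. Next I would verify that the torsion radical $t:=\te_{(\mathcal{C}_I,\mathcal{T}_I)}$ is an exact endofunctor of $\Mod(R)$: using the orthogonal idempotents $\{e_j\}_{j\in\mathbb{N}}$, one computes $I\otimes_R M=\bigoplus_j e_jM$ and sees that the canonical map $\bigoplus_j e_jM\to M$ is injective (applying $e_k$ kills every summand but the $k$-th), so $I\otimes_R M\cong IM=t(M)$; since $I=\bigoplus_j R_j$ is projective and hence flat, $t$ is exact.

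Because $\Mod(R)$ is AB4{*}, taking products of the two families of short exact sequences preserves exactness, and applying $t$ to the $\mathcal{C}_I$-sequence yields $0\to t(\prod T_\lambda^X)\to t(\prod T_\lambda^Y)\to t(\prod T_\lambda^Z)\to 0$ in $\mathcal{C}_I$. By Remark \ref{rem:adjuntos}, the product in $\mathcal{H}_{\mathbf{x}}$ can be realized as $\prod^{\mathcal{H}}A_\lambda=\te_{\u_2}(\prod^{\mathcal{D}}A_\lambda)$, whose standard cohomology is $(\prod F_\lambda^A,t(\prod T_\lambda^A))$ in degrees $-1,0$. To finish I would apply the $3\times 3$ lemma in $\mathcal{D}$ to the distinguished product triangle $\prod^{\mathcal{D}}X_\lambda\to\prod^{\mathcal{D}}Y_\lambda\to\prod^{\mathcal{D}}Z_\lambda$ together with the canonical truncation triangles $\prod^{\mathcal{H}}A_\lambda\to\prod^{\mathcal{D}}A_\lambda\to (\prod T_\lambda^A)/t(\prod T_\lambda^A)$ for $A\in\{X,Y,Z\}$; uniqueness of the $\u_2$-truncation then identifies the cone of $\prod^{\mathcal{H}}X_\lambda\to\prod^{\mathcal{H}}Y_\lambda$ with $\prod^{\mathcal{H}}Z_\lambda$, producing the required short exact sequence in $\mathcal{H}_{\mathbf{x}}$.

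For the second part, consider the family $\{R_\lambda\}_{\lambda\in\mathbb{N}}$ regarded in degree zero. Each $R_\lambda=e_\lambda R$ is in $\mathcal{C}_I$ (since $IR_\lambda=R_\lambda$), so $R_\lambda\in\mathcal{U}$. Because $\Mod(R)$ is AB4{*}, $\prod_\lambda^{\mathcal{D}}R_\lambda$ is concentrated in degree zero and equals $\prod_\lambda^{\Mod(R)}R_\lambda\cong R$ via the identification $\prod_\lambda e_\lambda R\cong\prod_\lambda\mathbb{F}_2=R$. Since $IR=I\subsetneq R$, we have $R\notin\mathcal{C}_I$, so $\prod_\lambda^{\mathcal{D}}R_\lambda\notin\mathcal{U}$; hence $\mathcal{U}$ is not closed under products in $\mathcal{D}$ and $\mathbf{x}$ is not co-smashing. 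The main obstacle is the last step of the AB4{*} verification — transferring the componentwise exactness to a distinguished triangle of products in $\mathcal{H}_{\mathbf{x}}$ — which requires the $3\times 3$ lemma and careful bookkeeping of the truncation triangles, since the right adjoint $\te_{\u_2}$ is not itself exact.
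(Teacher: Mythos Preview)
Your treatment of ``$\mathbf{x}$ is not co-smashing'' matches the paper's: both exhibit the family $\{R_i\}_{i\in\mathbb{N}}\subseteq\mathcal{C}_I$ whose product is $R\notin\mathcal{C}_I$.

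For the AET4{*} claim, your argument is correct but follows a genuinely different route. The paper invokes the dual of Corollary~\ref{cor:ThmA}: it suffices to show $\mathcal{H}_{\mathbf{x}}$ has enough projectives, and it verifies directly that $P:=I\amalg\Sigma(R/I)$ is a projective generator by computing $\Ext^1_{\mathcal{H}_{\mathbf{x}}}(P,-)=0$ on $\Sigma\mathcal{T}_I$ and on $\mathcal{C}_I$ separately (using $I\in\Proj(R)$, $R/I\in\Proj(R/I)$, and hereditariness of $(\mathcal{C}_I,\mathcal{T}_I)$). Your approach instead checks AB4{*} by hand: you decompose each short exact sequence in $\mathcal{H}_{\mathbf{x}}$ into its standard cohomology pieces via Remark~\ref{rem:her}, observe that the torsion radical $t\cong I\otimes_R-$ is exact (since $R$ is von Neumann regular and $I$ is flat), and reassemble the products through a $3\times 3$/truncation argument. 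This last step is the only place needing more care, but it goes through: the cone $C$ of $\prod^{\mathcal{H}}f_\lambda$ lies in $\mathcal{U}$, and the third column of the $3\times3$ diagram identifies its standard cohomology as $(\prod F_\lambda^Z,\,t(\prod T_\lambda^Z))$, forcing $C\in\mathcal{H}_{\mathbf{x}}$; hence $\prod^{\mathcal{H}}f_\lambda$ is monic and AB4{*} follows. The paper's argument is shorter and more conceptual (one object and four $\Ext$ computations), while yours is self-contained and avoids appealing to Corollary~\ref{cor:ThmA}, at the cost of the truncation bookkeeping you yourself flag.
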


\begin{proof}
Let us prove that $\mathbf{x}$ is not co-smashing. For this, observe
that $\mathcal{D}^{\leq0}$ is closed under products and that the
co-homology functor $H^{0}$ preserves products since $\Mod(R)$ is
AB4{*}. Therefore, it is enough to show that $\mathcal{C}_{I}$ is
not closed under products. For this, note that $R_{i}\in\mathcal{C}_{I}$
for all $i\in\mathbb{N}$ (see Remark \ref{rem:her}), but $R\cong\prod_{i\in\mathbb{N}}R_{i}\notin\mathcal{C}_{I}$. 

To prove that $\mathcal{H}_{\mathbf{x}}$ is AET4{*}, it is enough
to show that $\mathcal{H}_{\mathbf{x}}$ has enough projectives (see
Corollary \ref{cor:ThmA}). For this, we claim that $P:=I\amalg\Sigma R/I$
is a projective generator of $\mathcal{H}_{\mathbf{x}}$. Indeed,
for $T\in\mathcal{T}_{I}$, we have that:
\begin{alignat*}{1}
\Ext_{\mathcal{H}_{\mathbf{x}}}^{1}(\Sigma R/I,\Sigma T) & \cong\Hom_{\mathcal{D}}(\Sigma R/I,\Sigma^{2}T)\cong\Ext_{R}^{1}(R/I,T)\cong\Ext_{R/I}^{1}(R/I,T)=0\\
\Ext_{\mathcal{H}_{\mathbf{x}}}^{1}(I,\Sigma T) & \cong\Hom_{\mathcal{D}}(I,\Sigma^{2}T)\cong\Ext_{R}^{2}(I,T)=0
\end{alignat*}
since $\mathcal{T}_{I}=\Mod(R/I)$, $R/I\in\Proj(R/I)$ and $I\in\Proj(R)$
(here, the first isomorphisms are from \cite[Rem.3.1.17]{BBD}); and,
for $C\in\mathcal{C}_{I}$, we have that 
\begin{alignat*}{1}
\Ext_{\mathcal{H}_{\mathbf{x}}}^{1}(\Sigma R/I,C) & \cong\Hom_{\mathcal{D}}(R/I,C)\cong\Hom_{R}(R/I,C)=0\\
\Ext_{\mathcal{H}_{\mathbf{x}}}^{1}(I,C) & \cong\Hom_{\mathcal{D}}(I,\Sigma C)\cong\Ext_{R}^{1}(I,C)=0,
\end{alignat*}
where $\Hom_{\mathcal{D}}(R/I,M)=0$ since $\im(f)\in\mathcal{C}_{I}\cap\mathcal{T}_{I}=0$
(recall that $\mathcal{C}_{I}$ is closed under subobjects and $\mathcal{T}_{I}$
is closed under quotients), and $\Ext_{R}^{1}(I,M)=0$ since $I\in\Proj(R)$.
Therefore, since $\mathcal{H}_{\x}=\Sigma\mathcal{T}_{I}\star\mathcal{C}_{I}$,
we can conclude that $P\in\Proj(\mathcal{H}_{\x})$. 
Finally, it can be proved that $P$ is a generator by using the Horseshoe Lemma together
with the fact that $R/I$ and $I$ are generators in $\mathcal{T}_{I}$
and $\mathcal{C}_{I}$ respectively. 
\end{proof}
Let us prove that the extended heart $\mathcal{C}_{\x}=\mathcal{H}_{\x}\star\Sigma^{-1}\mathcal{H}_{\x}$
is not AET4{*}. Note that this is an example for three $s$-torsion
pairs $\u_{1}\leq\u_{2}\leq\u_{3}$ in an extriangulated category
with negative first extension $\mathcal{D}$ such that $\mathcal{H}_{[\u_{1},\u_{2}]}$
and $\mathcal{H}_{[\u_{2},\u_{3}]}$ are AET4{*}, but $\mathcal{H}_{[\u_{1},\u_{3}]}=\mathcal{H}_{[\u_{1},\u_{2}]}\star\mathcal{H}_{[\u_{2},\u_{3}]}$
is not. 
\begin{prop}\label{exNAET4}
The extended heart $\mathcal{C}_{\x}=\mathcal{H}_{\x}\star\Sigma^{-1}\mathcal{H}_{\x}=\mathcal{W}\cap\Sigma^{-1}\mathcal{U}$
is not AET4{*}. 
\end{prop}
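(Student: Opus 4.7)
The plan is to apply Theorem \ref{thm:cosmashing} with $n = 2$, which characterizes AET4* of the extended heart $\mathcal{C}_{\mathbf{x}} = \mathcal{C}_2 = \mathcal{W} \cap \Sigma^{-1}\mathcal{U}$ among $1$-co-smashing $t$-structures as being equivalent to $0$-co-smashing of $\mathbf{x}$. Since the proof of Proposition \ref{prop:HRS1} already establishes that $\mathbf{x}$ is not $0$-co-smashing (the product $\prod_{i\in\mathbb{N}} R_i \cong R$ lives in $\mathcal{U}$ via $R_i \in \mathcal{C}_I \subseteq \mathcal{U}$, but fails to lie in $\mathcal{C}_I$, hence not in $\mathcal{U}$), it only remains to verify that $\mathbf{x}$ is $1$-co-smashing. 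The existence of products in $\mathcal{D} = \mathcal{D}(R)$ needed to invoke the theorem is standard.

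The central step is thus to prove $\coFree_{\mathcal{D}}(\mathcal{U}) \subseteq \Sigma^{-1}\mathcal{U}$. Unwinding the explicit descriptions, this amounts to: for any family $\{U_j\}_{j\in J}$ in $\mathcal{U} = \{X \in \mathcal{D}^{\leq 0} : H^0(X) \in \mathcal{C}_I\}$, the product $\prod_{j\in J} U_j$ lies in $\Sigma^{-1}\mathcal{U} = \{X \in \mathcal{D}^{\leq 1} : H^1(X) \in \mathcal{C}_I\}$. Since $\Mod(R)$ is AB4* (being a module category, products are degree-wise and exact), the categorical product of complexes in $\mathcal{D}(R)$ is computed degree-wise and cohomology commutes with products: $H^i(\prod_j U_j) = \prod_j H^i(U_j)$. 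From $H^i(U_j) = 0$ for all $i > 0$ and all $j \in J$, we conclude that $\prod_j U_j \in \mathcal{D}^{\leq 0} \subseteq \mathcal{D}^{\leq 1}$ and in particular $H^1(\prod_j U_j) = 0 \in \mathcal{C}_I$, so $\prod_j U_j \in \Sigma^{-1}\mathcal{U}$ as required.

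Combining these ingredients, Theorem \ref{thm:cosmashing} applied to $\mathbf{x}$ with $n = 2$ gives the equivalence ``$\mathcal{C}_{\mathbf{x}}$ is AET4* $\Leftrightarrow$ $\mathbf{x}$ is $0$-co-smashing''; since the right-hand side fails by Proposition \ref{prop:HRS1}, the extended heart $\mathcal{C}_{\mathbf{x}}$ is not AET4*. The only mild subtlety is the commutation of cohomology with arbitrary products in $\mathcal{D}(R)$, but this is immediate from the exactness of products in $\Mod(R)$; no further technical obstacle is anticipated.
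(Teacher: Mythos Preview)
Your proposal is correct and follows essentially the same approach as the paper: invoke Proposition \ref{prop:HRS1} for the failure of $0$-co-smashing and then apply Theorem \ref{thm:cosmashing} with $n=2$. You are in fact slightly more explicit than the paper, which tacitly uses the dual of Lemma \ref{lem:HRSsmash}(c) for the $1$-co-smashing hypothesis, whereas you verify it directly via AB4* of $\Mod(R)$.
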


\begin{proof}
We have proved in Proposition \ref{prop:HRS1} that $\x$ is not co-smashing.
Thus,   it follows from Theorem \ref{thm:cosmashing}  that $\mathcal{C}_{\x}$
is not AET4{*}. 
\end{proof}

\section{Recollements and the AET4 condition}

It is known that, if there is a recollement of abelian categories
$(\mathcal{A},\mathcal{B},\mathcal{C})$, then condition AB4 in $\mathcal{B}$
is inherited by categories $\mathcal{A}$ and $\mathcal{C}$ 
(see \cite[Prop.3.5]{PV}). In this section, we will attempt to prove a
similar result for the condition AET4 in the more general setting of extriangulated categories. The following notions are inspired in \cite{WWZ}. 

\begin{defn}\label{def:exacto} Let $(\mathcal{C},\mathbb{E},\s)$ and $(\mathcal{D},\mathbb{F},\t)$
be extriangulated categories and $F:\mathcal{C}\rightarrow\mathcal{D}$ be an additive functor. It is said that $F$ is \textbf{right exact} if, for any conflation $\epsilon:\:\suc[C_{1}][C][C_{2}][a][b]$
in $\mathcal{C}$, there are conflations $\theta':\:\suc[B_{1}][FC_{1}][B_{2}][][x]$
and $\theta:\:\suc[B_{2}][FC][FC_{2}][y][Fb]$ in $\mathcal{D}$ such
that $y\circ x=Fa.$ Dually, we have the notion of a \textbf{left exact} functor between extriangulated categories.
\end{defn}

The following is an example of a right exact functor from a (non-abelian
non-triangulated) extriangulated category to an abelian category. 

\begin{example}
Let $R$ be a ring, $\D=\D(R)$ be the derived category of $\Mod(R)$ and $T\in \Mod(R)$ be a 1-tilting module. That
is, $\Gen(T)=T^{\bot_{1}}$. In this case, it is well-known that $\te=(\mathcal{T},\mathcal{F}):=(\Gen(T),T^{\bot_{0}})$
is a torsion pair. Moreover, $T$ admits a monomorphic projective
presentation $\suca[P_{1}][P_{0}][T][f]$. Let $P$ be the complex  
$
P:\cdots\rightarrow0\rightarrow P_{1}\stackrel{f}{\rightarrow}P_{0}\rightarrow0\rightarrow\cdots
$
with $H^{0}(P)=T.$ Consider $\mathcal{H}_{\te}:=\Sigma\mathcal{F}\star\mathcal{T}$
and $\mathcal{C}_{\te}:=\mathcal{H}_{\te}\star\Sigma^{-1}\mathcal{H}_{\te}$ in $\D.$
We claim that $F:=\Hom_{\C_\te}(\Sigma^{-1}T,-):\C_\te\rightarrow\text{Ab}$
is right exact. For this, we note that: 
\[
\Hom_{\mathcal{D}}(\Sigma^{-1}T,\Sigma H)\cong\Hom_{\mathcal{D}}(T,\Sigma^{2}H)\cong\Hom_{\mathcal{D}}(P,\Sigma^{2}H)\cong\Hom_{\mathcal{K}(R)}(P,\Sigma^{2}H)=0
\]
 for any $H\in\mathcal{H}_{\te}$. Moreover, since $T$ is a projective
generator in $\mathcal{H}_{\te}$ (see the proof of \cite[Thm.4.3]{HRS}),
we have that 
\[
\Hom_{\mathcal{D}(R)}(\Sigma^{-1}T,H)\cong\Hom_{\mathcal{D}(R)}(T,\Sigma H)\cong\Ext_{\mathcal{H}_{\te}}^{1}(T,H)=0.
\]
for any $H\in\mathcal{H}_{\te}$. Therefore
$\Hom_{\mathcal{D}(R)}(\Sigma^{-1}T,\Sigma C)=0$ for any $C\in\mathcal{C}_{\te}.$
Then, for a conflation $\suc[C_{1}][C_{2}][C_{3}][a][b]$
in $\mathcal{C}_{\te}$,  and since $\Hom_{\mathcal{D}(R)}(\Sigma^{-1}T,\Sigma C_{1})=0,$ we get the following exact sequence $FC_{1}\stackrel{Fa}{\rightarrow}FC_{2}\stackrel{Fb}{\rightarrow}FC_{3}\stackrel{}{\rightarrow} 0$ in $\Ab.$
\end{example}

\begin{defn}\label{def:rec} Let $\mathcal{A}$, $\mathcal{B}$ and $\mathcal{C}$
be extriangulated categories. A \emph{recollement} of $\mathcal{B}$
by $\mathcal{A}$ and $\mathcal{C}$ is a recollement of additive
categories (see Definition \ref{def:recad}) \\
\noindent\begin{minipage}[t]{1\columnwidth}%
\[
\begin{tikzpicture}[-,>=to,shorten >=1pt,auto,node distance=2cm,main node/.style=,x=2cm,y=-2cm]

\node (1) at (1,0) {$\mathcal{A}$};
\node (2) at (2,0) {$\mathcal{B}$};
\node (3) at (3,0) {$\mathcal{C}$};

\draw[-> , thin]  (1)  to  node  {$i_*$} (2);
\draw[-> , thin]  (2)  to  node  {$j^*$} (3);

\draw[-> , thin,in=45,out=135, above]  (3)  to  node  {$j_!$} (2);
\draw[-> , thin,in=45,out=135, above]  (2)  to  node  {$i^*$} (1);

\draw[-> , thin,in=-45,out=-135, below]  (3)  to  node  {$j_*$} (2);
\draw[-> , thin,in=-45,out=-135, below]  (2)  to  node  {$i^!$} (1);
 
\end{tikzpicture}
\]%
\end{minipage}\\
satisfying the following conditions:
\begin{enumerate}
\item [(ER4)]$i^{*}$ and $j_{!}$ are right exact; 
\item [(ER5)]$i^{!}$ and $j_{*}$ are left exact; 
\item [(ER6)]$i_{*}$ and $j^{*}$ are extriangulated; 
\item [(ER7)]the triple $(\Ker(i^{*}),\im(i_{*}),\Ker(i^{!}))$ is a TTF
triple in $\mathcal{B}$. That is, the following equalities hold:
$\Hom_{\mathcal{B}}(\im(i_{*}),\Ker(i^{!}))=0$, $\Hom_{\mathcal{B}}(\Ker(i^{*}),\im(i_{*}))=0$,
and $\mathcal{B}=\Ker(i^{*})\star\im(i_{*})=\im(i_{*})\star\Ker(i^{!})$.
\end{enumerate}
\end{defn}

\begin{rem}
\label{rem:natrec}$ $
\begin{enumerate}
\item The definition of right (resp. left) exact functor presented in this
paper is slightly different from the one introduced in \cite{WWZ}.
However, it should be noted that right (resp. left) exact functors
as defined in \cite{WWZ} are right (resp. left) exact as in Definition
\ref{def:exacto}. The same can be said (see Lemma \ref{lem:torsrec}) about the notion of recollement introduced in \cite{WWZ}.
\item In case $\mathcal{A}$, $\mathcal{B}$ and $\mathcal{C}$ are triangulated
(resp. abelian) categories, then the notion of recollement in Definition
\ref{def:exacto} coincides with the usual notion of recollement of
triangulated (resp. abelian) categories (see \cite{BBD,PV}). 
\item Consider a recollement of extriangulated categories as above. Note
that we have a natural isomorphism $_{4}\varphi:1_{\mathcal{C}}\rightarrow j^{*}\circ j_{!}$
(see Remark \ref{rem:rec}). Hence, it follows from Proposition \ref{prop:isovsextr}
that the functor $j^{*}\circ j_{!}:\mathcal{C}\rightarrow\mathcal{C}$
is extriangulated and that the natural transformation $_{4}\varphi:1_{\mathcal{C}}\rightarrow j^{*}\circ j_{!}$
is extriangulated. That is $\Gamma_{j^{*}j_{!}}(\eta)\cdot{}_{4}\varphi_{C_{2}}={}_{4}\varphi_{C_{1}}\cdot\eta\;$
for all $\eta\in\mathbb{E}_{\mathcal{C}}(C_{2},C_{1})$.
\end{enumerate}
\end{rem}

\begin{lem}\label{lem:torsadj} Let $(\mathcal{A},\mathbb{E},\s)$ and  $(\mathcal{B},\mathbb{F},\t)$
be extriangulated categories, and let $(S:\mathcal{A}\rightarrow\mathcal{B},T:\mathcal{B}\rightarrow\mathcal{A})$
be an adjoint pair. Then,
the following statements hold true. 
\begin{enumerate}
\item $\Hom_{\mathcal{B}}(\im(S),\Ker(T))=0$ and $\Hom_{\mathcal{A}}(\Ker(S),\im(T))=0.$ 
\item Let $T$ be left exact, $S$ be fully faithful and $\im(S)$ be closed
under extensions. Then, the equality $\mathcal{B}=\im(S)\star\Ker(T)$
holds true if, and only if, for each $B\in\mathcal{B}$,
$\psi_{B}:STB\rightarrow B$ is an inflation in $\mathcal{B}$ where
$\psi:S\circ T\rightarrow1_{\mathcal{B}}$ is the co-unit associated
to the adjoint pair $(S,T)$. Moreover, in this case, every $B\in\mathcal{B}$
admits a conflation $\suc[STB][B][F][\psi_{B}]$ with $TF=0$.
\item Let $S$ be right exact, $T$ be fully faithful and $\im(T)$ be
closed under extensions. Then, the equality $\mathcal{A}=\Ker(S)\star\im(T)$
holds true if, and only if, for each $A\in\mathcal{A}$, 
$\varphi_{A}:A\rightarrow TSA$ is a deflation in $\mathcal{A}$
where $\varphi:1_{\mathcal{A}}\rightarrow T\circ S$ is the unit associated
to the adjoint pair $(S,T)$. Moreover, in this case, every $A\in\mathcal{A}$
admits a conflation $\suc[F][A][TSA][][\varphi_{A}]$ with $SF=0$.
\end{enumerate}
\end{lem}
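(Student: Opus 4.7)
The plan is: (a) follows directly from the adjunction; for (b) the forward direction identifies the torsion part of $B$ with $STB$ via the universal property of the counit, and the backward direction shows that the cofiber of $\psi_B$ lies in $\Ker(T)$ by pulling back test morphisms into it and using the closure of $\im(S)$ under extensions; (c) is the formal dual of (b).

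For (a), the adjunction isomorphism $\Hom_{\mathcal{B}}(SA,B)\cong\Hom_{\mathcal{A}}(A,TB)$ vanishes when $TB=0$, giving the first equality; the second follows by symmetry. Throughout (b), since $S$ is fully faithful, $\varphi_{TB}$ is an isomorphism, and hence so is $T\psi_B$ by the triangle identity; consequently post-composition with $\psi_B$ induces an isomorphism $\Hom_{\mathcal{B}}(SC,STB)\xrightarrow{\sim}\Hom_{\mathcal{B}}(SC,B)$ for every $C\in\mathcal{A}$.

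For the forward direction of (b), take a conflation $\eta\colon SA\xrightarrow{f}B\xrightarrow{g}K$ with $K\in\Ker(T)$. By (a), $\Hom_{\mathcal{B}}(SC,K)=0$, so the long exact sequence attached to $\eta$ shows $f_{*}\colon\Hom_{\mathcal{B}}(SC,SA)\to\Hom_{\mathcal{B}}(SC,B)$ is surjective; hence $\psi_B=f\circ S\gamma'$ for some $\gamma'\colon TB\to A$. Combining this with the adjunction equality $f=\psi_B\circ S\bar{f}$ (where $\bar{f}\colon A\to TB$ denotes the adjoint of $f$) and invoking the universality of $\psi_B$ established above, one reads off $\bar{f}\circ\gamma'=1_{TB}$; in particular $S\gamma'$ is a split monomorphism, hence an inflation, and $\psi_B=f\circ S\gamma'$ is a composition of two inflations, therefore itself an inflation by ET4. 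The cofiber of $\psi_B$ automatically lies in $\Ker(T)$, which also proves the moreover statement.

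For the backward direction of (b), assume $\psi_B$ is an inflation with conflation $STB\xrightarrow{\psi_B}B\xrightarrow{g}F$. To prove $TF=0$ it suffices (by the adjunction of (a)) to show $\Hom_{\mathcal{B}}(SA,F)=0$ for every $A\in\mathcal{A}$. Given $h\colon SA\to F$, functoriality of $\mathbb{E}$ in the first variable realizes the pullback extension as a conflation $STB\xrightarrow{\alpha}E\xrightarrow{\beta'}SA$ together with a morphism $(1_{STB},\beta,h)$ onto the original conflation; closure of $\im(S)$ under extensions forces $E=SA'$ for some $A'\in\mathcal{A}$, and by full faithfulness we may write $\alpha=S\alpha'$. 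Taking adjoints in the identity $\psi_B=\beta\circ\alpha$ and using the triangle identity for $\psi_B$ yields $\bar{\beta}\circ\alpha'=1_{TB}$, so $\alpha$ is a split monomorphism and the pulled-back conflation splits as $SA'\cong STB\amalg SA$, with $\beta'$ corresponding to the projection. Applying $T$ to the original conflation, left exactness combined with $T\psi_B$ being an isomorphism forces the intermediate cofiber to vanish, hence $Tg=0$. Applying $T$ to the commutativity $g\circ\beta=h\circ\beta'$ then gives $Th\circ T\beta'=0$; since $T\beta'$ is a split epimorphism, $Th=0$, and therefore $h=0$ by the adjunction. Part (c) is the formal dual, replacing the counit by the unit, inflations by deflations, and pullbacks by pushouts throughout. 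The main obstacle is the backward direction of (b), since left exactness of $T$ alone only yields $Tg=0$; the combined use of $S$ fully faithful (to make $T\psi_B$ an isomorphism), $T$ left exact (to extract $Tg=0$), and $\im(S)$ closed under extensions (to lift $h$ to a split pulled-back conflation in $\im(S)$) is essential to conclude $TF=0$.
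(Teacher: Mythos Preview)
Your backward direction of (b) is correct and in fact a bit cleaner than the paper's: you test $TF=0$ by showing $\Hom_{\mathcal{B}}(SA,F)=0$ via pulling back along an arbitrary $h\colon SA\to F$, whereas the paper instead shows $\eta\cdot\psi_F=0$ and then lifts $\psi_F$ through $g$. Both arguments use the same three hypotheses in essentially the same places.

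The forward direction of (b), however, has a genuine gap. You obtain $\bar f\circ\gamma'=1_{TB}$, conclude that $S\gamma'$ is a split monomorphism, and then assert ``hence an inflation''. In an extriangulated category this inference is not valid in general: by Proposition~\ref{prop:WIC} the statement ``every split mono is an inflation'' is equivalent to the WIC condition, which is not assumed here. Without it you cannot feed $S\gamma'$ into ET4 as an inflation, and the argument for $\psi_B$ breaks down. Note also that you never use the hypothesis that $T$ is left exact in this direction, which is a warning sign.

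The paper closes this gap by proving more: it shows that the map playing the role of your $S\gamma'$ (their $Sf_0$) is in fact an \emph{isomorphism}. The extra input is precisely the left exactness of $T$. Applying $T$ to the original conflation $SA\xrightarrow{f}B\to F$ yields a conflation $TSA\xrightarrow{Tf}TB\to W$; combining this with $\psi_B\circ STf=f\circ\psi_{SA}$ and (ET3) produces a morphism into the original conflation whose third component lands in $\Hom_{\mathcal{B}}(SW,F)\cong\Hom_{\mathcal{A}}(W,TF)=0$, forcing $STf$ (hence $Tf$) to be a split mono. Together with your split-epi half this makes $Tf$, and therefore $\bar f$ and $\gamma'$, an isomorphism. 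At that point $\psi_B$ is identified with $f$ up to isomorphism, so it is an inflation without any appeal to WIC.
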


\begin{proof}
Note that (a) follows straightforward from the adjunction. Now, we only prove (b) 
since the proof of (c) follows by dual arguments. 

Since $S$ is fully faithful, we get that the unit $\varphi:1_{\mathcal{A}}\rightarrow T\circ S$ is an isomorphism by \cite[Prop.3.4.1]{B}. Therefore, by using that $T(\psi_{B})\circ\varphi_{TB}=1_{TB}$
and $\psi_{SA}\circ S(\varphi_{A})=1_{SA}$ for every $A\in\mathcal{A}$
and $B\in\mathcal{B}$, we have that $T(\psi_{B})$ and $\psi_{SA}$
are isomorphisms for all $A\in\mathcal{A}$ and $B\in\mathcal{B}$. 

$(\Rightarrow)$ Let $B\in\mathcal{B}$. Then, there is a
conflation $\eta:\:\suc[SA][B][F][f][g]$ in $\mathcal{B}$ with $TF=0$.
Recall that the map 
$
\phi:\Hom_{\mathcal{B}}(SA,B)\rightarrow\Hom_{\mathcal{A}}(A,TB)\text{, }h\mapsto Th\circ\varphi_{A},
$
is bijective with inverse 
$
\phi^{-1}:\Hom_{\mathcal{A}}(A,TB)\rightarrow\Hom_{\mathcal{B}}(SA,B)\text{, }w\mapsto\psi_{B}\circ Sw.
$
In particular, there is $f_{0}\in\Hom_{\mathcal{A}}(A,TB)$
such that $f=\psi_{B}\circ(Sf_{0})$. Consider a realization of the
morphism of extensions $(Sf_{0},1):\eta\rightarrow(Sf_{0})\cdot\eta$
\begin{equation}
\xymatrix{SA\ar[r]^{f}\ar[d]^{Sf_{0}} & B\ar[r]^{g}\ar[d]^{\alpha} & F\ar@{=}[d]\\
STB\ar[r]^{f'} & \tilde{B}\ar[r]^{g'} & F
}
\label{eq:diag-1}
\end{equation}
We assert that $(Sf_{0})\cdot\eta$ is realized by the sequence $\suc[STB][B][F][\psi_{B}][g]$.
For this, consider the following diagram
\[
\xymatrix{STB\ar[r]^{\psi_{B}}\ar@{=}[d] & B\ar[r]^{g}\ar[d]^{\alpha} & F\ar@{=}[d]\\
STB\ar[r]^{f'} & \tilde{B}\ar[r]^{g'} & F.
}
\]
We need to show that $\alpha\circ\psi_{B}=f'$ and that $\alpha$
is an isomorphism. Indeed, for the equality we proceed as follows. Note that
\[
g\circ\psi_{B}\in\Hom_{\mathcal{B}}(STB,F)\cong\Hom_{\mathcal{A}}(TB,TF)=\Hom_{\mathcal{A}}(TB,0)=0.
\]
Then by \cite[Prop.3.3]{NP} there is $\theta\in\Hom_{\mathcal{B}}(STB,SA)$
such that $f\circ\theta=\psi_{B}$. Note that 
$
\psi_{B}\circ(Sf_{0})\circ\theta=f\circ\theta={}\psi_{B}
$
 and thus $(Sf_{0})\circ\theta=1_{i_{*}i^{!}B}$ since $\phi^{-1}$
is a bijection. Therefore 
$
\alpha\circ\psi_{B}=\alpha\circ f\circ\theta=f'\circ(Sf_{0})\circ\theta=f'
$
and thus the above diagram commutes. Let us show that $\alpha$
is an isomorphism. For this, it is enough to prove that $Sf_{0}$
is an isomorphism (see diagram (\ref{eq:diag-1}) and \cite[Cor.3.6]{NP}).
Note that $f_{0}=\phi(f)=Tf\circ\varphi_{A}.$ Thus, since $\varphi_{A}$
is an isomorphism, it is enough to show that $Tf$ is an isomorphism.
We proceed as follows. On the one hand, from $f\circ\theta=\psi_{B}$,
we get that $Tf\circ T\theta=T\psi_{B}$ where $T\psi_{B}$ is an
isomorphism. Hence $Tf$ is a split epimorphism. On the other
hand, since $T$ is left exact, we have a conflation of the form $\suc[TSA][TB][W][Tf][x]$.
Now, considering the fact that $\psi_{B}\circ(STf)=f\circ\psi_{SA}$
together with (ET3), we get a morphism of extensions $({}\psi_{SA},q)$ which is
realized by the following diagram
\[
\xymatrix{STSA\ar[r]^{STf}\ar[d]^{\psi_{SA}} & STB\ar[r]^{Sx}\ar[d]^{\psi_{B}} & SW\ar[d]^{q}\\
SA\ar[r]^{f} & B\ar[r]^{g} & F.
}
\]
Since $\Hom_{\mathcal{B}}(SW,F)\cong\Hom_{\mathcal{A}}(W,TF)=0,$ it follows that 
 $q=0.$ Moreover, since $\psi_{SA}$ is an isomorphism,
one can check that $STf$ is a split monomorphism. Lastly, by using that 
$T\cdot{}_{3}\psi:(T\circ S\circ T)\rightarrow T$ is a natural isomorphism,
we conclude that $Tf$ is a split monomorphism and thus an
isomorphism as desired. 

$(\Leftarrow)$ Let $B\in\mathcal{B}$. By hypothesis, we have that
there is a conflation of the form $\eta:\:\suc[STB][B][F][\psi_{B}][g]$
in $\mathcal{B}$. Hence, it is enough to show that $TF=0.$ We assert that $Tg=0$. Indeed, since $T$ is left exact,
$\eta$ induces the conflations 
$
\suc[TSTB][TB][W][T(\psi_{B})][x]\:\text{ and }\;\suc[W][TF][W'][y]
$
 where $y\circ x=Tg$. Note that $W=0$ because $T(\psi_{B})$ is
an isomorphism, and thus $Tg=0$. Now, we show that 
$\eta\cdot\psi_{F}=0$. For this, consider a realization of the morphism
of extensions $(1,\psi_{F}):\eta\cdot\psi_{F}\rightarrow\eta$:
\[
\xymatrix{STB\ar[r]^{\alpha}\ar@{=}[d] & Z\ar[r]^{g'}\ar[d]^{h} & STF\ar[d]^{\psi_{F}}\\
STB\ar[r]^{\psi_{B}} & B\ar[r]^{g} & F.
}
\]
Here $Z=SA$ because $\im(S)$ is closed under extensions. Hence
$\alpha=Sa$ for some $a\in\Hom_{\mathcal{A}}(TB,A)$. Now, using that 
$T(\psi_{B})$ is an isomorphism and $Th\circ TSa=Th\circ T\alpha=T(\psi_{B})$,
we have that $TSa$ is a split monomorphism. Therefore $a$ is a split
monomorphism because $\varphi:1_{\mathcal{A}}\rightarrow T\circ S$
is a natural isomorphism. This implies that $\eta\cdot\psi_{F}=0$
as desired. Lastly, by \cite[Cor.3.5]{NP}, it follows that there
is a morphism $\theta:STF\rightarrow B$ such that $g\circ\theta=\psi_{F}$ 
and thus $0=Tg\circ T\theta=T(\psi_{F})$. This implies that $TF=0$
since $T(\psi_{F})$ is an isomorphism. 
\end{proof}

For an extriangulated category $\D$ and  $\mathcal{X}\subseteq\D$, we have the right $0$-perpendicular class
 $\mathcal{X}^{\bot_{0}}:=\{D\in\D\,|\:\Hom_{\D}(X,D)=0\;\forall X\in\mathcal{X}\}$ and the right $1$-perpendicular class  $\mathcal{X}^{\bot_{1}}:=\{D\in\D\,|\:\mathbb{E}(X,D)=0\;\forall X\in\mathcal{X}\}.$ Dually, we have ${}^{\perp_0}\mathcal{X}$ and ${}^{\perp_1}\mathcal{X}.$

\begin{cor}\label{cor:ttfrec} For a triple $(\mathcal{A},\mathcal{B},\mathcal{C})$ of extriangulated categories satisfying conditions (AR1), (AR2) and (AR3) from Definition \ref{def:recad}, the following statements hold true. 
\begin{enumerate}
\item $\Ker(i^{*})={}^{\bot_{0}}(\im(i_{*}))$ and $\Ker(i^{!})=(\im(i_{*}))^{\bot_{0}}.$
\item $\im(j_{!})\subseteq{}{}^{\bot_{1}}\Ker(j^{*})={}{}^{\bot_{1}}\im(i_{*})$
and $\im(j_{*})\subseteq\im(i_{*})^{\bot_{1}}=\Ker(j^{*})^{\bot_{1}}$. 
\end{enumerate}
\end{cor}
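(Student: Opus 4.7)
The plan is to handle parts (a) and (b) separately; both reduce to diagram-chases through the adjunctions, with (b) needing one additional structural input on $j^{*}$.

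For part (a), the inclusions $\Ker(i^{*})\subseteq{}^{\bot_{0}}\im(i_{*})$ and $\Ker(i^{!})\subseteq\im(i_{*})^{\bot_{0}}$ are just Lemma \ref{lem:torsadj}(a) applied to the adjunctions $(i^{*},i_{*})$ and $(i_{*},i^{!})$. For the reverse inclusions I would use the triangle identities together with the fact (Remark \ref{rem:rec}(b)) that ${}_{2}\psi$ and ${}_{3}\varphi$ are natural isomorphisms, since $i_{*}$ is fully faithful. Concretely: if $B\in{}^{\bot_{0}}\im(i_{*})$, taking $A=i^{*}B$ forces the unit ${}_{2}\varphi_{B}:B\to i_{*}i^{*}B$ to vanish; applying $i^{*}$ and invoking the triangle identity ${}_{2}\psi_{i^{*}B}\circ i^{*}({}_{2}\varphi_{B})=1_{i^{*}B}$ with ${}_{2}\psi$ iso then yields $i^{*}B=0$. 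The argument for $\Ker(i^{!})=\im(i_{*})^{\bot_{0}}$ is dual, using the counit ${}_{3}\psi$ and the identity $i^{!}({}_{3}\psi_{B})\circ{}_{3}\varphi_{i^{!}B}=1_{i^{!}B}$.

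For part (b), the equalities ${}^{\bot_{1}}\Ker(j^{*})={}^{\bot_{1}}\im(i_{*})$ and $\Ker(j^{*})^{\bot_{1}}=\im(i_{*})^{\bot_{1}}$ are immediate from (AR3). To prove $\im(j_{!})\subseteq{}^{\bot_{1}}\im(i_{*})$, I would pick a representative $\eta:i_{*}A\xrightarrow{f}E\xrightarrow{g}j_{!}C$ of a class in $\mathbb{E}_{\mathcal{B}}(j_{!}C,i_{*}A)$ and try to produce a section of $g$, since this forces $\eta=0$. Via the $(j_{!},j^{*})$-adjunction, finding $h:j_{!}C\to E$ with $g\circ h=1_{j_{!}C}$ amounts to finding $\beta:C\to j^{*}E$ with $j^{*}g\circ\beta={}_{4}\varphi_{C}$; the corresponding section is then $h:={}_{4}\psi_{E}\circ j_{!}(\beta)$, whose equality $g\circ h=1_{j_{!}C}$ follows from the naturality of ${}_{4}\psi$ and the triangle identity ${}_{4}\psi_{j_{!}C}\circ j_{!}({}_{4}\varphi_{C})=1_{j_{!}C}$. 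To produce $\beta$, I would observe that, because $j^{*}i_{*}A=0$ by (AR3), once one knows that $j^{*}$ sends $\eta$ to a conflation in $\mathcal{C}$, the morphism $j^{*}g$ becomes an isomorphism, and $\beta:=(j^{*}g)^{-1}\circ{}_{4}\varphi_{C}$ does the job. The inclusion $\im(j_{*})\subseteq\im(i_{*})^{\bot_{1}}$ is entirely dual, using the adjunction $(j^{*},j_{*})$ and its unit and counit.

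The main obstacle is precisely the step ``$j^{*}$ sends $\eta$ to a conflation'': this is automatic under the full extriangulated recollement of Definition \ref{def:rec} (condition (ER6) makes $j^{*}$ extriangulated), and it is the point at which extriangulated-compatibility of the recollement functors genuinely enters the argument. All remaining manipulations are purely categorical and rely only on the adjoint-pair data provided by (AR1)--(AR3).
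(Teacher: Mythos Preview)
Your proof is correct and matches the paper's approach: part (a) via Lemma \ref{lem:torsadj}(a) plus the unit/counit triangle identities, and part (b) by splitting the deflation after observing that $j^{*}$ turns it into an isomorphism and then invoking naturality of ${}_{4}\psi$. Your explicit flag that the step ``$j^{*}$ sends $\eta$ to a conflation'' requires (ER6) is well taken; the paper's ``one can check that $j^{*}b$ is an isomorphism'' tacitly uses the same hypothesis, which indeed goes beyond (AR1)--(AR3) alone.
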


\begin{proof} (a) We have that $\Ker(i^{*})\subseteq{}^{\bot_{0}}(\im(i_{*}))$ and
$\Ker(i^{!})\subseteq(\im(i_{*}))^{\bot_{0}}$.  By Lemma \ref{lem:torsadj} (a). 
For $B\in{}^{\bot_{0}}(\im(i_{*}))$, consider the morphism $B\stackrel{_{2}\varphi_{B}}{\rightarrow}i_{*}i^{*}B$.
Since $_{2}\varphi_{B}=0$ and $i^{*}{_{2}}\varphi_{B}$
is an isomorphism (see  Remark \ref{rem:rec}(b)), we get that   $i^{*}B=0.$
Similarly, for $B\in(\im(i_{*}))^{\bot_{0}}$, we have that $i^{!}B=0$
since $i^{!}{}_{3}\psi_{B}:i^{!}i_{*}i^{!}B\rightarrow i^{!}B$ is
a null isomorphism.
\

(b) Consider $\eta\in\mathbb{E}_{\mathcal{B}}(j_{!}C,T)$ for $C\in\mathcal{C}$
and $T\in\im(i_{*})$, and let $\suc[T][B][j_{!}C][a][b]$ be a realization
of $\eta$. Since $\im(i_{*})=\Ker(j^{*})$, one can check that $j^{*}b$
is an isomorphism. Therefore, since $j_{!}j^{*}b$ and $_{4}\psi_{j_{!}C}$
are isomorphisms (see Remark \ref{rem:rec}(b)) and $_{4}\psi_{j_{!}C}\circ(j_{!}j^{*}b)=b\circ{}_{4}\psi_{B}$,
we have that $b$ is an split-epi  and thus $\eta=0$. \\
Similarly, for $\eta\in\mathbb{E}_{\mathcal{B}}(T,j_{*}C)$ realized
by $\suc[j_{*}C][B][T][a][b]$ with $C\in\mathcal{C}$ and $T\in\im(i_{*})=\Ker(j^{*})$,
one can check that $j^{*}a$ is an isomorphism and thus $a$
is an split-mono. Therefore $\eta=0$. 
\end{proof}

The following lemma proves that condition (ER7) in Definition \ref{def:rec}
can be replaced with the following one: $\mathcal{B}=\Ker(i^{*})\star\im(i_{*})=\im(i_{*})\star\Ker(i^{!})$.
Moreover, it gives an alternative conditions for these equalities. 

\begin{lem}\label{lem:torsrec} For a recollement $(\mathcal{A},\mathcal{B},\mathcal{C})$ of additive categories
as in Definition \ref{def:recad} satisfying conditions (ER4), (ER5) and (ER6) from Definition \ref{def:rec},
the following statements hold true. 
\begin{enumerate}
\item $\Hom_{\mathcal{B}}(\im(i_{*}),\Ker(i^{!}))=0$ and $\Hom_{\mathcal{B}}(\Ker(i^{*}),\im(i_{*}))=0$.
\item The equality $\mathcal{B}=\im(i_{*})\star\Ker(i^{!})$ holds true if, and
only if, for each $B\in\mathcal{B}$, $_{3}\psi_{B}:i_{*}i^{!}B\rightarrow B$
is an inflation in $\mathcal{B}$ where $_{3}\psi:i_{*}\circ i^{!}\rightarrow1_{\mathcal{B}}$
is the co-unit associated to the adjoint pair $(i_{*},i^{!})$. Moreover,
in this case, every $B\in\mathcal{B}$ admits a conflation $\suc[i_{*}i^{!}B][B][F][{_{3}\psi_{B}}]$
with $i^{!}F=0$.
\item The equality $\mathcal{B}=\Ker(i^{*})\star\im(i_{*})$ holds true if, and
only if, for each $B\in\mathcal{B},$ $_{2}\varphi_{B}:B\rightarrow i_{*}i^{*}B$
is a deflation in $\mathcal{B}$ where $_{2}\varphi:1_{\mathcal{B}}\rightarrow i_{*}\circ i^{*}$
is the unit associated to the adjoint pair $(i^{*},i_{*})$. Moreover,
in this case, every $B\in\mathcal{B}$ admits a conflation $\suc[C][B][i_{*}i^{*}B][][{_{2}\varphi_{B}}]$
with $i^{*}C=0$.
\end{enumerate}
\end{lem}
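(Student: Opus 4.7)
The plan is to reduce Lemma \ref{lem:torsrec} to Lemma \ref{lem:torsadj} applied separately to the two adjoint pairs $(i^{*}, i_{*})$ and $(i_{*}, i^{!})$ of the recollement. Part (a) will follow immediately from two applications of Lemma \ref{lem:torsadj}(a): taking the adjoint pair $(S,T)=(i_{*}, i^{!})$ gives $\Hom_{\mathcal{B}}(\im(i_{*}), \Ker(i^{!})) = 0$, and taking $(S,T)=(i^{*}, i_{*})$ gives $\Hom_{\mathcal{B}}(\Ker(i^{*}), \im(i_{*})) = 0$. No extriangulated structure is needed for this part, only the adjunctions from (AR1).

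Before attacking (b) and (c), I would establish the key preparatory observation that $\im(i_{*})$ is closed under extensions in $\mathcal{B}$. This is the only place where condition (ER6) is essential: since $j^{*}$ is extriangulated, any $\s$-conflation $\suc[T_{1}][B][T_{2}]$ in $\mathcal{B}$ with $T_{1}, T_{2} \in \Ker(j^{*})$ is sent by $j^{*}$ to a conflation in $\mathcal{C}$ with zero endpoints, forcing $j^{*}B = 0$; then (AR3) gives $B \in \Ker(j^{*}) = \im(i_{*})$.

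With this closure at hand, part (b) follows from Lemma \ref{lem:torsadj}(b) applied to the adjoint pair $(S,T) = (i_{*}, i^{!})$: the hypothesis that $T = i^{!}$ is left exact comes from (ER5), the full-faithfulness of $S = i_{*}$ comes from (AR2), and the co-unit of this adjunction is precisely ${_{3}\psi}$. This yields both the equivalence of $\mathcal{B}=\im(i_{*})\star\Ker(i^{!})$ with the inflation property of ${_{3}\psi_{B}}$, and the conflation $\suc[i_{*}i^{!}B][B][F][{_{3}\psi_{B}}]$ with $i^{!}F=0$. Analogously, part (c) follows from Lemma \ref{lem:torsadj}(c) applied to $(S,T) = (i^{*}, i_{*})$: right exactness of $i^{*}$ comes from (ER4), full-faithfulness of $i_{*}$ from (AR2), and the unit of this adjunction is ${_{2}\varphi}$, delivering the equivalence with the deflation property of ${_{2}\varphi_{B}}$ together with the conflation $\suc[C][B][i_{*}i^{*}B][][{_{2}\varphi_{B}}]$ with $i^{*}C=0$.

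The main obstacle is really just the extension-closure of $\im(i_{*})$; once this is in hand, everything is pure bookkeeping. The only subtlety to watch for is making sure the left/right role of the adjoint pair is matched correctly when translating from the abstract statement of Lemma \ref{lem:torsadj} to each concrete adjunction in the recollement, so that the unit (resp.\ co-unit) of Lemma \ref{lem:torsadj}(c) (resp.\ (b)) is identified with ${_{2}\varphi}$ (resp.\ ${_{3}\psi}$) and not with ${_{3}\varphi}$ or ${_{2}\psi}$.
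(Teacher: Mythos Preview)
Your proposal is correct and follows essentially the same approach as the paper's proof: the paper also reduces (b) and (c) to Lemma~\ref{lem:torsadj}(b),(c) applied to the adjoint pairs $(i_{*},i^{!})$ and $(i^{*},i_{*})$, after noting that $\im(i_{*})=\Ker(j^{*})$ is closed under extensions, and derives (a) via Corollary~\ref{cor:ttfrec}(a) (which itself rests on Lemma~\ref{lem:torsadj}(a)). Your explicit justification of the extension-closure of $\im(i_{*})$ using that $j^{*}$ is extriangulated is exactly what the paper leaves implicit.
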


\begin{proof}
Item (a) follows from Corollary \ref{cor:ttfrec} (a). For item (b), consider
the adjoint pair $(i_{*},i^{!})$. By Definition \ref{def:rec}, we
have that $i^{!}$ is left exact and $i_{*}$ is fully faithful.
Moreover $\im(i_{*})=\Ker(j^{*})$ is closed under extensions. Hence
 (b) follows from Lemma \ref{lem:torsadj}(b). Finally (c) follows
with similar arguments. 
\end{proof}

In all that follows, we consider a recollement $(\mathcal{A},\mathcal{B},\mathcal{C})$ of extriangulated
categories as in the Definition \ref{def:rec}. Let us begin with
the following property of recollements. 

\begin{rem}
\label{rem:rec-1}Assume that $\mathcal{B}$ is WIC. By definition,
we have that $j_{!}$ is right exact. That is, for any conflation
$\epsilon:\:\suc[C_{1}][C][C_{2}][a][b]$ in $\mathcal{C}$, there
are conflations $\theta':\:\suc[B_{1}][j_{!}C_{1}][B_{2}][][x]$ and
$\theta:\:\suc[B_{2}][j_{!}C][j_{!}C_{2}][y][j_{!}b]$ in $\mathcal{D}$
such that $y\circ x=j_{!}a$. In what follows, we will show that (if
$\mathcal{B}$ is WIC) we can choose $\theta'$ in such a way that:
$j^{*}x$ is an isomorphism, $B_{1}\in\im(i_{*})$ and $\Gamma_{j^{*}}(\theta)=j^{*}x\cdot\Gamma_{j^{*}j_{!}}(\epsilon)$
(recall that $j^{*}\circ j_{!}$ is extriangulated by Remark \ref{rem:natrec} (c)). 
\

(a) The morphism $_{4}\psi_{B_{2}}:j_{!}j^{*}B_{2}\rightarrow B_{2}$
is a deflation. \\
Indeed, by the isomorphisms of the adjoint pair $(j_{!},j^{*})$,
we have that $x={}{}_{4}\psi_{B_{2}}\circ(j_{!}j^{*}x)\circ(j_{!}{}_{4}\varphi_{C_{1}})$. Hence, $_{4}\psi_{B_{2}}$ is a deflation since $x$ is so and $\mathcal{B}$ is WIC.
\
 
(b) There is an isomorphism $\tilde{\varphi}:C_{1}\rightarrow j^{*}B_{2}$
such that $(j^{*}y)\circ\tilde{\varphi}={}_{4}\varphi_{C}\circ a$.\\
This follows by considering axiom (ET3$^{op}$) and \cite[Cor.3.6]{NP}
in the following commutative diagram
\[
\xymatrix{C_{1}\ar[r]^{a} & C\ar[r]^{b}\ar[d]^{_{4}\varphi_{C}} & C_{2}\ar[d]^{_{4}\varphi_{C_{2}}}\\
j^{*}B_{2}\ar[r]^{j^{*}y} & j^{*}j_{!}C\ar[r]^{j^{*}j_{!}b} & j^{*}j_{!}C_{2}.
}
\]
(c) The morphism $\tilde{x}:={_{4}}\psi_{B_{2}}\circ j_{!}\tilde{\varphi}:j_{!}C_{1}\rightarrow B_{2}$
is a deflation such that $j^{*}\tilde{x}$ is an isomorphism and
$y\circ\tilde{x}=j_{!}a$. \\
Indeed, $_{4}\psi_{B_{2}}$ is a deflation by (a), and thus $\tilde{x}$
is a deflation by Lemma \ref{lem:infdef}. It is clear that $j^{*}x$
is an isomorphism since $j^{*}j_{!}\tilde{\varphi}$ is an isomorphism
by (b) and  $j^{*}{_{4}}\psi_{B_{2}}$ is an isomorphism by Remark
\ref{rem:rec}(b). Lastly, we observe that 
\[
y\circ\tilde{x}=y\circ{_{4}}\psi_{B_{2}}\circ j_{!}\tilde{\varphi}={_{4}}\psi_{j_{!}C}\circ j_{!}j^{*}y\circ j_{!}\tilde{\varphi}={_{4}}\psi_{j_{!}C}\circ j_{!}{_{4}}\varphi_{C}\circ j_{!}a=j_{!}a.
\]
(d) By (c) there is a conflation $\tilde{\theta'}:\:\suc[\tilde{B_{1}}][j_{!}C_{1}][B_{2}][][\tilde{x}]$
such that $j^{*}\tilde{x}$ is an isomorphism and $y\circ\tilde{x}=j_{!}a$.
\

(e) $\tilde{B_{1}}=i_{*}A$ for some $A\in\mathcal{A}$. \\
For this, we can check that $j^{*}\tilde{B_{1}}=0$ since $j^{*}\tilde{x}$
is an isomorphism in $\Gamma_{j^{*}}(\tilde{\theta}')$. Then, $\tilde{B_{1}}=i_{*}A$
by (AR3). 
\

(f) $\Gamma_{j^{*}}(\theta)=j^{*}\tilde{x}\cdot\Gamma_{j^{*}j_{!}}(\epsilon)$.
\\
To prove this, we proceed as follows. By Remark \ref{rem:natrec} (c),
$j^{*}\circ j_{!}$ is extriangulated. Therefore, we can consider
the conflation $\Gamma_{j^{*}j_{!}}(\epsilon):\:\suc[j^{*}j_{!}C_{1}][j^{*}j_{!}C][j^{*}j_{!}C_{2}][j^{*}j_{!}a][j^{*}j_{!}b]$.
Now, by the dual of \cite[Lem.2.5]{E}, the morphism of extensions
$(j^{*}\tilde{x},1):\Gamma_{j^{*}j_{!}}(\epsilon)\rightarrow j^{*}\tilde{x}\cdot\Gamma_{j^{*}j_{!}}(\epsilon)$
is realized by a commutative diagram as the one below
\[
\xymatrix{j^{*}j_{!}C_{1}\ar[r]^{j^{*}j_{!}a}\ar[d]^{j^{*}\tilde{x}} & j^{*}j_{!}C\ar[r]^{j^{*}j_{!}b}\ar[d]^{x'} & j^{*}j_{!}C_{2}\ar@{=}[d]\\
j^{*}B_{2}\ar[r]^{a'} & E\ar[r]^{b'} & j^{*}j_{!}C_{2}
}\]
where the square
on the left is a weak pushout. 
Thus, since $(j^{*}y)\circ(j^{*}\tilde{x})=1_{j^{*}j_{!}C}\circ(j^{*}j_{!}a)$,
there is a morphism $y':E\rightarrow j^{*}j_{!}C$ such that $y'\circ x'=1_{j^{*}j_{!}C}$
and $y'\circ a'=j^{*}y$. Moreover, by \cite[Cor.3.6]{NP}, we can
conclude that $x'$ and $y'$ are isomorphisms. Then, since $j^{*}\tilde{x}\cdot\Gamma_{j^{*}j_{!}}(\epsilon)$
is realized by $\suc[j^{*}B_{2}][E][j^{*}j_{!}C_{2}][a'][b']$ and
$y'$ is an isomorphism, it is enough to prove that the following
diagram commutes
\[
\xymatrix{j^{*}B_{2}\ar[r]^{a'}\ar@{=}[d] & E\ar[r]^{b'}\ar[d]^{y'} & j^{*}j_{!}C_{2}\ar@{=}[d]\\
j^{*}B_{2}\ar[r]^{j^{*}y} & j^{*}j_{!}C\ar[r]^{j^{*}j_{!}b} & j^{*}j_{!}C_{2}.
}
\]
Indeed, the left square on the above diagram commutes by construction.
Moreover $b'\circ x'=(j^{*}j_{!}b)=(j^{*}j_{!}b)\circ y'\circ x'.$
Therefore $b'=(j^{*}j_{!}b)\circ y'$ since $x'$ is an epimorphism.
\

(g) $B_{2}\in\Ker(i^{*})$. \\
To see this, consider the following exact sequence given by $\theta$
for $T\in\im(i_{*})=\Ker(j^{*})$: 
\[
\Hom_{\mathcal{B}}(j_{!}C,T)\rightarrow\Hom_{\mathcal{B}}(B_{2},T)\rightarrow\mathbb{E}_{\mathcal{B}}(j_{!}C_{2},T).
\]
Note that $\Hom_{\mathcal{B}}(j_{!}C,T)\cong\Hom_{\mathcal{C}}(C,j^{*}T)=0$
and  $\mathbb{E}_{\mathcal{B}}(j_{!}C_{2},T)=0$ by Corollary
\ref{cor:ttfrec}(b). Therefore $B_{2}\in{}^{\bot_{0}}\im(i_{*})=\Ker(i^{*})$
by Corollary \ref{cor:ttfrec}(a). 
\end{rem}

\begin{lem}\label{lem:inf} Let $f$ be a morphism in $\mathcal{A}$. Then $f$
is an inflation if, and only if, $i_{*}f$ is an inflation.
\end{lem}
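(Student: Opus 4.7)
The plan is to handle both implications separately; the forward direction is immediate from the recollement axioms, while the reverse direction requires showing the middle term of the conflation lies in $\im(i_*)$ and then descending the conflation to $\mathcal{A}$ using the Appendix's results on extriangulated adjoint pairs.

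For the direction $(\Rightarrow)$, if $f$ fits into a conflation $\suc[A][A'][A''][f][g]$ in $\mathcal{A}$, then since $i_*$ is extriangulated by (ER6), applying it produces a conflation $\suc[i_*A][i_*A'][i_*A''][i_*f][i_*g]$ in $\mathcal{B}$, so $i_*f$ is an inflation. This is the routine direction.

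For the direction $(\Leftarrow)$, suppose $i_*f$ is an inflation in $\mathcal{B}$, fitting into a conflation $\eta:\:\suc[i_*A][i_*A'][B][i_*f][g]$. I would proceed in three steps. \emph{Step 1: locate $B$ in $\im(i_*)$.} I would apply the extriangulated functor $j^*$ (by (ER6)) to obtain a realization of $\Gamma_{j^*}(\eta)$ as $\suc[j^*i_*A][j^*i_*A'][j^*B]$; since $j^*\circ i_*=0$ by (AR3), the outer terms vanish, so $\Gamma_{j^*}(\eta)\in\mathbb{E}_\mathcal{C}(j^*B,0)=0$. Comparing the resulting realization $\suc[0][0][j^*B]$ with the canonical split realization $\suc[0][j^*B][j^*B][0][1]$ of the zero extension (via equivalence of realizations) forces $j^*B=0$, so $B\in\Ker(j^*)=\im(i_*)$ and $B\cong i_*A''$ for some $A''\in\mathcal{A}$. \emph{Step 2: lift the deflation to $\mathcal{A}$.} Using this isomorphism together with Lemma \ref{lem:infdef}, I would replace $B$ by $i_*A''$ and assume $\eta$ is realized by $\suc[i_*A][i_*A'][i_*A''][i_*f][\bar g]$; by fully faithfulness of $i_*$ (AR2), $\bar g=i_* h$ for a unique $h:A'\rightarrow A''$ in $\mathcal{A}$.

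\emph{Step 3: descend the conflation through $i_*$.} Here I would invoke the Appendix, applied to the adjoint triple $(i^*,i_*,i^!)$, to conclude that the natural transformation $\Gamma_{i_*}:\mathbb{E}_\mathcal{A}(?,-)\rightarrow \mathbb{E}_\mathcal{B}(i_*(?),i_*(-))$ is a natural isomorphism. This yields an $\eta'\in\mathbb{E}_\mathcal{A}(A'',A)$ with $\Gamma_{i_*}(\eta')=\eta$; realizing $\eta'$ by $\suc[A][E][A''][u][v]$ in $\mathcal{A}$ and applying $i_*$ gives a conflation equivalent to $\eta$, hence an isomorphism $i_*E\cong i_*A'$ compatible with the inner maps. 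Full faithfulness of $i_*$ descends this to an isomorphism $\theta:E\xrightarrow{\sim} A'$ in $\mathcal{A}$ satisfying $\theta\circ u=f$ and $h\circ\theta=v$, which exhibits $\suc[A][A'][A''][f][h]$ as a conflation in $\mathcal{A}$, so $f$ is an inflation. The main obstacle is precisely Step 3: establishing that $\Gamma_{i_*}$ is a natural isomorphism is automatic in the abelian or triangulated settings but requires the substantive Appendix-level analysis of extriangulated adjoint pairs in general; Steps 1 and 2 are routine diagram chases using the recollement axioms and Lemma \ref{lem:infdef}.
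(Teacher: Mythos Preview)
Your forward direction and Steps 1--2 of the reverse direction are fine. The gap is Step~3: the Appendix does \emph{not} give you that $\Gamma_{i_*}:\mathbb{E}_{\mathcal{A}}(?,-)\to\mathbb{E}_{\mathcal{B}}(i_*(?),i_*(-))$ is an isomorphism. The Appendix's Propositions~\ref{prop:existe}--\ref{prop:0} produce isomorphisms of the shape $\mathbb{E}(S(?),-)\cong\mathbb{F}(?,T(-))$, and to even apply Proposition~\ref{prop:0} to the pair $(i_*,i^!)$ you would need $i^!$ to be $(i_*,1)$-extriangulated, which does not follow from left exactness (left exactness only tells you $i^!$ preserves inflations, not entire conflations ending in $\im(i_*)$). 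In fact, the paper establishes that $\Gamma_{i_*}$ is an isomorphism only in Proposition~\ref{prop:Giso}, under the extra hypothesis that $\mathcal{B}$ is AET4, and its proof passes through Lemma~\ref{lem:R}, which in turn \emph{uses} Lemma~\ref{lem:inf}. So your Step~3, as written, is either unjustified or circular.

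The paper's proof of the reverse implication is a two-line argument that sidesteps all of this: since $i^!$ is left exact it preserves inflations, so $i^!i_*f$ is an inflation; then the naturality square for the isomorphism ${}_3\varphi:1_{\mathcal{A}}\to i^!\circ i_*$ reads $({}_3\varphi_{A'})\circ f=(i^!i_*f)\circ({}_3\varphi_A)$, and Lemma~\ref{lem:infdef}(a) immediately gives that $f$ is an inflation. You never need to locate the third term in $\im(i_*)$, descend the conflation, or know anything about $\Gamma_{i_*}$.
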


\begin{proof} If $f$ is an inflation, then $i_{*}f$ is so since $i_{*}$ is extriangulated. 
\

Let $i_{*}f$ be an inflation. Then $(i^{!}\circ i_{*})f$ is an inflation since $i^{!}$ preserves inflations. Thus, by
using that $_{3}\varphi:1_{\mathcal{A}}\rightarrow i^{!}\circ i_{*}$
is an isomorphism, it follows that $f$ is an inflation by Lemma \ref{lem:infdef}. 
\end{proof}
\begin{lem}\label{lem:R} For $\eta\in\mathbb{E}_{\mathcal{B}}(i_{*}A_{1},i_{*}A_{2})$
there is a conflation $\delta:\:\suc[A_{2}][A][A_{1}][a][b]$ in $\mathcal{A}$
such that $\eta$ is realized by $\suc[i_{*}A_{2}][i_{*}A][i_{*}A_{1}][i_{*}a][i_{*}b]$.
\end{lem}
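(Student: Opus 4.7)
The plan is to exploit that $i_{*}$ is both extriangulated and fully faithful, together with the equality $\Ker(j^{*})=\im(i_{*})$ from (AR3), in order to transfer any realization of $\eta$ from $\mathcal{B}$ into the image of $i_{*}$ and then to read off the desired conflation in $\mathcal{A}$.

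First I would pick an arbitrary realization $\suc[i_{*}A_{2}][B][i_{*}A_{1}][f][g]$ of $\eta$ in $\mathcal{B}$ and apply the extriangulated functor $j^{*}$. Since $j^{*}\circ i_{*}=0$ by Remark \ref{rem:rec}(a), the resulting sequence has the form $\suc[0][j^{*}B][0]$; as this realizes the zero element of $\mathbb{E}_{\mathcal{C}}(0,0)=0$, it is equivalent to the split conflation $\suc[0][0][0]$, which forces $j^{*}B\cong 0$. By (AR3) we may therefore replace $B$ by $i_{*}A$ for some $A\in\mathcal{A}$, and the full faithfulness of $i_{*}$ lets us rewrite the realization in the form $\suc[i_{*}A_{2}][i_{*}A][i_{*}A_{1}][i_{*}a][i_{*}b]$ with $a$ and $b$ uniquely determined morphisms in $\mathcal{A}$.

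Next, Lemma \ref{lem:inf} upgrades the fact that $i_{*}a$ is an inflation in $\mathcal{B}$ to $a$ being an inflation in $\mathcal{A}$. I then choose a conflation $\delta_{0}:\,\suc[A_{2}][A][A''][a][c]$ in $\mathcal{A}$ and apply $i_{*}$ to obtain a second realization $\suc[i_{*}A_{2}][i_{*}A][i_{*}A''][i_{*}a][i_{*}c]$ in $\mathcal{B}$, sharing the inflation $i_{*}a$ with the one obtained before. Axiom (ET3) applied to the identities on $i_{*}A_{2}$ and $i_{*}A$ yields a morphism of conflations whose third component $\phi:i_{*}A''\to i_{*}A_{1}$ satisfies $\phi\circ i_{*}c=i_{*}b$, and \cite[Cor.3.6]{NP} forces $\phi$ to be an isomorphism. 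Full faithfulness promotes $\phi$ to an isomorphism $\tilde{\phi}:A''\xrightarrow{\sim}A_{1}$ in $\mathcal{A}$ satisfying $\tilde{\phi}\circ c=b$, so setting $\delta:=\delta_{0}\cdot\tilde{\phi}^{-1}\in\mathbb{E}_{\mathcal{A}}(A_{1},A_{2})$, a standard computation of the pull-back along an isomorphism shows $\delta$ is realized by $\suc[A_{2}][A][A_{1}][a][b]$; applying $i_{*}$ recovers the chosen realization of $\eta$.

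The main subtlety will be the identification of $A''$ with $A_{1}$ in this last step. The two conflations in $\mathcal{B}$ share their inflation but a priori have unrelated third terms; matching them requires the combination of (ET3) with \cite[Cor.3.6]{NP}, and only after transferring the resulting isomorphism from $\im(i_{*})\subseteq\mathcal{B}$ back to $\mathcal{A}$ through the fully faithful functor $i_{*}$ does the sought-for conflation emerge in the correct form.
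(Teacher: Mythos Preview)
Your proof is correct and follows essentially the same route as the paper's: both apply $j^{*}$ to force the middle term into $\im(i_{*})$, use full faithfulness to write the maps as $i_{*}a$, $i_{*}b$, invoke Lemma \ref{lem:inf} to make $a$ an inflation, complete it to a conflation in $\mathcal{A}$, and then compare the two resulting conflations in $\mathcal{B}$ via \cite[Cor.3.6]{NP} to identify the cokernel terms. The only cosmetic difference is that the paper transfers the resulting isomorphism back to $\mathcal{A}$ using $i^{*}$ and the counit ${}_{2}\psi$ (then cites \cite[Prop.3.7]{NP}), whereas you use full faithfulness of $i_{*}$ directly and express the conclusion as a pull-back along an isomorphism; both are equivalent.
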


\begin{proof}
Let $\suc[i_{*}A_{2}][B][i_{*}A_{1}][f][g]$ be a realization of $\eta$.
Since $j^{*}$ is extriangulated, we have that 
$
\suc[(j^{*}\circ i_{*})A_{2}][j^{*}B][(j^{*}\circ i_{*})A_{1}][j^{*}f][j^{*}g]
$
is a conflation in $\mathcal{C}$. Observe that $B\in\Ker(j^{*})=\im(i_{*})$
since $(j^{*}\circ i_{*})=0$. Hence, there is $A\in\mathcal{A}$
such that $B=i_{*}A$. Thus $f=i_{*}(a)$ and $g=i_{*}(b)$ by
condition (AR2). Let us prove that $\suc[A_{2}][A][A_{1}][a][b]$
is a conflation in $\mathcal{A}$. Firstly, note that $a$ is an inflation
by Lemma \ref{lem:inf}. Hence, there is a conflation $\suc[A_{2}][A][A'][a][b']$
in $\mathcal{A}$ and thus we get a conflation
$\suc[i_{*}A_{2}][i_{*}A][i_{*}A'][i_{*}a][i_{*}b']$. Note that $i_{*}a=f.$ Then,
 there is an isomorphism $\beta:i_{*}A_{1}\rightarrow i_{*}A'$
such that $\beta\circ(i_{*}b)=i_{*}b'$ by \cite[Cor.3.6]{NP}. Hence
$i^{*}\beta:i^{*}i_{*}A_{1}\rightarrow i^{*}i_{*}A'$ is an isomorphism
in $\mathcal{A}$ such that $i^{*}\beta\circ(i^{*}i_{*}b)=i^{*}i_{*}b'$.
Lastly, recall that ${}{}_{2}\psi:i^{*}\circ i_{*}\rightarrow1_{\mathcal{A}}$
is an isomorphism. Therefore, we have that $\gamma:={}_{2}\psi_{A'}\circ(i^{*}\beta)\circ{}_{2}\psi_{A_{1}}^{-1}$
is an isomorphism such that $\gamma\circ b=b'$ (see the diagram below)
\[
\xymatrix{i^{*}i_{*}A\ar[r]^{i^{*}i_{*}b}\ar[d]^{_{2}\psi_{A}}\ar@(u,u)[rr]^{i^{*}i_{*}b'} & i^{*}i_{*}A_{1}\ar[r]^{i^{*}\beta}\ar[d]^{_{2}\psi_{A_{1}}} & i^{*}i_{*}A'\ar[d]^{_{2}\psi_{A'}}\\
A\ar[r]^{b}\ar@(d,d)[rr]_{b'} & A_{1}\ar[r]^{\gamma} & A'
}
\]
Then, it follows from \cite[Prop.3.7]{NP} that $\suc[A_{2}][A][A_{1}][a][b]$
is a conflation in $\mathcal{A}$.
\end{proof}

\begin{prop}\label{prop:Giso} If $\mathcal{B}$ is AET4, then $\Gamma_{i_{*}}:\mathbb{E}_{\mathcal{A}}\rightarrow\mathbb{E}_{\mathcal{B}}\circ(i_{*}^{op}\times i_{*})$
is an isomorphism. 
\end{prop}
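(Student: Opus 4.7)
The plan is to establish injectivity and surjectivity of the abelian-group homomorphism
$
(\Gamma_{i_{*}})_{A_{1},A_{2}}: \E_{\mathcal{A}}(A_{1},A_{2})\to \E_{\mathcal{B}}(i_{*}A_{1},i_{*}A_{2})
$
separately, for arbitrary $A_{1},A_{2}\in\mathcal{A}$.

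For injectivity, I plan to use the full faithfulness of $i_{*}$. Suppose $\Gamma_{i_{*}}(\delta)=0$ for some $\delta\in \E_{\mathcal{A}}(A_{1},A_{2})$, and fix a realization $\delta:\,A_{2}\xrightarrow{a}A\xrightarrow{b}A_{1}$. Since $i_{*}$ is extriangulated, $\Gamma_{i_{*}}(\delta)$ is realized by $i_{*}A_{2}\xrightarrow{i_{*}a}i_{*}A\xrightarrow{i_{*}b}i_{*}A_{1}$, and vanishing of this extension forces this sequence to split in $\mathcal{B}$; a section $s:i_{*}A_{1}\to i_{*}A$ of $i_{*}b$ exists. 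Full faithfulness of $i_{*}$ produces a unique $t:A_{1}\to A$ with $s=i_{*}t$, and faithfulness converts the identity $i_{*}b\circ i_{*}t=1_{i_{*}A_{1}}$ into $b\circ t=1_{A_{1}}$. Hence $\delta$ splits in $\mathcal{A}$ and $\delta=0$. Since $\Gamma_{i_{*}}$ is an additive map, this yields injectivity.

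For surjectivity, I plan to combine Lemma \ref{lem:R} with the adjunction machinery developed in the Appendix. Given $\eta\in \E_{\mathcal{B}}(i_{*}A_{1},i_{*}A_{2})$, Lemma \ref{lem:R} produces a conflation $\delta:A_{2}\to A\to A_{1}$ in $\mathcal{A}$ whose image under $i_{*}$ realizes $\eta$; $\Gamma_{i_{*}}(\delta)$ has the same realization, so $\delta$ is the natural candidate for a preimage. However, two $\E_{\mathcal{B}}$-extensions sharing a realization need not coincide, and an intrinsic identification is needed. For this I would invoke the adjoint pair $(i_{*},i^{!})$: the functor $i_{*}$ is extriangulated by (ER6), $i^{!}$ is left exact by (ER5), and the unit ${}_{3}\varphi:1_{\mathcal{A}}\to i^{!}\circ i_{*}$ is a natural isomorphism by Remark \ref{rem:rec}(b). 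Under the AET4 hypothesis on $\mathcal{B}$, the Appendix results on adjoint pairs of extriangulated functors promote the classical $\Hom$-adjunction to a natural isomorphism
$
\E_{\mathcal{B}}(i_{*}A_{1},-)\stackrel{\sim}{\longrightarrow} \E_{\mathcal{A}}(A_{1},i^{!}(-)),
$
and composing with ${}_{3}\varphi_{A_{2}}^{-1}$ evaluated on $i^{!}i_{*}A_{2}$ produces an inverse to $\Gamma_{i_{*}}(A_{1},A_{2})$. This forces $\eta=\Gamma_{i_{*}}(\delta)$ and completes surjectivity.

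The main obstacle is this second step: moving from the observation that $\eta$ and $\Gamma_{i_{*}}(\delta)$ share a realization to the stronger conclusion that they are equal inside $\E_{\mathcal{B}}(i_{*}A_{1},i_{*}A_{2})$. This is precisely where the AET4 assumption enters, since it supplies the coproduct-compatibility of $\E_{\mathcal{B}}$ required by the Appendix's adjoint-pair framework to upgrade the ordinary adjunction of $(i_{*},i^{!})$ to an adjunction at the level of first extensions.
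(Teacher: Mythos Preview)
Your injectivity argument is essentially the same as the paper's (the paper phrases it via the inflation and the isomorphism ${}_{3}\varphi$, you via the deflation and full faithfulness; both are fine).

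Your surjectivity argument, however, has a genuine gap. The Appendix machinery (Propositions \ref{prop:existe}, \ref{prop:mono}, \ref{prop:0}) does not take AET4 or ``coproduct-compatibility of $\E_{\mathcal{B}}$'' as a hypothesis anywhere; what it requires for the pair $(S,T)=(i_{*},i^{!})$ is that $i^{!}$ be $(i_{*},1)$-extriangulated and that the unit or counit be extriangulated in the appropriate sense. Axiom (ER5) only gives that $i^{!}$ is \emph{left exact} in the sense of Definition \ref{def:exacto}, which means a conflation $A\to B\to i_{*}C$ is sent to a pair of conflations factoring $i^{!}$ of the deflation, not to a single conflation $i^{!}A\to i^{!}B\to i^{!}i_{*}C$. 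So you have not shown that $\tau:\E_{\mathcal{B}}(i_{*}(?),-)\to\E_{\mathcal{A}}(?,i^{!}(-))$ even exists, let alone that it is an isomorphism. The AET4 hypothesis on $\mathcal{B}$ is about coproducts and is simply orthogonal to these conditions on $i^{!}$.

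The paper's route is different and does not attempt to build such an adjunction at the $\E$-level. Instead it uses AET4 via Theorem A: there is a \emph{universal} $\E_{\mathcal{B}}$-extension $\eta\in\E_{\mathcal{B}}\bigl((i_{*}A_{1})^{(X)},i_{*}A_{2}\bigr)$. Since $i_{*}$ preserves coproducts (Remark \ref{rem:rec}(e)), the source is $i_{*}(A_{1}^{(X)})$, so Lemma \ref{lem:R} applies to $\eta$ itself and produces $\delta\in\E_{\mathcal{A}}(A_{1}^{(X)},A_{2})$ with $\s(\Gamma_{i_{*}}(\delta))=\s(\eta)$. Now (ET3) gives a morphism $h$ with $\eta=\Gamma_{i_{*}}(\delta)\cdot h$; one does \emph{not} need $\eta=\Gamma_{i_{*}}(\delta)$, only that $\Gamma_{i_{*}}(\delta)$ dominates $\eta$, which makes $\Gamma_{i_{*}}(\delta)$ universal as well. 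Surjectivity of $(\Gamma_{i_{*}})_{A_{1},A_{2}}$ then follows from a diagram chase on
\[
\xymatrix{
\Hom_{\mathcal{A}}(A_{1},A_{1}^{(X)})\ar[r]^{\delta\cdot-}\ar[d]^{i_{*}}_{\cong} & \E_{\mathcal{A}}(A_{1},A_{2})\ar[d]^{\Gamma_{i_{*}}}\\
\Hom_{\mathcal{B}}(i_{*}A_{1},i_{*}A_{1}^{(X)})\ar[r]^{\Gamma_{i_{*}}(\delta)\cdot-} & \E_{\mathcal{B}}(i_{*}A_{1},i_{*}A_{2})
}
\]
using that the left vertical is an isomorphism (full faithfulness) and the bottom row is surjective (universality). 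This is where AET4 actually enters: it guarantees the existence of the universal extension to which Lemma \ref{lem:R} can be applied.
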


\begin{proof} Let $\mathcal{B}$ be AET4 and  $A_{1},A_{2}\in\mathcal{A}.$ Let us show firstly that the map given by $\Gamma:=\Gamma_{i_{*}}:\mathbb{E}_{\mathcal{A}}(A_{1},A_{2})\rightarrow\mathbb{E}_{\mathcal{B}}(i_{*}A_{1},i_{*}A_{2})$
is injective. Indeed, let $\eta:\:\suc[A_{2}][A][A_{1}][a][b]$ be 
 a conflation such that $\Gamma(\eta)=0.$ Then, by \cite[Cor.3.5]{NP} we get that $i_{*}(a)$ is a split-mono. Furthermore, by Remark \ref{rem:rec} (b) there is a natural
isomorphism ${}_{3}\varphi:1_{\mathcal{A}}\rightarrow i^{!}\circ i_{*}$.
Hence $a$ is a split-mono since $a={}_{3}\varphi_{B}^{-1}\circ\left(i^{!}i_{*}a\right)\circ{}_{3}\varphi_{A}$ and thus  $\eta=0$ by \cite[Cor.3.5]{NP}.

It remains to prove that $\Gamma$ is surjective. For this, consider
the objects $B_{1}:=i_{*}A_{1}$ and $B_{2}:=i_{*}A_{2}$ in $\mathcal{B}$.
Since $\mathcal{B}$ is AET4, it follows from Theorem A that there
is a universal $\mathbb{E}$-extension $\eta\in\mathbb{E_{\mathcal{B}}}(B_{1}^{(X)},B_{2})$.
Recall that $B_{2}=i_{*}A_{2}$ and $B_{1}^{(X)}=i_{*}(A_{1}^{(X)})$
(see Remark \ref{rem:rec} (c)). Hence, by Lemma \ref{lem:R}, there
is a conflation $\delta:\:\suc[A_{2}][A][A_{1}^{(X)}][a][b]$ in $\mathcal{A}$
such that $\eta$ is realized by $\suc[i_{*}A_{2}][i_{*}A][i_{*}A_{1}^{(X)}][i_{*}a][i_{*}b]$. 

Let us prove that $\Gamma(\delta)$ is a universal $\mathbb{E}$-extension
in $\mathcal{B}$. By (ET3), there is a morphism $h:i_{*}A_{1}^{(X)}\rightarrow i_{*}A_{1}^{(X)}$
such that $\eta=\Gamma(\delta)\cdot h$. Now, since $\eta$ is a universal
$\mathbb{E}$-extension, for every $\epsilon\in\mathbb{E}_{\mathcal{B}}(i_{*}A_{1},i_{*}A_{2})$
there is a morphism $f_{\epsilon}:i_{*}A_{1}\rightarrow i_{*}A_{1}^{(X)}$
such that $\eta\cdot f_{\epsilon}=\epsilon$. Thus $\Gamma(\delta)$
is a universal $\mathbb{E}$-extension because $\epsilon=\eta\cdot f_{\epsilon}=\Gamma(\delta)\cdot h\cdot f_{\epsilon}$
for all $\epsilon\in\mathbb{E}_{\mathcal{B}}(i_{*}A_{1},i_{*}A_{2})$.

Now, note that we have the following commutative diagram 
\[
\xymatrix{_{\mathcal{A}}(A_{1},A_{2})\ar[r]^{_{\mathcal{A}}(A_{1},a)}\ar[d]^{i_{*}} & _{\mathcal{A}}(A_{1},A)\ar[r]^{_{\mathcal{A}}(A_{1},b)}\ar[d]^{i_{*}} & _{\mathcal{A}}(A_{1},A_{1}^{(X)})\ar[r]^{\delta\cdot-}\ar[d]^{i_{*}} & \mathbb{E}_{\mathcal{A}}(A_{1},A_{2})\ar[d]^{\Gamma}\\
_{\mathcal{B}}(i_{*}A_{1},i_{*}A_{2})\ar[r]^{\Hom_{\mathcal{B}}(i_{*}A_{1},i_{*}a)} & _{\mathcal{B}}(i_{*}A_{1},i_{*}A)\ar[r]^{\Hom_{\mathcal{B}}(i_{*}A_{1},i_{*}b)} & _{\mathcal{B}}(i_{*}A_{1},i_{*}A_{1}^{(X)})\ar[r]^{\Gamma(\delta)\cdot-} & \mathbb{E}_{\mathcal{B}}(i_{*}A_{1},i_{*}A_{2})
}
\]
where $\Gamma(\delta)\cdot-$ is surjective because $\Gamma(\delta)$
is a universal $\mathbb{E}$-extension. Indeed, the right square on the
above diagram commutes since 
$
\Gamma(\delta\cdot f)=\Gamma(\delta)\cdot i_{*}(f)
$
for all $f\in\Hom_{\mathcal{A}}(A_{1},A_{1}^{(X)})$ (see Definition
\ref{def:funextr}). Finally, by using that $i_{*}$ is fully faithful, we
can check by diagram chasing that $\Gamma$ is an epimorphism. 
\end{proof}
\begin{thm}\label{M1RETC}
Let $(\mathcal{A},\mathcal{B},\mathcal{C})$ be a recollement of extriangulated categories such that $\mathcal{A}$ is coproduct-compatible (resp. product compatible) and $\mathcal{B}$ is AET4 (resp. AET4{*}). Then $\mathcal{A}$ is AET4 (resp. AET4{*}). 
\end{thm}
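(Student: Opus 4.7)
The plan is to apply Theorem A: since $\mathcal{A}$ is coproduct-compatible, it suffices to exhibit, for every $V,D\in\mathcal{A}$, a universal $\mathbb{E}$-extension of $V$ by $D$ (condition (c) of Theorem A). The strategy is to pull back a universal extension from $\mathcal{B}$ through the fully faithful embedding $i_*$.

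More precisely, I would first observe that $\mathcal{A}$ has coproducts by Remark \ref{rem:rec}(c), and that $i_*$ preserves them with $(i_*V)^{(X)}=i_*(V^{(X)})$ and canonical inclusions $\{i_*(\mu_x^V)\}_{x\in X}$ by Remark \ref{rem:rec}(d). Next, since $\mathcal{B}$ is AET4, Theorem A provides a universal $\mathbb{E}$-extension $\eta\in\mathbb{E}_{\mathcal{B}}(i_*(V^{(X)}),i_*D)$ for some non-empty set $X$. Then Proposition \ref{prop:Giso} furnishes a unique $\tilde{\eta}\in\mathbb{E}_{\mathcal{A}}(V^{(X)},D)$ with $\Gamma_{i_*}(\tilde{\eta})=\eta$.

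To verify universality of $\tilde{\eta}$, given any $\epsilon\in\mathbb{E}_{\mathcal{A}}(V,D)$ I would use universality of $\eta$ to obtain $g\colon i_*V\to i_*(V^{(X)})$ with $\eta\cdot g=\Gamma_{i_*}(\epsilon)$, write $g=i_*(f)$ for a unique $f\colon V\to V^{(X)}$ using the full faithfulness of $i_*$, and then conclude $\tilde{\eta}\cdot f=\epsilon$ from the naturality of $\Gamma_{i_*}$ in the first argument together with its injectivity. This shows $\tilde{\eta}$ is a universal $\mathbb{E}$-extension, so $\mathcal{A}$ is AET4 by Theorem A. The dual statement for AET4* would follow by applying the same argument to the opposite recollement, invoking Theorem \ref{thm:mainAop} in place of Theorem A.

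The main obstacle is already resolved by Proposition \ref{prop:Giso}, which under the AET4 hypothesis on $\mathcal{B}$ promotes $\Gamma_{i_*}$ to a natural isomorphism on $\mathbb{E}$-extensions; without this, there would be no mechanism to lift a universal $\mathbb{E}$-extension from $\mathcal{B}$ down to $\mathcal{A}$. After that, the compatibility of $i_*$ with coproducts and its full faithfulness reduce the remaining verification to routine diagram chasing.
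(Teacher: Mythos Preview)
Your proposal is correct and follows essentially the same strategy as the paper: both proofs hinge on Proposition~\ref{prop:Giso} (that $\Gamma_{i_*}$ is a natural isomorphism when $\mathcal{B}$ is AET4) combined with Theorem~A and the fact that $i_*$ preserves coproducts. The only cosmetic difference is that you verify condition~(c) of Theorem~A by lifting a universal $\mathbb{E}$-extension through $\Gamma_{i_*}^{-1}$, whereas the paper verifies condition~(b) directly via the commutative square
\[
\xymatrix{
\mathbb{E}_{\mathcal{A}}(\coprod_{i\in I} X_i,Y)\ar[r]^{\tau_{\mathcal{A}}}\ar[d]_{\Gamma_{i_*}} & \prod_{i\in I}\mathbb{E}_{\mathcal{A}}(X_i,Y)\ar[d]^{\Gamma_{i_*}}\\
\mathbb{E}_{\mathcal{B}}(\coprod_{i\in I}^{\mathcal{B}} i_* X_i,i_* Y)\ar[r]^{\tau_{\mathcal{B}}} & \prod_{i\in I}\mathbb{E}_{\mathcal{B}}(i_* X_i,i_* Y),
}
\]
concluding that $\tau_{\mathcal{A}}$ is an isomorphism since the other three arrows are. (A minor bookkeeping note: the fact that $i_*$ preserves coproducts is Remark~\ref{rem:rec}(e), not~(d).)
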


\begin{proof}
Let us prove the statement for AET4 since the case AET4{*} follows
by duality. 
By Theorem A, it is enough to show that the natural transformation
\[
\tau_{\mathcal{A}}:\mathbb{E}_{\mathcal{A}}(\coprod_{i\in I}X_{i},Y)\rightarrow\prod_{i\in I}\mathbb{E}_{\mathcal{A}}(X_{i},Y)\text{, }\epsilon\mapsto\left(\epsilon\cdot\mu_{i}^{X}\right)_{i\in I}
\]
 is an isomorphism. For this, recall that for a family $\{A_{i}\}_{i\in I}$ in 
 $\mathcal{A},$ we have that 
$i_{*}\Big(\coprod_{i\in I}^{\mathcal{A}}X_{i}\Big)=\coprod_{i\in I}^{\mathcal{B}}i_{*X_{i}}$ (see \ref{rem:rec} (c)). Hence, for any $Y\in\mathcal{A}$,
we have the following commutative diagram
\[
\xymatrix{\mathbb{E}_{\mathcal{A}}(\coprod_{i\in I}X_{i},Y)\ar[r]^{\tau_{\mathcal{A}}}\ar[d]^{\Gamma_{i_{*}}} & \prod_{i\in I}\mathbb{E}_{\mathcal{A}}(X_{i},Y)\ar[d]^{\Gamma_{i_{*}}}\\
\mathbb{E}_{\mathcal{B}}(\coprod_{i\in I}^{\mathcal{B}}i_{*}X_{i},i_{*}Y)\ar[r]^{\tau_{\mathcal{B}}} & \prod_{i\in I}\mathbb{E}_{\mathcal{B}}(X_{i},Y).
}
\]
Note that, by Proposition
\ref{prop:Giso},  the columns in the above diagram are isomorphisms. Moreover,  the row in the bottom is an isomorphism by Theorem
A. Therefore,  we can conclude that $\tau_{\mathcal{A}}$ is an isomorphism as
desired. 
\end{proof}

\begin{thm}\label{M2RETC}
Let $(\mathcal{A},\mathcal{B},\mathcal{C})$ be a recollement of extriangulated categories such that $\mathcal{C}$ is coproduct-compatible (resp. product-compatible)
and $\mathcal{B}$
is AET4 (resp. AET4{*}) and WIC. Then $\mathcal{C}$ is AET4
(resp. AET4{*}). 
\end{thm}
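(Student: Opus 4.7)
The approach is to mirror the proof of Theorem \ref{M1RETC}, substituting the left adjoint $j_!$ for the embedding $i_*$. Since $\mathcal{C}$ is coproduct-compatible, by Theorem A it suffices to exhibit, for every pair $V,D\in\mathcal{C}$, a universal $\mathbb{E}$-extension of $V$ by $D$. Two facts from the recollement will drive the argument: first, $j_!$ preserves coproducts (Remark \ref{rem:rec}(e)), so that $(j_!V)^{(X)}=j_!(V^{(X)})$; and second, ${}_4\varphi\colon 1_{\mathcal{C}}\xrightarrow{\sim} j^*\circ j_!$ is a natural isomorphism of extriangulated functors (Remarks \ref{rem:rec}(b) and \ref{rem:natrec}(c)), so that $\Gamma_{j^*j_!}(\epsilon)={}_4\varphi_D\cdot\epsilon\cdot{}_4\varphi_V^{-1}$ for each $\epsilon\in\mathbb{E}_{\mathcal{C}}(V,D)$.

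Applying Theorem A to $\mathcal{B}$ at the objects $j_!V$ and $j_!D$ produces a universal $\mathbb{E}$-extension $\eta\in\mathbb{E}_{\mathcal{B}}(j_!(V^{(X)}),j_!D)$ for some non-empty set $X$. Using that $j^*$ is extriangulated (ER6), the natural candidate in $\mathcal{C}$ is
\[
\xi:={}_4\varphi_D^{-1}\cdot\Gamma_{j^*}(\eta)\cdot{}_4\varphi_{V^{(X)}}\in\mathbb{E}_{\mathcal{C}}(V^{(X)},D).
\]
The verification that $\xi$ is universal reduces, by the universality of $\eta$, to showing that the natural transformation
\[
\alpha_W\colon\mathbb{E}_{\mathcal{B}}(j_!W,j_!D)\to\mathbb{E}_{\mathcal{C}}(W,D),\quad \theta\mapsto{}_4\varphi_D^{-1}\cdot\Gamma_{j^*}(\theta)\cdot{}_4\varphi_W,
\]
is surjective. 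Indeed, once $\alpha_V$ is known to be surjective, any $\epsilon\in\mathbb{E}_{\mathcal{C}}(V,D)$ admits a preimage $\theta^\#\in\mathbb{E}_{\mathcal{B}}(j_!V,j_!D)$; universality of $\eta$ then gives $g\colon j_!V\to j_!(V^{(X)})$ with $\eta\cdot g=\theta^\#$, and via the adjunction $(j_!,j^*)$ together with ${}_4\varphi$ this $g$ corresponds to a unique $f\colon V\to V^{(X)}$. A direct naturality computation yields $\xi\cdot f=\alpha_V(\eta\cdot g)=\alpha_V(\theta^\#)=\epsilon$, as required.

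The main obstacle is establishing the surjectivity of $\alpha_V$, and this is exactly where the WIC hypothesis on $\mathcal{B}$ is essential. Conceptually, $\alpha_V$ is precisely the natural transformation comparing $\mathbb{E}_{\mathcal{B}}(j_!V,j_!D)$ with $\mathbb{E}_{\mathcal{C}}(V,j^*j_!D)\cong\mathbb{E}_{\mathcal{C}}(V,D)$ associated to the adjoint pair $(j_!,j^*)$, and falls under the framework developed in the Appendix on adjoint pairs between extriangulated categories. Concretely, given $\epsilon\in\mathbb{E}_{\mathcal{C}}(V,D)$, Remark \ref{rem:rec-1} (which uses WIC to manipulate the right-exact image of $\epsilon$ under $j_!$) produces conflations $\theta'\colon i_*A\to j_!D\xrightarrow{\tilde{x}}B_2$ and $\theta\colon B_2\to j_!C\to j_!V$ in $\mathcal{B}$, with $j^*\tilde{x}$ an isomorphism and $\Gamma_{j^*}(\theta)=j^*\tilde{x}\cdot\Gamma_{j^*j_!}(\epsilon)$. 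Since $\mathbb{E}_{\mathcal{B}}(j_!V,i_*A)=0$ by Corollary \ref{cor:ttfrec}(b), the long exact sequence attached to $\theta'$ reads
\[
0\to\mathbb{E}_{\mathcal{B}}(j_!V,j_!D)\xrightarrow{\tilde{x}\cdot-}\mathbb{E}_{\mathcal{B}}(j_!V,B_2)\xrightarrow{\theta'\cdot-}\mathbb{E}^2_{\mathcal{B}}(j_!V,i_*A),
\]
and lifting $\theta$ along $\tilde{x}\cdot-$ to the desired $\theta^\#\in\mathbb{E}_{\mathcal{B}}(j_!V,j_!D)$ amounts to vanishing of the Yoneda obstruction $\theta'\cdot\theta\in\mathbb{E}^2_{\mathcal{B}}(j_!V,i_*A)$. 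This vanishing --- ultimately the assertion that $\im(j_!)\subseteq{}^{\perp_n}\im(i_*)$ for $n=2$ --- is precisely the content provided by the higher-extension theory for adjoint pairs in the Appendix. Once this is in hand, one checks directly that $\alpha_V(\theta^\#)=\epsilon$, completing the proof. The AET4$^*$ case is strictly dual, with $j_*$ (preserving products) replacing $j_!$ and the natural isomorphism ${}_1\psi\colon j^*j_*\xrightarrow{\sim}1_{\mathcal{C}}$ replacing ${}_4\varphi$.
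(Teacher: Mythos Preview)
Your overall strategy---pull back a universal $\mathbb{E}_{\mathcal{B}}$-extension through $j^*$ and ${}_4\varphi$---is the same as the paper's, but there is a genuine gap at the step where you lift $\theta\in\mathbb{E}_{\mathcal{B}}(j_!V,B_2)$ to $\theta^{\#}\in\mathbb{E}_{\mathcal{B}}(j_!V,j_!D)$. You correctly identify the obstruction as the Yoneda product $\theta'\cdot\theta\in\mathbb{E}^{2}_{\mathcal{B}}(j_!V,i_*A)$, but your justification that it vanishes is not supported by anything in the paper. The Appendix result you invoke (Proposition~\ref{prop:higher}) gives $\mathbb{E}^{n}(S(?),-)\cong\mathbb{F}^{n}(?,T(-))$ only when \emph{both} $S$ and $T$ are extriangulated and the unit and counit are extriangulated. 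To conclude $\mathbb{E}^{2}_{\mathcal{B}}(j_!V,i_*A)\cong\mathbb{E}^{2}_{\mathcal{A}}(i^*j_!V,A)=0$ you would need this for the pair $(i^*,i_*)$, but $i^*$ is only right exact (ER4), not extriangulated. Likewise, applying it to $(j_!,j^*)$ fails because $j_!$ is only right exact. Nothing in Corollary~\ref{cor:ttfrec} or elsewhere in the paper gives $\im(j_!)\subseteq{}^{\perp_2}\im(i_*)$, and in general this inclusion is false even for recollements of abelian categories.

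The paper sidesteps this obstruction entirely by never attempting to lift into $\mathbb{E}_{\mathcal{B}}(j_!V,j_!D)$. Instead it uses the TTF triple (ER7) via Lemma~\ref{lem:torsrec}(b): the conflation $\theta_1:\;i_*i^{!}j_!C_1\xrightarrow{{}_3\psi}j_!C_1\xrightarrow{\omega}B$ furnishes a canonical object $B$ with $i^{!}B=0$, and one shows (using only $\mathbb{E}^{1}$-level information and the adjunction) that the intermediate object $B_2$ from Remark~\ref{rem:rec-1} maps to $B$ via some $\beta$ with $j^*\beta$ an isomorphism. The universal $\mathbb{E}_{\mathcal{B}}$-extension is then taken with target $B$, not $j_!C_1$, and transported back to $\mathcal{C}$ through $j^*\omega$ (which is invertible because $j^*i_*=0$). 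This avoids any appeal to $\mathbb{E}^2$.
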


\begin{proof}
By Theorem A, it is enough to show that $\mathcal{C}$ has universal
$\mathbb{E}_\C$-extensions. Consider $C_{1},C_{2}\in\mathcal{C}$ and
a conflation $\epsilon:\:\suc[C_{1}][C][C_{2}][a][b]$ in $\mathcal{C}$. 
By Lemma \ref{lem:torsrec}, there is a conflation
in $\mathcal{B}$ given by $\theta_{1}:\:\suc[i_{*}i^{!}j_{!}C_{1}][j_{!}C_{1}][B][{_{3}\psi_{j_{!}C_{1}}}][\omega].$
Moreover, by (ER4) there are conflations $\theta_{2:\:}\suc[B_{1}][j_{!}C_{1}][B_{2}][\alpha][x]$
and $\theta_{3}:\:\suc[B_{2}][j_{!}C][j_{!}C_{2}][y][j_{!}b]$ in
$\mathcal{B}$ such that $y\circ x=j_{!}a$. Here, $B_{1}=i_{*}A$
for some $A\in\mathcal{A}$ by Remark \ref{rem:rec-1}. Recall that
we have the isomorphism 
$
\Hom_{\mathcal{B}}(i_{*}A,j_{!}C_{1})\cong\Hom_{\mathcal{A}}(A,i^{!}j_{!}C_{1})
$
given by the map $f\mapsto(i^{!}f)\circ{}_{3}\varphi_{A}$ with inverse
given by $g\mapsto{}_{3}\psi_{j^{!}C_{1}}\circ(i_{*}g)$. In particular
$
\alpha={}_{3}\psi_{j^{!}C_{1}}\circ(i_{*}i^{!}\alpha)\circ i_{*}({}{}_{3}\varphi_{A});
$
 and thus, by (ET3)$^{op}$ there is a morphism $\beta:B'\rightarrow B$
such that $\theta_{1}\cdot\beta=\tilde{\alpha}\cdot\theta_{2}$, where
$\tilde{\alpha}=(i_{*}i^{!}\alpha)\circ i_{*}({}{}_{3}\varphi_{A})$
\[
\xymatrix{B_{1}\ar[r]^{\alpha}\ar[d]^{\tilde{\alpha}} & j_{!}C_{1}\ar[r]^{x}\ar@{=}[d] & B_{2}\ar[d]^{\beta}\\
i_{*}i^{!}j_{!}C_{1}\ar[r]^{_{3}\psi_{j_{!}C_{1}}} & j_{!}C_{1}\ar[r]^{\omega} & B.
}
\]
We assert that $j^{*}\beta$ is an isomorphism. Indeed, $j^{*}\omega$
is an isomorphism since $j^{*}i_{*}i^{!}j_{!}C_{1}=0$ and $j^{*}x$
is an isomorphism by Remark \ref{rem:rec-1}. Therefore $j^{*}\beta=j^{*}\omega\circ(j^{*}x)^{-1}$
is an isomorphism. 

Let $\theta:=\beta\cdot\theta_{3}\in\mathbb{E}_{\mathcal{B}}(j_{!}C_{2},B)$.
If $\eta\in\mathbb{E}_{\mathcal{B}}(j_{!}C_{2}^{(X)},B)$ is a universal
$\mathbb{E}$-extension, then there is a morphism $f\in\Hom_{\mathcal{B}}(j_{!}C_{2},j_{!}C_{2}^{(X)})$
such that $\eta\cdot f=\theta;$ and thus, 
\begin{alignat*}{1}
\Gamma_{j^{*}}(\eta)\cdot(j^{*}f) & =\Gamma_{j^{*}}(\theta)\\
 & =\Gamma_{j^{*}}(\beta\cdot\theta_{3})\\
 & =(j^{*}\beta)\cdot\Gamma_{j^{*}}(\theta_{3})\\
 & =\left(j^{*}(\beta)\circ j^{*}(x)\right)\cdot\Gamma_{j^{*}j_{!}}(\epsilon)\\
 & =j^{*}(\omega)\cdot\left({}{}_{4}\varphi_{C_{1}}\cdot\epsilon\cdot{}_{4}\varphi_{C_{2}}^{-1}\right),
\end{alignat*}
where the last two equalities follow from Remark \ref{rem:rec-1}
and Remark \ref{rem:natrec} (c). Hence
\[
\epsilon={}{}_{4}\varphi_{C_{1}}^{-1}\cdot j^{*}(\omega)^{-1}\cdot\Gamma_{j^{*}}(\eta)\cdot(j^{*}f)\cdot{}_{4}\varphi_{C_{2}}.
\]
 Therefore, ${}{}_{4}\varphi_{C_{1}}^{-1}\cdot j^{*}(\omega)^{-1}\cdot\Gamma_{j^{*}}(\eta)$
is a universal $\mathbb{E}$-extension of $C_{2}$ by $C_{1}$. 
\end{proof}

\appendix

\section{Extriangulated adjoint pairs}

Let $S:\mathcal{D}\rightarrow\mathcal{C}$ and $T:\mathcal{C}\rightarrow\mathcal{D}$
be functors. If there is a natural equivalence $\Hom_{\mathcal{C}}(S(?),-)\rightarrow\Hom_{\mathcal{D}}(?,T(-))$
of bifunctors, we say that $(S,T)$ is an \textbf{adjoint pair between
$\mathcal{C}$ and $\mathcal{D}$}. It is well-known that this is
equivalent to the existence of natural transformations $\varphi:1_{\mathcal{D}}\rightarrow T\circ S$
and $\psi:S\circ T\rightarrow1_{\mathcal{C}}$ such that $\psi_{SD}\circ S(\varphi_{D})=1_{SD}$
and $T(\psi_{C})\circ\varphi_{TC}=1_{TC}$ for all $D\in\mathcal{D}$
and $C\in\mathcal{C}$ (see \cite[Thm. 3.1.5]{B}).

In this section, we will see under which conditions an adjoint pair
$(S,T)$ between extriangulated categories $(\mathcal{C},\mathbb{E},\s)$
and $(\mathcal{D},\mathbb{F},\t)$ induces a natural isomorphism $\mathbb{E}\circ(S^{op}\times1_{\C})\xrightarrow{\sim}\mathbb{F}\circ(1_{\D}^{op}\times T)$.
It is worth mentioning that a similar isomorphism has been built in
\cite[Lemma 2.16]{WWZ} under the assumption that the categories have
enough projectives or enough injectives. In contrast, our construction
will not require these assumptions. Similar results have also been
studied in \cite[Prop.2.5]{A} and \cite[Lem.5.1]{HJ} for abelian
categories,  in \cite[Lem.2.1]{ECO} for exact categories, 
and in \cite[Sec.3.7]{BGLS} for extriangulated categories. Let us begin with the following definition. 

\begin{defn}
Let $(\mathcal{C},\mathbb{E},\s)$ and $(\mathcal{D},\mathbb{F},\t)$
be extriangulated categories, and consider functors $T:\mathcal{C}\rightarrow\mathcal{D}$,
$S_{1}:\mathcal{X}\rightarrow\mathcal{C}$ and $S_{2}:\mathcal{Y}\rightarrow\mathcal{C}$.
We say that $T$ is \textbf{$(S_{1},S_{2})$-extriangulated} if $T$
is additive and there is a natural transformation 
\[
\Gamma_{T}^{(S_{1},S_{2})}:\mathbb{E}\circ(S_{1}^{op}\times S_{2})\rightarrow\mathbb{F}(T^{op}S_{1}^{op}\times TS_{2})
\]
such that $\t\Big((\Gamma_{T}^{(S_{1},S_{2})})_{(S_{1}X,S_{2}Y)}(\eta)\Big)=T(\s(\eta)),$
for $\eta\in\Ebb(S_{1}X,S_{2}Y).$ 
\end{defn}

\begin{rem}\label{rem:funext}Let $(\mathcal{C},\mathbb{E},\s)$ and $(\mathcal{D},\mathbb{F},\t)$
be extriangulated categories, and consider the functors $T:\mathcal{C}\rightarrow\mathcal{D}$
and $S:\mathcal{D}\rightarrow\mathcal{C}$. 
\begin{enumerate}
\item If $T$ is $(S,1)$-extriangulated, then $T(g)\cdot\Gamma_{T}^{(S,1)}(\eta)\cdot TS(f)=\Gamma_{T}^{(S,1)}(g\cdot\eta\cdot Sf)$
for all $\eta\in\mathbb{E}(SA,B)$, $f\in\Hom_{\mathcal{D}}(A',A)$
and $g\in\Hom_{\mathcal{C}}(B,B')$. 
\item If $S$ is $(1,T)$-extriangulated, then $ST(g)\cdot\Gamma_{S}^{(T,1)}(\eta)\cdot S(f)=\Gamma_{S}^{(T,1)}(Tg\cdot\eta\cdot f)$
for all $\eta\in\mathbb{F}(A,TB)$, $f\in\Hom_{\mathcal{D}}(A',A)$
and $g\in\Hom_{\mathcal{C}}(B,B')$. 
\item If $T$ is $(S,1)$-extriangulated and $S$ is $(1,T)$-extriangulated,
then: 
\begin{enumerate}
\item the composition $S\circ T$ is $(S,1)$-extriangulated via 
\[
\Gamma_{S\circ T}^{(S,1)}:=(\Gamma_{S}^{(1,T)}\cdot(T^{op}S^{op}\times1))\circ\Gamma_{T}^{(S,1)}.
\]
\item the composition $T\circ S$ is $(1,T)$-extriangulated via 
\[
\Gamma_{T\circ S}^{(1,T)}:=(\Gamma_{S}^{(1,T)}\cdot(1\times ST))\circ\Gamma_{S}^{(1,T)}.
\]
\end{enumerate}
\end{enumerate}
\end{rem}

\begin{defn}
Let $(\mathcal{C},\mathbb{E},\s)$, $(\mathcal{D},\mathbb{F},\t)$
and $(\mathcal{D}',\mathbb{F}',\t')$ be extriangulated categories,
and consider functors $T:\mathcal{C}\rightarrow\mathcal{D}$, $T':\mathcal{C}\rightarrow\mathcal{D}$,
$S_{1}:\mathcal{X}\rightarrow\mathcal{C}$ and $S_{2}:\mathcal{Y}\rightarrow\mathcal{C}$.
A \textbf{natural transformation of $(S_{1},S_{2})$-extriangulated
functors} $\alpha:(T,\Gamma_{T}^{(S_{1},S_{2})})\rightarrow(T',\Gamma_{T'}^{(S_{1},S_{2})})$
is a natural transformation $\alpha:T\rightarrow T'$ such that $\alpha_{S_{2}Y}\cdot\Gamma_{T}^{(S_{1},S_{2})}(\eta)=\Gamma_{T'}^{(S_{1},S_{2})}(\eta)\cdot\alpha_{S_{1}X}$.
In such case, we will say that $\alpha$ is \textbf{$(S_{1},S_{2})$-extriangulated
}for short. 
\end{defn}

\begin{example}\label{ExA}
Let $(\mathcal{C},\mathbb{E},\s)$ and $(\mathcal{D},\mathbb{F},\t)$
be extriangulated categories, and let $(S:\D\to \C,T:\C\to \D)$ be an adjoint pair. 
\begin{enumerate}
\item \cite[Sec.2]{ECO} Let $(\mathcal{C},\mathbb{E},\s)$ and $(\mathcal{D},\mathbb{F},\t)$ be exact categories. Then $T$ is $(S,1)$-extriangulated if $T$ preserves
conflations of the form $\suc[A][B][SX][f][g].$

\item If $T$ is extriangulated, then $T$ is $(S,1)$-extriangulated. Indeed,
it can be seen that, for the natural transformation 
\[
\Gamma_{T}^{(S,1)}:=\Gamma_{T}\cdot(S^{op}\times1):\mathbb{E}(S(?),-)\rightarrow\mathbb{F}(TS(?),T(-))
\]
$\t(\Gamma_{T}^{(S,1)}(\eta))=[\suc[TA][TB][TSX][Tf][Tg]]$ if $\s(\eta)=[\suc[A][B][SX][f][g]].$

\item If $S$ is extriangulated, then $S$ is $(1,T)$-extriangulated. 
\item If $\alpha$ is a natural transformation of extriangulated functors,
then it is a natural transformation of $(S_{1},S_{2})$-extriangulated
functors for any $S_{1}$ and $S_{2}$. 
\end{enumerate}
\end{example}

Let $(S:\D\to \C,T:\C\to \D)$ be an adjoint pair between extriangulated categories
such that $T$ is $(S,1)$-extriangulated. Then, we have
the natural transformation 
$
\Gamma_{T}^{(S,1)}:\mathbb{E}(S(?),-)\rightarrow\mathbb{F}(TS(?),T(-))
$ and the unit $\varphi:1_{\mathcal{D}}\rightarrow T\circ S$
induces the natural transformation 
\[
(\mathbb{F}\circ(1\times T))\cdot(\varphi^{op}\times1):\mathbb{F}(TS(?),T(-))\rightarrow\mathbb{F}(?,T(-)).
\]
By taking the composition $(\mathbb{F}\circ(1\times T))\cdot(\varphi^{op}\times1))\circ\Gamma_{T}^{(S,1)}$,
we define 
\[
\tau:\mathbb{E}(S(?),-)\rightarrow\mathbb{F}(?,T(-)).
\]
If $S$ is $(1,T)$-extriangulated, we can build
the natural transformation 
\[
\sigma:\mathbb{F}(?,T(-))\rightarrow\mathbb{E}(S(?),-),
\]
defined as $\sigma:=((\mathbb{E}\circ(S^{op}\times1))\cdot(1\times\psi))\circ\Gamma_{S}^{(1,T)}$. Therefore, We have proved the following result. 

\begin{prop}\label{prop:existe} Let $(\mathcal{C},\mathbb{E},\s)$ and $(\mathcal{D},\mathbb{F},\t)$
be extriangulated categories, and let $(S:\D\to \C,T:\C\to D)$ be an adjoint pair.
\begin{enumerate}
\item If $T$ is $(S,1)$-extriangulated, then there is a natural transformation
\[
\tau:\mathbb{E}(S(?),-)\rightarrow\mathbb{F}(?,T(-))
\]
defined as $\tau(\eta)=\Gamma_{T}^{(S,1)}(\eta)\cdot\varphi_{D}$
for all $\eta\in\mathbb{E}(S(D),C)$. 
\item If $S$ is $(1,T)$-extriangulated, then there is a natural transformation
\[
\sigma:\mathbb{F}(?,T(-))\rightarrow\mathbb{E}(S(?),-)
\]
defined as $\sigma(\eta)=\psi_{C}\cdot\Gamma_{S}^{(1,T)}(\eta)$ for
all $\eta\in\mathbb{F}(D,T(C))$. 
\end{enumerate}
\end{prop}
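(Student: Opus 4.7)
The plan is to verify that the two constructions described in the paragraph preceding the statement are in fact natural transformations between the indicated bifunctors, and to observe that statements (a) and (b) are formally dual so a detailed treatment of (a) suffices. Well-definedness and additivity of the component maps are immediate: for $\eta\in \mathbb{E}(SD,C)$, the element $\Gamma_{T}^{(S,1)}(\eta)$ lies in $\mathbb{F}(TSD,TC)$ by hypothesis, and the unit $\varphi_D:D\to TSD$ of the adjunction allows the formation of $\Gamma_{T}^{(S,1)}(\eta)\cdot \varphi_D\in \mathbb{F}(D,TC)$ via the contravariant action $-\cdot\varphi_D=\mathbb{F}(\varphi_D^{op},TC)$, which is a group homomorphism; the map $\Gamma_{T}^{(S,1)}$ is additive as a component of a natural transformation between additive bifunctors.

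The only substantive point is bi-naturality. For morphisms $f:D'\to D$ in $\mathcal{D}$ and $g:C\to C'$ in $\mathcal{C}$, I would compute
\begin{align*}
\tau_{D',C'}(g\cdot \eta\cdot Sf) &= \Gamma_{T}^{(S,1)}(g\cdot \eta\cdot Sf)\cdot \varphi_{D'}\\
&= Tg\cdot \Gamma_{T}^{(S,1)}(\eta)\cdot TS(f)\cdot \varphi_{D'}\\
&= Tg\cdot \Gamma_{T}^{(S,1)}(\eta)\cdot \varphi_{D}\cdot f\\
&= Tg\cdot \tau_{D,C}(\eta)\cdot f,
\end{align*}
where the second equality is Remark \ref{rem:funext}(a) and the third is the naturality square $TS(f)\circ \varphi_{D'}=\varphi_{D}\circ f$ of the unit $\varphi$. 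This establishes that $\tau$ is a natural transformation $\mathbb{E}(S(?),-)\to \mathbb{F}(?,T(-))$.

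For statement (b), the construction is symmetric: given $\eta\in \mathbb{F}(D,TC)$, one has $\Gamma_{S}^{(1,T)}(\eta)\in \mathbb{E}(SD,STC)$ and uses the co-unit $\psi_C:STC\to C$ to form $\sigma_{D,C}(\eta):=\psi_C\cdot \Gamma_{S}^{(1,T)}(\eta)\in \mathbb{E}(SD,C)$ via the covariant action in the second slot of $\mathbb{E}$. Additivity is automatic, and naturality with respect to $f:D'\to D$ and $g:C\to C'$ is verified by the analogous calculation, using Remark \ref{rem:funext}(b) together with the naturality square $g\circ \psi_C=\psi_{C'}\circ ST(g)$ of the co-unit $\psi$. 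I do not expect any real obstacle; the verification is a short diagram-chase once the two identities in Remark \ref{rem:funext} and the naturality of the unit and co-unit of the adjoint pair $(S,T)$ are in hand.
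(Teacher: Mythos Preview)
Your proof is correct and follows essentially the same approach as the paper. The paper presents $\tau$ as the composite $((\mathbb{F}\circ(1\times T))\cdot(\varphi^{op}\times 1))\circ\Gamma_{T}^{(S,1)}$ using the Godement product, so that naturality is automatic from the naturality of the two factors; your argument simply unpacks this composite at the level of elements and verifies bi-naturality directly via Remark~\ref{rem:funext}(a) and the naturality of the unit, which is the same content written out explicitly.
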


Let us prove that $\tau$ and $\sigma$ always are monomorphisms. 

\begin{prop}\label{prop:mono} Let $(\mathcal{C},\mathbb{E},\s)$ and $(\mathcal{D},\mathbb{F},\t)$
be extriangulated categories, and let $(S:\D\to\C,T:\C\to \D)$ be an adjoint pair. 
\begin{enumerate}
\item If $T$ is $(S,1)$-extriangulated, then $\tau:\mathbb{E}(S(?),-)\rightarrow\mathbb{F}(?,T(-))$
is a monomorphism. 
\item If $S$ is $(1,T)$-extriangulated, then $\sigma:\mathbb{F}(?,T(-))\rightarrow\mathbb{E}(S(?),-)$
is a monomorphism. 
\end{enumerate}
\end{prop}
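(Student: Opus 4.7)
The plan is to prove item (a) directly and obtain item (b) by the evident dualization. For (a), the strategy will be to take $\eta\in\mathbb{E}(SD,C)$ with $\s(\eta)=[C\xrightarrow{f}B\xrightarrow{g}SD]$ satisfying $\tau(\eta)=0$ and exhibit a section of $g$; by \cite[Cor.3.5]{NP} this splits $\eta$, forcing $\eta=0$.

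To this end, I would first fix a realization of $\tau(\eta)=\Gamma_T^{(S,1)}(\eta)\cdot\varphi_D$ as some conflation $TC\xrightarrow{a}X\xrightarrow{b}D$, and observe that the right action by $\varphi_D$ produces, via axiom (ET3) applied to the realization $TC\xrightarrow{Tf}TB\xrightarrow{Tg}TSD$ of $\Gamma_T^{(S,1)}(\eta)$, a morphism of conflations with vertical components $(1_{TC},h,\varphi_D)$ from $\tau(\eta)$ to $\Gamma_T^{(S,1)}(\eta)$; the crucial consequence is the identity $Tg\circ h=\varphi_D\circ b$. The vanishing $\tau(\eta)=0$ then supplies a section $\tilde{s}\colon D\to X$ of $b$. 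I would pass through the adjunction by setting $\gamma:=\psi_B\circ S(h\circ\tilde{s})\colon SD\to B$, and use naturality of $\psi$, the identity above, and the triangle identity $\psi_{SD}\circ S(\varphi_D)=1_{SD}$ to obtain $g\circ\gamma=1_{SD}$, completing the argument.

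Item (b) is handled in the symmetric way: from realizations of $\eta\in\mathbb{F}(D,TC)$ and of $\sigma(\eta)=\psi_C\cdot\Gamma_S^{(1,T)}(\eta)$, one obtains a morphism of conflations with vertical components $(\psi_C,h',1_{SD})$; a retraction of the resulting inflation (supplied by $\sigma(\eta)=0$) is converted through the adjunction and the other triangle identity $T(\psi_C)\circ\varphi_{TC}=1_{TC}$ into a retraction of the inflation in any realization of $\eta$. The only delicate point throughout is identifying correctly the morphism of conflations attached to the actions $\cdot\,\varphi_D$ and $\psi_C\,\cdot\,$; once this diagram is in place, the arguments become direct translations of the unit--counit equations of $(S,T)$ into the language of conflations.
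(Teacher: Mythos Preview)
Your proposal is correct and follows essentially the same route as the paper: the paper simply chooses the explicit split realization $TC\xrightarrow{\binom{1}{0}}TC\amalg D\xrightarrow{[0\ 1]}D$ of $\tau(\eta)=0$, so that your section $\tilde{s}$ is the inclusion $\binom{0}{1}$ and your $h\circ\tilde{s}$ is exactly the component they call $y$; the resulting computation $g\circ\psi_{B}\circ Sy=\psi_{SD}\circ S(Tg\circ y)=\psi_{SD}\circ S\varphi_{D}=1_{SD}$ is identical to yours. One small remark: the morphism of conflations $(1_{TC},h,\varphi_D)$ comes directly from the definition of the action $\Gamma_T^{(S,1)}(\eta)\cdot\varphi_D$ together with the additivity of the realization $\t$, rather than from axiom (ET3).
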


\begin{proof}
We only prove (a) since (b) follows with similar arguments. For this, it
is enough to show that $\tau:\mathbb{E}(S(D),C)\rightarrow\mathbb{F}(D,T(C))$
is a monomorphism for every $D\in\mathcal{D}$ and $C\in\mathcal{C}$.
Let $\eta\in\mathbb{E}(S(D),C)$ be such that $\tau(\eta)=0$. If $\s(\eta)=[\suc[C][E][SD][f][g]]$,
then $\t(\Gamma_{T}^{(S,1)}(\eta))=[\suc[TC][TE][TSD][Tf][Tg]]$.
Moreover, since $\tau(\eta)=0$, we have that $\t(\tau(\eta))=[TC\overset{\left[\begin{smallmatrix}1\\
0
\end{smallmatrix}\right]}{\rightarrow}TC\amalg D\overset{\left[\begin{smallmatrix}0 & 1\end{smallmatrix}\right]}{\rightarrow}D]$. Observe that $\tau(\eta)=\Gamma_{T}^{(S,1)}(\eta)\cdot\varphi_{D}$.
Hence, we have the following commutative diagram 
\[
\xymatrix{TC\ar[r]^{\left[\begin{smallmatrix}1\\
0
\end{smallmatrix}\right]}\ar@{=}[d] & TC\amalg D\ar[r]^{\left[\begin{smallmatrix}0 & 1\end{smallmatrix}\right]}\ar[d]^{\left[\begin{smallmatrix}x & y\end{smallmatrix}\right]} & D\ar[d]^{\varphi_{D}}\\
TC\ar[r]^{Tf} & TE\ar[r]^{Tg} & TSD.
}
\]
Note that $Tg\circ y=\varphi_{D}$ and thus $g\circ\psi_{E}\circ Sy=\psi_{SD}\circ STg\circ Sy=\psi_{SD}\circ S\varphi_{D}=1_{SD}$.
Therefore $\eta=0$ by \cite[Cor.3.5]{NP}. 
\end{proof}
We point out that the following result is an immediate consequence of the
above proposition and the definition of $\tau$ and $\sigma$. 

\begin{cor}\label{prop:mono-1} Let $(\mathcal{C},\mathbb{E},\s)$ and $(\mathcal{D},\mathbb{F},\t)$
be extriangulated categories, and let $(S:\D\to \C,T:\C\to D)$ be an adjoint pair. 
\begin{enumerate}
\item If $T$ is $(S,1)$-extriangulated, then $\Gamma_{T}^{(S,1)}:\mathbb{E}(S(?),-)\rightarrow\mathbb{F}(TS(?),T(-))$
is a monomorphism. 
\item If $S$ is $(1,T)$-extriangulated, then $\Gamma_{S}^{(1,T)}:\mathbb{F}(?,T(-))\rightarrow\mathbb{E}(S(?),ST(-))$
is a monomorphism. 
\end{enumerate}
\end{cor}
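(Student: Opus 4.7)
The plan is to exploit the factorizations of $\tau$ and $\sigma$ through $\Gamma_T^{(S,1)}$ and $\Gamma_S^{(1,T)}$ respectively, together with the general fact that if a composition of morphisms of abelian groups is injective, then the first factor must already be injective.

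For part (a), I would recall from Proposition \ref{prop:existe}(a) that $\tau$ is defined by $\tau(\eta) = \Gamma_T^{(S,1)}(\eta) \cdot \varphi_D$ for $\eta \in \mathbb{E}(S(D), C)$. This exhibits $\tau$ as the composition
\[
\mathbb{E}(S(?),-) \xrightarrow{\Gamma_T^{(S,1)}} \mathbb{F}(TS(?), T(-)) \xrightarrow{(\mathbb{F}\cdot(\varphi^{op}\times 1))} \mathbb{F}(?, T(-)).
\]
By Proposition \ref{prop:mono}(a), the composite $\tau$ is a monomorphism. Hence its first factor $\Gamma_T^{(S,1)}$ must be a monomorphism as well, which is precisely the claim.

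For part (b), the same argument applies dually. From Proposition \ref{prop:existe}(b), $\sigma(\eta) = \psi_C \cdot \Gamma_S^{(1,T)}(\eta)$, so $\sigma$ factors as
\[
\mathbb{F}(?,T(-)) \xrightarrow{\Gamma_S^{(1,T)}} \mathbb{E}(S(?), ST(-)) \xrightarrow{(\mathbb{E}\cdot(1\times \psi))} \mathbb{E}(S(?), -),
\]
and since Proposition \ref{prop:mono}(b) ensures that $\sigma$ is a monomorphism, we conclude that $\Gamma_S^{(1,T)}$ is a monomorphism.

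There is no real obstacle here; the statement is genuinely a formal consequence of the previous proposition, as the excerpt itself remarks. The only thing to verify carefully is that the factorizations above indeed reproduce the definitions of $\tau$ and $\sigma$, which is immediate from unpacking the Godement products used to build them.
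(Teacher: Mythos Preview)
Your proof is correct and follows exactly the approach indicated in the paper, which states that the corollary is an immediate consequence of Proposition~\ref{prop:mono} and the definitions of $\tau$ and $\sigma$. The factorizations you wrote down are precisely those given in the construction preceding Proposition~\ref{prop:existe}, so there is nothing to add.
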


Note that, if $T$ is $(S,1)$-extriangulated and $S$ is $(1,T)$-extriangulated,
then $T\circ S$ is $(1,T)$-extriangulated and $S\circ T$ is $(S,1)$-extriangulated
(see Remark \ref{rem:funext}). We will use implicitly this fact in
the following statement. 

\begin{prop}\label{prop:0} Let $(\mathcal{C},\mathbb{E},\s)$ and $(\mathcal{D},\mathbb{F},\t)$
be extriangulated categories, and let $(S:\D\to \C,T:\C\to D)$ be an adjoint pair such that $T$ is $(S,1)$-extriangulated
and $S$ is $(1,T)$-extriangulated. 
\begin{enumerate}
\item If $\varphi:1\rightarrow T\circ S$ is $(1,T)$-extriangulated, then
$\tau:\mathbb{E}(S(?),-)\rightarrow\mathbb{F}(?,T(-))$ is an isomorphism
and $\tau^{-1}=\sigma$. 
\item If $\psi:S\circ T\rightarrow1$ is $(S,1)$-extriangulated, then $\sigma:\mathbb{F}(?,T(-))\rightarrow\mathbb{E}(S(?),-)$
is an isomorphism and $\sigma^{-1}=\tau$. 
\end{enumerate}
\end{prop}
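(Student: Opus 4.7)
The plan is to prove each case by establishing one composition identity between $\tau$ and $\sigma$, and then leveraging the monomorphism property from Proposition \ref{prop:mono} to upgrade the one-sided inverse to a two-sided inverse. Since both $\tau$ and $\sigma$ are already monomorphisms by Proposition \ref{prop:mono}, any left or right inverse will automatically be a genuine inverse.

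For part (a), I would compute $\tau(\sigma(\eta))$ directly for $\eta\in\mathbb{F}(D,TC)$. Unpacking the definitions,
\[
\tau(\sigma(\eta))=\Gamma_{T}^{(S,1)}\bigl(\psi_C\cdot\Gamma_{S}^{(1,T)}(\eta)\bigr)\cdot\varphi_D.
\]
Remark \ref{rem:funext}(a) lets me factor the post-composition with $\psi_C$ out of $\Gamma_T^{(S,1)}$, yielding $T(\psi_C)\cdot\Gamma_T^{(S,1)}(\Gamma_S^{(1,T)}(\eta))\cdot\varphi_D$. The inner iteration of $\Gamma$'s is precisely the composite structure $\Gamma_{T\circ S}^{(1,T)}(\eta)$ that makes $T\circ S$ into a $(1,T)$-extriangulated functor (Remark \ref{rem:funext}(c)). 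Now I invoke the hypothesis that $\varphi:1_{\mathcal{D}}\to T\circ S$ is $(1,T)$-extriangulated; since $1_{\mathcal{D}}$ carries the trivial $(1,T)$-extriangulated structure (the identity), this condition reads
\[
\Gamma_{T\circ S}^{(1,T)}(\eta)\cdot\varphi_D=\varphi_{TC}\cdot\eta.
\]
Substituting gives $\tau(\sigma(\eta))=T(\psi_C)\cdot\varphi_{TC}\cdot\eta=\eta$, where the final equality is the triangular identity $T(\psi_C)\circ\varphi_{TC}=1_{TC}$ applied via functoriality of the action. Hence $\tau\circ\sigma=1$, and Proposition \ref{prop:mono}(a) (that $\tau$ is mono) combined with the equality $\tau\sigma\tau=\tau$ cancels to give $\sigma\tau=1$ as well; thus $\tau$ is an isomorphism with $\sigma=\tau^{-1}$.

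Part (b) is a formal dualization. I would compute $\sigma(\tau(\eta))$ for $\eta\in\mathbb{E}(SD,C)$, use Remark \ref{rem:funext}(b) to pull the pre-composition with $S(\varphi_D)$ out of $\Gamma_S^{(1,T)}$, recognize the inner iterated $\Gamma$ as $\Gamma_{S\circ T}^{(S,1)}(\eta)$ via Remark \ref{rem:funext}(c), apply the hypothesis that $\psi:S\circ T\to 1_{\mathcal{C}}$ is $(S,1)$-extriangulated (in the form $\psi_C\cdot\Gamma_{S\circ T}^{(S,1)}(\eta)=\eta\cdot\psi_{SD}$), and finish with the other triangular identity $\psi_{SD}\circ S(\varphi_D)=1_{SD}$. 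This produces $\sigma\circ\tau=1$, which together with Proposition \ref{prop:mono}(b) upgrades $\sigma$ to an isomorphism with inverse $\tau$.

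The main obstacle is organizational rather than conceptual: one must keep precise track of the domains, codomains, and labels $(S_1,S_2)$ of each variant of $\Gamma$ to confirm that iterated applications $\Gamma_T^{(S,1)}\circ\Gamma_S^{(1,T)}$ and $\Gamma_S^{(1,T)}\circ\Gamma_T^{(S,1)}$ genuinely coincide with the composite structures named in Remark \ref{rem:funext}(c), and to unwind the $(1,T)$- (resp.\ $(S,1)$-) extriangulated condition on $\varphi$ (resp.\ $\psi$) using the trivial such structure on $1_{\mathcal{D}}$ (resp.\ $1_{\mathcal{C}}$). Once the signatures are aligned, the heart of the proof is a single application of naturality followed by one triangular identity of the adjunction.
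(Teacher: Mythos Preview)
Your proposal is correct and follows essentially the same route as the paper: compute $\tau\circ\sigma(\eta)$, factor out $\psi_C$ via naturality, identify the iterated $\Gamma$ as the composite $(1,T)$-extriangulated structure on $T\circ S$, apply the $(1,T)$-extriangulated hypothesis on $\varphi$ and the triangular identity to obtain $\tau\circ\sigma=1$, and finish using that $\tau$ is monic (Proposition~\ref{prop:mono}). The paper's write-up is terser but the logical steps are identical; your remark that the only obstacle is bookkeeping of the $(S_1,S_2)$-labels is accurate.
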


\begin{proof}
We only prove (a) since (b) follows similarly. For this, note that 
\[
\tau\circ\sigma(\eta)=\Gamma_{T}^{(S,1)}(\psi_{A}\cdot\Gamma_{S}^{(1,T)}(\eta))\cdot\varphi_{Y}=T(\psi_{A})\cdot(\Gamma_{T}^{(S,1)}\Gamma_{S}^{(1,T)}(\eta))\cdot\varphi_{Y}
\]
for all $\eta\in\mathbb{F}(Y,TA)$. But $\Gamma_{T}^{(S,1)}\Gamma_{S}^{(1,T)}(\eta)=\Gamma_{T\circ S}^{(S,1)}(\eta).$
and thus 
\[
\tau\circ\sigma(\eta)=T(\psi_{A})\cdot\Gamma_{T\circ S}^{(S,1)}(\eta)\cdot\varphi_{Y}=T(\psi_{A})\cdot\varphi_{TA}\cdot\eta=\eta
\]
Hence, $\tau$ is a split-epi. Then, by Proposition \ref{prop:mono}  $\tau$ is an isomorphism.
\end{proof}

Assume that $(S,T)$ is an adjoint pair between the extriangulated categories
$(\mathcal{C},\mathbb{E},\s)$ and $(\mathcal{D},\mathbb{F},\t)$.
Under what conditions having $\varphi:1\rightarrow T\circ S$ $(1,T)$-extriangulated
implies that $\psi:S\circ T\rightarrow1$ is $(S,1)$-extriangulated
and vice versa? We will seek to answer this question in the following
lemma.

\begin{lem}\label{lem:phi} Let $(\mathcal{C},\mathbb{E},\s)$ and $(\mathcal{D},\mathbb{F},\t)$
be extriangulated categories, and let $(S:\D\to \C,T:\C\to\D)$ be an adjoint pair such that $T$ is $(S,1)$-extriangulated
and $S$ is $(1,T)$-extriangulated. 
\begin{enumerate}
\item If $\varphi:1\rightarrow T\circ S$ is $(1,T)$-extriangulated, then
$T\psi_{C}\cdot\Gamma_{TST}^{(S,1)}(\eta)\cdot\varphi_{TSD}=\Gamma_{T}^{(S,1)}(\eta)$
for all $\eta\in\mathbb{E}(SD,C)$. 
\item If $\psi:S\circ T\rightarrow1$ is $(S,1)$-extriangulated, then $\psi_{STC}\cdot\Gamma_{STS}^{(1,T)}(\eta)\cdot S\varphi_{D}=\Gamma_{S}^{(1,T)}(\eta)$
for all $\eta\in\mathbb{F}(D,TC)$. 
\item If $\varphi:1\rightarrow T\circ S$ is $(1,T)$-extriangulated, $T$
is extriangulated, and $TST\psi_{SD}\circ\varphi_{TSTSD}=1_{TSTSD}$
for all $D\in\mathcal{D}$, then $\psi:S\circ T\rightarrow1$ is $(S,1)$-extriangulated. 
\item If $\psi:S\circ T\rightarrow1$ is $(S,1)$-extriangulated, $S$ is
extriangulated, and $\psi_{STSTC}\circ STS\varphi_{TC}:=1_{STSTC}$
for every $C\in\mathcal{C}$, then $\varphi:1\rightarrow T\circ S$
is $(1,T)$-extriangulated. 
\end{enumerate}
\end{lem}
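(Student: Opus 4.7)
The plan is to handle (a) and (b) as direct applications of the hypothesis that $\varphi$ (resp.\ $\psi$) is extriangulated, applied to a well-chosen image under $\Gamma_T^{(S,1)}$ (resp.\ $\Gamma_S^{(1,T)}$), and then to deduce (c) and (d) from the first two parts using the injectivity supplied by Corollary \ref{prop:mono-1} together with a second-order identity extracted from the extra hypotheses.

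For (a), I would start with $\eta\in\mathbb{E}(SD,C)$, form $\Gamma_T^{(S,1)}(\eta)\in\mathbb{F}(TSD,TC)$, and view it as an $\mathbb{F}$-extension lying in $\mathbb{F}(X,TY)$ with $X=TSD$ and $Y=C$. Since $\varphi:1_\mathcal{D}\to TS$ is $(1,T)$-extriangulated, the defining identity for such a natural transformation yields
\[
\varphi_{TC}\cdot\Gamma_T^{(S,1)}(\eta)=\Gamma_{TS}^{(1,T)}\!\bigl(\Gamma_T^{(S,1)}(\eta)\bigr)\cdot\varphi_{TSD}.
\]
Writing $TST=TS\circ T$ and invoking the composition formula of Remark \ref{rem:funext}(c), the middle factor on the right becomes $\Gamma_{TST}^{(S,1)}(\eta)$; pushing by $T\psi_C$ on the left and collapsing $T\psi_C\circ\varphi_{TC}=1_{TC}$ via the adjunction triangle identity yields the equation in (a). Part (b) will be the exact dual: apply the $(S,1)$-extriangulated hypothesis on $\psi$ to $\Gamma_S^{(1,T)}(\eta)$, identify $\Gamma_{ST}^{(S,1)}\!\bigl(\Gamma_S^{(1,T)}(\eta)\bigr)=\Gamma_{STS}^{(1,T)}(\eta)$, and collapse using $\psi_{SD}\circ S\varphi_D=1_{SD}$.

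For (c), what must be proved is $\psi_C\cdot\Gamma_{ST}^{(S,1)}(\eta)=\eta\cdot\psi_{SD}$ for every $\eta\in\mathbb{E}(SD,C)$. Since $T$ is extriangulated, $\Gamma_T^{(S,1)}$ agrees with $\Gamma_T$ on $\mathbb{E}(S?,-)$ and is a monomorphism by Corollary \ref{prop:mono-1}(a), so it is enough to verify the equality after applying $\Gamma_T$. Naturality of $\Gamma_T$ in both arguments, together with the identification $\Gamma_T\circ\Gamma_{ST}^{(S,1)}=\Gamma_{TST}^{(S,1)}$, will reduce the problem to
\[
T\psi_C\cdot\Gamma_{TST}^{(S,1)}(\eta)=\Gamma_T^{(S,1)}(\eta)\cdot T\psi_{SD}.
\]
Substituting the formula from (a) into the right-hand side makes this equivalent to $\Gamma_{TST}^{(S,1)}(\eta)\cdot\bigl(\varphi_{TSD}\circ T\psi_{SD}\bigr)=\Gamma_{TST}^{(S,1)}(\eta)$, which would follow at once from the object-level identity $\varphi_{TSD}\circ T\psi_{SD}=1_{TSTSD}$.

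The main obstacle is precisely this second-order identity, since the standard triangle identity only provides the opposite composition $T\psi_{SD}\circ\varphi_{TSD}=1_{TSD}$. The remedy, and the point where the extra hypothesis of (c) enters, is to apply naturality of $\varphi:1_\mathcal{D}\to TS$ to the morphism $T\psi_{SD}:TSTSD\to TSD$ of $\mathcal{D}$, giving
\[
\varphi_{TSD}\circ T\psi_{SD}=TS(T\psi_{SD})\circ\varphi_{TSTSD}=TST\psi_{SD}\circ\varphi_{TSTSD},
\]
which by the assumed identity $TST\psi_{SD}\circ\varphi_{TSTSD}=1_{TSTSD}$ equals $1_{TSTSD}$; this closes the proof of (c). Part (d) will follow by the fully dual argument: use (b) in place of (a), the injectivity of $\Gamma_S^{(1,T)}$ from Corollary \ref{prop:mono-1}(b), the fact that $S$ is extriangulated, and naturality of $\psi:ST\to 1_\mathcal{C}$ at $S\varphi_{TC}:STC\to STSTC$ together with the hypothesis $\psi_{STSTC}\circ STS\varphi_{TC}=1_{STSTC}$ to derive the dual second-order identity $S\varphi_{TC}\circ\psi_{STC}=1_{STSTC}$ needed to conclude.
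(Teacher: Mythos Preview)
Your proposal is correct and follows essentially the same route as the paper: part (a) is obtained by applying the $(1,T)$-extriangulated property of $\varphi$ to $\Gamma_T^{(S,1)}(\eta)$ and collapsing with the triangle identity $T\psi_C\circ\varphi_{TC}=1_{TC}$, and part (c) is settled by comparing both sides after applying the monomorphism $\Gamma_T$ (Corollary~\ref{prop:mono-1}) and invoking (a). The only cosmetic difference is in how the extra hypothesis of (c) is used: the paper inserts $TST\psi_{SD}\circ\varphi_{TSTSD}=1_{TSTSD}$ directly on the domain side of $\Gamma_{TST}(\eta)$ and then uses naturality of $\Gamma_{TST}$ in the $\mathcal{C}$-variable (available because $T$ is extriangulated), whereas you first convert the hypothesis via naturality of $\varphi$ into the equivalent identity $\varphi_{TSD}\circ T\psi_{SD}=1_{TSTSD}$ and plug that in---both manipulations are equivalent.
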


\begin{proof}
We only prove (a) and (c) since (b) and (d) follow similarly. For this,
consider $\eta\in\mathbb{E}(SD,C)$. 

(a) Since $\varphi:1\rightarrow T\circ S$ is $(1,T)$-extriangulated,
we have that 
\[
\Gamma_{TS}^{(1,T)}\Gamma_{T}^{(S,1)}(\eta)\cdot\varphi_{TSD}=\varphi_{:TC}\cdot\Gamma_{T}^{(S,1)}(\eta)
\]
and thus $T\psi_{C}\cdot\Gamma_{TS}^{(1,T)}\Gamma_{T}^{(S,1)}(\eta)\cdot\varphi_{TSD}=T\psi_{C}\cdot\varphi_{:TC}\cdot\Gamma_{T}^{(S,1)}(\eta)=\Gamma_{T}^{(S,1)}(\eta).$ 

(c) By (a) we have $T\psi_{C}\cdot\Gamma_{TST}(\eta\cdot\psi_{SD})\cdot\varphi_{TSTSD}=\Gamma_{T}(\eta\cdot\psi_{SD})$.
Then
\[
\Gamma_{T}(\psi_{C}\cdot\Gamma_{ST}\eta)=T\psi_{C}\cdot\Gamma_{TST}(\eta)\cdot TST\psi_{SD}\cdot\varphi_{TSTSD}=\Gamma_{T}(\eta\cdot\psi_{SD})
\]
since $\psi_{STSTC}\circ STS\varphi_{TC}:=1_{STSTC}$ by hypothesis.
Therefore $\psi_{C}\cdot\Gamma_{ST}\eta=\eta\cdot\psi_{SD}$ because
$\Gamma_{T}^{(S,1)}$ is monic by Corollary \ref{prop:mono-1}. Thus
 $\psi$ is $(S,1)$-extriangulated. 
\end{proof}
Note that the previous result holds true if we replace $(1,T)$-extriangulated
and $(S,1)$-extriangulated with extriangulated (see proposition below). 

\begin{lem}\label{lem:phi-1} Let $(\mathcal{C},\mathbb{E},\s)$
and $(\mathcal{D},\mathbb{F},\t)$ be extriangulated categories, and let
$(S:\D\to\C,T:\C\to\D)$ be an adjoint pair
such that $T$ is extriangulated and $S$ is extriangulated. 
\begin{enumerate}
\item If $\varphi:1\rightarrow T\circ S$ is extriangulated,
then $T\psi_{C}\cdot\Gamma_{TST}^{(S,1)}(\eta)\cdot\varphi_{TSD}=\Gamma_{T}^{(S,1)}(\eta)$
for all $\eta\in\mathbb{E}(SD,C)$. 
\item If $\psi:S\circ T\rightarrow1$ is extriangulated,
then $\psi_{STC}\cdot\Gamma_{STS}^{(1,T)}(\eta)\cdot S\varphi_{D}=\Gamma_{S}^{(1,T)}(\eta)$
for all $\eta\in\mathbb{F}(D,TC)$. 
\item If $\varphi:1\rightarrow T\circ S$ is extriangulated,
$T$ is extriangulated, and $TST\psi_{A}\circ\varphi_{TSTA}=1_{TSTA}$
for all $A\in\mathcal{C}$, then $\psi:S\circ T\rightarrow1$ is extriangulated. 
\item If $\psi:S\circ T\rightarrow1$ is extriangulated,
$S$ is extriangulated, and $\psi_{STSB}\circ STS\varphi_{B}:=1_{STSB}$
for every $B\in\mathcal{D}$, then $\varphi:1\rightarrow T\circ S$
is extriangulated. 
\end{enumerate}
\end{lem}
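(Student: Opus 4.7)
The plan is to treat the four parts in parallel to the proof of Lemma \ref{lem:phi}, exploiting the fact that being extriangulated is a strictly stronger hypothesis than being $(1,T)$- or $(S,1)$-extriangulated, so the conclusions of Lemma \ref{lem:phi} can be re-used and then upgraded. First, I note that by Example \ref{ExA}(b)--(d), an extriangulated functor is automatically $(S,1)$-extriangulated (with $\Gamma^{(S,1)}_T=\Gamma_T\cdot(S^{op}\times 1)$) and $(1,T)$-extriangulated, and an extriangulated natural transformation is automatically $(S_1,S_2)$-extriangulated for any $S_1,S_2$. Consequently, items (a) and (b) of Lemma \ref{lem:phi-1} are \emph{immediate} corollaries of Lemma \ref{lem:phi}(a) and (b), since their hypotheses are now satisfied and their conclusions are identical.

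For item (c), the goal is to strengthen the conclusion of Lemma \ref{lem:phi}(c): we must show that $\psi_C\cdot\Gamma_{ST}(\eta)=\eta\cdot\psi_A$ for \emph{every} $\eta\in\mathbb{E}(A,C)$, not only for $\eta\in\mathbb{E}(SD,C)$. Fix such an $\eta$ and consider the $\mathbb{E}$-extension $\eta\cdot\psi_A\in\mathbb{E}(STA,C)$. Since $STA=S(TA)$, this lives in the domain on which part (a) applies; hence (a) yields
$$T\psi_C\cdot\Gamma_{TST}(\eta\cdot\psi_A)\cdot\varphi_{TSTA}=\Gamma_T(\eta\cdot\psi_A).$$
Using naturality of $\Gamma_T$ and $\Gamma_{TST}$ to pull out the action of $\psi_A$ on the right, and then invoking the hypothesis $TST\psi_A\circ\varphi_{TSTA}=1_{TSTA}$ (valid for \emph{all} $A\in\mathcal{C}$, which is the key place where the strengthened hypothesis in (c) is used), one arrives after a short manipulation at
$$\Gamma_T\bigl(\psi_C\cdot\Gamma_{ST}(\eta)\bigr)=\Gamma_T(\eta\cdot\psi_A).$$
Both sides are images under $\Gamma_T^{(S,1)}$ of elements of $\mathbb{E}(STA,C)=\mathbb{E}(SD',C)$ with $D'=TA$; by Corollary \ref{prop:mono-1}(a), $\Gamma_T^{(S,1)}$ is a monomorphism on this group, so the desired equality follows.

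Item (d) is proved by a fully dual argument: use (b) instead of (a), replace the role of $\varphi$ by $\psi$, apply naturality of $\Gamma_S$ and $\Gamma_{STS}$, and invoke the hypothesis $\psi_{STSB}\circ STS\varphi_B=1_{STSB}$ (for all $B\in\mathcal{D}$) and then the monicity of $\Gamma_S^{(1,T)}$ from Corollary \ref{prop:mono-1}(b). No additional ingredients are needed.

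The main (mild) obstacle is bookkeeping: one has to be careful to check that the element of $\mathbb{E}$ to which one applies Lemma \ref{lem:phi}(a) (respectively (b)) genuinely lies in $\mathbb{E}(SD,C)$ (respectively $\mathbb{F}(D,TC)$), which here works because $STA$ and $TSB$ are automatically of the requisite form; and that when finally cancelling $\Gamma_T$ or $\Gamma_S$ via monicity, one is comparing two extensions whose source and target lie in the correct subgroup where Corollary \ref{prop:mono-1} gives the monomorphism property. All verifications are straightforward manipulations using the naturality of $\Gamma$ on morphisms and the adjunction identities, and the argument otherwise closely parallels the proof of Lemma \ref{lem:phi} already given.
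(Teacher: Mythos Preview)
Your proposal is correct and follows essentially the same approach that the paper intends by ``similar arguments as in the proof of Lemma \ref{lem:phi}'': items (a) and (b) reduce immediately to Lemma \ref{lem:phi}(a),(b) via Example \ref{ExA}, and for (c) (dually (d)) you correctly replace the element $\eta\in\mathbb{E}(SD,C)$ by an arbitrary $\eta\in\mathbb{E}(A,C)$, apply part (a) to $\eta\cdot\psi_A\in\mathbb{E}(S(TA),C)$, use naturality of $\Gamma_{TST}$ and the strengthened hypothesis $TST\psi_A\circ\varphi_{TSTA}=1$, and cancel via the monicity of $\Gamma_T^{(S,1)}$ on $\mathbb{E}(S(TA),C)$ from Corollary \ref{prop:mono-1}. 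This is exactly the adaptation the paper has in mind.
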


\begin{proof}
It follows from similar arguments as in the proof of Lemma \ref{lem:phi}.
\end{proof}

\begin{prop}\label{prop:1} Let $(\mathcal{C},\mathbb{E},\s)$ and $(\mathcal{D},\mathbb{F},\t)$ be extriangulated categories, and let $(S:\D\to\C,T:\C\to\D)$ be an adjoint pair. 
\begin{enumerate}
\item If $T$ is $(S,1)$-extriangulated and $\tau:\mathbb{E}(S(?),-)\xrightarrow{\sim}\mathbb{F}(?,T(-)),$ then there is a natural transformation 
$
\Lambda:\;\mathbb{F}(?,-)\rightarrow\mathbb{E}(S(?),S(-)).
$
Moreover, for $\eta\in\mathbb{F}(C,A)$, $\Lambda(\eta)$ satisfies
that $\Gamma_{T}^{(S,1)}\circ\Lambda(\eta)$ is an $\mathbb{F}$-extension
$\eta'\in\mathbb{F}(TSC,TSA)$ such that $\eta'\cdot\varphi_{C}=\varphi_{A}\cdot\eta$. 
\item If $S$ is $(1,T)$-extriangulated and $\sigma:\mathbb{F}(?,T(-))\xrightarrow{\sim}\mathbb{E}(S(?),-),$ then there is a natural transformation 
$
\Lambda:\;\mathbb{E}(?,-)\rightarrow\mathbb{F}(T(?),T(-)).
$
Moreover, for $\eta\in\mathbb{E}(C,A)$, $\Lambda(\eta)$ satisfies
that $\Gamma_{S}^{(1,T)}\circ\Lambda(\eta)$ is an $\mathbb{E}$-extension
$\eta'\in\mathbb{E}(STC,STA)$ such that $\eta\cdot\psi_{C}=\psi_{A}\cdot\eta'$. 
\end{enumerate}
\end{prop}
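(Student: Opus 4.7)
The plan is to build $\Lambda$ in each part as a composition of $\tau^{-1}$ (resp.\ $\sigma^{-1}$) with the natural transformation induced by the unit $\varphi$ (resp.\ the co-unit $\psi$). I would only prove part (a); part (b) follows by the obvious dualization, replacing $\tau$ with $\sigma$ and the unit with the co-unit.

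For part (a), the natural candidate is
\[
\Lambda_{C,A}:\mathbb{F}(C,A)\to \mathbb{E}(SC,SA),\quad \eta\mapsto \tau^{-1}_{C,SA}(\varphi_{A}\cdot \eta).
\]
That is, $\Lambda$ is the composition
\[
\mathbb{F}(?,-)\xrightarrow{\mathbb{F}\cdot(1\times\varphi)}\mathbb{F}(?,TS(-))\xrightarrow{\tau^{-1}\cdot(1\times S)}\mathbb{E}(S(?),S(-)),
\]
where the second arrow uses the naturality of $\tau^{-1}$ in the second variable (instantiated at $S(-)$). Since both constituents are natural transformations of bifunctors $\mathcal{D}^{op}\times \mathcal{D}\to \mathrm{Ab}$, so is $\Lambda$; the straightforward naturality check would combine the naturality of $\varphi$ with that of $\tau^{-1}$, namely $Sg\cdot \tau^{-1}(\xi)\cdot Sf=\tau^{-1}(Tg\cdot \xi\cdot f)$, and the identity $\varphi_{A'}\cdot g=TSg\cdot \varphi_{A}$.

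The property to verify is that, setting $\eta':=\Gamma_{T}^{(S,1)}(\Lambda(\eta))\in \mathbb{F}(TSC,TSA)$, one has $\eta'\cdot \varphi_{C}=\varphi_{A}\cdot \eta$. This is immediate from the definition of $\tau$ given in Proposition \ref{prop:existe}(a): applying $\tau$ to $\Lambda(\eta)=\tau^{-1}(\varphi_{A}\cdot \eta)$ yields
\[
\Gamma_{T}^{(S,1)}(\Lambda(\eta))\cdot \varphi_{C}=\tau(\Lambda(\eta))=\varphi_{A}\cdot \eta,
\]
which is exactly the required equality.

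For part (b), the dual construction is
\[
\Lambda_{C,A}:\mathbb{E}(C,A)\to \mathbb{F}(TC,TA),\quad \eta\mapsto \sigma^{-1}_{TC,A}(\eta\cdot \psi_{C}),
\]
i.e.\ $\Lambda=(\sigma^{-1}\cdot (T^{op}\times 1))\circ (\mathbb{E}\cdot (\psi^{op}\times 1))$. Naturality follows as before, and the equality $\psi_{A}\cdot \eta'=\eta\cdot \psi_{C}$ for $\eta':=\Gamma_{S}^{(1,T)}(\Lambda(\eta))$ is read off directly from the formula $\sigma(\xi)=\psi_{A}\cdot \Gamma_{S}^{(1,T)}(\xi)$ given in Proposition \ref{prop:existe}(b). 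I do not anticipate any real obstacle; the only point to be attentive about is keeping the variance of the two variables straight when invoking naturality of $\tau^{-1}$ (resp.\ $\sigma^{-1}$) at the object $S(-)$ (resp.\ $T(?)$).
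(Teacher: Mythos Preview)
Your proposal is correct and follows essentially the same approach as the paper: define $\Lambda:=(\tau^{-1}\cdot(1\times S))\circ(\mathbb{F}\cdot(1\times\varphi))$, so that $\Lambda(\eta)=\tau^{-1}(\varphi_A\cdot\eta)$, and then read off the required identity $\Gamma_T^{(S,1)}(\Lambda(\eta))\cdot\varphi_C=\varphi_A\cdot\eta$ directly from the formula $\tau(\xi)=\Gamma_T^{(S,1)}(\xi)\cdot\varphi_C$. Your treatment of (b) is likewise the expected dualization.
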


\begin{proof}
We only prove (a) since (b) follows similarly. Indeed, we can
consider $\tau^{-1}:\mathbb{F}(?,T(-))\rightarrow\mathbb{E}(S(?),-)$ and  $\mathbb{F}\cdot(1\times\varphi):\mathbb{F}(?,-)\rightarrow\mathbb{F}(?,TS(-)).$ Then, define $\Lambda$ as the composition 
\[
\Lambda\;:=\;(\tau^{-1}\cdot(1\times S))\circ(\mathbb{F}\cdot(1\times\varphi)):\;\mathbb{F}(?,-)\rightarrow\mathbb{E}(S(?),S(-))
\]
(see diagram below)\\
 \[
\begin{tikzpicture}[-,>=to,shorten >=1pt,auto,node distance=2.5cm,main node/.style=,x=2cm,y=-2cm]
\coordinate (U) at (0,0.5);
\coordinate (V) at (1,0);
\coordinate (U') at (0,-0.5);
\coordinate (V') at (1,1);
\node (1) at (0,0.5)  {$\mathcal{D}^{op} \times \mathcal{C} $};
\node (2) at (1,0)  {$\mathcal{D}^{op} \times \mathcal{D} $};
\node (2') at (1,1)  {$\mathcal{C}^{op} \times \mathcal{C} $};
\node (3) at (2,0.5)  {$\mathbf{Ab}$};
\node (5) at (-1,0)  {$\mathcal{D}^{op} \times \mathcal{D} $};
\node (A1) at (barycentric cs:U=1 ,U'=.5)  {$$};
\node (A2) at (barycentric cs:U=.5 ,U'=1)  {$$};
\node (B1) at (barycentric cs:V'=.5 ,V=1)  {$$};
\node (B2) at (barycentric cs:V'=1 ,V=.5)  {$$};
\draw[->, thin]  (1)  to [below left] node  {$\scriptstyle {S}^{op} \times 1 $} (2');
\draw[->, thin]  (1)  to [below right] node  {$\scriptstyle 1 \times T $} (2);
\draw[->, thin]  (2)  to  node  {$\scriptstyle \mathbb{F} $} (3);
\draw[->, thin]  (2')  to [below right] node  {$\scriptstyle \mathbb{E} $} (3);
\draw[->, thin]  (5)  to [below left] node  {$\scriptstyle 1\times S$} (1);
\draw[->, thin]  (5)  to [out=30,in=150] node  {$\scriptstyle 1 \times 1 $} (2);
\draw[->, thin]  (A2)  to node  {$\scriptstyle 1 \times \varphi $} (A1);
\draw[->, thin]  (B1)  to node  {$\scriptstyle \tau ^{-1}$} (B2);
\end{tikzpicture}
\]

Lastly, for $\eta\in\mathbb{F}(C,A)$, $\Lambda(\eta)=\tau^{-1}(\varphi_{A}\cdot\eta)$
and thus $\varphi_{A}\cdot\eta=\tau\circ\Lambda(\eta)=(\Gamma_{T}^{(S,1)}\circ\Lambda(\eta))\cdot\varphi_{C}$. 
\end{proof}

\begin{lem}\label{lem:adj} Let $(\mathcal{C},\mathbb{E},\s)$ and $(\mathcal{D},\mathbb{F},\t)$
be extriangulated categories, and let $(S:\D\to\C,T:\C\to\D)$ be an adjoint pair.
\begin{enumerate}
\item If $\mathcal{C}$ is WIC, $T$ is $(S,1)$-extriangulated and $\tau:\mathbb{E}(S(?),-)\xrightarrow{\sim}\mathbb{F}(?,T(-)),$
 then $S$ preserves inflations. 
\item If $\mathcal{D}$ is WIC, $S$ is $(1,T)$-extriangulated and $\sigma:\mathbb{F}(?,T(-))\xrightarrow{\sim}\mathbb{E}(S(?),-),$
then $T$ preserves deflations. 
\end{enumerate}
\end{lem}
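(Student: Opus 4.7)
The plan is to reduce each claim to the WIC condition on the target category: namely, to realize $Sf$ (respectively $Tg$) as a factor of a known inflation (respectively deflation), and then invoke the WIC axiom in the form that if a composite is an inflation (deflation), so is the appropriate factor. The machinery to produce that factorisation is the natural transformation $\Lambda$ built in Proposition \ref{prop:1}.

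For part (a), I would start from an inflation $f:X\to Y$ in $\mathcal{D}$, embedded in a $\t$-conflation $\eta:\;X\xrightarrow{f}Y\xrightarrow{g}Z$. Proposition \ref{prop:1}(a) provides $\Lambda:\mathbb{F}(?,-)\rightarrow\mathbb{E}(S(?),S(-))$ such that, for $\xi:=\Lambda(\eta)\in\mathbb{E}(SZ,SX)$ realised say by $SX\xrightarrow{a}W\xrightarrow{b}SZ$, one has $\Gamma_{T}^{(S,1)}(\xi)\cdot\varphi_{Z}=\varphi_{X}\cdot\eta$ as $\mathbb{F}$-extensions in $\mathbb{F}(Z,TSX)$. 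This equality of extensions yields, by the very definition of a morphism of $\t$-conflations, a morphism $h:Y\to TW$ fitting in the commutative diagram
\[
\xymatrix{X\ar[r]^{f}\ar[d]^{\varphi_{X}} & Y\ar[r]^{g}\ar[d]^{h} & Z\ar[d]^{\varphi_{Z}}\\
TSX\ar[r]^{Ta} & TW\ar[r]^{Tb} & TSZ.}
\]
Now I would use the adjunction $(S,T)$: the morphism $h$ corresponds to a unique $\tilde h:SY\to W$ with $T\tilde h\circ\varphi_{Y}=h$. The key identification is $\tilde h\circ Sf=a$. To see this, I apply $T$ and precompose with $\varphi_{X}$: using naturality $\varphi_{Y}\circ f=TSf\circ\varphi_{X}$ and the commutativity above, both $T(\tilde h\circ Sf)\circ\varphi_{X}$ and $T(a)\circ\varphi_{X}$ reduce to $h\circ f=Ta\circ\varphi_{X}$; then injectivity of the adjunction bijection $\mathcal{C}(SX,W)\to\mathcal{D}(X,TW)$ forces $\tilde h\circ Sf=a$. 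Since $a$ is an $\s$-inflation and $\mathcal{C}$ satisfies WIC, it follows that $Sf$ is an $\s$-inflation.

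For part (b), the argument is strictly dual, so I would sketch it only briefly. Given a deflation $g:X\to Y$ in $\mathcal{C}$ sitting inside $\eta:\;W\to X\xrightarrow{g}Y$, Proposition \ref{prop:1}(b) gives $\Lambda(\eta)\in\mathbb{F}(TY,TW)$ realised as $TW\xrightarrow{a}M\xrightarrow{b}TY$ together with a morphism $h:SM\to X$ realising $\psi_{W}\cdot\Gamma_{S}^{(1,T)}(\Lambda\eta)=\eta\cdot\psi_{Y}$. Under the adjunction, $h$ corresponds to $\tilde h:=Th\circ\varphi_{M}:M\to TX$, and, using naturality of $\varphi$ together with the triangle identity $T\psi_{Y}\circ\varphi_{TY}=1_{TY}$, one checks that $Tg\circ\tilde h=b$; since $b$ is a $\t$-deflation and $\mathcal{D}$ is WIC, $Tg$ is a deflation.

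The one place requiring care, and the step I expect to be the main obstacle, is the identification $\tilde h\circ Sf=a$ (and its dual $Tg\circ\tilde h=b$): one must verify it by going through $T$ and using the adjunction's unit/counit triangle identities in just the right order, making sure to exploit the naturality of $\varphi$ (resp.\ $\psi$) together with the commutativity square produced by $\Lambda$. Everything else is formal bookkeeping once Proposition \ref{prop:1} is in hand; the WIC hypothesis on $\mathcal{C}$ (resp.\ $\mathcal{D}$) is used only at the very last step, to pass from "the composite is an inflation (deflation)" to "the relevant factor is an inflation (deflation)".
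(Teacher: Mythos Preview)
Your proposal is correct and follows essentially the same strategy as the paper: both produce the $\s$-conflation $\xi=\tau^{-1}(\varphi_X\cdot\eta)$ (what you call $\Lambda(\eta)$ via Proposition~\ref{prop:1}), realize the inflation of $\xi$ as a composite with $Sf$ as the first factor, and then invoke WIC. The only difference is cosmetic: the paper stacks three commutative squares and uses $\psi$ explicitly to obtain the factorisation, whereas you collapse this into a single adjunction step $\tilde h\circ Sf=a$; your route is slightly more economical but equivalent.
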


\begin{proof}
We only prove (a) since (b) follows similarly.

Consider a $\t$-conflation $\eta:\:\suc[X][Y][Z][a][b]$ and let
$\suc[TSX][Y'][Z][f][g]$ be a realization of $\varphi_{X}\cdot\eta\in\mathbb{F}(Z,TSX)$.
Since $\tau$ is an isomorphism, there is an $\s$-conflation $\eta':\:\suc[SX][Y''][SZ][f'][g']$
such that $\varphi_{X}\cdot\eta=\tau(\eta')=\Gamma_{T}^{(S,1)}(\eta')\cdot\varphi_{Z}$.
Then, we obtain the following commutative diagram by applying $S$
to the realizations of the morphisms of $\t$-conflations obtained
by the above considerations
\[
\xymatrix{SX\ar[r]^{Sa}\ar[d]^{S\varphi_{X}} & SY\ar[r]^{Sb}\ar[d]^{S\varphi_{X}'} & SZ\ar@{=}[d]\\
STSX\ar[r]^{Sf}\ar@{=}[d] & SY'\ar[r]^{Sg}\ar[d]^{S\varphi'_{Z}} & SZ\ar[d]^{S\varphi_{Z}}\\
STSX\ar[r]^{STf'}\ar[d]^{\psi_{SX}} & STY''\ar[r]^{STg'}\ar[d]^{\psi_{Y''}} & STSZ\ar[d]^{\psi_{SZ}}\\
SX\ar[r]^{f'} & Y''\ar[r]^{g'} & SZ.
}
\]
Observe that $\psi_{Y''}\circ S\varphi'_{Z}\circ S\varphi'_{X}\circ Sa=f'\circ\psi_{SX}\circ S\varphi_{X}=f'$
and thus $Sa$ is an inflation since $f'$ is an inflation and $\mathcal{C}$
is left WIC.
\end{proof}

\subsection{Higher extension groups}

Let $(\mathcal{C},\mathbb{E},\s)$ be an extriangulated category and
$|\mathcal{C}|$ be a set of representatives of the iso-classes of objects
in $\mathcal{C}$. If $\mathcal{C}$ is essentially small, then one
can build the groups of higher extensions $\mathbb{E}^{n}(A,B)$ for
all $A,B\in\mathcal{C}$ and $n\geq2$ (see \cite[Sec.3]{G}, \cite[Sec.2]{BGLS},
or \cite[Sec.5.1]{AMP}). This construction is a generalization of
the gluing of short exact sequences that is done in exact categories.
In particular, by doing this we get a family of functors $\{\mathbb{E}^{n}(-,-):\mathcal{C}^{op}\times\mathcal{C}\rightarrow\Ab\}_{n\geq1}$
 satisfying the following properties.
 
\begin{enumerate}
\item [(A)] \cite[Sec.3.1]{G} $\;\mathbb{E}^{1}=\mathbb{E}.$ 
\item [(B)] For $\epsilon\in\mathbb{E}^{n}(A,B)$, $f:B\rightarrow B'$
and $g:A'\rightarrow A,$ we use the notation 
\[
f\cdot\epsilon:=\mathbb{E}^{n}(A,f)(\epsilon)\text{ and }\epsilon\cdot g:=\mathbb{E}^{n}(g^{op},B)(\epsilon).
\]
\item [(C)] There is an epimorphism 
$
\phi_{i,j}:\coprod_{C\in|\mathcal{C}|}\mathbb{E}^{i}(C,B)\times\mathbb{E}^{j}(A,C)\rightarrow\mathbb{E}^{i+j}(A,B)
$
with kernel equal to the abelian group generated by 
\[
\left\{ (\epsilon,f\cdot\epsilon')-(\epsilon\cdot f,\epsilon')\,|\:\epsilon\in\mathbb{E}^{i}(C,B),\epsilon'\in\mathbb{E}^{j}(A,C'),f\in\Hom_{\mathcal{C}}(C',C)\right\} .
\]
In particular, $\phi_{i,j}(\epsilon,f\cdot\epsilon')=\phi_{i,j}(\epsilon\cdot f,\epsilon')$
for all $\epsilon\in\mathbb{E}^{i}(C,B),$ $\epsilon'\in\mathbb{E}^{j}(A,C'),$
$f\in\Hom_{\mathcal{C}}(C',C).$

\item [(D)] \cite[Lem.3.8]{G} For $\epsilon_{1}\in\mathbb{E}^{i}(C_{1},B),\epsilon'_{1}\in\mathbb{E}^{j}(A,C_{1}),\epsilon_{2}\in\mathbb{E}^{i}(C_{2},B),\epsilon'_{2}\in\mathbb{E}^{j}(A,C_{2})$,
we have that 
$
\phi_{i,j}\left((\epsilon_{1},\epsilon'_{1})+(\epsilon_{2},\epsilon'_{2})\right)=\phi_{i.j}(\epsilon,\epsilon'),
$
where $\epsilon$ and $\epsilon'$ are defined from
 $C_i\xrightarrow{\mu_i}C_1\coprod C_2\xrightarrow{\pi_i}C_i$
as follows: $\epsilon:=\epsilon_{1}\cdot\pi_{1}+\epsilon_{2}\cdot\pi_{2}$
and $\epsilon':=\mu_{1}\cdot\epsilon_{1}'+\mu_{2}\cdot\epsilon_{2}'.$
 
\item [(E)] For $\eta\in\mathbb{E}^{i+j}(A,B)$ there
is $(\epsilon,\epsilon')\in\coprod_{C\in|\mathcal{C}|}\mathbb{E}^{i}(C,B)\times\mathbb{E}^{j}(A,C)$
such that $\eta=\phi_{i,j}(\epsilon,\epsilon')$. In this case, we
will use the notation $\eta=\epsilon\cdot\epsilon'$. In \cite{BGLS},
this is denoted as $\eta=\epsilon\smile\epsilon'$ and is called \emph{cup
product} (see \cite[Sec.2.4(2.4.3)]{BGLS}).

\item [(F)] To sum up, $\mathbb{E}^{i+j}(A,B)$ is an abelian group such
that its elements can be expressed as $\eta=\epsilon\cdot\epsilon'$
with $\epsilon\in\mathbb{E}^{i}(C,B)$ and $\epsilon'\in\mathbb{E}^{j}(A,C)$
for some $C\in\mathcal{C}$. Furthermore $\epsilon\cdot(f\cdot\epsilon')=(\epsilon\cdot f)\cdot\epsilon'$
for every morphism $f$. 

\item [(G)] Every extriangulated functor $F:(\mathcal{C},\mathbb{E},\s)\rightarrow(\mathcal{D},\mathbb{F},\t)$
induces a family of natural transformations 
$
\left\{ \Gamma_{F}^{k}:\mathbb{E}^{k}(-,?)\rightarrow\mathbb{F}^{k}(F(-),F(?))\right\} _{k\geq1}
$
 such that $\Gamma_{F}^{i}(\epsilon)\cdot\Gamma_{F}^{j}(\epsilon')=\Gamma_{F}^{i+j}(\eta)$
if $\eta=\epsilon\cdot\epsilon'$ (see \cite[Prop.3.6]{BGLS}). Note
that, for another extriangulated functor $G:(\mathcal{D},\mathbb{F},\t)\rightarrow(\mathcal{E},\mathbb{G},\t')$,
$\Gamma_{G}^k\circ\Gamma_{F}^k=\Gamma_{GF} ^k$  $\forall k\geq1$. 

\item [(H)] It follows from (E) that, if $\alpha:F\rightarrow F'$
is an extriangulated natural transformation, then $\alpha_{B}\cdot\Gamma_{F}^{n}(\eta)=\Gamma_{F'}^{n}(\eta)\cdot\alpha_{A}$
for all $\eta\in\mathbb{E}(A,B)$ and all $n>0$. 
\end{enumerate}
The following is a modified version of \cite[Cor.3.10]{BGLS}. We
include a proof for the sake of completeness.

\begin{prop}\label{prop:higher} Let $(\mathcal{C},\mathbb{E},\s)$ and $(\mathcal{D},\mathbb{F},\t)$
be extriangulated categories, and let $(S:\D\to \C,T:\C\to\D)$ be an adjoint pair such that $T$ and $S$ are extriangulated.
Consider the natural transformations $\tau^{n}:\mathbb{E}^{n}(S(?),-)\rightarrow\mathbb{F}^{n}(?,T(-))$
and $\sigma^{n}:\mathbb{F}^{n}(?,T(-))\rightarrow\mathbb{E}^{n}(S(?),-)$
induced by the maps
\begin{alignat*}{1}
\mathbb{E}^{n}(S(D),C) & \stackrel{\tau_{D,C}^{n}}{\rightarrow}\mathbb{F}^{n}(D,T(C))\text{, }\eta\mapsto\Gamma_{T}^{n}(\eta)\cdot\varphi_{D},\text{ and }\\
\mathbb{F}^{n}(D,T(C)) & \stackrel{\sigma_{D,C}^{n}}{\rightarrow}\mathbb{E}^{n}(S(D),C)\text{, }\eta\mapsto\psi_{C}\cdot\Gamma_{S}^{n}(\eta).
\end{alignat*}
Then, the following statements hold true. 
\begin{enumerate}
\item If $\varphi:1\rightarrow T\circ S$ is extriangulated, then $\sigma^{n}$
is a split-mono and $\tau^{n}$ is a split-epi.
\item If $\psi:S\circ T\rightarrow1$ is extriangulated, then $\tau^{n}$
is a split-mono and $\sigma^{n}$ is a split-epi.
\item If $\varphi:1\rightarrow T\circ S$ and $\psi:S\circ T\rightarrow1$
are extriangulated, then $\tau^{n}$ is an isomorphism whose inverse is $\sigma^{n}.$
\end{enumerate}
\end{prop}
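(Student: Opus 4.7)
The plan is to prove all three parts simultaneously by computing the two compositions $\sigma^{n}\circ\tau^{n}$ and $\tau^{n}\circ\sigma^{n}$ directly, exploiting: (i) the naturality of $\Gamma_{S}^{n}$ and $\Gamma_{T}^{n}$ with respect to morphism actions, (ii) property (G), which gives $\Gamma_{S}^{n}\circ\Gamma_{T}^{n}=\Gamma_{ST}^{n}$ and $\Gamma_{T}^{n}\circ\Gamma_{S}^{n}=\Gamma_{TS}^{n}$, (iii) property (H), which says that an extriangulated natural transformation $\alpha:F\to F'$ satisfies $\alpha_{B}\cdot\Gamma_{F}^{n}(\eta)=\Gamma_{F'}^{n}(\eta)\cdot\alpha_{A}$ for every $\eta\in\mathbb{E}^{n}(A,B)$, and (iv) the triangle identities $\psi_{SD}\circ S(\varphi_{D})=1_{SD}$ and $T(\psi_{C})\circ\varphi_{TC}=1_{TC}$ of the adjoint pair $(S,T)$.

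For part (a), suppose $\varphi$ is extriangulated. Take $\eta\in\mathbb{F}^{n}(D,TC)$. Then
\[
\tau^{n}\circ\sigma^{n}(\eta)=\Gamma_{T}^{n}\bigl(\psi_{C}\cdot\Gamma_{S}^{n}(\eta)\bigr)\cdot\varphi_{D}=T(\psi_{C})\cdot\Gamma_{T}^{n}\bigl(\Gamma_{S}^{n}(\eta)\bigr)\cdot\varphi_{D}=T(\psi_{C})\cdot\Gamma_{TS}^{n}(\eta)\cdot\varphi_{D},
\]
where the second equality uses the naturality of $\Gamma_{T}^{n}$ in the first variable, and the third uses (G). Since $\varphi:1_{\mathcal{D}}\to TS$ is extriangulated, property (H) gives $\Gamma_{TS}^{n}(\eta)\cdot\varphi_{D}=\varphi_{TC}\cdot\eta$, hence
\[
\tau^{n}\circ\sigma^{n}(\eta)=T(\psi_{C})\cdot\varphi_{TC}\cdot\eta=\eta,
\]
by the triangle identity. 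Therefore $\sigma^{n}$ is a split monomorphism and $\tau^{n}$ is a split epimorphism.

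For part (b), suppose $\psi$ is extriangulated. Take $\eta\in\mathbb{E}^{n}(SD,C)$. A symmetric calculation yields
\[
\sigma^{n}\circ\tau^{n}(\eta)=\psi_{C}\cdot\Gamma_{S}^{n}\bigl(\Gamma_{T}^{n}(\eta)\cdot\varphi_{D}\bigr)=\psi_{C}\cdot\Gamma_{ST}^{n}(\eta)\cdot S(\varphi_{D}),
\]
now using the naturality of $\Gamma_{S}^{n}$ in the second variable together with (G). Since $\psi:ST\to 1_{\mathcal{C}}$ is extriangulated, property (H) gives $\psi_{C}\cdot\Gamma_{ST}^{n}(\eta)=\eta\cdot\psi_{SD}$, so
\[
\sigma^{n}\circ\tau^{n}(\eta)=\eta\cdot\psi_{SD}\cdot S(\varphi_{D})=\eta
\]
by the other triangle identity. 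Thus $\tau^{n}$ is a split monomorphism and $\sigma^{n}$ is a split epimorphism. Combining (a) and (b) gives (c): when both $\varphi$ and $\psi$ are extriangulated, the two compositions are the respective identities, so $\tau^{n}$ is an isomorphism with inverse $\sigma^{n}$.

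The main obstacle is purely bookkeeping: one must verify that properties (G) and (H) are valid for the higher extension functors $\mathbb{E}^{n}$ and $\mathbb{F}^{n}$ (not just for $n=1$) and that they interact cleanly with the morphism-action notation $\cdot$. This is exactly what is recorded in items (G) and (H) of the list preceding the proposition, so once these are invoked the proof reduces to the four-line calculation above. No induction on $n$ is needed, because (G) and (H) already handle all $n\geq 1$ uniformly.
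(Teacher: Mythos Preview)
Your proof is correct and follows essentially the same route as the paper's. The paper proves only (b) explicitly (claiming (a) by duality and (c) from (a)+(b)), and its computation of $\sigma^{n}\circ\tau^{n}$ first decomposes $\eta=\epsilon'\cdot\epsilon$ via the cup product (E) and then applies the extriangulated property of $\psi$ to each factor; your version is slightly more streamlined in that you invoke property (H) directly at degree $n$ rather than unfolding it through the cup product, but the underlying mechanism is identical. One minor quibble: your labels ``first variable'' and ``second variable'' for the naturality of $\Gamma_{T}^{n}$ and $\Gamma_{S}^{n}$ are swapped relative to the paper's convention (where $\psi_{C}$ acts on the covariant slot and $\varphi_{D}$ on the contravariant one), though the computations themselves are correct.
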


\begin{proof}
We only prove (b) since (a) follows by duality and (c) follows from
(a) and (b). 

Let $\eta\in\mathbb{E}^{n}(S(D),C)$. By item (E), there are $\epsilon\in\mathbb{E}^{n-1}(S(D),E)$
and $\epsilon'\in\mathbb{E}(E,C)$ such that $\eta=\epsilon'\cdot\epsilon$
and thus 
\begin{alignat*}{1}
\sigma_{D,C}^{n}\circ\tau_{D,C}^{n}(\eta) & =\sigma^{n}\left(\Gamma_{T}^{n}(\eta)\cdot\varphi_{D}\right)\\
 & =\psi_{C}\cdot\Gamma_{S}^{n}\Gamma_{T}^{n}(\eta)\cdot S\varphi_{D}\\
 & =\psi_{C}\cdot\Gamma_{ST}^{n}(\epsilon'\cdot\epsilon)\cdot S\varphi_{D}\\
 & =\psi_{C}\cdot\Gamma_{ST}(\epsilon')\cdot\Gamma_{ST}^{n-1}(\epsilon)\cdot S\varphi_{D}\\
 & =\epsilon'\cdot\psi_{E}\cdot\Gamma_{ST}^{n-1}(\epsilon)\cdot S\varphi_{D}\\
 & =\epsilon'\cdot\epsilon\cdot\psi_{SD}\cdot S\varphi_{D}\\
 & =\epsilon'\cdot\epsilon=\eta.
\end{alignat*}
Therefore $\tau_{D,C}^{n}$ is a split-mono and $\sigma_{D,C}^{n}$
is a split-epi. 
\end{proof}

{\footnotesize{}{}\vskip3mm\quad\; Alejandro Argud{\'i}n-Monroy}\\
 {\footnotesize{}{} Instituto de Matem{\'a}ticas, Facultad de Ciencias,}\\
 {\footnotesize{}{} Universidad Nacional Aut{\'o}noma de M{\'e}xico,}\\
 {\footnotesize{}{} Circuito Exterior, Ciudad Universitaria,}\\
 {\footnotesize{}{} CDMX 04510, M{\'E}XICO.}\\
 {\footnotesize{}{} }\texttt{\footnotesize{}{}argudin@ciencias.unam.mx}{\footnotesize\par}

\noindent {\footnotesize{}\noindent {}\vskip3mm Octavio Mendoza
Hern{\'a}ndez}\\
 {\footnotesize{}{} Instituto de Matem{\'a}ticas}\\
 {\footnotesize{}{} Universidad Nacional Aut{\'o}noma de M{\'e}xico,}\\
 {\footnotesize{}{} Circuito Exterior, Ciudad Universitaria,}\\
 {\footnotesize{}{} CDMX 04510, M{\'E}XICO.}\\
 {\footnotesize{}{} }\texttt{\footnotesize{}{}omendoza@matem.unam.mx}{\footnotesize\par}

\noindent {\footnotesize{}\noindent {}\vskip3mm Carlos E. Parra}\\
 {\footnotesize{}{} Instituto de Ciencias F{\'i}sicas y Matem{\'a}ticas}\\
 {\footnotesize{}{} Edificio Emilio Pugin, Campus Isla Teja}\\
 {\footnotesize{}{} Universidad Austral de Chile}\\
 {\footnotesize{}{} 5090000 Valdivia, CHILE}\\
 {\footnotesize{}{} }\texttt{\footnotesize{}{}carlos.parra@uach.cl}{\footnotesize\par}

\end{document}